\documentclass[a4paper,12pt]{amsart}

\usepackage{float}
\usepackage{euscript,eufrak,verbatim}
\usepackage{graphicx}
\usepackage[usenames]{color}
\usepackage[colorlinks,linkcolor=red,anchorcolor=blue,citecolor=blue]{hyperref}
\usepackage{amsmath}
\usepackage{amsthm}
\usepackage[all]{xy}
\usepackage{graphicx} 
\usepackage{amssymb} 
\usepackage{amsfonts}

\usepackage{mathrsfs}
\usepackage{amscd}

\usepackage{tikz-cd}

\usepackage{bbm}

\makeatletter
\newcommand{\svdots}{%
  \vbox{\fontsize{\sf@size}{\sf@size pt}\linespread{0.3}\selectfont
    \kern0.2\baselineskip
    \hbox{.}\hbox{.}\hbox{.}%
    \kern0.1\baselineskip
  }%
}
\makeatother

\theoremstyle{plain}
\newtheorem{main theorem}{Main Theorem}
\newtheorem{theorem}{Theorem}[section]
\newtheorem{lemma}[theorem]{Lemma}

\newtheorem{corollary}[theorem]{Corollary}
\newtheorem{proposition}[theorem]{Proposition}
\newtheorem{claim}[theorem]{Claim}

\newtheorem{lemma-definition}[theorem]{Lemma-Definition}
\theoremstyle{definition}

\newtheorem{remark}[theorem]{Remark}
\newtheorem{example}[theorem]{Example}

\newtheorem{problem}[theorem]{Problem}

\makeatletter 
\@addtoreset{equation}{section}
\numberwithin{equation}{section}

\newcommand{\norm}[1]{\left\lVert#1\right\rVert}

\newcommand{\diam}{\mathrm{Diam}}
\newcommand{\supp}{\mathrm{supp}}
\newcommand{\h}{h_{\mathrm{top}}}
\newcommand{\FHh}{h_{\mathrm{FH}}}
\newcommand{\mesh}{\mathrm{mesh}}

\title[Intersecting random translates of Bedford--McMullen carpets]
{Weighted topological entropy and intersecting random translates of Bedford--McMullen carpets}

\author{Nima Alibabaei, Masaki Tsukamoto}

\address[Nima Alibabaei]
{Department of Mathematics, Kyoto University, Kyoto 606-8502, Japan}

\email{alibabaei.nima.28c@st.kyoto-u.ac.jp}

\address[Masaki Tsukamoto]
{Department of Mathematics, Kyoto University, Kyoto 606-8502, Japan}

\email{tsukamoto@math.kyoto-u.ac.jp}

\begin{document}

\subjclass[2020]{37A35, 37B40, 28A80, 37D35}

\keywords{weighted topological entropy, relativised variational principle,
Bedford--McMullen carpets, Hausdorff dimension, random matrix products}

\thanks{N.A. was supported by JSPS KAKENHI Grant Number 25KJ1473.
M.T. was supported by JSPS KAKENHI JP25K06974.}

\begin{abstract}
We establish a relativised variational principle for the Feng--Huang weighted
topological entropy associated with a factor map between dynamical systems.
Combined with a recent theorem of Yin, this yields an almost-everywhere equivalence
between the Feng--Huang entropy and its combinatorial version on fibers.
As an application, we compute the Hausdorff dimension of the intersection of random
translates of two Bedford--McMullen carpets.
The resulting formula extends the Kenyon--Peres formula from the self-similar to the self-affine setting, 
and also points to a new problem concerning random matrix products.
\end{abstract}

\maketitle

\section{Background}  \label{section: Background}

The purpose of this paper is twofold.
First, we establish a relativised version of the Feng--Huang weighted variational principle \cite{Feng--Huang}.
Second, we apply it to the Hausdorff dimension of intersections of random translates of Bedford--McMullen carpets.
One of the most intriguing features of our work is that the resulting dimension formula 
points to a new problem concerning random matrix products.
In this section, we review the two main strands of research motivating these results.
Our main theorems are stated in \S\ref{section: main results}.
Since this paper is rather long, we include a brief roadmap in 
\S \ref{subsection: organization of the paper}.

Throughout the paper, a \textbf{dynamical system} means a pair $(X, T)$ such that 
$X$ is a compact metrizable space and $T$ is a continuous map from $X$ to itself.
(We do not assume that $T$ is a homeomorphism.)

\subsection{Weighted topological entropy} \label{subsection: weighted topological entropy}

The variational principle for topological entropy \cite{Goodwyn, Dinaburg, Goodman}
is one of the most basic results in topological dynamics:
Let $(X, T)$ be a dynamical system. Then we have
\begin{equation}  \label{eq: variational principle}
   \h(X, T) = \sup_{\mu\in \mathscr{M}^T(X)} h_{\mu}(T). 
\end{equation}   
Here $\h(X, T)$ and $h_{\mu}(T)$ denote the topological entropy and the Kolmogorov--Sinai entropy respectively, and 
$\mathscr{M}^T(X)$ is the set of all $T$-invariant Borel probability measures on $X$.

Motivated by the dimension theory of self-affine carpets and sponges, Feng and Huang \cite{Feng--Huang} discovered
an innovative idea; they introduced a 
\textit{weighted version} of \eqref{eq: variational principle}.
Their approach is based on the Hausdorff dimension-like definition of topological entropy 
developed by Bowen \cite{Bowen}.
Here we explain their theory using notations that differ slightly from the original ones.

Let $(X, T)$ and $(Y, S)$ be dynamical systems. 
Let $\pi\colon X\to Y$ be an equivariant continuous map, i.e. a continuous map satisfying 
$\pi\circ T = S\circ \pi$.
(We will sometimes write $\pi\colon (X, T)\to (Y, S)$ to clarify the underlying dynamics $T$ and $S$.)
For a measure $\mu\in \mathscr{M}^T(X)$ we denote by $\pi_*\mu \in \mathscr{M}^S(Y)$ the push-forward measure of $\mu$ by $\pi$.
Fix a real number $0\leq w \leq 1$.
Let $\mathbf{d}$ and $\mathbf{d}^\prime$ be metrics on $X$ and $Y$ respectively.
For $x\in X$, a natural number $n$ and a positive number $\varepsilon$,
we define the \textbf{$w$-weighted Bowen ball} $B^w_n(x, \varepsilon)$ as the set of $y\in X$ satisfying the following two conditions
\begin{align*}
 \mathbf{d}\left(T^k x, T^k y\right) &< \varepsilon \quad (0\leq k < \lceil wn\rceil) \\
 \mathbf{d}^\prime\left(S^k \pi(x), S^k \pi(y)\right) &< \varepsilon \quad (0 \leq k < n).
\end{align*}
Here $\lceil u\rceil$ denotes the least integer not less than $u\in \mathbb{R}$.
Roughly speaking, the orbit in $X$ is traced for only $\lceil wn\rceil$ steps,
while the projected orbit in $Y$ is traced for $n$ steps.
In the self-affine example below (Example \ref{example: Bedford--McMullen carpets}), 
this reflects the two different geometric scales.

Let $N$ be a natural number.
Let $\Omega \subset X$ be a subset.
We consider a covering of $\Omega$ by at most countably many $w$-weighted Bowen balls
\[  \Omega \subset \bigcup_{k=1}^\infty B^w_{n_k}(x_k, \varepsilon) \]
satisfying $n_k\geq N$ for all $k$.
For $s\geq 0$, we define $\Lambda_{N,\varepsilon}^{w, s}(\Omega)$ as the infimum of 
\[ \sum_{k=1}^\infty e^{-sn_k} \]
over all such coverings.
The quantity $\Lambda_{N,\varepsilon}^{w, s}(\Omega)$ is monotone in $N$. We define 
\[ \Lambda_\varepsilon^{w, s} (\Omega) = \lim_{N\to \infty} \Lambda_{N, \varepsilon}^{w, s}(\Omega). \]
As the parameter $s$ varies from $0$ to $\infty$, there exists a unique value of $s$
(denoted by $\FHh^w(\Omega, T, \varepsilon)$) at which $\Lambda_\varepsilon^{w, s} (\Omega)$ jumps from 
$\infty$ to $0$:
\[ \Lambda^{w,s}_{\varepsilon}(\Omega) = \begin{cases} \infty & (s <  \FHh^w(\Omega, T, \varepsilon)) \\
                                                                     0        & (s> \FHh^w(\Omega, T, \varepsilon)) \end{cases}. \]
The quantity $\FHh^w(\Omega, T, \varepsilon)$ is monotone in $\varepsilon$.
We define \textbf{Feng--Huang’s $w$-weighted topological entropy of $\Omega$} by 
\[ \FHh^w(\Omega, T) = \lim_{\varepsilon \to 0} \FHh^w(\Omega, T, \varepsilon). \]
Notice that this quantity depends not only on $\Omega$ but also on the map
$\pi\colon (X,T)\to (Y,S)$,
although this dependence is suppressed in the notation.
When we need to clarify the dependence on $\pi$, we will use the notation 
$\FHh^w(\Omega, \pi, T)$.
(The value of $\FHh^w(\Omega, T)$ is independent of the choices of metrics $\mathbf{d}$ and $\mathbf{d}^\prime$.
We assume that $\FHh^w(\Omega, T) = -\infty$ when $\Omega$ is empty.)
We also sometimes denote the $w$-weighted Bowen ball $B^w_n(x, \varepsilon)$ by $B^w_n(x, \pi, \varepsilon)$.

When $\Omega = X$, we denote $\FHh^w(X, T)$ by $\FHh^w(T)$.
Feng and Huang \cite{Feng--Huang} proved the following variational principle \cite[Theorem 1.4]{Feng--Huang}.

\begin{theorem}[Feng--Huang 2016] \label{theorem: Feng--Huang}
\[  \FHh^w(T) = \sup_{\mu\in \mathscr{M}^T(X)} \left(w h_\mu(T) + (1-w) h_{\pi_*\mu}(S)\right). \]
\end{theorem}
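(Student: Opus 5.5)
We prove the two inequalities separately, following the scheme of Bowen's entropy for non-compact sets and the Feng--Huang argument. Write $P := \sup_{\mu}\left(w h_\mu(T) + (1-w) h_{\pi_*\mu}(S)\right)$.

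\emph{Lower bound $\FHh^w(T)\geq P$.} Fix an ergodic $\mu$; by ergodic decomposition and affinity of $\mu\mapsto w h_\mu(T)+(1-w)h_{\pi_*\mu}(S)$ (upper semicontinuity is not needed here), it suffices to treat ergodic measures. We prove a weighted Brin--Katok type estimate:
\[ \liminf_{n\to\infty} -\tfrac1n \log \mu\bigl(B^w_n(x,\varepsilon)\bigr) \geq w\, h_\mu(T,\xi) + (1-w) h_{\pi_*\mu}(S,\eta) - o(1) \]
for $\mu$-a.e.\ $x$. Here $\xi,\eta$ are partitions of small boundary measure subordinate to $\varepsilon$, and the $o(1)$ term vanishes as $\varepsilon\to 0$. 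The key is the conditional Shannon--McMillan--Breiman computation. We write
\[ \mu\bigl(B^w_n(x,\varepsilon)\bigr) \lesssim \mu\bigl(\xi_0^{\lceil wn\rceil-1}(x)\cap \pi^{-1}\eta_0^{n-1}(\pi x)\bigr), \]
and decompose this as the measure of $\pi^{-1}\eta_0^{n-1}(\pi x)$, which is $\approx e^{-n h_{\pi_*\mu}}$, times a conditional factor. To handle the conditional factor we use $\lceil wn\rceil$-step information relative to $\pi^{-1}\eta_0^{\lceil wn\rceil -1}$, which contributes $e^{-wn(h_\mu - h_{\pi_*\mu})}$, together with the remaining $\eta$ information from time $wn$ to $n$. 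Combining these gives exponent $wh_\mu+(1-w)h_{\pi_*\mu}$. A mass distribution (Frostman) argument applied to a set of $\mu$-measure $>1/2$ on which the bound holds uniformly then yields $\Lambda^{w,s}_\varepsilon(X)>0$ for $s$ below the exponent.

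\emph{Upper bound $\FHh^w(T)\leq P$.} This is the main obstacle. We mimic Feng--Huang's proof of the Bowen--Pesin variational principle, which itself follows Feng--Huang's earlier work on packing and Bowen entropy. The steps are as follows.
\begin{enumerate}
\item Prove a weighted dynamical Frostman lemma: if $\Lambda^{w,s}_\varepsilon(X)>0$, then there is a Borel probability measure $\nu$ and $N$ with $\nu(B^w_n(x,\varepsilon))\leq C e^{-sn}$ for all $n\geq N$. This uses a weighted Vitali-type covering lemma for weighted Bowen balls (the $5r$-covering trick adapted so that $B^w_n(x,\varepsilon)$ enlarged by factor $3$ in $\varepsilon$ controls intersecting balls of smaller $n$), combined with a Hahn--Banach / weighted-covering-function argument.
\item From $\nu$, which is not invariant, manufacture an invariant measure. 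Take the empirical averages $\frac1n\sum_{k<n} T^k_*\nu$ and pass to a limit point $\mu$. The essential estimate is that for a partition $\xi$ with $\mathrm{diam}<\varepsilon$,
\[ \frac{w}{\lceil wn\rceil}H_\nu\Bigl(\xi_0^{\lceil wn\rceil-1}\Bigr)\cdot\text{(correct scaling)} + (1-w)\,\tfrac1n H_\nu\bigl(\pi^{-1}\eta_0^{n-1}\mid\cdots\bigr) \gtrsim s. \]
\item Pass to the limit using the standard Misiurewicz subadditivity trick separately on $X$ (over the first $wn$ steps) and on $Y$ (over the full $n$ steps). This requires writing $H_\nu$ of the joint partition as $H_\nu(\xi_0^{wn}\vee\pi^{-1}\eta_0^{wn}) + H_\nu(\pi^{-1}\eta_{wn}^{n}\mid\cdot)$ and bounding each piece by averaged entropies. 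The delicate point is that averaging over the two different time windows produces a limit measure $\mu$ (or a convex combination of measures) for which the weighted entropy is at least $s$. We expect to handle this by first reducing to $w$ rational and working with block lengths divisible by the denominator, then using upper semicontinuity after replacing $T$ by an asymptotically $h$-expansive extension (or zero-dimensional principal extension) so that the entropy maps are upper semicontinuous.
\end{enumerate}
Letting $s\uparrow\FHh^w(T,\varepsilon)$ and $\varepsilon\to0$ gives the upper bound $\FHh^w(T)\leq P$.
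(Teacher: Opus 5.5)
\textbf{Verdict: there is a genuine gap in step (3) of your upper bound.} Your overall architecture matches the paper's. The paper obtains this statement as the one-point-$Z$ case of Theorem~\ref{theorem: relativised variational principle}: a lower bound via a weighted Brin--Katok estimate plus mass distribution, and an upper bound via a dynamical Frostman measure, orbit averaging and a zero-dimensional principal extension. But step (3) is exactly where the content of the theorem lies. After the Frostman bound $sn\le H_\lambda(\alpha_0^{\lceil wn\rceil-1})+H_{T^{\lceil wn\rceil}_*\lambda}(\gamma_0^{n-\lceil wn\rceil-1})$, with $\gamma=\pi^{-1}\beta$, you run Misiurewicz's block argument ``separately'' on the two pieces. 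This turns the first term into an entropy for the average of $T^k_*\lambda$ over $0\le k<\lceil wn\rceil$. It turns the second into an entropy for the average over $\lceil wn\rceil\le k<n$.

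These two averages have, in general, different limit points $\mu'\neq\mu''$. All you can conclude is $s\lesssim w\,h_{\mu'}(T)+(1-w)\,h_{\pi_*\mu''}(S)$, and this mixed quantity is not bounded by $\sup_\mu\bigl(wh_\mu(T)+(1-w)h_{\pi_*\mu}(S)\bigr)$. It can be as large as $w\h(T)+(1-w)\h(S)$. For the carpet of Example~\ref{example: Bedford--McMullen carpets} that value is $(\log b)\dim_{\mathrm M}X$, which is strictly larger than $(\log b)\dim_{\mathrm H}X$. None of your proposed fixes closes this. Taking $w$ rational leaves the two windows different. Convex combinations of $\mu'$ and $\mu''$ do not help, because the functional is affine. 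Upper semicontinuity only lets you pass to a limit; it does not merge the two measures.

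The missing idea is Feng--Huang's, reproduced in Proposition~\ref{proposition: construction of invariant measures more detailed}. Set $\mu_n=\frac1n\sum_{k<n}T^k_*\lambda$ and write $n\mu_n=\lceil wn\rceil\mu_{\lceil wn\rceil}+\sum_{k=\lceil wn\rceil}^{n-1}T^k_*\lambda$. Concavity of $H$ then bounds $\sum_{k=\lceil wn\rceil}^{n-1}H_{T^k_*\lambda}(\gamma_0^{\ell-1})$ by $nH_{\mu_n}(\gamma_0^{\ell-1})-\lceil wn\rceil H_{\mu_{\lceil wn\rceil}}(\gamma_0^{\ell-1})$. This puts both the $\alpha$ and $\gamma$ terms on the single measure $\mu_{\lceil wn\rceil}$, up to an error $u(n)-u(\lceil wn\rceil)$ with $u(n)=\frac1\ell H_{\mu_n}(\gamma_0^{\ell-1})$. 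Since $u$ is bounded with $|u(n+1)-u(n)|\to0$, Lemma~\ref{lemma: calculus} gives a subsequence along which $u(\lceil wn_k\rceil)-u(n_k)$ is asymptotically nonnegative. You then take a limit of $\mu_{\lceil wn_k\rceil}$ along that subsequence.

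Because this subsequence depends on the partitions and on $\ell$, the partitions must be fixed in advance, and their Shannon entropies must be upper semicontinuous at every measure. That is why one passes to a zero-dimensional principal extension of both $X$ and $Y$ (via the fibre product, Corollary~\ref{cor: zero dimensional trick}) and uses clopen partitions. Upper semicontinuity of $\mu\mapsto h_\mu$ coming from asymptotic $h$-expansiveness is not the relevant property. A compactness argument over $\ell$ (the nested closed sets $\mathscr M_\ell$) then yields one measure that works for all $\ell$.

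A smaller point on your lower bound. Your conditional factor $\mu(\xi_0^{\lceil wn\rceil-1}(x)\mid\pi^{-1}\eta_0^{n-1})$ is conditioned on the longer $Y$-name, which can make it larger than the version conditioned on $\pi^{-1}\eta_0^{\lceil wn\rceil-1}$. So it has to be compared with conditioning on all of $\pi^{-1}\mathcal B_Y$, using a relative Shannon--McMillan--Breiman theorem plus Borel--Cantelli. Alternatively, cite Wang--Huang's weighted Brin--Katok formula, as the paper does.
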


\begin{remark} \label{remark: variation of the definition of FH entropy}
In the definition of \(\FHh^w(\Omega,T)\), we consider at most countable coverings of
\(\Omega\) by \(w\)-weighted Bowen balls
\begin{equation}  \label{eq: covering in the definition of FH entropy}
   \Omega \subset \bigcup_{k} B^w_{n_k}(x_k,\varepsilon),
   \qquad n_k\geq N.
\end{equation}
The centers \(x_k\) are not required to belong to \(\Omega\).  
However, one obtains the same value of \(\FHh^w(\Omega,T)\) 
even if the definition is modified by requiring all centers \(x_k\) to belong to \(\Omega\).
Indeed, suppose that
\(\Omega\cap B^w_{n_k}(x_k,\varepsilon)\neq\emptyset\), and choose
\(y_k\in \Omega\cap B^w_{n_k}(x_k,\varepsilon)\).  Then
\[
B^w_{n_k}(x_k,\varepsilon)
\subset
B^w_{n_k}(y_k,2\varepsilon).
\]
Thus any cover as in \eqref{eq: covering in the definition of FH entropy}
can be replaced, after discarding the balls which do not meet \(\Omega\), by a
cover whose centers lie in \(\Omega\), at the cost of replacing
\(\varepsilon\) by \(2\varepsilon\).
Equivalently, if \(\widetilde{\Lambda}^{w,s}_{N,\varepsilon}(\Omega)\) denotes
the version of \(\Lambda^{w,s}_{N,\varepsilon}(\Omega)\) in which the centers of
the Bowen balls are required to lie in \(\Omega\), then
\[
\widetilde{\Lambda}^{w,s}_{N,2\varepsilon}(\Omega)
\leq
\Lambda^{w,s}_{N,\varepsilon}(\Omega)
\leq
\widetilde{\Lambda}^{w,s}_{N,\varepsilon}(\Omega).
\]
Hence the resulting critical value after taking the limit
\(\varepsilon\to0\) is unchanged.  We will use this observation in
Lemma \ref{lemma: reduction to previous Corollary} in \S \ref{subsection: useful corollary}.
\end{remark}

A key motivation for introducing weighted topological entropy lies in its connection to the dimension theory of self-affine carpets,
as illustrated in the following example.
This connection will also play an important role in the present work.

\begin{example}[Bedford--McMullen carpets]  \label{example: Bedford--McMullen carpets}
Let $\mathbb{T} = \mathbb{R}/\mathbb{Z}$ be the circle,
and let $\mathbb{T}^2 = \mathbb{R}^2/\mathbb{Z}^2$ be the torus.
Let $a\geq b>1$ be two integers. 
Define $T\colon \mathbb{T}^2\to \mathbb{T}^2$ and $S\colon \mathbb{T}\to \mathbb{T}$ by 
\[ T(x, y) = (ax, by), \quad S(y) = by. \]
We define an equivariant continuous map $\pi\colon \mathbb{T}^2\to \mathbb{T}$ by $\pi(x, y) = y$.
Let $X \subset \mathbb{T}^2$ be a $T$-invariant closed subset, and set $Y = \pi(X)$.
We set $w := \log_a b$ and consider the weighted topological entropy with respect to the weight $w$.
(The choice $w=\log_a b$ is natural because $a^{-wn}=b^{-n}$,
so the horizontal scale on $X$ and the vertical scale on $Y$ are matched.)
Then it is a direct consequence of the above definition that the weighted topological entropy of 
the map $\pi\colon (X,T) \to (Y,S)$ is connected to the Hausdorff dimension of $X$ by 
\begin{equation} \label{eq: Hausdorff dimension of self-affine carpet}
  \dim_{\mathrm{H}} X = \frac{\FHh^w(X, T)}{\log b}, \quad (w=\log_a b). 
\end{equation}  
Here the Hausdorff dimension is defined by using the standard metric on $\mathbb{T}^2$.
A basic example of $T$-invariant closed subsets is provided by Bedford--McMullen carpets as follows.
Let $D$ be a nonempty subset of $\{0,1,2,\dots, a-1\} \times \{0,1,2,\dots, b-1\}$. 
We define a \textbf{Bedford--McMullen carpet} by 
\[ X = \left\{\left(\sum_{n=1}^\infty \frac{x_n}{a^n}, \sum_{n=1}^\infty\frac{y_n}{b^n}\right)\middle|\, (x_n, y_n)\in D \text{ for all $n\geq1$}\right\}. \]
This is a $T$-invariant closed subset of $\mathbb{T}^2$.
Hence its Hausdorff dimension is connected to the weighted topological entropy by \eqref{eq: Hausdorff dimension of self-affine carpet}.
Bedford \cite{Bedford} and McMullen \cite{McMullen} calculated the Hausdorff dimension of $X$.
Their calculation shows that the Hausdorff dimension and Minkowski dimension do not coincide for $X$ in general. 
For example, let $a=3$, $b=2$ and $D = \{(0,0), (1,1), (2,0)\}$.
\begin{figure}[htbp]
    \centering
    \includegraphics[width=0.45\textwidth]{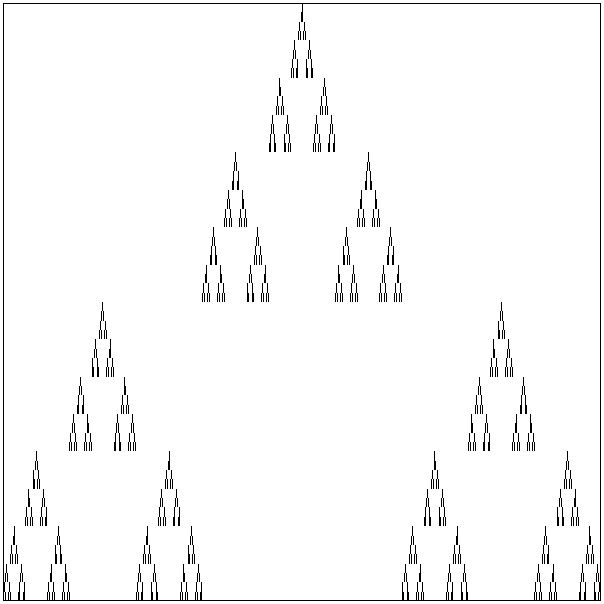}
    \caption{The Bedford--McMullen carpet for
    \(a=3\), \(b=2\) and \(D=\{(0,0),(1,1),(2,0)\}\).}
    \label{fig:BM-carpet-example}
\end{figure}
See Figure \ref{fig:BM-carpet-example}.\footnote{The figures in this paper were prepared with the assistance of
ChatGPT Pro (GPT-5.5).}
In this case, the Hausdorff dimension is given by 
\[ \dim_{\mathrm{H}} X = \log_2\left(1+2^{\log_3 2}\right) = 1.3496838201\dots,  \]
whereas the Minkowski dimension is given by 
$\dim_{\mathrm M} X = 2-\log_3 2 = 1.3690702464\dots$.
\end{example}

At the end of their paper \cite[\S~7.3, p.~441]{Feng--Huang}, 
Feng and Huang asked whether one can establish a relativised version of their variational principle. 
More precisely, their question is as follows.

\begin{problem} \label{problem: relativised}
Let $(X, T)$, $(Y, S)$ and $(Z, R)$ be dynamical systems.
Let $\pi\colon X\to Y$, $\rho\colon X\to Z$ and $\theta\colon Y\to Z$ be equivariant continuous maps such that 
$\rho = \theta \circ \pi$ and that $\rho$ is surjective.
\[
\begin{tikzcd}
(X,T) \arrow[r,"\pi"] \arrow[dr, two heads, "\rho"'] & (Y,S) \arrow[d,"\theta"] \\
& (Z,R)
\end{tikzcd}
\]
Let $0\leq w \leq 1$ and $\nu\in \mathscr{M}^R(Z)$. 
Can one prove that 
\begin{equation} \label{eq: relativised weighted variational principle}
  \begin{split}
   & \int_Z \FHh^w\left(\rho^{-1}(z),\pi, T\right) \, d\nu(z) \\
   & = \sup\left\{w h_\mu(T|R) + (1-w) h_{\pi_*\mu}(S|R) \middle|\, 
    \mu \in \mathscr{M}^T(X)  \text{ with }\rho_*\mu = \nu\right\}? 
   \end{split}
\end{equation}    
Here $h_\mu(T|R) = h_\mu(T) - h_{\nu}(R)$ and $h_{\pi_*\mu}(S|R) = h_{\pi_*\mu}(S) - h_{\nu}(R)$ denote the corresponding 
conditional Kolmogorov--Sinai entropy.
$\FHh^w\left(\rho^{-1}(z), \pi, T\right)$ is the $w$-weighted topological entropy of the set $\rho^{-1}(z)$
with respect to $\pi\colon (X, T)\to (Y, S)$.
\end{problem}

The purpose of the paper is to solve this question affirmatively and apply it to the dimension theory 
of certain fractal sets.

We would like to mention an important paper by Yin \cite{Yin}.
To our knowledge, this is the first paper to address Feng and Huang's question.
The approach of Yin \cite{Yin} and ours are different; in some sense, they are complementary.

We will directly establish a relativised variational principle for $\FHh^w\left(\rho^{-1}(z),\pi, T\right)$,
as this is the quantity most relevant to applications in fractal geometry.
On the other hand, Yin developed a relativised variational principle for an (a priori) different version of weighted topological entropy,
which we will denote by $\h^w\left(\Omega,\pi, T\right)$ in 
\S\ref{subsection: relativised variational principle for weighted topological entropy}.

By combining Yin’s variational principle with ours, we obtain the conclusion that
$\FHh^w\left(\rho^{-1}(z),\pi, T\right) = \h^w\left(\rho^{-1}(z),\pi, T\right)$ for $\nu$-almost every $z \in Z$
in the setting of Problem \ref{problem: relativised}.
The equivalence of these two quantities is a highly nontrivial and useful result.
We will explain this connection in more detail in \S\ref{subsection: relativised variational principle for weighted topological entropy}.

\subsection{Intersection of random translates of Cantor sets} \label{subsection: Intersection of random translates of Cantor sets}

Let 
\[ C = \left\{\sum_{n=1}^\infty \frac{x_n}{3^n} \in \mathbb{R} \middle|\, x_n\in \{0,2\}\right\} \]
be the standard middle-third Cantor set.
Hawkes \cite{Hawkes} discovered the following very curious result:
 \begin{theorem}[Hawkes 1975] \label{theorem: Hawkes}
   For almost every $t\in [0,1]$ with respect to the Lebesgue measure, the Hausdorff dimension of the intersection 
   $C\cap (C+t)$ is given by  
   \[ \dim_\mathrm{H}\left(C\cap (C+t)\right) = \frac{\log_3 2}{3}. \]
   Here $C+t = \{x+t\mid x\in C\}$.
 \end{theorem}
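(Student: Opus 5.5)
We prove Hawkes' theorem by turning $C\cap(C+t)$ into the image of a subshift of finite type driven by the random digits of $t$, and then computing its dimension fiberwise via a law of large numbers.

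\textbf{Step 1: digit coding.} Write $t=\sum_{n\ge1} t_n 3^{-n}$ with $t_n\in\{0,1,2\}$. For Lebesgue-a.e. $t$ the digits $t_n$ are i.i.d. uniform on $\{0,1,2\}$. A point $x=\sum x_n3^{-n}\in C$ lies in $C+t$ exactly when $x-t=\sum y_n 3^{-n}$ with $y_n\in\{0,2\}$. Thus we must solve $x_n - y_n - t_n + 3c_{n}-c_{n-1}= 0$, where $c_n\in\{-1,0,1\}$ are the borrows. Since $x_n-y_n\in\{-2,0,2\}$, this is a finite-state automaton. The state is the carry, which is determined by the tail $\sum_{k>n}(x_k-y_k-t_k)3^{n-k}$ lying in $[-2,2]$, rounded appropriately. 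Reading the digits $t_n$ one at a time, each admissible pair $(x_n,y_n)$ moves the carry from one state to another. It is cleanest to read the carry backwards from the tail value $u_n=\sum_{k>n}(x_k-y_k-t_k)3^{n-k}\in[-1,1]$ (roughly), which must avoid the boundary.

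So $C\cap(C+t)$ is the set of points $x$ whose digit sequence $(x_n)$ is a path in a nonhomogeneous graph. The allowed transitions at time $n$ depend on $t_n$ through three $0/1$ matrices $A_0,A_1,A_2$ indexed by carry states.

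\textbf{Step 2: dimension equals growth rate.} The covering number of $C\cap(C+t)$ by triadic intervals of length $3^{-n}$ is comparable, up to bounded factors, to the number $N_n(t)$ of admissible words of length $n$, i.e.\ to $\|A_{t_1}\cdots A_{t_n}\|$. The upper bound $\dim_H\le\liminf\frac{\log N_n}{n\log 3}$ is immediate. For the lower bound, I would construct a Frostman measure by distributing mass uniformly along admissible words. Equivalently, this is the relativised variational principle with $Z=\{0,1,2\}^{\mathbb N}$, $\nu$ the Bernoulli measure, and $w=1$, the fiber over $t$ being the intersection set.

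\textbf{Step 3: compute the growth rate.} Here the specific structure should make the matrix products trivial. The key observation: the digit $t_n=1$ forces a carry change, and the pattern of $x_n-y_n\in\{-2,0,2\}$ means that there is a free binary choice only when $t_n$ matches the current carry in a specific way. I expect to show the following. Let the carry be $c\in\{0,1\}$ in suitable normalisation, so that $x_n-y_n=t_n+c_{n-1}-3c_n$. Then a choice with two options ($x_n=y_n\in\{0,2\}$) occurs exactly when $t_n+c_{n-1}-3c_n=0$. Choices involving $\pm2$ are forced. The positions of free choices then form a stationary process in $n$, whose density, by the ergodic theorem, is the expected fraction of times with a free choice. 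A computation of the stationary Markov chain of carries under uniform $t_n$ should give density $1/3$, with the other branches dying off or contributing subexponentially. This yields $N_n\approx 2^{n/3}$ and hence dimension $\frac{\log 2}{3\log 3}$.

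\textbf{Main obstacle.} The delicate point is Step 3. A carry path that survives must be chosen consistently with the future, since the carry is determined by the tail. The naive count of free positions along a typical carry path must therefore be justified: I must show the number of surviving paths is not larger, for instance because branches with differing carries merge or die within bounded time. I would handle this by showing that for a.e.\ $t$ the carry sequence is essentially determined by $t$ up to finitely many ambiguities between digit-$1$ occurrences. Then $\frac1n\log_2 N_n\to\nu(\text{free position})=1/3$ by Birkhoff's theorem.
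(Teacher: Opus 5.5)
The paper does not prove this statement. It quotes Hawkes' theorem as background, and the only route it points to is that of Kenyon--Peres: carry matrices $A_\tau$, a random-matrix growth rate, and a relativised variational principle identifying Hausdorff dimension with that rate. The paper's Theorem~\ref{theorem: intersection of Bedford--McMullen carpets} is a self-affine generalisation of that route.

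Your plan is a different and more elementary route, and it does close once you do the computation you only predicted. First fix the recursion, because your formula interchanges the incoming and outgoing carries. Set $c_0=0$ and $c_n=-\sum_{k>n}(x_k-y_k-t_k)3^{n-k}$. Then:
\begin{enumerate}
\item Each $c_n$ is an integer, and $c_n\in\{0,1\}$ unless the ternary digits of $t$ are eventually constant $0$ or $2$, which is a null set.
\item The correct relation is $x_n-y_n=t_n+c_n-3c_{n-1}$.
\item Checking the six cases, for each $(c_{n-1},t_n)$ exactly one $c_n\in\{0,1\}$ gives $x_n-y_n\in\{-2,0,2\}$.
\item $t_n=1$ flips the carry with a forced digit, and $t_n\in\{0,2\}$ keeps the carry.
\item The step is free ($x_n=y_n\in\{0,2\}$) exactly when $t_n=2c_{n-1}$.
\end{enumerate}
So the carry path is a deterministic function of $t$; equivalently, the balanced-ternary expansion of $t/2$ is a.e.\ unique. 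Your ``main obstacle'' therefore disappears: there are no competing branches to merge or kill. Telescoping $\sum_n(c_n-3c_{n-1})3^{-n}=-c_0=0$ shows that every such digit sequence is a genuine point of $C\cap(C+t)$. Since $t_n$ is uniform and independent of $c_{n-1}$, the free-step indicators are i.i.d.\ Bernoulli$(1/3)$, so $F_n/n\to 1/3$ by the strong law. No stationarity argument is needed.

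Hence for a.e.\ $t$ the set $C\cap(C+t)$ is a homogeneous Moran set with exactly $2^{F_n}$ level-$n$ triadic cylinders. The measure that splits mass equally at free steps is a consistent Frostman measure with $\mu(B(x,3^{-n}))\le 2\cdot 2^{-F_n}$. The covering bound and the mass distribution principle then both give $\frac{\log_3 2}{3}$, so the variational principle you mention in Step~2 is unnecessary.

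What your route buys is an explicit, self-contained argument. Hausdorff and box dimension coincide for trivial reasons, and the growth rate of the monomial matrices $A_0=\mathrm{diag}(2,1)$, $A_1=\bigl(\begin{smallmatrix}0&1\\1&0\end{smallmatrix}\bigr)$, $A_2=\mathrm{diag}(1,2)$ is computed exactly.

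What the Kenyon--Peres/paper route buys is generality. For general digit sets, branches genuinely multiply and finite admissible words may fail to extend (compare the paper's persistence lemmas). Uniform mass on admissible words need not be a Frostman measure, and in the self-affine case Hausdorff and box dimension differ. In that generality your Step~2 claim is not justified as stated; here it holds only because of the homogeneity that comes out of Step~3.

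One minor point: on the line only paths starting from carry $0$ count, so $N_n=2^{F_n}$. This agrees with $\norm{A_{t_1}\cdots A_{t_n}}$ only in exponential rate, not up to bounded factors.
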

 Notice that $\dim_{\mathrm{H}} C = \log_3 2$. 
 Therefore, by considering the intersection with its own (random) translate, the dimension is reduced by a factor of $1/3$.
For some time, Hawkes’ theorem was regarded as an interesting isolated result.
Kenyon and Peres \cite{Kenyon--Peres_intersection}
placed Hawkes’ result in a broader context, namely that of Lyapunov exponents, and succeeded in greatly generalizing it.

Let $b>1$ be an integer.
Let $D_1, D_2\subset \{0,1,2,\dots, b-1\}$ be nonempty subsets, and define
\[ \Lambda_r = \left\{\sum_{n=1}^\infty \frac{x_n}{b^n} \in \mathbb{T} \middle|\, x_n\in D_r\right\} \quad (r=1,2). \]
Here we have regarded the Cantor set $\Lambda_r$ as a subset of the circle $\mathbb{T} = \mathbb{R}/\mathbb{Z}$.
We define $2\times 2$-matrices $A_0, A_1, \dots, A_{b-1}$ by 
\[ A_\tau(i, j) = \left|(D_1+i+\tau)\cap (D_2+jb)\right| \quad (i, j= 0,1). \]
Here $|\cdot|$ denotes the cardinality and $D_1+i+\tau = \{x+i+\tau\in \mathbb{Z}\mid x \in D_1\}$.
We define $\lambda$ as the top Lyapunov exponent of the unbiased random matrix products of $A_0, A_1, \dots, A_{b-1}$:
\[ \lambda = \lim_{n\to \infty} \frac{1}{n} \log \norm{A_{u_n}A_{u_{n-1}}\cdots A_{u_1}} \]
where $\norm{\cdot}$ is (any) norm of matrices and $u_1, u_2, u_3, \dots$ are i.i.d. random variables that take values in 
$\{0,1,2,\dots, b-1\}$ according to the uniform distribution.
This limit exists and is constant almost surely by the subadditive ergodic theorem.
Kenyon and Peres proved the following beautiful dimension formula \cite[Theorem 1.2]{Kenyon--Peres_intersection}.

\begin{theorem}[Kenyon--Peres 1991] \label{theorem: Kenyon--Peres}
For almost every $t\in \mathbb{T}$ with respect to the Lebesgue measure, the Hausdorff dimension of the intersection 
$(\Lambda_1+t)\cap \Lambda_2$ is given by 
\begin{equation} \label{eq: Kenyon--Peres formula}
    \dim_{\mathrm{H}}\left((\Lambda_1+t)\cap \Lambda_2\right) = \frac{\lambda}{\log b}. 
\end{equation}    
\end{theorem}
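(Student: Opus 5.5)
We plan to prove Theorem \ref{theorem: Kenyon--Peres} by recasting $(\Lambda_1+t)\cap\Lambda_2$ as a fiber of a factor map and then combining a relativised variational principle with a symbolic counting argument, in the spirit of the paper's approach. Write $t=\sum_{n\ge1} t_n b^{-n}$ with digits $t_n\in\{0,\dots,b-1\}$; for Lebesgue-a.e.\ $t$ these are i.i.d.\ uniform. The condition $x+t=y\pmod 1$ with $x\in\Lambda_1$, $y\in\Lambda_2$ becomes a digit-wise condition with carries. At each position $n$ there is a carry $c_n\in\{0,1\}$, and the constraint reads
\[ x_n+t_n+c_{n} = y_n + b\,c_{n-1}. \]
This is exactly the relation encoded by $A_\tau(i,j)$ with $\tau=t_n$, $i=c_n$ (incoming carry from the right) and $j=c_{n-1}$ (outgoing carry).

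Consider finite words of length $n$ together with a boundary carry. The number of admissible digit strings $(x_1,\dots,x_n)$ consistent with prescribed carries at both ends is the corresponding entry of the product $A_{t_1}A_{t_2}\cdots A_{t_n}$, with the order chosen to match the convention above. The first step is therefore to show that
\[ N_n(t):=\#\{\text{admissible length-$n$ prefixes}\}\asymp \norm{A_{t_1}\cdots A_{t_n}}, \]
up to the two boundary carry choices. By Furstenberg--Kesten applied to the reversed (equidistributed) product, this gives $\frac1n\log N_n(t)\to\lambda$ almost surely. Covering the intersection by $N_n(t)$ intervals of length $\asymp b^{-n}$ then yields $\dim_{\mathrm H}\le \overline{\dim}_{\mathrm M}\le \lambda/\log b$. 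One caveat is that the points $y$ with two expansions form a countable set, so they are harmless.

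For the lower bound, we would build a measure on the fiber rather than a Frostman argument by hand. Take $Z=\{0,\dots,b-1\}^{\mathbb N}$ with the uniform Bernoulli measure $\nu$ as the $t$-digit system. Let $X$ be the subshift of triples (digits of $x$, digits of $t$, carries) satisfying the local rule, and let $\rho$ be the projection to $t$. This is a setting of Problem \ref{problem: relativised} with $w=1$, where ordinary relativised entropy applies. The fiber $\rho^{-1}(t)$ maps onto $(\Lambda_1+t)\cap\Lambda_2$ finite-to-one. The fiber entropy $\h(\rho^{-1}(t))$ equals $\lambda$ for a.e.\ $t$, by the counting above together with the relativised variational principle of Ledrappier--Walters. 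Hence there are invariant measures $\mu$ with $\rho_*\mu=\nu$ and $h_\mu(T|R)$ arbitrarily close to $\lambda$.

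Disintegrating $\mu$ over $t$ gives conditional measures $\mu_t$ on the fibers. By a relative Shannon--McMillan--Breiman theorem, applied to an ergodic component, these satisfy $\mu_t([x_1\dots x_n])\le e^{-n(h_\mu(T|R)-\delta)}$ on large sets. The mass distribution principle then gives $\dim_{\mathrm H}\ge (\lambda-\delta)/\log b$.

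The main obstacle is the lower bound, specifically justifying the relative SMB step. We must ensure that cylinders of $x$-digits of length $n$ correspond to intervals of length $\asymp b^{-n}$ without carry ambiguity. We also need to choose $\mu$ ergodic with fiber entropy close to the supremum. For the latter, we would first try the Markov-type measure built from a measurable choice of Oseledets top direction, in the style of a random Parry measure. This realizes $\lambda$ exactly and gives almost-sure entropy directly from the Lyapunov exponent.
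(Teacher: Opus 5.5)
Your upper bound is fine, and the architecture of your lower bound is sound. It is the one-dimensional, $w=1$ version of the route the paper takes for Theorem \ref{theorem: intersection of Bedford--McMullen carpets}: Ledrappier--Walters replaces Yin's theorem, and your relative SMB/Brin--Katok step replaces Proposition \ref{proposition: half of the variational principle}. One small correction to the upper bound: the caveat that matters is not double expansions of $y$. It is that the tails $\sum_{m>n}t_m b^{-m}$ lie strictly in $(0,b^{-n})$; this holds off a null set and is what keeps the carries in $\{0,1\}$ (cf.\ Lemma \ref{lemma: counting intersection and matrix product}).

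The genuine gap is the sentence ``the fiber entropy $\h(\rho^{-1}(t))$ equals $\lambda$ by the counting above''. The level-$n$ cylinders of $\rho^{-1}(t)$ in your subshift are the prefixes that extend to infinite admissible sequences. The entries of $A_{t_n}\cdots A_{t_1}$, by contrast, count words obeying the carry rule only at positions $1,\dots,n$, and a terminal carry state may be impossible to continue given $t_{n+1},t_{n+2},\dots$. For instance, if $0\notin D_2$ and $t_{n+1}=0$, then $c_n=1$ would force $x_{n+1}=b-1$, $c_{n+1}=1$, $y_{n+1}=0$. So every locally consistent word ending with $c_n=1$ is dead. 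Your $N_n$ can be read in two ways:
\begin{itemize}
\item if it means locally consistent words, then $\asymp$ holds, but these words are not the cylinders of the fibre;
\item if it means genuine prefixes of fibre points, you give no argument, and there is no reason for $\asymp$ to hold.
\end{itemize}
Either way you only obtain $\h(\rho^{-1}(t))\le\lambda$. The reverse inequality $\h(\rho^{-1}(t))\ge\lambda$ is exactly what Ledrappier--Walters needs in order to produce $\mu$ with $h_\mu(T|R)\approx\lambda$, and it is unproved.

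By contrast, the step you flagged as the main obstacle is routine. Relative SMB holds for subinvariant conditioning $\sigma$-algebras, or you can pass to the natural extension as in Proposition \ref{proposition: half of the variational principle}. Ergodicity is free: the ergodic components of $\mu$ still project to the ergodic $\nu$, and relative entropy is affine (Lemma \ref{lemma: basic properties of relative measure theoretic entropy}). The random Parry measure is unnecessary, and its weights would have to vanish on dead carry states, which is the same problem again.

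The missing idea is the paper's notion of persistence, here for $2\times 2$ matrices. Call a nonnegative $M$ persistent if no product $A_{\tau_k}\cdots A_{\tau_1}M$ vanishes. The argument runs as follows.
\begin{itemize}
\item By pigeonhole on the $2^4$ zero patterns, a non-persistent $M$ is killed by some word of length $<2^4$ (Lemma \ref{lemma: extinction in definite time}).
\item Persistence is preserved under left multiplication. So a product that is still nonzero and non-persistent after $k$ random digits has avoided a killing word in each of $\lfloor k/15\rfloor$ independent blocks.
\item Borel--Cantelli then gives, almost surely, for all large $n$ and all $M$: $A_{t_{n+\lfloor\varepsilon n\rfloor}}\cdots A_{t_{n+1}}M$ is either zero or persistent (Lemma \ref{lemma: extinction or not}).
\item Entries of $A_{t_n}\cdots A_{t_1}$ whose carry state survives all future products count genuinely extendable prefixes, by a compactness argument (Lemma \ref{lemma: approximate overlap and real intersection point}).
\end{itemize}
Hence $\norm{A_{t_{n+\lfloor\varepsilon n\rfloor}}\cdots A_{t_1}}\le C^{\varepsilon n}\,\#\{\text{extendable prefixes of length } n\}$. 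This gives a growth rate of at least $\lambda-O(\varepsilon)$ for extendable prefixes, exactly as in the proof of Proposition \ref{proposition: combinatorial entropy and matrix product}.

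Alternatively, you can bypass the topological fibre entropy altogether. Average the uniform measures on locally consistent words over $t$ against $\nu$ and along the shift. The carry constraints are clopen, so any weak$^*$ limit lives on your subshift and projects to $\nu$. The block/concavity argument of \S\ref{subsection: construction of invariant probability measures}, with $w=1$, then gives $h_\mu(T|R)\ge\lambda$ directly. With either fix, the rest of your lower bound goes through.
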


A central question motivating the present paper is 
\textit{how to extend the Kenyon--Peres formula \eqref{eq: Kenyon--Peres formula} to the self-affine setting}.
Namely, let $X_1, X_2 \subset \mathbb{T}^2$ be two
Bedford--McMullen carpets as introduced in Example \ref{example: Bedford--McMullen carpets}.
We would like to calculate the Hausdorff dimension of the intersection
\begin{equation*} 
  \dim_{\mathrm{H}}\left((X_1+\mathbf{t})\cap X_2\right) 
\end{equation*}  
for Lebesgue-a.e. $\mathbf{t} \in \mathbb{T}^2$.
(Here $X_1+\mathbf{t} = \{\mathbf{x}+\mathbf{t} \in \mathbb{T}^2\mid \mathbf{x}\in X_1\}$.)
This is a delicate problem since the Hausdorff dimension and Minkowski dimension do not coincide for 
$(X_1+\mathbf{t})\cap X_2$ in general.
(They coincide for $(\Lambda_1+t)\cap \Lambda_2$ in the setting of Theorem \ref{theorem: Kenyon--Peres};
this is an important step in the proof of Kenyon and Peres.)

Examining the argument of Kenyon and Peres (in particular, the proof of \cite[Proposition 2.6]{Kenyon--Peres_intersection}),  
one quickly sees that the relativised variational principle of Ledrappier and Walters \cite{Ledrappier--Walters} plays an important role
in the proof of Theorem \ref{theorem: Kenyon--Peres}.
Therefore, when attempting to extend the Kenyon--Peres formula to the self-affine setting, 
it is natural to expect that a relativised variational principle for weighted topological entropy 
(see Problem \ref{problem: relativised} above) will be useful.
Our strategy is first to solve Problem \ref{problem: relativised} and then to apply it to the analysis of the dimension 
$\dim_{\mathrm{H}}\left((X_1+\mathbf{t})\cap X_2\right)$.
Of course, the study of Problem \ref{problem: relativised} has its own interest and will hopefully find further applications in the future.

We conclude this section by mentioning two earlier approaches to translated intersections of Bedford--McMullen carpets
by Gui and Li \cite{Gui--Li} and Lu, Zou, and Wang \cite{Lu--Zou--Wang}.
Take a digit set $D \subset \{0,1,2,\dots, a-1\}\times \{0,1,2,\dots, b-1\}$, and let
$X$ be the Bedford--McMullen carpet defined by it. 
These two papers study the Hausdorff dimension of the intersection $(X+\mathbf{t})\cap X$.
(Strictly speaking, they considered $X$ as a subset of the plane $\mathbb{R}^2$ not of the torus $\mathbb{T}^2$.
But this difference is minor.)

Gui and Li \cite{Gui--Li} studied the problem through a level-dependent carpet model arising from the digit expansion of the translation, and 
obtained the explicit formula for $\dim_{\mathrm{H}}\left((X+\mathbf{t})\cap X\right)$ 
under suitable separation and coding assumptions\footnote{More specifically, assume
the following three assumptions: 
  \begin{enumerate}
   \item $\norm{d_1-d_2}_\infty\geq 2$ for any distinct $d_1, d_2\in D-D := \{x-y\mid x, y\in D\}$, where $\norm{\cdot}_\infty$ denotes the max norm.
   \item $t\in \mathbb{R}^2$ can be uniquely written as 
   $\mathbf{t}= \left(\sum_{n=1}^\infty \frac{\alpha_n}{a^n}, \sum_{n=1}^\infty\frac{\beta_n}{b^n}\right)$ by
   $(\alpha_n, \beta_n) \in D-D$.
   \item For each $d\in D-D$, the limit $\lim_{N \to \infty} \frac{1}{N}|\{1\leq n \leq N \mid (\alpha_n, \beta_n) = d\}|:=p_d$ exists.
  \end{enumerate}
Under these assumptions, the paper \cite[Corollary 3.3]{Gui--Li} proved that
\[ \dim_{\mathrm{H}}\left((X+\mathbf{t})\cap X\right) = \sum_{d\in D-D} p_d\log_b \left(\sum_{v\in q(D_d)} n_{v,d}^{\log_a b}\right), \]
where $q\colon \mathbb{R}^2\to \mathbb{R}$ is the projection to the second coordinate, $D_d := (D+d)\cap D$ and 
$n_{v,d} := \left|D_d\cap q^{-1}(v)\right|$. 
A beautiful point of their work is that 
the dimension formula is very explicit and easy to estimate.}.  
Their result has a spirit similar to the theorem of Hawkes (Theorem \ref{theorem: Hawkes}).
The difference between their work and ours is that we do not impose any restriction on the digit sets and
consider completely general ones. (On the other hand, the resulting formula is more difficult to estimate than the formula of
Gui and Li.)  

Lu, Zou, and Wang \cite{Lu--Zou--Wang} analyzed the problem from a different angle.
They proved that, for a general digit set $D\subset \{0,1,2,\dots, a-1\}\times \{0,1,2,\dots, b-1\}$, the intersection 
$(X+\mathbf{t})\cap X$ becomes a sofic affine invariant set if $\mathbf{t}\in \mathbb{Q}^2$.
A sofic affine invariant set is a class of fractal sets studied by another paper of Kenyon and Peres \cite{Kenyon--Peres_sofic} and its dimension formula was 
given there.
The point of \cite{Lu--Zou--Wang} is that their result concerns very \textit{structured} translations (that is, rational $\mathbf{t}$), 
whereas ours is formulated for \textit{random} translations.
Thus the two approaches are complementary.

\section{Main results} \label{section: main results}

In this section, we describe our main results.
We present the relativised variational principle for weighted topological entropy and its consequence in 
\S\ref{subsection: relativised variational principle for weighted topological entropy}.
We also explain our calculation of the Hausdorff dimension of the intersection of random translates of Bedford--McMullen carpets 
in \S\ref{subsection: intersecting random translates of Bedford--McMullen carpets}.

\subsection{Relativised variational principle for weighted topological entropy}
\label{subsection: relativised variational principle for weighted topological entropy}

Recall that, for $0\leq w\leq 1$, an equivariant continuous map $\pi\colon (X, T)\to (Y, S)$ between dynamical systems 
and a subset $\Omega \subset X$,
we have denoted by $\FHh^w(\Omega, T)$ (or $\FHh^w(\Omega, \pi,T)$ for clarifying the dependence on $\pi$) 
the $w$-weighted topological entropy of $\Omega$ (see \S \ref{subsection: weighted topological entropy}).
The following theorem is our first main result. This affirmatively solves Problem \ref{problem: relativised}.

\begin{theorem}  \label{theorem: relativised variational principle}
Let $0\leq w\leq 1$.
Let $(X, T)$, $(Y, S)$ and $(Z, R)$ be dynamical systems.
Let $\pi\colon X\to Y$, $\rho\colon X\to Z$ and $\theta\colon Y\to Z$ be equivariant continuous maps such that 
$\rho = \theta \circ \pi$ and that $\rho$ is surjective.
\[
\begin{tikzcd}
(X,T) \arrow[r,"\pi"] \arrow[dr, two heads, "\rho"'] & (Y,S) \arrow[d,"\theta"] \\
& (Z,R)
\end{tikzcd}
\]
For any $\nu\in \mathscr{M}^R(Z)$, we have
\begin{equation*} 
  \begin{split}
   & \int_Z \FHh^w\left(\rho^{-1}(z),\pi, T\right) \, d\nu(z) \\
   & = \sup\left\{w h_\mu(T|R) + (1-w) h_{\pi_*\mu}(S|R) \middle|\, 
    \mu \in \mathscr{M}^T(X)  \text{ with }\rho_*\mu = \nu\right\}.
   \end{split}
\end{equation*}    
\end{theorem}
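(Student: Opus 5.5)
We prove the two inequalities separately, following the structure of Feng--Huang's proof of Theorem \ref{theorem: Feng--Huang} and the Ledrappier--Walters relativised variational principle. First we check that $z\mapsto \FHh^w(\rho^{-1}(z),\pi,T)$ is $\nu$-measurable and invariant in a suitable sense. Invariance means $\FHh^w(\rho^{-1}(Rz)) \ge \FHh^w(\rho^{-1}(z))$, since $T$ maps $\rho^{-1}(z)$ into $\rho^{-1}(Rz)$ and the fiber entropy behaves well under one-step shifts. Measurability will be handled by working with $\Lambda^{w,s}_{N,\varepsilon}$ restricted to finite open covers and taking countable limits, giving upper semicontinuity-type bounds. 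By ergodic decomposition of $\nu$, and the affinity of both sides in $\nu$, it should suffice to treat ergodic $\nu$. The affinity of the right-hand side uses the standard fact that the measures $\mu$ with $\rho_*\mu=\nu$ decompose compatibly. For ergodic $\nu$ the function $z\mapsto \FHh^w(\rho^{-1}(z))$ is then $\nu$-a.e. constant, say $c$.

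\emph{Lower bound} ($\int \ge \sup$). Fix $\mu$ with $\rho_*\mu=\nu$, and assume $\mu$ ergodic via the decomposition of $\mu$ over $\nu$. We will prove a relativised weighted Brin--Katok/Shannon--McMillan statement. For $\mu$-a.e. $x$ we want
\[
\liminf_n -\tfrac1n\log \mu_{\rho(x)}\bigl(B^w_n(x,\varepsilon)\bigr) \ge w h_\mu(T|R)+(1-w)h_{\pi_*\mu}(S|R)-\delta
\]
for small $\varepsilon$, where $\mu=\int\mu_z\,d\nu(z)$ is the disintegration over $\rho$. The plan follows Feng--Huang: take a partition $\xi$ of $X$ finer than $\pi^{-1}\eta$ for a partition $\eta$ of $Y$, itself finer than $\theta^{-1}\zeta$. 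Using the chain rule
\[
H(\xi^{\lceil wn\rceil}\vee \pi^{-1}\eta^n\mid\rho^{-1}\mathcal{B}_Z)\approx wn\,h_\mu(T|R)+(1-w)n\,h_{\pi_*\mu}(S|R),
\]
we bound the conditional measure of cells. This requires the relative SMB theorem applied to $\xi$ along $\lceil wn\rceil$ steps and to $\pi^{-1}\eta$ along the remaining steps, conditioned on the past. Boundary issues are handled by choosing partitions with $\mu$-null boundaries and the usual $\varepsilon$-neighbourhood argument. A mass distribution (Frostman) principle on the fiber $\rho^{-1}(z)$ with the measure $\mu_z$ then gives $\FHh^w(\rho^{-1}(z))\ge$ the right-hand side for $\nu$-a.e. $z$.

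\emph{Upper bound} ($\int\le\sup$). This is the main obstacle. For ergodic $\nu$ and $s<c$ we need a $T$-invariant $\mu$ over $\nu$ with weighted conditional entropy at least $s$. We will first use a weighted Frostman lemma, as Feng--Huang do via a dynamical Frostman lemma, to obtain for $\nu$-typical $z$ a probability measure $m_z$ on $\rho^{-1}(z)$. This measure should satisfy $m_z(B^w_n(x,\varepsilon))\le Ce^{-sn}$ for all large $n$. Next we must choose $m_z$ measurably in $z$, by a measurable selection over a compact family of measures with closed constraints. We then average $\frac1n\sum_{k<n}T^k_*\int m_z\,d\nu$ and take a weak-* limit $\mu$, so that $\rho_*\mu=\nu$.

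The entropy estimate is the delicate part. Feng--Huang's combinatorial lemma splits $n$-blocks into a $wn$-block in $X$ and the rest in $Y$, and here we would run it conditionally on $\rho^{-1}\mathcal{B}_Z$, imitating Ledrappier--Walters and Downarowicz--Serafin. Doing so will give $wh_\mu(T|R)+(1-w)h_{\pi_*\mu}(S|R)\ge s$. We expect the hardest points to be the concavity/Jensen step that passes from $m_z$ to the averaged measure while keeping the conditional entropies, and the measurability of the Frostman measures.
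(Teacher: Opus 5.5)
\textbf{The upper-bound half has a genuine gap.} Your lower-bound half is essentially the paper's argument: a local entropy estimate for the fibre measures $\mu_z$, followed by mass distribution on $\rho^{-1}(z)$. The paper cites Wang--Huang's weighted relative Brin--Katok formula for the first step rather than proving it. The upper-bound half, however, breaks down exactly at the step you pass over with ``doing so will give''.

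Fix partitions $\alpha$ of $X$ and $\beta$ of $Y$, set $\gamma=\pi^{-1}\beta$, and fix $\ell$. After the Frostman bound and the block splitting, what you actually get is
\[
s\le \tfrac{\lceil wn\rceil}{n\ell}H_{\mu_{\lceil wn\rceil}}\bigl(\alpha_0^{\ell-1}\mid\rho^{-1}\mathcal{B}_Z\bigr)+\tfrac{n-\lceil wn\rceil}{n\ell}H_{\mu_{\lceil wn\rceil}}\bigl(\gamma_0^{\ell-1}\mid\rho^{-1}\mathcal{B}_Z\bigr)+\bigl(u(n)-u(\lceil wn\rceil)\bigr)+o(1),
\]
where $u(n)=\frac1\ell H_{\mu_n}(\gamma_0^{\ell-1}\mid\rho^{-1}\mathcal{B}_Z)$.
\begin{itemize}
\item The error term from the two time scales can only be discarded along a subsequence supplied by Feng--Huang's calculus lemma. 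That subsequence, and hence the weak$^*$ limit $\mu$, depends on $\gamma$ and $\ell$.
\item Passing to the limit needs upper semicontinuity of the conditional entropies at $\mu$, that is, partitions with $\mu$-null boundaries. You cannot choose the partitions after $\mu$ as in Misiurewicz's argument, because $\mu$ was selected using them.
\item You also get a different limit for each $\ell$. A bound at a single level $\ell$ does not bound $w h_\mu(T|R)+(1-w)h_{\pi_*\mu}(S|R)$ from below, so you need one $\mu$ that works for all $\ell$.
\end{itemize}
The concavity step you flag as hardest is routine (Lemma~3.1). This limit step is the real obstruction, and nothing in your plan resolves it.

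\textbf{How the paper closes it.} It first reduces to zero-dimensional $X$ and $Y$. Downarowicz--Huczek plus a fibre product give principal extensions $f\colon X'\to X$ and $g\colon Y'\to Y$ with $\pi\circ f=g\circ\pi'$ and $X',Y'$ zero-dimensional. Principal maps do not change the conditional entropies over $R$, so the supremum is unchanged, while the fibre Feng--Huang entropy can only grow under $f$. Upstairs, clopen partitions make $\mu\mapsto H_\mu(\alpha_0^{\ell-1}\mid\rho^{-1}\mathcal{B}_Z)$ upper semicontinuous on all of $\mathscr{M}(X)$. Hence the sets $\mathscr{M}_\ell$ of invariant lifts of $\nu$ satisfying the level-$\ell$ bound are compact and nonempty. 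They are nested via $\mathscr{M}_{\ell_1\ell_2}\subset\mathscr{M}_{\ell_1}\cap\mathscr{M}_{\ell_2}$, so $\bigcap_\ell\mathscr{M}_\ell\neq\emptyset$ gives a single $\mu$.

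\textbf{Two smaller issues.}
\begin{itemize}
\item Reducing to ergodic $\nu$ by ``affinity'' of the right-hand side is not free. Gluing near-optimal lifts of the ergodic components requires a measurable choice, which the paper makes with the Kuratowski--Ryll-Nardzewski theorem. Conversely, the measurable choice of Frostman measures you worry about is unnecessary. For ergodic $\nu$, one Frostman measure on a single fibre over a $\nu$-generic point $z_0$ suffices, and genericity gives $\rho_*\mu=\nu$.
\item In the lower bound, a relative SMB/Brin--Katok theorem conditioned on $\rho^{-1}\mathcal{B}_Z$ needs that $\sigma$-algebra to be invariant. This fails when $R$ is not invertible, since it is then only subinvariant. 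The paper therefore first passes to the natural extension of $(Z,R)$, where $R$ becomes a homeomorphism.
\end{itemize}
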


Notice that $\rho_*\colon \mathscr{M}^T(X) \to \mathscr{M}^R(Z)$ is surjective since $\rho$ is assumed to be surjective.
(This follows from the Hahn--Banach theorem.)
We also remark that $\FHh^w\left(\rho^{-1}(z), \pi, T\right)$ is a measurable function of $z\in Z$.
(See Lemma \ref{lemma: measurability of fiber entropy} in 
\S \ref{section: proof of one direction of the relativised variational principle}.)

In order to state an important corollary of Theorem \ref{theorem: relativised variational principle}, 
we need to review the theory developed by Yin \cite{Yin}, whose approach is based on 
\cite{Barral--Feng_arXiv, Barral--Feng, Tsukamoto}.

Let $(X,T)$ be a dynamical system with a metric $\mathbf{d}$ on $X$.
For $N\geq 1$, we define a new metric $\mathbf{d}_N$ on $X$ by 
\[ \mathbf{d}_N(x, y) = \max_{0\leq n < N} \mathbf{d}\left(T^n x, T^n y\right). \]
We sometimes denote $\mathbf{d}_N$ by $\mathbf{d}^T_N$ to clarify the map $T$.
For a subset $U\subset X$, we denote by $\diam(U, \mathbf{d}_N)$ the diameter of $U$ with respect to $\mathbf{d}_N$.
For a positive number $\varepsilon$ and a subset $\Omega \subset X$, we define the 
\textbf{$\varepsilon$-covering number} $\#\left(\Omega, \mathbf{d}_N, \varepsilon\right)$ as the 
minimum cardinality $n$ of an open covering $\Omega \subset U_1\cup U_2\cup\dots \cup U_n$ that satisfies 
$\diam\left(U_i, \mathbf{d}_N\right) < \varepsilon$ for all $1\leq i \leq n$.
Let $(Y, S)$ be another dynamical system with a metric $\mathbf{d}^\prime$ on $Y$, and suppose that 
we are given an equivariant continuous map $\pi\colon X\to Y$.
Let $0\leq w\leq 1$, $\varepsilon >0$ and $\Omega \subset X$.
We set
\begin{equation*}
 \begin{split}
  & \#^w\left(\Omega, N, \varepsilon\right) \\
  & = \inf\left\{\sum_{j=1}^m \left(\#\left(\Omega \cap \pi^{-1}(V_j), \mathbf{d}_N, \varepsilon\right)\right)^w \middle|\, 
     \parbox{3in}{\centering $V_1, \dots, V_m$ are open sets of $Y$ that satisfy $\pi(\Omega) \subset V_1\cup \dots \cup V_m$ and 
             $\diam(V_j, \mathbf{d}^\prime_N) < \varepsilon$ for all $j$}\right\}. 
  \end{split}           
\end{equation*}             
We define 
\[ \h^w(\Omega, T) = \lim_{\varepsilon \to 0}\left(\limsup_{N\to \infty} \frac{1}{N}\log \#^w\left(\Omega, N, \varepsilon\right)\right). \]
We will occasionally use the notations $\#^w\left(\Omega,\pi, N, \varepsilon\right)$ and $\h^w(\Omega, \pi, T)$ 
to clarify the dependence on the map $\pi$.
When $\Omega= X$, we also denote $\h^w(X, T)$ by $\h^w(T)$.
This quantity was first introduced in \cite{Tsukamoto} as a new approach to the weighted variational principle.
(For symbolic systems, it was first considered by Barral and Feng \cite{Barral--Feng_arXiv, Barral--Feng}.)
There is no reason to hope that $\h^w(\Omega, T)$ coincides with $\FHh^w(\Omega, T)$; 
indeed they do not coincide in general even for a (non-invariant) closed subset $\Omega$.
However the paper \cite{Tsukamoto} proved that 
\[  \h^w(T) = \sup_{\mu\in \mathscr{M}^T(X)} \left(w h_\mu(T) + (1-w) h_{\pi_*\mu}(S)\right). \]
In particular $\h^w(T) = \FHh^w(T)$, and hence we have $\h^w(\Omega, T) = \FHh^w(\Omega, T)$ 
for any closed $T$-invariant subsets $\Omega$.

Yin \cite[Theorem 2.9]{Yin} proved the following theorem.

\begin{theorem}[Yin 2025] \label{theorem: Yin}
Let $0\leq w\leq 1$.
Let $(X, T)$, $(Y, S)$ and $(Z, R)$ be dynamical systems.
Let $\pi\colon X\to Y$, $\rho\colon X\to Z$ and $\theta\colon Y\to Z$ be equivariant continuous maps such that 
$\rho = \theta \circ \pi$ and that $\rho$ is surjective.
For any $\nu\in \mathscr{M}^R(Z)$, we have
\begin{equation*} 
  \begin{split}
   & \int_Z \h^w\left(\rho^{-1}(z),\pi, T\right) \, d\nu(z) \\
   & = \sup\left\{w h_\mu(T|R) + (1-w) h_{\pi_*\mu}(S|R) \middle|\, 
    \mu \in \mathscr{M}^T(X)  \text{ with }\rho_*\mu = \nu\right\}.
   \end{split}
\end{equation*}    
\end{theorem}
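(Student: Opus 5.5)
The plan is to prove the two inequalities separately, following the pattern of the Ledrappier--Walters relativised variational principle, with the non-relative weighted machinery of \cite{Tsukamoto} inserted at the combinatorial level. The basic combinatorial tool is the following weighted Shannon inequality. Let $p$ be a probability vector on a finite set partitioned into blocks $E_1,\dots,E_m$ with $|E_j|=n_j$, and let $q_j=p(E_j)$. Then
\[
 w H(p) + (1-w) H(q) \leq \log \sum_{j=1}^m n_j^w .
\]
Equality holds for $q_j = n_j^w/\sum_i n_i^w$ with $p$ uniform inside each block. This is exactly what makes $\#^w$ the correct counting quantity.

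\textbf{Step 1: the inequality ``$\sup\le\int$''.} Fix $\mu$ with $\rho_*\mu=\nu$ and disintegrate $\mu=\int \mu_z\,d\nu(z)$ with $\mu_z$ supported on $\rho^{-1}(z)$. Choose a finite Borel partition $\mathcal{P}$ of $X$ of small mesh and a partition $\mathcal{Q}$ of $Y$ of small mesh, with $\pi^{-1}\mathcal{Q}\preceq\mathcal{P}$ after refining, and with $\mu$-null and $\pi_*\mu$-null boundaries. For each $z$, a near-optimal family $V_1,\dots,V_m$ in the definition of $\#^w(\rho^{-1}(z),N,\varepsilon)$, together with the $\mathbf d_N$-covers of the pieces, bounds the number of atoms of $\mathcal{P}^N$ and $\mathcal{Q}^N$ that each set meets; Lebesgue-number considerations give a bounded multiplicative constant $C^{N}$ with $C$ close to $1$. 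The weighted Shannon inequality, applied to $\mu_z$, then yields
\[
 w H_{\mu_z}(\mathcal P^N) + (1-w) H_{\pi_*\mu_z}(\mathcal Q^N) \leq \log \#^w(\rho^{-1}(z),N,\varepsilon) + o(N).
\]
Next I integrate in $\nu$. The standard identity gives $\frac1N\int H_{\mu_z}(\mathcal P^N)\,d\nu \to h_\mu(T,\mathcal P\mid R)$, and similarly for $\pi_*\mu$. Since $\frac1N\log\#^w\le \log(\text{const})$, reverse Fatou applies, giving $\limsup_N\int\le\int\limsup_N$. Letting the meshes go to $0$ completes the step.

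\textbf{Step 2: the inequality ``$\int\le\sup$''.} First reduce to ergodic $\nu$ using the ergodic decomposition and measurability of $z\mapsto \h^w(\rho^{-1}(z),\pi,T)$. Fix $\varepsilon$ and $\delta>0$.

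1. By an Egorov-type argument, find a set $A\subset Z$ with $\nu(A)>1-\delta$ and an infinite sequence $N_k$ such that $\frac1{N_k}\log\#^w(\rho^{-1}(z),N_k,\varepsilon)\ge \int \h^w\,d\nu-\delta$ for $z\in A$. The $\limsup$ in the definition forces this step; I would try to exploit that $T$ maps $\rho^{-1}(z)$ into $\rho^{-1}(Rz)$, so the fiber entropy is essentially $R$-subinvariant and hence a.e.\ constant for ergodic $\nu$.
2. For each such $z$, measurably select maximal $(N_k,\varepsilon)$-separated sets in the fiber, organised into $\mathbf d'_{N_k}$-separated blocks in $Y$.
3. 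Put the extremal weighted measure from the Shannon inequality on them, giving $\sigma_{z,k}$.
4. Set $\mu_k=\frac1{N_k}\sum_{n<N_k}T^n_*\int\sigma_{z,k}\,d\nu$ and take a weak$^*$ limit $\mu$. Invariance of $\nu$ gives $\rho_*\mu=\nu$.
5. Run the Misiurewicz splitting argument simultaneously for the conditional entropies of $\mathcal P$ and $\mathcal Q$ relative to $\rho^{-1}\mathcal B_Z$. This gives $w h_\mu(T\mid R)+(1-w)h_{\pi_*\mu}(S\mid R)\ge \int\h^w\,d\nu - O(\delta)$.

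\textbf{Main obstacle.} I expect Step 2 to be the hard part, in two places. The first is the measurable selection of the weighted separated structures. The second is the Misiurewicz argument with two scales and conditioning: the block decomposition must be compatible with both $\mathcal P^{N}$ and $\pi^{-1}\mathcal Q^N$, while only the $Y$-coordinate carries full length. I would first attempt this with the concavity trick of \cite{Tsukamoto}, which treats $w H(\mathcal P^N)+(1-w)H(\mathcal Q^N)$ as a single subadditive quantity. I would then conditionalise it fiberwise via $H_\mu(\,\cdot\mid\rho^{-1}\mathcal B_Z)=\int H_{\mu_z}(\cdot)\,d\nu$.
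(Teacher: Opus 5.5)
This theorem is not proved in the paper. It is quoted from Yin (Theorem~2.9 there), so the only in-paper benchmark is the proof of the parallel statement for $\FHh^w$ in \S\S\ref{section: proof of one direction of the relativised variational principle}--\ref{section: completion of the proof of relativised variational principle}.

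Your Step~1 is a sensible counting route; the paper's analogue goes through the Wang--Huang weighted relative Brin--Katok formula instead. One correction: Lebesgue-number arguments give a factor exponential in $N$ (e.g.\ $4^N$ after the usual compact-subset modification of $\mathcal P,\mathcal Q$), not $C^N$ with $C$ close to $1$. You remove it by passing to $T^m$, using $\h^w(\rho^{-1}(z),\pi,T^m)\le m\,\h^w(\rho^{-1}(z),\pi,T)$ and $h_\mu(T^m|R^m)=m\,h_\mu(T|R)$ (and likewise for $S$). The genuine gaps are in Step~2.

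\textbf{First gap: synchronisation in item~1.} Almost-everywhere constancy of $z\mapsto \h^w(\rho^{-1}(z),\pi,T)$ says nothing about \emph{at which times} the $\limsup$ is nearly attained. For a general measurable family of $\limsup$s, a common sequence $N_k$ that works on a set of large (or even positive) measure need not exist. The inclusion $T\rho^{-1}(z)\subset\rho^{-1}(Rz)$ you want to exploit is the right ingredient, but it should be turned into a cocycle inequality. Intersect the $Y$-covers as $V_j\cap S^{-N}V'_k$ and the $X$-covers as $U\cap T^{-N}U'$, and use $(c_jc'_k)^w=c_j^w c_k'^w$; this gives
\[
\#^w\bigl(\rho^{-1}(z),N+M,\varepsilon\bigr)\le \#^w\bigl(\rho^{-1}(z),N,\varepsilon\bigr)\,\#^w\bigl(\rho^{-1}(R^Nz),M,\varepsilon\bigr).
\]
So $\log\#^w(\rho^{-1}(z),N,\varepsilon)$, which is upper semicontinuous in $z$, is a subadditive cocycle over $R$. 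Kingman's theorem then gives $\nu$-a.e.\ convergence of $\frac1N\log\#^w$ for every invariant $\nu$, and Egorov applies. This also makes your reduction to ergodic $\nu$ unnecessary. As stated, that reduction is incomplete anyway: it needs a measurable choice of near-optimal lifts of the ergodic components, as in \S\ref{section: removing the ergodicity assumptions}. Alternatively, for ergodic $\nu$, work as in Proposition~\ref{proposition: construction of invariant measures more detailed} on the single fibre over one $\nu$-generic point $z_0$. This removes the synchronisation and the selection in $z$, at the price of that measurable-selection argument for general $\nu$.

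\textbf{Second gap: the extremal measure in items~2--3.} These items do not secure
\[
wH_\sigma(\mathcal P^{N})+(1-w)H_{\pi_*\sigma}(\mathcal Q^{N})\ge \log\#^w-o(N)
\]
for the partitions you use at the end. With separated sets in a general compact metric space you need three things at once:
\begin{enumerate}
\item images of distinct blocks separated by more than $\mesh(\mathcal Q)$;
\item block sizes dominating a covering sum at a comparable scale;
\item partitions whose boundaries are null for a limit $\mu$ that is not yet known.
\end{enumerate}
You give no mechanism reconciling (1) and (2). The natural net-based matching compares fibres over $\mathbf d'_N$-balls of radius $\varepsilon$ with fibres over balls of radius $3\varepsilon$. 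It therefore only holds up to the number of $\varepsilon$-separated points in a $\mathbf d'_N$-ball of radius $3\varepsilon$. That number can be $e^{cN}$ with $c$ not tending to $0$ as $\varepsilon\to0$, already for the shift on $[0,1]^{\mathbb N}$. Since $x\mapsto x^w$ is subadditive, merging blocks does not help.

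The missing tool is the zero-dimensional principal extension (\S\ref{subsection: zero dimensional principal extension}). Take clopen $\alpha\succeq\pi^{-1}\beta$ of mesh $<\varepsilon$. The atoms of $\alpha_0^{N-1}$ and $\beta_0^{N-1}$ are themselves admissible covers, so $\#^w(\rho^{-1}(z),N,\varepsilon)\le\sum_{B}n_B^w$, where $n_B$ is the number of $\alpha_0^{N-1}$-atoms meeting $\rho^{-1}(z)\cap\pi^{-1}B$. Put one point of $\rho^{-1}(z)$ in each such atom, with mass proportional to $n_B^{w-1}$. This gives exactly $\log\sum_B n_B^w$. Clopenness makes Lemma~\ref{lemma: basic properties of relative measure theoretic entropy}(1) available for whatever limit arises. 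You then still have to check two things: that the right-hand side is unchanged under the extension (Lemma~\ref{lemma: principal extension and weighted entropy sum}), and that fibre $\h^w$ does not decrease under it.

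Conversely, the obstacle you single out does not exist. $\#^w$ uses $\mathbf d_N$ and $\mathbf d'_N$ with the \emph{same} $N$, and $w$ enters only as an exponent. So the ordinary Misiurewicz splitting with the concavity of Lemma~\ref{lemma: concavity of entropy and reverse} suffices. The $\lceil wn\rceil$ versus $n$ issue (Lemma~\ref{lemma: calculus}) belongs to the Feng--Huang side.
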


Combining this theorem with ours (Theorem \ref{theorem: relativised variational principle}), 
we obtain the following corollary.

\begin{corollary} \label{corollary: equivalence of two approaches}
Let $0\leq w\leq 1$.
Let $(X, T)$, $(Y, S)$ and $(Z, R)$ be dynamical systems.
Let $\pi\colon X\to Y$, $\rho\colon X\to Z$ and $\theta\colon Y\to Z$ be equivariant continuous maps such that 
$\rho = \theta \circ \pi$ and that $\rho$ is surjective.
For any $\nu\in \mathscr{M}^R(Z)$, we have
\[ \FHh^w(\rho^{-1}(z),\pi, T) = \h^w(\rho^{-1}(z),\pi, T) \]
for $\nu$-almost every point $z\in Z$.
\end{corollary}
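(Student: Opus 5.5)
The plan is to combine Theorem \ref{theorem: relativised variational principle} and Theorem \ref{theorem: Yin}, which share the same right-hand side, with a pointwise inequality between the two fiber entropies. Two integrals that agree, together with a pointwise one-sided inequality, force equality almost everywhere.

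First, I will prove the pointwise inequality
\[ \FHh^w(\Omega,\pi,T) \le \h^w(\Omega,\pi,T) \]
for every subset $\Omega\subset X$. This is the usual comparison of a Hausdorff-type (Bowen) quantity with an upper box-counting-type quantity. Fix $\varepsilon>0$ and a large $N$. Choose open sets $V_1,\dots,V_m$ covering $\pi(\Omega)$ with $\diam(V_j,\mathbf{d}'_N)<\varepsilon$ that nearly attain $\#^w(\Omega,N,\varepsilon)$. For each $j$, choose an open cover of $\Omega\cap\pi^{-1}(V_j)$ by $k_j=\#(\Omega\cap\pi^{-1}(V_j),\mathbf{d}_{\lceil wN\rceil},\varepsilon)$ sets of $\mathbf{d}_{\lceil wN\rceil}$-diameter $<\varepsilon$.

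There is a technical point here. The quantity $\h^w$ uses $\mathbf{d}_N$-covering numbers raised to the power $w$, not $\mathbf{d}_{\lceil wN\rceil}$-covering numbers. In Tsukamoto's and Yin's setup this is handled by the standard estimate relating the two, and I will check it against the precise definitions. Concretely, a product/iteration argument over blocks of length $N$ shows that the relevant growth rate of $\#^w$ dominates the growth rate of $\sum_j \#(\cdot,\mathbf{d}_{\lceil wN\rceil},\varepsilon)$ in the exponential sense. If this is delicate, I would instead invoke the known inequality $\FHh^w\le \h^w$ for arbitrary subsets, as established in \cite{Tsukamoto}.

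Each piece of the resulting cover lies in a weighted Bowen ball $B^w_N(x,\varepsilon)$ centered at one of its points. This gives $\Lambda^{w,s}_{N,\varepsilon}(\Omega)\lesssim e^{-sN}\cdot e^{N(\h^w+\delta)}$ along a sequence of $N\to\infty$, which tends to $0$ when $s>\h^w(\Omega)+\delta$. Hence $\FHh^w(\Omega,T,\varepsilon)\le \h^w(\Omega,T)+\delta$, and letting $\delta\to 0$ and $\varepsilon\to 0$ gives the inequality.

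Next, I apply the inequality with $\Omega=\rho^{-1}(z)$. This gives $f(z):=\h^w(\rho^{-1}(z))-\FHh^w(\rho^{-1}(z))\ge 0$ for every $z$. Both fiber entropies take values in $[0,\h(X,T)]$ and are measurable in $z$: for $\FHh^w$ this is Lemma \ref{lemma: measurability of fiber entropy}, and for $\h^w$ it is implicit in Yin's theorem. Since the fiber is nonempty by surjectivity of $\rho$, neither quantity equals $-\infty$. Hence both integrals are finite. By Theorem \ref{theorem: relativised variational principle} and Theorem \ref{theorem: Yin}, they equal the same supremum, so $\int_Z f\,d\nu=0$. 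A nonnegative measurable function with zero integral vanishes $\nu$-almost everywhere, which proves the corollary.

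The main obstacle is the pointwise comparison in the first step. This is where the mismatch between the $\lceil wN\rceil$-step tracing in weighted Bowen balls and the $w$-th power of $N$-step covering numbers must be handled, together with the fact that the limit superior in $\h^w$ only gives good covers along a subsequence of $N$. A subsequence suffices for the Bowen-ball covers, because $\Lambda_{N,\varepsilon}$ is monotone in $N$ and we may take $N$ along that subsequence.
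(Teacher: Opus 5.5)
The overall reduction would work if you had the pointwise inequality, but that inequality, $\FHh^w(\Omega,\pi,T)\le \h^w(\Omega,\pi,T)$, is exactly the step that is not proved. The covering argument you sketch for it cannot work. The quantity $\h^w$ is \emph{not} the box-counting analogue of $\FHh^w$. It raises $\mathbf{d}_N$-covering numbers of the strips $\Omega\cap\pi^{-1}(V_j)$ to the power $w$, whereas a weighted Bowen ball resolves only $\lceil wN\rceil$ steps in $X$. At a fixed scale there is no bound of $\#(E,\mathbf{d}_{\lceil wN\rceil},\varepsilon)$ by $\#(E,\mathbf{d}_N,\varepsilon)^w$, even up to subexponential factors.

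The failure already occurs for $\Omega=X$ a Bedford--McMullen carpet with $w=\log_a b$. Your cover (strips of small $\mathbf{d}'_N$-diameter, each covered by sets of small $\mathbf{d}_{\lceil wN\rceil}$-diameter) is essentially a cover by approximate squares of side $b^{-N}$. So $\sum_j k_j$ grows like $b^{N\dim_{\mathrm M}X}$. On the other hand, $\#^w(X,N,\varepsilon)$ grows like $b^{N\dim_{\mathrm H}X}$, because $\h^w(X,T)=\FHh^w(X,T)=(\log b)\dim_{\mathrm H}X$. For the carpet of Figure~\ref{fig:BM-carpet-example} these exponential rates differ. Hence the bound $\Lambda^{w,s}_{N,\varepsilon}(\Omega)\lesssim e^{-sN}e^{N(\h^w+\delta)}$ does not follow from single-scale covers, even in this case where the inequality itself holds.

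Your fallbacks do not rescue the step. A block/iteration argument needs invariance of $\Omega$, but $T$ maps $\rho^{-1}(z)$ into the different fibre $\rho^{-1}(Rz)$. And \cite{Tsukamoto} contains no comparison for arbitrary subsets; even $\h^w(T)=\FHh^w(T)$ is obtained only by matching two variational principles. The fibrewise inequality may well be true, but proving it would need something like the Frostman-measure, entropy and Lemma~\ref{lemma: calculus} machinery of \S\ref{subsection: construction of invariant probability measures}, not a direct covering. There is also a smaller problem in your last step: $\h(X,T)$ may be infinite, and then equality of the two integrals says nothing about the difference.

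The paper's proof needs no comparison between the two entropies at all. Both $z\mapsto\FHh^w(\rho^{-1}(z),\pi,T)$ and $z\mapsto\h^w(\rho^{-1}(z),\pi,T)$ are measurable and satisfy $\varphi(z)\le\varphi(Rz)$. Hence both are $R$-invariant $\nu$-almost everywhere (Lemma~\ref{lemma: measurability of fiber entropy}(2) and Remark~\ref{remark: R-invariance of combinatorial weighted entropy}). For ergodic $\nu$ both are therefore a.e.\ constant. Theorems~\ref{theorem: relativised variational principle} and \ref{theorem: Yin} then force the two constants to coincide, also when they equal $+\infty$. The ergodic decomposition finally shows that the set where they differ is null for every $\nu\in\mathscr{M}^R(Z)$. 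Replacing your first step by this invariance argument closes the gap and also removes the finiteness issue.
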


\begin{proof}
Set $\Omega = \{z\in Z\mid \FHh^w(\rho^{-1}(z),\pi, T) \neq \h^w(\rho^{-1}(z),\pi, T)\}$.
When $\nu$ is ergodic, both $\FHh^w(\rho^{-1}(z), \pi, T)$ and $\h^w(\rho^{-1}(z), \pi, T)$ are constant $\nu$-almost everywhere.
(See Lemma \ref{lemma: measurability of fiber entropy} and Remark \ref{remark: R-invariance of combinatorial weighted entropy} in 
\S \ref{section: proof of one direction of the relativised variational principle}.)
Therefore Theorems \ref{theorem: relativised variational principle} and \ref{theorem: Yin} imply
$\FHh^w(\rho^{-1}(z),\pi, T) = \h^w(\rho^{-1}(z),\pi, T)$ for $\nu$-almost every $z\in Z$.
Namely we have $\nu(\Omega) = 0$.
This holds for every ergodic measure $\nu\in \mathscr{M}^R(Z)$.
Then it follows from the ergodic decomposition theorem that $\nu(\Omega)=0$ for 
every (not necessarily ergodic) $\nu\in \mathscr{M}^R(Z)$.
\end{proof}

Corollary \ref{corollary: equivalence of two approaches} is very useful because the two quantities are adapted to different purposes.
On the one hand, the quantity $\FHh^w(\rho^{-1}(z),\pi, T)$ is intrinsically related to Hausdorff dimension since its definition 
mimics that of Hausdorff dimension.
On the other hand, $\h^w(\rho^{-1}(z),\pi,T)$ is more combinatorial in nature
and is often easier to estimate.
Therefore the identity $\FHh^w(\rho^{-1}(z),\pi, T) = \h^w(\rho^{-1}(z),\pi, T)$ allows one to transfer information
between the geometric and combinatorial settings.
We will illustrate this idea by an application to the intersection of random translates of Bedford--McMullen carpets.

The original paper of Feng and Huang \cite{Feng--Huang} introduced not only the weighted topological entropy but also 
the weighted topological \textit{pressure}.
In the present paper, we focus on the case of entropy because this is the most relevant to the application in our mind.
However, we believe that our relativised variational principle can be generalized to the case of pressure.
(Yin \cite{Yin} studied the weighted pressure in his setting.)
Interested readers may pursue this direction.

\subsection{Intersecting random translates of Bedford--McMullen carpets}
\label{subsection: intersecting random translates of Bedford--McMullen carpets}

In this subsection, we extend the Kenyon--Peres formula to the self-affine setting.
Let $a\geq b >1$ be two integers, and set $A = \{0,1,2,\dots, a-1\}$ and $B = \{0,1,2,\dots,b-1\}$.
Let $D_1$ and $D_2$ be nonempty subsets of $A\times B$.
Let $X_r$ $(r=1,2)$ be the Bedford--McMullen carpets defined by $D_r$:
\[ X_r = \left\{\left(\sum_{n=1}^\infty \frac{x_n}{a^n}, \sum_{n=1}^\infty\frac{y_n}{b^n}\right) \in \mathbb{T}^2\middle|\, 
    (x_n, y_n)\in D_r \text{ for all $n\geq1$}\right\}. \]
Notice that this is defined as a subset of the torus $\mathbb{T}^2 = \mathbb{R}^2/\mathbb{Z}^2$.

To state our dimension formula, we need to introduce a family of $4\times4$-matrices 
whose rows and columns are indexed by the set
$I := \{(0,0),(0,1),(1,0),(1,1)\}$.
Let $q\colon \mathbb{R}^2\to \mathbb{R}$ be the projection to the second coordinate.
For $\tau\in A\times B$ and $v\in B$, 
we define a $4\times 4$-matrix $Q_{\tau, v}$ by 
\[ Q_{\tau, v}\bigl((i,j), (k,\ell)\bigr) 
   = \left|\left(D_1+\tau+(i,j)\right)\cap \left(D_2+(ka,\ell b)\right)\cap q^{-1}(\ell b+ v)\right|, \]
where $(i,j), (k,\ell)\in I$.   
Here $Q_{\tau, v}\bigl((i,j), (k,\ell)\bigr)$ denotes the $\left((i,j), (k,\ell)\right)$-entry of $Q_{\tau, v}$,
namely 
 \[
Q_{\tau, v} =
\begin{pmatrix}
Q_{\tau,v}\bigl((0,0),(0,0)\bigr) & Q_{\tau, v}\bigl((0,0),(0,1)\bigr) & Q_{\tau, v}\bigl((0,0),(1,0)\bigr) & Q_{\tau, v}\bigl((0,0),(1,1)\bigr) \\
Q_{\tau, v}\bigl((0,1),(0,0)\bigr) & Q_{\tau, v}\bigl((0,1),(0,1)\bigr) & Q_{\tau, v}\bigl((0,1),(1,0)\bigr) & Q_{\tau, v}\bigl((0,1),(1,1)\bigr) \\
Q_{\tau, v}\bigl((1,0),(0,0)\bigr) & Q_{\tau, v}\bigl((1,0),(0,1)\bigr) & Q_{\tau, v}\bigl((1,0),(1,0)\bigr) & Q_{\tau, v}\bigl((1,0),(1,1)\bigr) \\
Q_{\tau, v}\bigl((1,1),(0,0)\bigr) & Q_{\tau, v}\bigl((1,1),(0,1)\bigr) & Q_{\tau, v}\bigl((1,1),(1,0)\bigr) & Q_{\tau, v}\bigl((1,1),(1,1)\bigr)
\end{pmatrix}.
\]
The binary indices $(i, j), (k,\ell)\in I$ record the carry parameters that arise in the
digit-by-digit description of the intersection.
See Examples \ref{example: sharing positive eigenvector} and
 \ref{example: numerical calculation} below for concrete examples of these matrices.

Let $\tau_1, \tau_2,\tau_3, \dots$ be i.i.d. random variables that take values in $A\times B$ according to the uniform distribution.
We consider the following limit:
\begin{equation} \label{eq: new random matrix product}
 \lambda = \lim_{n\to \infty} 
 \frac{1}{n}\log\left(\sum_{v_1, \dots, v_n\in B} \norm{Q_{\tau_n, v_n} Q_{\tau_{n-1}, v_{n-1}}\cdots Q_{\tau_1, v_1}}^{\log_a b}\right).
\end{equation} 
This limit almost surely exists and is constant by the subadditive ergodic theorem.
We denote this constant by $\lambda$.

Our second main result is the following theorem.

\begin{theorem}  \label{theorem: intersection of Bedford--McMullen carpets}
For almost every $\mathbf{t} \in \mathbb{T}^2$ with respect to the Lebesgue measure, the Hausdorff dimension of the intersection 
$(X_1+\mathbf{t})\cap X_2$ is given by 
\begin{equation} \label{eq: Hausdorff dimension of intersection of carpets}
   \dim_{\mathrm{H}}\left((X_1+\mathbf{t})\cap X_2\right) = \frac{\lambda}{\log b}. 
\end{equation}   
\end{theorem}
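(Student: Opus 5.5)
We follow the Kenyon--Peres strategy, replacing the Ledrappier--Walters principle by Theorem \ref{theorem: relativised variational principle} and Corollary \ref{corollary: equivalence of two approaches}.

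\textbf{Step 1: symbolic model.} Let $Z=(A\times B)^{\mathbb{N}}$ with the shift $R$ and the uniform Bernoulli measure $\nu$. Its push-forward under $\mathbf{t}=(\sum \alpha_n a^{-n},\sum\beta_n b^{-n})$ is Lebesgue measure on $\mathbb{T}^2$, and this coding map is injective off a null set. For $\mathbf{t}$ coded by $\tau=(\tau_n)$, write a point of $(X_1+\mathbf{t})\cap X_2$ digit by digit. A point of $X_1+\mathbf{t}$ has digits $d_n+\tau_n$ plus carries, and each carry lies in $\{0,1\}$ in each coordinate. So the intersection is the image of the set of sequences $(d_n,d'_n,c_n)$ with $d_n\in D_1$, $d'_n\in D_2$ and carry $c_n=(i,j)\in I$ satisfying
\[ d_n+\tau_n+c_n=d'_n+(ka,\ell b),\qquad (k,\ell)=c_{n-1}. \]
The indexing convention for carries has to be checked against the definition of $Q_{\tau,v}$.

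These sequences form a closed shift-invariant subset $X\subset Z\times (D_1\times D_2\times I)^{\mathbb{N}}$, with factor maps $\rho\colon X\to Z$ and $\pi\colon X\to Y$. Here $\pi$ keeps $\tau$ together with the vertical data (the digits $v_n=q(d'_n)$ and the vertical carries), and $\theta\colon Y\to Z$ forgets everything except $\tau$. The fibre $\rho^{-1}(\tau)$ maps onto $(X_1+\mathbf{t})\cap X_2$ by a map that is finite-to-one with uniformly bounded multiplicity. This map is also Lipschitz-compatible with the weighted Bowen balls, with $w=\log_a b$. Hence, arguing as for \eqref{eq: Hausdorff dimension of self-affine carpet}, we get
\[ \dim_{\mathrm H}\bigl((X_1+\mathbf{t})\cap X_2\bigr)=\FHh^w(\rho^{-1}(\tau),\pi,T)/\log b. \]
The upper bound only needs a covering argument. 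The lower bound needs a little care with boundary points, where the coding is ambiguous. I would handle this with the standard device of treating $\mathbb{T}^2$ modulo the ambiguity, or by bounding the multiplicity.

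\textbf{Step 2: pass to the combinatorial entropy.} By Corollary \ref{corollary: equivalence of two approaches}, for $\nu$-a.e.\ $\tau$ we have $\FHh^w(\rho^{-1}(\tau))=\h^w(\rho^{-1}(\tau))$. Since $\nu$ is ergodic, this value is an a.s.\ constant. So it suffices to show $\h^w(\rho^{-1}(\tau),\pi,T)=\lambda$ a.s. In the symbolic setting one can take $\varepsilon$ to correspond to cylinders of length one. Then $\#^w(\rho^{-1}(\tau),N,\varepsilon)$ is, up to bounded factors, the sum over admissible vertical words $v_1\dots v_N$ (with vertical carries) of $M^w$. Here $M$ is the number of admissible full words of length $N$ with that vertical projection, and it is the sum of the entries of $Q_{\tau_N,v_N}\cdots Q_{\tau_1,v_1}$. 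The vertical carry is encoded in the indices, because the column index $\ell$ and the constraint $q^{-1}(\ell b+v)$ together fix the vertical data.

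Two technical points arise here. First, $\#^w$ counts $\varepsilon$-covers in the $\mathbf{d}_N$ metric only up to the first $\lceil wN\rceil$ coordinates horizontally. So the count should be: for each vertical word of length $N$, the number of distinct horizontal prefixes of length $\lceil wN\rceil$. I would verify that this reorganizes to the stated expression, where the norm of the full product raised to $w$ appears. If it does not match directly, I would check whether the definition of $\#^w$ (covering numbers in $\mathbf{d}_N$ with $N$ steps, exponent $w$) really gives $\sum_v(\#\text{words})^w$. By the definition it does, since $\mathbf{d}_N$ traces all $N$ steps in $X$.

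Since all norms are equivalent and the entries are nonnegative, the sum of entries is comparable to $\norm{\cdot}$. This gives $\frac1N\log\#^w\to\lambda$ a.s., by \eqref{eq: new random matrix product}. One caveat: words whose matrix product vanishes contribute zero and correspond exactly to empty pieces of the fibre.

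\textbf{Main obstacle.} I expect the main difficulty to be Step 1: building the carry-coded system so that the fibres are exactly the intersections. The $(i,j),(k,\ell)$ carry conventions must match $Q_{\tau,v}$. The factor $\pi$ must be chosen so that weighted Bowen balls correspond to approximate squares of side $b^{-n}$. Everything must remain continuous and equivariant. Once this is in place, Theorem \ref{theorem: relativised variational principle} enters only through Corollary \ref{corollary: equivalence of two approaches}, which removes the Hausdorff-versus-box discrepancy that Kenyon--Peres avoided in the self-similar case.
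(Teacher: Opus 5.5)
Your architecture (identify the fibre entropy with $(\log b)\dim_{\mathrm H}$, switch to $\h^w$ via Corollary \ref{corollary: equivalence of two approaches}, then count) is the paper's. Putting $\tau$ into $Y$ so that $\theta$ exists is a legitimate substitute for Corollary \ref{corollary: geometric and combinatorial entropies}. You must still handle a non-surjective $\rho$, as in Example \ref{example: finite time extinction}, but a proper closed shift-invariant set has Bernoulli measure zero, so this is harmless.

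\textbf{The gap is in Step 2.} The covering numbers in $\#^w(\rho^{-1}(\tau),N,\varepsilon)$ count only length-$N$ words that actually occur in the fibre, i.e.\ words that extend to an infinite carry-consistent sequence for the future digits $\tau_{N+1},\tau_{N+2},\dots$. The entries of $Q_{\tau_N,v_N}\cdots Q_{\tau_1,v_1}$ count all locally consistent words (the approximate overlaps of Lemma \ref{lemma: discrete carpets}). These include dead ends whose terminal carry state is annihilated by some later block $Q_{\tau_{N+h}}\cdots Q_{\tau_{N+1}}$. Write $\mathcal N(\mathbf t;v_1,\dots,v_N)$ for the number of horizontal prefixes over $v_1,\dots,v_N$ that extend to genuine intersection points. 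Your identification then only gives $\mathcal N(\mathbf t;v_1,\dots,v_N)\le\norm{Q_{\tau_N,v_N}\cdots Q_{\tau_1,v_1}}$ (Lemma \ref{lemma: counting intersection and matrix product}). Hence you get $\h^w\le\lambda$, which is only the upper bound on the dimension.

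Your caveat that vanishing products ``correspond exactly to empty pieces of the fibre'' holds in one direction only. In Example \ref{example: finite time extinction} the fibre is a.s.\ empty, while the finite products are typically nonzero for small $N$. More generally, for a given $\tau$ nothing in your argument prevents doomed carry states from carrying exponentially more weight than the surviving ones.

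\textbf{The missing ingredient is the paper's persistence argument,} which uses the randomness of $\tau$ essentially. It has three parts:
\begin{enumerate}
\item A nonnegative $4\times 4$ matrix that can be annihilated at all is annihilated by some product of fewer than $2^{16}$ matrices $Q_\tau$. This follows by pigeonhole on the $0$--$1$ patterns $\iota(\cdot)$ (Lemma \ref{lemma: extinction in definite time}).
\item Hence, by Borel--Cantelli, almost surely, for each $\varepsilon>0$ and all large $n$, every $Q_{\tau_{n+\lfloor n\varepsilon\rfloor}}\cdots Q_{\tau_{n+1}}M$ is either zero or persistent (Lemma \ref{lemma: extinction or not}).
\item Entries in persistent positions genuinely extend to points of $(X_1+\mathbf t)\cap X_2$ (Lemma \ref{lemma: approximate overlap and real intersection point}).
\end{enumerate}
Combining these gives $\norm{Q_{\tau_{n+\lfloor n\varepsilon\rfloor},v_{n+\lfloor n\varepsilon\rfloor}}\cdots Q_{\tau_1,v_1}}\le 16(4ab)^{n\varepsilon}\,\mathcal N(\mathbf t;v_1,\dots,v_n)$. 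Summing $w$-th powers over $v_1,\dots,v_{n+\lfloor n\varepsilon\rfloor}$ and letting $\varepsilon\to0$ yields $\lambda\le\h^w(\rho^{-1}(\mathbf t),\pi,T)$ (Proposition \ref{proposition: combinatorial entropy and matrix product}).

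This step, not the carry bookkeeping of Step 1, is the real obstacle. In the paper, Step 1 is just an isometry plus a comparison of metric balls with weighted Bowen balls (Lemma \ref{lemma: Hausdorff dimension of intersection}, Proposition \ref{proposition: Feng--Huang entropy and dimension}).
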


Here is the outline of the proof.
The key point is that the same fibre entropy admits both a geometric and a combinatorial description.
For $r=1,2$ we define $T_r\colon X_r \to X_r$ by $T_r(x_1, x_2) = (ax_1, bx_2)$, and we set
$T= T_1\times T_2\colon X_1\times X_2\to X_1\times X_2$.
Let $\pi_r\colon X_r\to \mathbb{T}$ be the projection to the second coordinate $(r=1,2)$, 
and set $Y_r = \pi_r(X_r)$.
We define $\pi\colon X_1\times X_2\to Y_1\times Y_2$ and $\rho\colon X_1\times X_2\to \mathbb{T}^2$ by 
\[ \pi(\mathbf{x}, \mathbf{y}) = \left(\pi_1(\mathbf{x}), \pi_2(\mathbf{y})\right), \quad \rho(\mathbf{x}, \mathbf{y}) = \mathbf{y}-\mathbf{x}.\]
Notice that the fiber $\rho^{-1}(\mathbf{t})$ is naturally identified with the intersection $(X_1+\mathbf{t})\cap X_2$.
Set $w= \log_a b$ and apply (a slight variant of) Corollary \ref{corollary: equivalence of two approaches} to 
$X:=X_1\times X_2$, $Y:= Y_1\times Y_2$ and $Z:= \mathbb{T}^2$.
(Indeed, the current setting is different from that of Corollary \ref{corollary: equivalence of two approaches}
because there is no relevant map from $Y_1\times Y_2$ to $\mathbb{T}^2$.
We will explain this modification in Corollary \ref{corollary: geometric and combinatorial entropies}.)
Then we obtain
\[ \FHh^w\left(\rho^{-1}(\mathbf{t}),\pi, T\right) = \h^w\left(\rho^{-1}(\mathbf{t}),\pi, T\right) \]
for almost every $\mathbf{t}\in \mathbb{T}^2$.
The theorem therefore follows from the two identities
\[
\FHh^w(\rho^{-1}(\mathbf{t}),\pi,T)
 =(\log b)\dim_{\mathrm H}((X_1+\mathbf{t})\cap X_2)
\]
and
\[
\h^w(\rho^{-1}(\mathbf{t}),\pi,T)=\lambda \quad (\text{for almost every $\mathbf{t}\in \mathbb{T}^2$}).
\]
The first identity is geometric and immediately follows from the definition, 
while the second one is combinatorial and comes from a
digit-by-digit description of the intersection.
Indeed, writing
\[
\mathbf{t}=\left(\sum_{n=1}^\infty \frac{\alpha_n}{a^n},
\sum_{n=1}^\infty \frac{\beta_n}{b^n}\right),
\]
the digits $\tau_n=(\alpha_n,\beta_n)\in A\times B$ are i.i.d.\ and uniformly distributed
for Lebesgue-a.e.\ $\mathbf{t}$, and the corresponding overlap transitions are encoded by the
matrices $Q_{\tau_n,v_n}$.

\begin{remark}
In the setting of Theorem \ref{theorem: intersection of Bedford--McMullen carpets},
the intersection $(X_1+\mathbf{t})\cap X_2$ may become empty.
For dealing with this case, we assume that the Hausdorff dimension of the empty set is $-\infty$.
Then the identity \eqref{eq: Hausdorff dimension of intersection of carpets} still holds for this case.
Namely, if $(X_1+\mathbf{t})\cap X_2$ is empty for Lebesgue almost every $\mathbf{t}$ then 
\[ \sum_{v_1, \dots, v_n\in B} \norm{Q_{\tau_n, v_n} Q_{\tau_{n-1}, v_{n-1}}\cdots Q_{\tau_1, v_1}}^{\log_a b}=0 \]
almost surely for sufficiently large $n$. (We assume that $\log 0 = -\infty$.)
See Example \ref{example: finite time extinction} below for an interesting example.
\end{remark}

Theorem \ref{theorem: intersection of Bedford--McMullen carpets} can be seen as a generalization of the Kenyon--Peres formula.
For $\tau \in A\times B$, we set 
\begin{equation} \label{eq: Q_tau in introduction}
  Q_\tau = \sum_{v\in B} Q_{\tau, v}.
\end{equation}  
This is a $4\times 4$-matrix whose entries are given by
\[ Q_\tau\left((i,j), (k,\ell)\right) =\left|\left(D_1+\tau+(i,j)\right)\cap \left(D_2+(ka,\ell b)\right)\right| . \]
Since we have $(x+y)^w \leq x^w + y^w$ for nonnegative numbers $x$ and $y$, 
the quantity $\lambda$ in \eqref{eq: new random matrix product}
is bounded from below by 
\[ \frac{\lambda}{\log_a b} \geq  \lim_{n\to \infty} \frac{1}{n} \log \norm{Q_{\tau_n} Q_{\tau_{n-1}}\dots Q_{\tau_1}}, \]
where the right-hand side is the top Lyapunov exponent of the unbiased random matrix products of $Q_\tau$ $(\tau\in A\times B)$.
Moreover, when 
$a=b$ (namely, in the self-similar case), this inequality becomes an equality, 
and the above dimension formula \eqref{eq: Hausdorff dimension of intersection of carpets} reduces to 
the planar case of the Kenyon--Peres formula \cite[Corollary~5.8]{Kenyon--Peres_intersection}, 
discussed in \S\ref{subsection: Intersection of random translates of Cantor sets}.

In the case of $a>b$, the calculation of the limit \eqref{eq: new random matrix product} does not fall 
within the standard framework of Lyapunov exponents.
Indeed, the expression involves a sum of powers of norms of matrix products,
rather than the norm of a single random product,
and therefore lies outside the usual multiplicative-ergodic framework.
The rigorous computation of \(\lambda\) appears to lead to an interesting problem
in the theory of random matrix products.
Of course, one can numerically estimate the value of $\lambda$ by Monte Carlo methods.
However, a computation accompanied by certified error bounds requires new ideas.

The following is a simple, yet interesting, example.

\begin{example}[Finite-time extinction] \label{example: finite time extinction}
Let \(a=3\), \(b=2\), and
\[
D_1=D_2=\{(0,0),(1,1),(2,0)\}.
\]
Probably this is the most famous example of Bedford--McMullen carpets.
See Figure \ref{fig:BM-carpet-example} in \S \ref{subsection: weighted topological entropy}.
We have $\dim_{\mathrm{H}} X_1 = \dim_{\mathrm H} X_2 = \log_2\left(1+2^{\log_3 2}\right) \approx 1.34968$.
We show that, in this example, the generic intersection is empty:
\[
(X_1+\mathbf t)\cap X_2=\emptyset
\qquad
\text{for Lebesgue-a.e. } \mathbf t\in\mathbb T^2.
\]
With respect to the ordering
\(I=\{(0,0),(0,1),(1,0),(1,1)\}\), a direct computation gives
\[
Q_{(0,0)}
=
\begin{pmatrix}
3&0&0&0\\
0&0&0&0\\
0&0&1&0\\
1&1&0&0
\end{pmatrix},
\qquad
Q_{(0,1)}
=
\begin{pmatrix}
0&0&0&0\\
0&3&0&0\\
1&1&0&0\\
0&0&0&1
\end{pmatrix},
\]
where \(Q_\tau\) is the matrix defined by \eqref{eq: Q_tau in introduction}.
These two matrices satisfy
\[
Q_{(0,1)}Q_{(0,0)}Q_{(0,1)}Q_{(0,0)}=0.
\]
\(\tau_1,\tau_2, \tau_3, \dots\) are i.i.d. and uniformly distributed on
\(A\times B\).
Hence by the Borel--Cantelli lemma, the word
\[
(0,0),(0,1),(0,0),(0,1)
\]
appears in the sequence \(\tau_1,\tau_2, \tau_3, \dots\) almost surely.
Whenever this word appears, the corresponding product of the matrices
\(Q_\tau\) contains the zero block
\(Q_{(0,1)}Q_{(0,0)}Q_{(0,1)}Q_{(0,0)}\)
and therefore
\(Q_{\tau_n}Q_{\tau_{n-1}}\cdots Q_{\tau_1}=0\)
for all sufficiently large \(n\). 
It follows that
\[
Q_{\tau_n,v_n}Q_{\tau_{n-1},v_{n-1}}\cdots Q_{\tau_1,v_1}=0
\]
for every \(v_1,\dots,v_n\in B\), for all sufficiently large \(n\).
Thus 
\(\lambda=-\infty\).
Consequently, under the convention \(\dim_{\mathrm H}\emptyset=-\infty\),
Theorem \ref{theorem: intersection of Bedford--McMullen carpets} gives
\[
\dim_{\mathrm H}\bigl((X_1+\mathbf t)\cap X_2\bigr)=-\infty
\]
for Lebesgue-a.e. \(\mathbf t\in\mathbb T^2\). Equivalently,
\( (X_1+\mathbf t)\cap X_2=\emptyset \)
for Lebesgue-a.e. \(\mathbf t\in\mathbb T^2\).
This example shows that Theorem \ref{theorem: intersection of Bedford--McMullen carpets} can be used to 
detect the empty intersection \( (X_1+\mathbf t)\cap X_2=\emptyset \).
Although this is only a superficial application of the theorem, 
it is sometimes useful.
\end{example}

In general, computing $\lambda$ is highly nontrivial.
However, the following example illustrates an exceptional case where $\lambda$ can be computed explicitly.

\begin{figure}[htbp]
    \centering
    \includegraphics[width=0.85\textwidth]{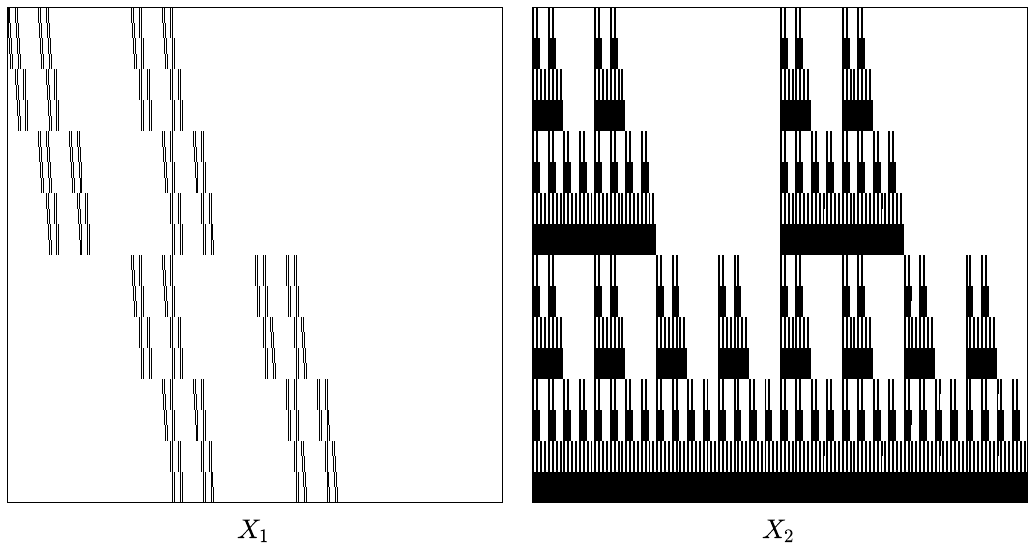}
    \caption{The Bedford--McMullen carpets $X_1$ and $X_2$ for
    \(a=4\), \(b=2\) and the digit sets \(D_1 = \{(1,0),(2,0),(0,1),(1,1)\}\) and \(D_2 = \{(0,0), (1,0), (2,0), (3,0), (0,1), (2,1)\}\).}
    \label{fig:BM-carpet-example-Perron}
\end{figure}

\begin{example}[Sharing positive eigenvector]  \label{example: sharing positive eigenvector}
The constant $\lambda$ can be easily computed if the matrices $Q_{\tau, v}$ share a common positive eigenvector (i.e. Perron eigenvector).
Namely, if there exists a component-wise positive vector $u\in \mathbb{R}^4$ for which 
\[ \forall \tau\in A\times B,\, \forall v\in B: \quad  Q_{\tau, v} u = c_{\tau, v} u \quad (c_{\tau, v}>0), \]
then $\norm{Q_{\tau_n, v_n}\cdots Q_{\tau_1, v_1}} \asymp \prod_{k=1}^n c_{\tau_k, v_k}$ and hence 
$\lambda = \frac{1}{ab} \sum_{\tau\in A\times B} \log \left(\sum_{v\in B} c_{\tau, v}^{\log_a b}\right)$.
In fact, one can prove that, if such a vector $u$ exists, then the eigenvector $u$ and eigenvalues $c_{\tau, v}$ must have the forms
$u = s(1,1,1,1)^T$ for some $s>0$ and $c_{\tau, v} = c_v$. 
In particular, $\lambda = \log \left(\sum_{v\in B} c_{v}^{\log_a b}\right)$ and hence 
\[ \dim_{\mathrm H} \bigl((X_1+\mathbf t)\cap X_2\bigr)
= \log_b \left(\sum_{v\in B} c_v^{\log_a b}\right) \quad  \text{for Lebesgue a.e. } \mathbf{t} \in \mathbb{T}^2. \]
(The above fact follows from the combinatorial structure of $Q_{\tau, v}$.
The details of the proof would take us rather far from the main purpose of the present paper. 
We therefore omit the details here. 
We plan to give them in a subsequent paper.)
More concretely, 
let 
$a=4$, $b=2$ and
\[ D_1 =\{(1,0),(2,0),(0,1),(1,1)\}, \quad 
D_2 =\{(0,0),(1,0),(2,0),(3,0),(0,1),(2,1)\}. \]
See Figure \ref{fig:BM-carpet-example-Perron}.
In this case we have 
$\dim_{\mathrm H} X_1 = \dim_{\mathrm M} X_1 = 3/2$,
$\dim_{\mathrm H} X_2 = \log_2 (2+ \sqrt{2}) \approx 1.771$ and 
$\dim_{\mathrm M} X_2 = 1 + \log_4 3 \approx 1.792$.
With respect to the ordering \(I=\{(0,0),(0,1),(1,0),(1,1)\}\), 
the sixteen matrices \(Q_{\tau,v}\) are given as follows:
{\scriptsize
\[
\begin{array}{cccc}
Q_{(0,0),0}=
\left(\begin{smallmatrix}
2&0&0&0\\
0&2&0&0\\
2&0&0&0\\
0&2&0&0
\end{smallmatrix}\right)
&
Q_{(0,0),1}=
\left(\begin{smallmatrix}
1&0&0&0\\
1&0&0&0\\
1&0&0&0\\
1&0&0&0
\end{smallmatrix}\right)
&
Q_{(0,1),0}=
\left(\begin{smallmatrix}
0&2&0&0\\
0&2&0&0\\
0&2&0&0\\
0&2&0&0
\end{smallmatrix}\right)
&
Q_{(0,1),1}=
\left(\begin{smallmatrix}
1&0&0&0\\
0&1&0&0\\
1&0&0&0\\
0&1&0&0
\end{smallmatrix}\right)
\\[2.5em]
Q_{(1,0),0}=
\left(\begin{smallmatrix}
2&0&0&0\\
0&2&0&0\\
1&0&1&0\\
0&2&0&0
\end{smallmatrix}\right)
&
Q_{(1,0),1}=
\left(\begin{smallmatrix}
1&0&0&0\\
1&0&0&0\\
1&0&0&0\\
0&0&1&0
\end{smallmatrix}\right)
&
Q_{(1,1),0}=
\left(\begin{smallmatrix}
0&2&0&0\\
0&2&0&0\\
0&2&0&0\\
0&1&0&1
\end{smallmatrix}\right)
&
Q_{(1,1),1}=
\left(\begin{smallmatrix}
1&0&0&0\\
0&1&0&0\\
0&0&1&0\\
0&1&0&0
\end{smallmatrix}\right)
\\[2.5em]
Q_{(2,0),0}=
\left(\begin{smallmatrix}
1&0&1&0\\
0&2&0&0\\
0&0&2&0\\
0&1&0&1
\end{smallmatrix}\right)
&
Q_{(2,0),1}=
\left(\begin{smallmatrix}
1&0&0&0\\
0&0&1&0\\
0&0&1&0\\
0&0&1&0
\end{smallmatrix}\right)
&
Q_{(2,1),0}=
\left(\begin{smallmatrix}
0&2&0&0\\
0&1&0&1\\
0&1&0&1\\
0&0&0&2
\end{smallmatrix}\right)
&
Q_{(2,1),1}=
\left(\begin{smallmatrix}
0&0&1&0\\
0&1&0&0\\
0&0&1&0\\
0&0&0&1
\end{smallmatrix}\right)
\\[2.5em]
Q_{(3,0),0}=
\left(\begin{smallmatrix}
0&0&2&0\\
0&1&0&1\\
0&0&2&0\\
0&0&0&2
\end{smallmatrix}\right)
&
Q_{(3,0),1}=
\left(\begin{smallmatrix}
0&0&1&0\\
0&0&1&0\\
0&0&1&0\\
0&0&1&0
\end{smallmatrix}\right)
&
Q_{(3,1),0}=
\left(\begin{smallmatrix}
0&1&0&1\\
0&0&0&2\\
0&0&0&2\\
0&0&0&2
\end{smallmatrix}\right)
&
Q_{(3,1),1}=
\left(\begin{smallmatrix}
0&0&1&0\\
0&0&0&1\\
0&0&1&0\\
0&0&0&1
\end{smallmatrix}\right).
\end{array}
\]
}
These matrices share a common positive eigenvector\footnote{We note here a curious fact. 
If we interchange $D_1$ and $D_2$, then the resulting matrices $Q_{\tau,v}$ no longer share a common positive eigenvector. 
On the other hand, the dimension of the generic intersection
$\dim_{\mathrm H}\bigl((X_1+\mathbf{t})\cap X_2\bigr)$
is invariant under this interchange. 
This is obvious from the geometric viewpoint. 
However, this symmetry is hidden at the level of the matrices $Q_{\tau,v}$.}:
Let $u = (1,1,1,1)^T$. Then 
\[ \forall \tau \in A\times B: \quad  Q_{\tau, 0} u = 2 u, \quad Q_{\tau, 1} u = u. \]
Therefore we have $\lambda = \log (1+\sqrt{2})$ and 
\[ \dim_{\mathrm H}
\bigl((X_1+\mathbf t)\cap X_2\bigr)
= \log_2 \left(1+ \sqrt{2}\right) \approx 1.27155 \quad  \text{for Lebesgue a.e. } \mathbf{t} \in \mathbb{T}^2. \]
Interestingly, this implies that
\[ \dim_{\mathrm H} \bigl((X_1+\mathbf t)\cap X_2\bigr) = \dim_{\mathrm H} X_1 + \dim_{\mathrm H} X_2 -2
   \quad \text{for Lebesgue a.e. $\mathbf{t} \in \mathbb{T}^2$}.  \]
This means that the bound in the Marstrand slicing theorem\footnote{The Marstrand slicing theorem and product theorem 
(\cite[Theorem 1.6.1 and Theorem 3.2.1]{Bishop--Peres}) provide
\[ \dim_{\mathrm H} \bigl((X_1+\mathbf t)\cap X_2\bigr) \leq \dim_{\mathrm M} X_1 + \dim_{\mathrm H} X_2 -2 \]
for Lebesgue a.e. $\mathbf{t}\in \mathbb{T}^2$. 
We have $\dim_{\mathrm M} X_1 = \dim_{\mathrm H} X_1$ for this example.} 
is attained for these sets $X_1$ and $X_2$.
This phenomenon is not accidental; one can show that any example with a common positive eigenvector has the same property.
We also plan to give a detailed account of this in a subsequent paper.
\end{example}

\begin{figure}[htbp]
    \centering
    \includegraphics[width=0.45\textwidth]{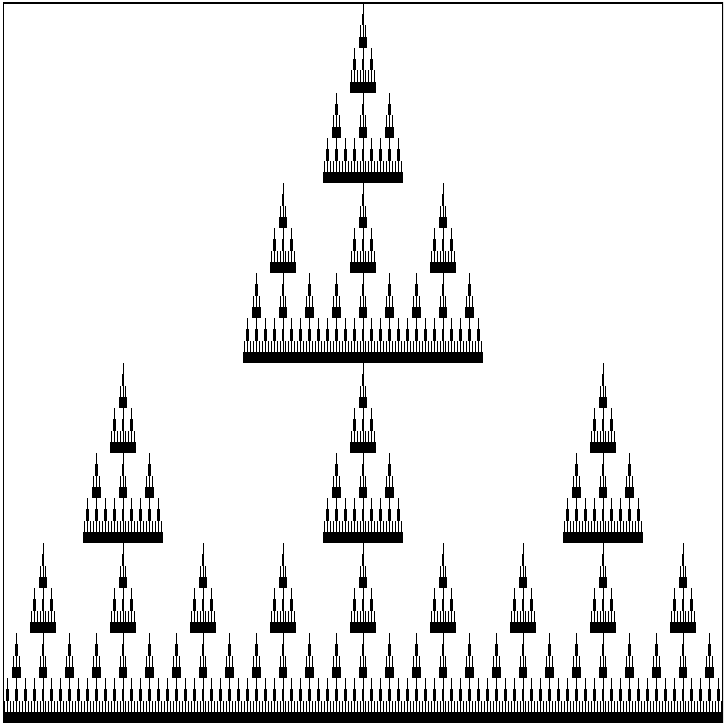}
    \caption{The Bedford--McMullen carpet for
    \(a=3\), \(b=2\) and the digit set \(\{(0,0),(1,0),(2,0),(1,1)\}\).}
    \label{fig:BM-carpet-example-positive}
\end{figure}

Apart from special situations such as finite-time extinction and the common
positive eigenvector case, a rigorous computation of \(\lambda\)
appears to be highly nontrivial.  We next present an example for which numerical
computations suggest that the generic intersection has positive Hausdorff
dimension.

\begin{example}[Numerical example] \label{example: numerical calculation}
Let \(a=3\), \(b=2\), and 
\[ D_1=D_2=\{(0,0),(1,0),(2,0),(1,1)\}. \] 
See Figure \ref{fig:BM-carpet-example-positive}.
In this case, $\dim_{\mathrm H} X_1 = \dim_{\mathrm H} X_2 = \log_2 3 \approx 1.585$
and $\dim_{\mathrm M} X_1 = \dim_{\mathrm M} X_2 = 1 + \log_3 2 \approx 1.6309$.
With respect to the ordering
\(I=\{(0,0),(0,1),(1,0),(1,1)\}\), 
the twelve matrices \(Q_{\tau,v}\) are given as follows:
{\scriptsize
\[
\begin{array}{cccc}
Q_{(0,0),0}=
\left(\begin{smallmatrix}
3&0&0&0\\
0&1&0&0\\
2&0&1&0\\
0&1&0&0
\end{smallmatrix}\right)
&
Q_{(0,0),1}=
\left(\begin{smallmatrix}
1&0&0&0\\
1&0&0&0\\
0&0&0&0\\
1&0&0&0
\end{smallmatrix}\right)
&
Q_{(0,1),0}=
\left(\begin{smallmatrix}
0&1&0&0\\
0&3&0&0\\
0&1&0&0\\
0&2&0&1
\end{smallmatrix}\right)
&
Q_{(0,1),1}=
\left(\begin{smallmatrix}
1&0&0&0\\
0&1&0&0\\
1&0&0&0\\
0&0&0&0
\end{smallmatrix}\right)
\\[2.5em]
Q_{(1,0),0}=
\left(\begin{smallmatrix}
2&0&1&0\\
0&1&0&0\\
1&0&2&0\\
0&0&0&1
\end{smallmatrix}\right)
&
Q_{(1,0),1}=
\left(\begin{smallmatrix}
0&0&0&0\\
1&0&0&0\\
0&0&0&0\\
0&0&1&0
\end{smallmatrix}\right)
&
Q_{(1,1),0}=
\left(\begin{smallmatrix}
0&1&0&0\\
0&2&0&1\\
0&0&0&1\\
0&1&0&2
\end{smallmatrix}\right)
&
Q_{(1,1),1}=
\left(\begin{smallmatrix}
1&0&0&0\\
0&0&0&0\\
0&0&1&0\\
0&0&0&0
\end{smallmatrix}\right)
\\[2.5em]
Q_{(2,0),0}=
\left(\begin{smallmatrix}
1&0&2&0\\
0&0&0&1\\
0&0&3&0\\
0&0&0&1
\end{smallmatrix}\right)
&
Q_{(2,0),1}=
\left(\begin{smallmatrix}
0&0&0&0\\
0&0&1&0\\
0&0&1&0\\
0&0&1&0
\end{smallmatrix}\right)
&
Q_{(2,1),0}=
\left(\begin{smallmatrix}
0&0&0&1\\
0&1&0&2\\
0&0&0&1\\
0&0&0&3
\end{smallmatrix}\right)
&
Q_{(2,1),1}=
\left(\begin{smallmatrix}
0&0&1&0\\
0&0&0&0\\
0&0&1&0\\
0&0&0&1
\end{smallmatrix}\right).
\end{array}
\]
}
These matrices do not share a common positive eigenvector.
A numerical computation\footnote{The numerical computation was carried out by simulating long typical sequences
\[
\tau_1,\tau_2,\dots \in A\times B
\]
with respect to the uniform Bernoulli measure, and estimating the corresponding logarithmic growth rate
\[
\frac{1}{n}\log \sum_{v_1,\dots,v_n\in B}
\left\|Q_{\tau_n,v_n}\cdots Q_{\tau_1,v_1}\right\|^{\log_3 2}.
\]
Since the sum over all \(v_1,\dots,v_n\) has \(2^n\) terms, we approximated this quantity by a sequential Monte Carlo method.} 
suggests that 
\[ \lambda \approx 0.775. \] 
Hence 
\begin{equation} \label{eq: numerical approximation of dimension}
  \dim_{\mathrm H}\bigl((X_1+\mathbf t)\cap X_2\bigr) = \frac{\lambda}{\log 2} \approx 1.12 
\end{equation}  
for Lebesgue-a.e. \(\mathbf t\in\mathbb T^2\).
We emphasize that this value is only a numerical approximation; no rigorous
error bound is claimed here.
The Marstrand slicing theorem and product theorem (\cite[Theorem 1.6.1 and Theorem 3.2.1]{Bishop--Peres})
provide
\[ \dim_{\mathrm H}\bigl((X_1+\mathbf t)\cap X_2\bigr) \leq \dim_{\mathrm H} X_1 + \dim_{\mathrm M} X_2 -2 
  = \log_2 3 + \log_3 2-1 \approx 1.215 \]
for Lebesgue a.e. $\mathbf{t} \in \mathbb{T}^2$.
Thus, assuming that the numerical approximation \eqref{eq: numerical approximation of dimension} is accurate, 
the dimension of the generic intersection is strictly smaller than the upper bound provided by Marstrand's theorems.
\end{example}

The present paper focuses on the general theory, and hence the rigorous computation of 
$\lambda$ is postponed to future work.
We hope to return to this problem in the near future.

\begin{remark}
The dimension formula \eqref{eq: Hausdorff dimension of intersection of carpets} 
appears to be a \lq\lq{}hybrid\rq\rq{} of Theorem \ref{theorem: Kenyon--Peres} (the original Kenyon--Peres formula) and 
another dimension formula of Kenyon and Peres \cite[Theorem~1.1]{Kenyon--Peres_sofic} about sofic affine invariant sets.
We expect that the method of \cite{Kenyon--Peres_sofic} (see also \cite{Alibabaei_sofic}) may be useful
for the rigorous calculation of $\lambda$.
Indeed, by adapting an observation of \cite[pp. 167--168]{Kenyon--Peres_sofic},
the constant $\lambda$ can be interpreted, at least formally, as the top Lyapunov exponent of
random products of \textit{infinite-dimensional matrices}:
Let $S^3$ be the $3$-dimensional sphere and $C(S^3)$ the space of continuous functions on it.
For each $\tau\in A\times B$ we define an operator $\mathcal{L}_{\tau}\colon C(S^3)\to C(S^3)$ by 
\[ \mathcal{L}_\tau f(x) = \sum_{v\in B} |Q_{\tau, v} x|^{\log_a b} f\left(\frac{Q_{\tau, v}x}{|Q_{\tau, v}x|}\right), \]
where the term
$|Q_{\tau, v} x|^{\log_a b} f\left(\frac{Q_{\tau,v}x}{|Q_{\tau,v}x|}\right)$
is defined to be zero if $Q_{\tau,v}x=0$.
A simple induction gives
\begin{align*}
 &\left(\mathcal{L}_{\tau_1}\mathcal{L}_{\tau_2}\cdots \mathcal{L}_{\tau_n}f\right)(x) \\
  &= \sum_{v_1, \dots, v_n\in B} \left|Q_{\tau_n, v_n}Q_{\tau_{n-1}, v_{n-1}}\cdots Q_{\tau_1, v_1} x\right|^{\log_a b}
     f\left(\frac{Q_{\tau_n, v_n}Q_{\tau_{n-1}, v_{n-1}}\cdots Q_{\tau_1, v_1} x}{\left|Q_{\tau_n, v_n}Q_{\tau_{n-1}, v_{n-1}}\cdots Q_{\tau_1, v_1} x\right|}
     \right). 
\end{align*}
In this formula, as above, the summand is interpreted as \(0\) whenever
\(Q_{\tau_n,v_n}\cdots Q_{\tau_1,v_1}x=0\).
Let $\mathbbm{1}$ denote the constant function on $S^3$ taking the value $1$.
Then 
\[ \lambda = \lim_{n\to \infty} \frac{1}{n} \log \norm{\mathcal{L}_{\tau_1}\mathcal{L}_{\tau_2}\cdots \mathcal{L}_{\tau_n} \mathbbm{1}}_\infty, \]
where $\tau_1, \tau_2, \tau_3, \dots$ are the i.i.d. random variables that uniformly take values in $A\times B$.
This expression of $\lambda$ is rather formal and does not immediately yield a practical method of computation.
(We will not use it in the rest of the present paper.)
Nevertheless, it may provide useful intuition for future developments.    
\end{remark}

\subsection{Organization of the paper} \label{subsection: organization of the paper}

We describe the organization of the rest of the paper.

\S \ref{section: preliminaries} contains preliminary material on
entropy theory. 
In \S \ref{section: proof of one direction of the relativised variational principle}, 
we prove one direction of the
relativised variational principle:
\begin{equation*} 
  \begin{split}
   & \sup\left\{w h_\mu(T|R) + (1-w) h_{\pi_*\mu}(S|R) \middle|\, 
    \mu \in \mathscr{M}^T(X)  \text{ with }\rho_*\mu = \nu\right\} \\
    & \leq \int_Z \FHh^w\left(\rho^{-1}(z),\pi, T\right) \, d\nu(z),
   \end{split}
\end{equation*}  
using a weighted relative Brin--Katok
formula of Wang and Huang \cite{Wang--Huang}.

The reverse inequality is proved in several steps.  
In \S \ref{section: proof of relativised variational principle under additional assumptions}, 
we establish
it under additional assumptions, namely in the zero-dimensional and ergodic
case.  In \S \ref{section: removing the ergodicity assumptions}, 
we remove the ergodicity assumption by a measurable
selection argument.  
In \S \ref{section: completion of the proof of relativised variational principle}, 
we remove the zero-dimensional assumption
using the theory of principal extensions, thereby completing the proof of the relativised
variational principle.  We also relate the Feng--Huang weighted entropy
to combinatorial weighted entropy; the consequence needed later is stated
in \S \ref{subsection: useful corollary}.

\S \ref{section: application to Bedford--McMullen carpets} 
is devoted to the application to Bedford--McMullen carpets.  
There we identify the geometric side of the theory with Hausdorff dimension and compute
the combinatorial side in terms of random matrix products.

Readers interested mainly in the Bedford--McMullen application may skip the
proof of the general relativised variational principle on a first reading.  
The application uses the preceding theory only through the result stated in
\S \ref{subsection: useful corollary}, 
and hence \S \ref{subsection: useful corollary} together with 
\S \ref{section: application to Bedford--McMullen carpets} can be read
independently of most of 
\S \ref{section: preliminaries}--\S \ref{subsection: proof of relativised variational principle}.

\subsection{Acknowledgement of AI assistance}

During the preparation of this paper, we used AI-assisted tools, in particular
ChatGPT Pro (GPT-5.5), to help check for possible mistakes, improve English expressions,
prepare figures, and write code for the numerical computations in
Example~\ref{example: numerical calculation}.  AI assistance was especially useful in
the exploratory search leading to Example~\ref{example: sharing positive eigenvector}.
All AI-generated suggestions
and outputs were checked and edited by the authors.

\section{Preliminaries}  \label{section: preliminaries}

In this section we prepare several facts about Kolmogorov--Sinai entropy and weighted topological entropy.
For the materials around Kolmogorov--Sinai entropy, basic references are the books of 
Walters \cite[Chapter 4]{Walters_book} and Downarowicz \cite[Chapters 1 and 2]{Downarowicz}.

Let $X$ be a compact metrizable space with its Borel $\sigma$-algebra $\mathcal{B}_X$.
Let $\mathscr{M}(X)$ be the set of Borel probability measures on $X$.
Take $\mu \in \mathscr{M}(X)$ and let 
$\alpha = \{A_1, \dots, A_a\}$ be a finite measurable partition of $X$.
We define the \textbf{Shannon entropy} by 
\[ H_\mu(\alpha) = -\sum_{i=1}^a \mu(A_i) \log \mu(A_i), \]
where we assume $0\log 0  = 0$.

For another finite measurable partition $\beta = \{B_1, \dots, B_b\}$, we define 
\[ \alpha\vee \beta  = \{A_i\cap B_j\mid 1\leq i \leq a, 1\leq j \leq b\}. \]
We have $H_\mu(\alpha\vee \beta) \leq H_\mu(\alpha) + H_\mu(\beta)$.

Let $\mathcal{A}$ be a sub-$\sigma$-algebra of $\mathcal{B}_X$.
For $\mu\in \mathscr{M}(X)$, we consider its disintegration by $\mathcal{A}$:
\[ \mu = \int_X \mu_x^{\mathcal{A}} d\mu(x).  \]
For a finite measurable partition $\alpha$ of $X$, we define 
\[ H_\mu(\alpha|\mathcal{A}) = \int_X H_{\mu^{\mathcal{A}}_x}(\alpha) d\mu(x). \]
For another finite measurable partition $\beta$ of $X$, we have 
\[ H_\mu(\alpha\vee \beta|\mathcal{A}) \leq H_\mu(\alpha|\mathcal{A}) + H_\mu(\beta|\mathcal{A}). \]

The following lemma is standard, but we include the proof for completeness.

\begin{lemma} \label{lemma: concavity of entropy and reverse}
Let $X$ be a compact metrizable space, $\alpha$ a finite measurable partition of $X$ and 
$\mathcal{A}$ a sub-$\sigma$-algebra of $\mathcal{B}_X$.
For $0\leq t \leq 1$ and $\mu_0, \mu_1\in \mathscr{M}(X)$, we have
\begin{equation} \label{eq: concavity of entropy}
 (1-t) H_{\mu_0}(\alpha|\mathcal{A}) + t H_{\mu_1}(\alpha|\mathcal{A})  \leq 
    H_{(1-t)\mu_0 + t\mu_1}(\alpha|\mathcal{A}), 
\end{equation}
\begin{equation}  \label{eq: reverse inequality for concavity of entropy}
     H_{(1-t)\mu_0 + t\mu_1}(\alpha|\mathcal{A}) \leq (1-t) H_{\mu_0}(\alpha|\mathcal{A}) + t H_{\mu_1}(\alpha|\mathcal{A}) + h(t), 
\end{equation}     
where $h(t) = -(1-t) \log (1-t) - t\log t$.
(We assume $h(0) = h(1) = 0$.)
\end{lemma}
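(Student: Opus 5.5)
The plan is to reduce both inequalities to statements about the conditional measures of the mixture, and then to use concavity of $\phi(x)=-x\log x$ together with elementary bounds on the mixing weights.

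Write $\mu=(1-t)\mu_0+t\mu_1$. Since $\mu_0,\mu_1\ll\mu$, we can set $f_0=d\mu_0/d\mu$ and $f_1=d\mu_1/d\mu$. Then $(1-t)f_0+tf_1=1$ $\mu$-a.e. Let $g_i=E_\mu[f_i|\mathcal{A}]$, so that $(1-t)g_0+tg_1=1$. Two standard identities will drive everything. First, on $\{g_i>0\}$ the conditional measures of $\mu_i$ are $(\mu_i)^{\mathcal{A}}_x(A)=E_\mu[f_i 1_A|\mathcal{A}](x)/g_i(x)$. Second, for any $\mathcal A$-measurable $F\ge0$ we have $\int F\,d\mu_i=\int F g_i\,d\mu$. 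Consequently, for each $A\in\alpha$, $\mu$-a.e.,
\[ \mu^{\mathcal{A}}_x(A)=(1-t)g_0(x)(\mu_0)^{\mathcal{A}}_x(A)+t g_1(x)(\mu_1)^{\mathcal{A}}_x(A). \]
So pointwise in $x$, $\mu^{\mathcal A}_x$ restricted to $\alpha$ is a convex combination of $(\mu_0)^{\mathcal A}_x$ and $(\mu_1)^{\mathcal A}_x$ with weights $p_0(x)=(1-t)g_0(x)$ and $p_1(x)=tg_1(x)$, which sum to $1$.

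For \eqref{eq: concavity of entropy}, concavity of Shannon entropy in the distribution gives
\[ H_{\mu^{\mathcal A}_x}(\alpha)\ge p_0H_{(\mu_0)^{\mathcal A}_x}(\alpha)+p_1H_{(\mu_1)^{\mathcal A}_x}(\alpha). \]
We then integrate over $\mu$. By the second identity, $\int p_0 H_{(\mu_0)^{\mathcal A}_x}(\alpha)\,d\mu=(1-t)\int H_{(\mu_0)^{\mathcal A}_x}(\alpha)\,d\mu_0=(1-t)H_{\mu_0}(\alpha|\mathcal A)$. This uses that $x\mapsto H_{(\mu_0)^{\mathcal A}_x}(\alpha)$ is $\mathcal A$-measurable, and that the disintegration of $\mu_0$ agrees $\mu_0$-a.e. with the formula above. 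The $\mu_1$ term is handled the same way, which gives the first inequality.

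For \eqref{eq: reverse inequality for concavity of entropy}, we use the standard reverse bound for mixtures: $H(p_0\nu_0+p_1\nu_1)\le p_0H(\nu_0)+p_1H(\nu_1)+h(p_1)$. This follows from $\phi(a+b)\le\phi(a)+\phi(b)$, since expanding $\phi(p_i\nu_i(A))$ yields exactly the $h(p_1)$ term. Integrating as before gives
\[ H_\mu(\alpha|\mathcal A)\le(1-t)H_{\mu_0}(\alpha|\mathcal A)+tH_{\mu_1}(\alpha|\mathcal A)+\int h(p_1(x))\,d\mu(x), \]
and concavity of $h$ with Jensen bounds the last term by $h\bigl(\int p_1\,d\mu\bigr)=h(t)$, because $\int g_1\,d\mu=\int f_1\,d\mu=1$. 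A quicker alternative route is to introduce the auxiliary two-point space $\{0,1\}$ with measure $(1-t)\delta_0\times\mu_0+t\delta_1\times\mu_1$ and the partition $\gamma$ by the label. One then has $H(\alpha|\mathcal A)\le H(\alpha\vee\gamma|\mathcal A)\le H(\gamma)+H(\alpha|\gamma\vee\mathcal A)$, and the same computation with $\gamma\vee\mathcal A$ in place of $\mathcal A$ gives concavity. Either route works; I would present the direct pointwise one.

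The main obstacle is the measure-theoretic bookkeeping of conditional measures. Specifically, we must justify the formula for $(\mu_i)^{\mathcal A}_x$ in terms of $\mu$-conditional expectations, handle the null set where $g_i=0$ (there the term carries weight $p_i=0$, so it is harmless), and check the change of measure $\int F g_i\,d\mu=\int F\,d\mu_i$ for the $\mathcal A$-measurable function $F$. Everything else is elementary convexity of $\phi$ and $h$.
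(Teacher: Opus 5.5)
Your proof is correct, but your main route differs from the paper's. The paper's argument is exactly the ``quicker alternative'' you sketch at the end.

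The paper passes to $X\times\{0,1\}$ with $\mu'(A\times\{0\})=(1-t)\mu_0(A)$ and $\mu'(A\times\{1\})=t\mu_1(A)$. It identifies the conditional measures of $\mu'$ given $\mathcal{A}'\vee\beta$ as $(\mu_i)^{\mathcal{A}}_x\times\delta_i$. The first inequality then comes from ``conditioning reduces entropy'', and the second from the chain rule together with $H_{\mu'}(\beta|\mathcal{A}')\le H_{\mu'}(\beta)=h(t)$.

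Your pointwise computation is the same argument unpacked. Your weight $p_1(x)=tg_1(x)$ is precisely the conditional probability of the label $1$ given $\mathcal{A}'$. Hence your error term $\int h(p_1)\,d\mu$ equals $H_{\mu'}(\beta|\mathcal{A}')$, and your Jensen step is the paper's final bound $H_{\mu'}(\beta|\mathcal{A}')\le H_{\mu'}(\beta)$.

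The paper's version is shorter and delegates the bookkeeping to standard conditional-entropy identities. It asserts the form of the conditional measures on $X\times\{0,1\}$ without proof; that is essentially the Bayes-formula step you carry out explicitly. Your version is more elementary and self-contained, reducing everything to finite-distribution inequalities for $-x\log x$.

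Two small points to tighten:
\begin{itemize}
\item Treat $t\in\{0,1\}$ separately, since for example $\mu_1\not\ll\mu$ when $t=0$.
\item $\{g_i=0\}$ is $\mu_i$-null but not $\mu$-null in general (take $\mu_0\perp\mu_1$ and $\mathcal{A}=\mathcal{B}_X$). So the Bayes formula, which a priori holds only $\mu_i$-a.e., must be upgraded to hold $\mu$-a.e.\ on $\{g_i>0\}$. This works because the exceptional set $N$ lies in $\mathcal{A}$, and $N\subset\{g_i>0\}$ with $\mu_i(N)=\int_N g_i\,d\mu=0$ forces $\mu(N)=0$. On $\{g_i=0\}$ the $i$-th term has weight zero and $0\le E_\mu[f_i1_A|\mathcal{A}]\le g_i=0$. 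Together these give your mixture identity $\mu$-a.e.
\end{itemize}
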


\begin{proof}
The cases \(t=0\) and \(t=1\) are trivial, so we assume \(0<t<1\).
Set $\mu = (1-t)\mu_0 + t \mu_1$.
We consider a space $X^\prime := X\times \{0,1\}$ with a probability measure $\mu^\prime$ defined by 
\[ \mu^\prime(A\times \{0\}) = (1-t) \mu_0(A), \quad 
    \mu^\prime(A\times \{1\}) = t \mu_1(A), \quad (A\in \mathcal{B}_X). \]
We define a partition $\alpha^\prime$ and a $\sigma$-algebra $\mathcal{A}^\prime$ of $X^\prime$ by 
\[ \alpha^\prime = \{A\times \{0,1\}\mid A\in \alpha\}, \quad 
     \mathcal{A}^\prime = \{A\times \{0,1\}\mid A\in \mathcal{A}\}. \]
Since $\mu^\prime(A\times \{0,1\}) = (1-t)\mu_0(A) + t \mu_1(A)$ for $A\in \mathcal{B}_X$, we have 
\[ H_{\mu^\prime}(\alpha^\prime\mid \mathcal{A}^\prime)  = H_\mu(\alpha\mid \mathcal{A}). \]     
     
Let $\beta = \{X\times \{0\}, X\times \{1\}\}$. This is a partition of $X^\prime$.
Let $\mathcal{A}^\prime \vee \beta$ be the $\sigma$-algebra generated by $\mathcal{A}^\prime$ and $\beta$.
Explicitly 
\[ \mathcal{A}^\prime \vee \beta = \{A_0 \times \{0\}\cup A_1 \times \{1\} \mid A_0, A_1 \in \mathcal{A}\}. \]
Then, for \(\mu_0\)-a.e. \(x\in X\) and for \(\mu_1\)-a.e. \(x\in X\), respectively,
\[
(\mu^\prime)^{\mathcal{A}^\prime \vee \beta}_{(x,0)}
=
(\mu_0)^{\mathcal A}_x\times\delta_0,
\qquad
(\mu^\prime)^{\mathcal{A}^\prime \vee \beta}_{(x,1)}
=
(\mu_1)^{\mathcal A}_x\times\delta_1.
\]
($\delta_0$ and $\delta_1$ denote the Dirac measures at $0$ and $1$ respectively.)
Hence 
\[ H_{\mu^\prime}(\alpha^\prime\mid \mathcal{A}^\prime \vee \beta)
   = (1-t) H_{\mu_0}(\alpha\mid \mathcal{A}) + t H_{\mu_1}(\alpha\mid \mathcal{A}). \]
Since conditioning reduces entropy, we obtain 
\[ H_{\mu^\prime}(\alpha^\prime\mid \mathcal{A}^\prime) 
  \geq  H_{\mu^\prime}(\alpha^\prime\mid \mathcal{A}^\prime \vee \beta). \]
Recall $H_{\mu^\prime}(\alpha^\prime\mid \mathcal{A}^\prime)  = H_\mu(\alpha\mid \mathcal{A})$.
This provides the inequality \eqref{eq: concavity of entropy}.

On the other hand, we have 
$H_{\mu^\prime}(\alpha^\prime\mid \mathcal{A}^\prime) 
\leq H_{\mu^\prime}(\alpha^\prime \vee \beta \mid \mathcal{A}^\prime)$.
By the chain rule
\begin{align*}
   H_{\mu^\prime}(\alpha^\prime \vee \beta \mid \mathcal{A}^\prime) 
    & = H_{\mu^\prime}(\beta\mid \mathcal{A}^\prime) + H_{\mu^\prime}(\alpha^\prime\mid \mathcal{A}^\prime \vee \beta) \\
    & = H_{\mu^\prime}(\beta\mid \mathcal{A}^\prime)
     + (1-t) H_{\mu_0}(\alpha\mid \mathcal{A}) + t H_{\mu_1}(\alpha\mid \mathcal{A}). 
\end{align*}
Since $H_{\mu^\prime}(\beta\mid \mathcal{A}^\prime) \leq H_{\mu^\prime}(\beta) = h(t)$, we obtain the inequality 
\eqref{eq: reverse inequality for concavity of entropy}.
\end{proof}

Let $X$ be a compact metrizable space, and 
let $T\colon X\to X$ be a continuous map.
We define $\mathscr{M}^T(X)$ as the set of $T$-invariant Borel probability measures on $X$.
We also define $\mathscr{M}^T_{\mathrm{erg}}(X)$ as the set of ergodic measures.
For $0\leq m <n$ and a finite measurable partition $\alpha$ of $X$, we set 
\[ \alpha_m^n = \bigvee_{j=m}^n T^{-j}\alpha. \]
For $\mu\in \mathscr{M}^T(X)$, we define 
\[ h_\mu(T,\alpha) = \lim_{n\to \infty} \frac{1}{n} H_\mu(\alpha_0^{n-1}). \]
We define the \textbf{Kolmogorov--Sinai entropy} $h_\mu(T)$ as the supremum of $h_\mu(T,\alpha)$ over all finite 
measurable partitions $\alpha$ of $X$.

Let $(Y, S)$ be another dynamical system and $\mathcal{B}_Y$ the Borel $\sigma$-algebra of $Y$.
Let $\pi\colon X\to Y$ be an equivariant continuous map.
For $\mu\in \mathscr{M}^T(X)$ and a finite measurable partition $\alpha$ of $X$, we define 
\[  h_\mu(T|S, \alpha) = \lim_{n\to \infty} \frac{1}{n} H_\mu(\alpha_0^{n-1}|\pi^{-1}\mathcal{B}_Y), \]
where $\pi^{-1}\mathcal{B}_Y$ denotes the pull-back of $\mathcal{B}_Y$ by $\pi$.
We define $h_\mu(T|S)$ as the supremum of $h_\mu(T|S, \alpha)$ over all finite measurable partitions $\alpha$
of $X$.

We have \cite[Lemma 3.1]{Ledrappier--Walters}
\[ h_\mu(T) = h_{\pi_*\mu}(S) + h_\mu(T|S). \]
In particular, if $h_\mu(T) < \infty$ then we can write $h_\mu(T|S) = h_\mu(T) - h_{\pi_*\mu}(S)$.
(More generally, given a third dynamical system $(Z, R)$ and an equivariant continuous map $\theta\colon Y\to Z$,
we have $h_\mu(T|R) = h_{\pi_*\mu}(S|R) + h_\mu(T|S)$.)

The following lemma was proved in \cite[Lemma 3.2]{Ledrappier--Walters}.

\begin{lemma} \label{lemma: basic properties of relative measure theoretic entropy}
Let $\pi\colon (X, T)\to (Y, S)$ be an equivariant continuous map between dynamical systems
$(X, T)$ and $(Y, S)$.
Let $\alpha$ be a finite measurable partition of $X$.
\begin{enumerate}
  \item Let $\{\mu_n\}_{n=1}^\infty \subset \mathscr{M}(X)$ be a sequence of Borel probability measures on $X$
  that converges to some $\mu\in \mathscr{M}(X)$ in the weak$^*$ topology. 
  If $\mu(\partial A) = 0$ for all $A\in \alpha$ then we have 
  \[ \limsup_{n\to \infty} H_{\mu_n}(\alpha|\pi^{-1}\mathcal{B}_Y) \leq H_\mu(\alpha|\pi^{-1}\mathcal{B}_Y). \]
  \item Let $\mu\in \mathscr{M}^T(X)$ and we consider its ergodic decomposition 
  \[ \mu = \int_{\mathscr{M}^T_{\mathrm{erg}}(X)} \lambda dp(\lambda), \]
  where $p$ is a Borel probability measure on $\mathscr{M}^T_{\mathrm{erg}}(X)$.
  Then we have 
  \[ h_\mu(T|S) = \int_{\mathscr{M}^T_{\mathrm{erg}}(X)} h_\lambda(T|S) dp(\lambda). \]
\end{enumerate}
\end{lemma}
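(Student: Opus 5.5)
The statement consists of the two standard facts of Ledrappier and Walters, and I would prove them separately. Throughout write $\mathcal{A}=\pi^{-1}\mathcal{B}_Y$.

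\emph{Part (1).} I would reduce conditioning on $\mathcal{A}$ to conditioning on finite partitions of $Y$ with null boundaries.
\begin{enumerate}
\item Since $\pi_*\mu$ is a Borel probability measure on the compact metric space $Y$, there is a refining sequence $\beta_1\preceq\beta_2\preceq\cdots$ of finite partitions of $Y$ with $\mathrm{mesh}(\beta_k)\to0$ and $\pi_*\mu(\partial B)=0$ for all $B\in\beta_k$. One builds these from balls whose radii avoid the countably many radii of positive-measure spheres.
\item The $\sigma$-algebras $\pi^{-1}\beta_k$ increase and generate $\mathcal{A}$. The martingale convergence theorem then gives $H_\mu(\alpha|\pi^{-1}\beta_k)\downarrow H_\mu(\alpha|\mathcal{A})$.
\item For fixed $k$, write $H_{\mu_n}(\alpha|\pi^{-1}\beta_k)=H_{\mu_n}(\alpha\vee\pi^{-1}\beta_k)-H_{\mu_n}(\pi^{-1}\beta_k)$. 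Every cell of $\alpha\vee\pi^{-1}\beta_k$ has boundary inside $\partial A\cup\pi^{-1}\partial B$, which is $\mu$-null. Hence each cell's measure converges under $\mu_n\to\mu$, and so $H_{\mu_n}(\alpha|\pi^{-1}\beta_k)\to H_\mu(\alpha|\pi^{-1}\beta_k)$.
\item Since conditioning on the finer $\mathcal{A}$ reduces entropy, $\limsup_n H_{\mu_n}(\alpha|\mathcal{A})\le \limsup_n H_{\mu_n}(\alpha|\pi^{-1}\beta_k)=H_\mu(\alpha|\pi^{-1}\beta_k)$.
\item Letting $k\to\infty$ and using step 2 gives the claim.
\end{enumerate}

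\emph{Part (2), fixed partition.} First I would prove $h_\mu(T|S,\alpha)=\int h_\lambda(T|S,\alpha)\,dp(\lambda)$ for a fixed finite partition $\alpha$. Let $\mathcal{I}$ be the $\sigma$-algebra of $T$-invariant sets, so that the ergodic decomposition is the disintegration of $\mu$ over $\mathcal{I}$. Disintegrating gives
\[
H_\mu(\alpha_0^{n-1}\mid\mathcal{A}\vee\mathcal{I})=\int H_\lambda(\alpha_0^{n-1}\mid\mathcal{A})\,dp(\lambda).
\]
I would divide by $n$ and let $n\to\infty$. The integrands $\frac1nH_\lambda(\alpha_0^{n-1}|\mathcal{A})$ are bounded by $\log|\alpha|$ and converge to $h_\lambda(T|S,\alpha)$, so dominated convergence identifies the right-hand limit as $\int h_\lambda(T|S,\alpha)\,dp$.

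It remains to compare conditioning on $\mathcal{A}\vee\mathcal{I}$ with conditioning on $\mathcal{A}$.
\begin{itemize}
\item If $\gamma$ is a finite partition into $T$-invariant sets, the chain rule gives
\[
0\le H_\mu(\alpha_0^{n-1}|\mathcal{A})-H_\mu(\alpha_0^{n-1}|\mathcal{A}\vee\gamma)\le H_\mu(\gamma).
\]
This difference disappears after dividing by $n$. This is the finite-mixture version of Lemma \ref{lemma: concavity of entropy and reverse}.
\item To pass from finite $\gamma$ to all of $\mathcal{I}$, I would take finite invariant partitions $\gamma_j\uparrow\mathcal{I}$ and use $\frac1nH_\mu(\alpha_0^{n-1}|\mathcal{A}\vee\gamma)\ge\frac1nH_\mu(\alpha_0^{n-1}|\mathcal{A}\vee\mathcal{I})$.
\item For the reverse inequality I would not use a naive martingale limit, since the limits in $n$ and $j$ do not obviously commute. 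Instead I would use the fact that the relative entropy rate $h_\mu(T|S\vee\mathcal{I},\alpha)$ equals $h_\mu(T|S,\alpha)$. I would prove it with the formula $h_\mu(T|S,\alpha)=H_\mu(\alpha\mid\alpha_1^\infty\vee\mathcal{A})$, valid because $\mathcal{A}$ is invariant. Since every $\mathcal{I}$-measurable set is, mod $\mu$, measurable with respect to the tail of $\alpha_1^\infty\vee\mathcal{A}$, adjoining $\mathcal{I}$ does not change this conditional entropy.
\end{itemize}

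\emph{Part (2), supremum over partitions.} I would fix finite partitions $\alpha_k$, each refining the previous, with mesh tending to $0$. The relative Kolmogorov--Sinai theorem then gives $h_\lambda(T|S,\alpha_k)\uparrow h_\lambda(T|S)$ for every invariant $\lambda$ simultaneously. The monotone convergence theorem then passes the identity from $\alpha_k$ to $h_\mu(T|S)$, and this also yields measurability of $\lambda\mapsto h_\lambda(T|S)$.

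The main obstacle is the step in Part (2) showing that conditioning additionally on $\mathcal{I}$ does not change the entropy rate. The rest is routine bookkeeping with the chain rule and the inequalities of Lemma \ref{lemma: concavity of entropy and reverse}.
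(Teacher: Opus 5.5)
Your Part (1) is correct and is the standard argument. The paper itself does not prove this lemma; it only cites Ledrappier--Walters.

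The gap is in Part (2), in exactly the step you flagged: the inequality $h_\mu(T|S,\alpha)\le\lim_n\frac1n H_\mu(\alpha_0^{n-1}\mid\mathcal{A}\vee\mathcal{I})$. The justification you propose does not work, for two reasons.

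First, the identity $h_\mu(T|S,\alpha)=H_\mu(\alpha\mid\alpha_1^\infty\vee\mathcal{A})$ needs $T^{-1}\mathcal{A}=\mathcal{A}$. Here $T^{-1}\mathcal{A}=\pi^{-1}S^{-1}\mathcal{B}_Y$ is in general only contained in $\mathcal{A}$, since neither $T$ nor $S$ is assumed invertible. The telescoping then gives only one inequality, and equality can fail. For example, let $X=Y=\{0,1\}^{\mathbb{N}}$ with the $(\frac12,\frac12)$-Bernoulli measure, $T=S=\pi=\sigma$ the one-sided shift, and $\alpha$ the partition according to $x_0$. Then $\mathcal{A}$ is the $\sigma$-algebra generated by $x_1,x_2,\dots$. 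So $H_\mu(\alpha_0^{n-1}\mid\mathcal{A})=\log 2$ for every $n$ and $h_\mu(T|S,\alpha)=0$, whereas $H_\mu(\alpha\mid\alpha_1^\infty\vee\mathcal{A})=\log 2$.

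Second, even when $\mathcal{A}$ is invariant, the claim that $\mathcal{I}$ is, mod $\mu$, contained in the tail of $\alpha_1^\infty\vee\mathcal{A}$ is false for a fixed non-generating $\alpha$. Take $Y$ a point, $X=X_0\times\{0,1\}$, $T=T_0\times\mathrm{id}$, $\mu=\mu_0\times(\frac12\delta_0+\frac12\delta_1)$, and $\alpha$ depending only on the first coordinate. The invariant set $X_0\times\{0\}$ has measure $\frac12$ and is independent of $\alpha_0^\infty$.

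The repair is simple. Your worry about interchanging the limits in $n$ and $j$ is unfounded, because both limits are infima.
\begin{itemize}
\item Choose finite partitions $\gamma_j$ into invariant sets with $\gamma_j\uparrow\mathcal{I}$ mod $\mu$. These exist because $\mathcal{I}$ is countably generated mod $\mu$.
\item Let $\mathcal{C}$ be any sub-$\sigma$-algebra with $T^{-1}\mathcal{C}\subset\mathcal{C}$; in particular $\mathcal{C}_j=\mathcal{A}\vee\gamma_j$ and $\mathcal{A}\vee\mathcal{I}$. Then $H_\mu(\alpha_0^{n-1}\mid\mathcal{C})$ is subadditive in $n$, since $H_\mu(T^{-n}\beta\mid\mathcal{C})\le H_\mu(T^{-n}\beta\mid T^{-n}\mathcal{C})=H_\mu(\beta\mid\mathcal{C})$. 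Hence $\lim_n\frac1n H_\mu(\alpha_0^{n-1}\mid\mathcal{C})=\inf_n\frac1n H_\mu(\alpha_0^{n-1}\mid\mathcal{C})$.
\item For fixed $n$, the martingale theorem gives $H_\mu(\alpha_0^{n-1}\mid\mathcal{A}\vee\mathcal{I})=\inf_j H_\mu(\alpha_0^{n-1}\mid\mathcal{C}_j)$.
\end{itemize}
Therefore
\[
\lim_{n\to\infty}\frac1n H_\mu(\alpha_0^{n-1}\mid\mathcal{A}\vee\mathcal{I})
=\inf_n\inf_j\frac1n H_\mu(\alpha_0^{n-1}\mid\mathcal{C}_j)
=\inf_j\inf_n\frac1n H_\mu(\alpha_0^{n-1}\mid\mathcal{C}_j)
=h_\mu(T|S,\alpha).
\]
The last equality is your first bullet, since for each $j$ the difference from conditioning on $\mathcal{A}$ is at most $H_\mu(\gamma_j)/n$.

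Use this in place of the $\alpha_1^\infty$ argument. The rest of your Part (2) then goes through: the disintegration over $\mathcal{A}\vee\mathcal{I}$, dominated convergence, and monotone convergence along refining partitions.
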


In the rest of this section we prepare elementary facts on the weighted topological entropy.
First we consider the definition of $\varepsilon$-covering number.
Let $(X, \mathbf{d})$ be a compact metric space.
For $\varepsilon >0$ and $\Omega\subset X$, we defined $\#(\Omega, \mathbf{d}, \varepsilon)$ as the minimum $n\geq 1$
for which there exists an open covering $\Omega \subset U_1\cup \dots \cup U_n$ satisfying $\diam(U_i, \mathbf{d}) < \varepsilon$
for all $1\leq i \leq n$.
Now it is easy to check that we can replace \lq\lq{}open covering\rq\rq{} with \lq\lq{}closed covering\rq\rq{} in this definition.
Namely $\#(\Omega, \mathbf{d}, \varepsilon)$ is the minimum $n\geq 1$ for which there exist closed subsets $E_1, \dots E_n$ of $X$
that satisfy $\Omega\subset E_1\cup \dots \cup E_n$ and $\diam (E_i, \mathbf{d}) < \varepsilon$ $(1\leq i \leq n)$.

Let $\pi\colon (X, T)\to (Y, S)$ be an equivariant continuous map between dynamical systems, and let 
$\mathbf{d}$ and $\mathbf{d}^\prime$ be metrics on $X$ and $Y$ respectively.
For $0\leq w \leq 1$ and $\Omega \subset X$, we defined $\#^w(\Omega, N, \varepsilon)$ as the minimum of 
$\sum_{j=1}^m \left(\#(\Omega \cap \pi^{-1}(V_j), \mathbf{d}_N, \varepsilon)\right)^w$ over all open covering 
$\pi(\Omega) \subset V_1\cup \dots \cup V_m$ with $\diam(V_j, \mathbf{d}^\prime_N) < \varepsilon$ $(1\leq j \leq m)$.
The following lemma will be used in \S \ref{section: application to Bedford--McMullen carpets}.

\begin{lemma} \label{lemma: weighted covering number}
If $\Omega\subset X$ is closed, then 
\begin{equation*}
 \begin{split}
  & \#^w\left(\Omega, N, \varepsilon\right) \\
  & = \inf\left\{\sum_{j=1}^m \left(\#\left(\Omega \cap \pi^{-1}(E_j), \mathbf{d}_N, \varepsilon\right)\right)^w \middle|\, 
     \parbox{3in}{\centering $E_1, \dots, E_m$ are closed sets of $Y$ that satisfy $\pi(\Omega) \subset E_1\cup \dots \cup E_m$ and 
             $\diam(E_j, \mathbf{d}^\prime_N) < \varepsilon$ for all $j$}\right\}. 
  \end{split}           
\end{equation*}
\end{lemma}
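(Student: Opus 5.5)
We prove the two inequalities separately. Denote by $\#^w_{\mathrm{cl}}(\Omega,N,\varepsilon)$ the infimum on the right-hand side, taken over closed coverings $E_1,\dots,E_m$ of $\pi(\Omega)$.

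For $\#^w_{\mathrm{cl}}(\Omega,N,\varepsilon)\leq \#^w(\Omega,N,\varepsilon)$: take an open covering $V_1,\dots,V_m$ of $\pi(\Omega)$ with $\diam(V_j,\mathbf{d}'_N)<\varepsilon$. Set $E_j=\overline{V_j}$. Each $E_j$ is closed, the $E_j$ still cover $\pi(\Omega)$, and $\diam(E_j,\mathbf{d}'_N)=\diam(V_j,\mathbf{d}'_N)<\varepsilon$. It remains to compare $\#(\Omega\cap\pi^{-1}(E_j),\mathbf{d}_N,\varepsilon)$ with $\#(\Omega\cap\pi^{-1}(V_j),\mathbf{d}_N,\varepsilon)$, and here the closure could enlarge the set. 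To avoid this, I would first shrink the covering. Since $\Omega$ is closed in the compact space $X$, the set $\pi(\Omega)$ is compact. The usual shrinking lemma for finite open covers of a compact set in a metric space gives open sets $W_j$ with $\overline{W_j}\subset V_j$ that still cover $\pi(\Omega)$. Put $E_j=\overline{W_j}\subset V_j$. Then $E_j$ is closed, $\diam(E_j,\mathbf{d}'_N)<\varepsilon$, and by monotonicity of the covering number
\[ \#(\Omega\cap\pi^{-1}(E_j),\mathbf{d}_N,\varepsilon)\leq \#(\Omega\cap\pi^{-1}(V_j),\mathbf{d}_N,\varepsilon). \]
This proves the first inequality.

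For $\#^w(\Omega,N,\varepsilon)\leq \#^w_{\mathrm{cl}}(\Omega,N,\varepsilon)$: take closed $E_1,\dots,E_m$ covering $\pi(\Omega)$ with $\diam(E_j,\mathbf{d}'_N)<\varepsilon$. We need to thicken each $E_j$ to an open $V_j$ without increasing the covering numbers of the corresponding pieces of $\Omega$. This is the main step.

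Fix $j$ and set $k=\#(\Omega\cap\pi^{-1}(E_j),\mathbf{d}_N,\varepsilon)$. By the open-covering definition there are open sets $U_1,\dots,U_k$ with $\diam(U_i,\mathbf{d}_N)<\varepsilon$ and $\Omega\cap\pi^{-1}(E_j)\subset U:=U_1\cup\dots\cup U_k$. The set $K=\Omega\setminus U$ is compact, because $\Omega$ is closed. Hence $\pi(K)$ is compact, and it is disjoint from $E_j$: if $\pi(x)\in E_j$ with $x\in\Omega$, then $x\in U$.

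Define, for $\delta>0$,
\[ V_j=\{y\in Y \mid \mathbf{d}'_N(y,E_j)<\delta\}. \]
Since $\diam(E_j,\mathbf{d}'_N)<\varepsilon$ strictly and $E_j$ is compact, we can choose $\delta$ so small that $\diam(V_j,\mathbf{d}'_N)\leq\diam(E_j,\mathbf{d}'_N)+2\delta<\varepsilon$. Because $\pi(K)$ is compact and disjoint from $E_j$, the distance $\mathbf{d}'_N(\pi(K),E_j)$ is positive, so shrinking $\delta$ further we may also assume $V_j\cap\pi(K)=\emptyset$. Then $\Omega\cap\pi^{-1}(V_j)\subset U$, which gives
\[ \#(\Omega\cap\pi^{-1}(V_j),\mathbf{d}_N,\varepsilon)\leq k. \]
The $V_j$ are open, cover $\pi(\Omega)$, and have $\mathbf{d}'_N$-diameter less than $\varepsilon$. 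Hence
\[ \#^w(\Omega,N,\varepsilon)\leq \sum_{j=1}^m\bigl(\#(\Omega\cap\pi^{-1}(E_j),\mathbf{d}_N,\varepsilon)\bigr)^w, \]
and taking the infimum over closed coverings gives the second inequality.

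The only delicate point is the thickening step just carried out. It uses the closedness of $\Omega$ in an essential way, through the compactness of $K$ and the positive gap between $\pi(K)$ and $E_j$.
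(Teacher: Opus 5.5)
Your proof is correct, and the first inequality follows the paper's own argument: shrink the open cover of the compact set $\pi(\Omega)$ to closed sets $E_j\subset V_j$, then use monotonicity of the covering number. The paper dismisses the reverse inequality as easy, and your thickening argument supplies those details correctly: compactness of $\Omega\setminus U$ gives a positive gap between $\pi(\Omega\setminus U)$ and $E_j$, so a small $\delta$-neighbourhood of $E_j$ works.
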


\begin{proof}
We denote the right-hand side of the above identity by $\#^w\left(\Omega, N, \varepsilon\right)^\prime$.
Let $V_1, \dots, V_m$ be open subsets of $Y$ that satisfy $\pi(\Omega)\subset V_1\cup \dots \cup V_m$ and 
$\diam(V_j, \mathbf{d}_N^\prime) < \varepsilon$.
Since $\pi(\Omega)$ is compact, there exist closed subsets $E_1, \dots, E_m$ of $Y$ that satisfy 
$E_i\subset V_i$ and $\pi(\Omega) \subset E_1\cup \dots \cup E_m$.
Then 
\[ \sum_{j=1}^m \left(\#\left(\Omega \cap \pi^{-1}(E_j), \mathbf{d}_N, \varepsilon\right)\right)^w \leq 
  \sum_{j =1}^m \left(\#(\Omega \cap \pi^{-1}(V_j), \mathbf{d}_N, \varepsilon)\right)^w. \]
It follows that $\#^w\left(\Omega, N, \varepsilon\right)^\prime \leq \#^w\left(\Omega, N, \varepsilon\right)$.
The reverse inequality is easy.
\end{proof}

Next we consider the Feng--Huang entropy.
Let $0\leq w \leq 1$.
Let $(X_i, T_i)$ and $(Y_i, S_i)$ be dynamical systems $(i=1,2)$, and
$\pi_i\colon X_i\to Y_i$ equivariant continuous maps.
Let $f \colon X_1\to X_2$ and $g \colon Y_1\to Y_2$ be also equivariant continuous maps.
Suppose that $f$ is surjective and $g\circ \pi_1 = \pi_2\circ f$.
Namely we have the following commutative diagram.

\[
\begin{tikzcd}
(X_1,T_1) \arrow[r,"\pi_1"] \arrow[d, two heads,"f"'] &  (Y_1,S_1)    \arrow[d,"g"] \\
 (X_2,T_2) \arrow[r,"\pi_2"'] & (Y_2,S_2)
\end{tikzcd}
\]

\begin{lemma} \label{lemma: factor map and weighted topological entropy}
For any $\Omega \subset X_2$ we have 
$\FHh^w(\Omega,\pi_2, T_2) \leq \FHh^w\left(f^{-1}(\Omega), \pi_1, T_1\right)$.
\end{lemma}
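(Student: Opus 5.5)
The plan is to push coverings of $f^{-1}(\Omega)$ forward along $f$, using uniform continuity of $f$ and $g$ to compare weighted Bowen balls. Fix metrics $\mathbf{d}_1,\mathbf{d}_1^\prime$ on $X_1,Y_1$ and $\mathbf{d}_2,\mathbf{d}_2^\prime$ on $X_2,Y_2$. Given $\varepsilon>0$, uniform continuity of $f$ and $g$ on the compact spaces $X_1$, $Y_1$ gives $\delta>0$ such that
\[ \mathbf{d}_1(x,x^\prime)<\delta \Rightarrow \mathbf{d}_2(f(x),f(x^\prime))<\varepsilon, \qquad \mathbf{d}_1^\prime(y,y^\prime)<\delta \Rightarrow \mathbf{d}_2^\prime(g(y),g(y^\prime))<\varepsilon. \]
Using equivariance $f\circ T_1=T_2\circ f$, $g\circ S_1=S_2\circ g$ and $g\circ\pi_1=\pi_2\circ f$, I will check the key inclusion
\[ f\bigl(B^w_n(x,\pi_1,\delta)\bigr)\subset B^w_n(f(x),\pi_2,\varepsilon) \]
for all $x\in X_1$ and $n\geq1$. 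Indeed, if $x^\prime\in B^w_n(x,\pi_1,\delta)$, then $\mathbf{d}_2(T_2^kf(x),T_2^kf(x^\prime))=\mathbf{d}_2(f(T_1^kx),f(T_1^kx^\prime))<\varepsilon$ for $k<\lceil wn\rceil$. Similarly, $\mathbf{d}_2^\prime(S_2^k\pi_2 f(x),S_2^k\pi_2 f(x^\prime))=\mathbf{d}_2^\prime(g(S_1^k\pi_1x),g(S_1^k\pi_1x^\prime))<\varepsilon$ for $k<n$.

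Next, take any countable cover $f^{-1}(\Omega)\subset\bigcup_k B^w_{n_k}(x_k,\pi_1,\delta)$ with $n_k\geq N$. Since $f$ is surjective, $\Omega=f(f^{-1}(\Omega))\subset\bigcup_k f(B^w_{n_k}(x_k,\pi_1,\delta))\subset\bigcup_k B^w_{n_k}(f(x_k),\pi_2,\varepsilon)$. This is an admissible cover of $\Omega$ with the same lengths $n_k$, so the weights $\sum e^{-sn_k}$ are the same. Hence
\[ \Lambda^{w,s}_{N,\varepsilon}(\Omega;\pi_2,T_2)\leq\Lambda^{w,s}_{N,\delta}(f^{-1}(\Omega);\pi_1,T_1). \]
Letting $N\to\infty$ gives the same inequality for $\Lambda^{w,s}_\varepsilon$ and $\Lambda^{w,s}_\delta$, and therefore $\FHh^w(\Omega,\pi_2,T_2,\varepsilon)\leq\FHh^w(f^{-1}(\Omega),\pi_1,T_1,\delta)$.

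Finally, let $\varepsilon\to0$. By monotonicity in the scale, $\FHh^w(f^{-1}(\Omega),\pi_1,T_1,\delta)\leq\FHh^w(f^{-1}(\Omega),\pi_1,T_1)$, so $\FHh^w(\Omega,\pi_2,T_2,\varepsilon)\leq\FHh^w(f^{-1}(\Omega),\pi_1,T_1)$ for every $\varepsilon$, which yields the claim. If $\Omega=\emptyset$, then $f^{-1}(\Omega)=\emptyset$ and both sides equal $-\infty$.

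I expect no serious obstacle; the only point needing care is the inclusion of Bowen balls. It relies on both coordinate conditions transferring simultaneously, which is exactly what the commutativity $g\circ\pi_1=\pi_2\circ f$ provides. Surjectivity of $f$ is used only to guarantee that the pushed-forward balls still cover $\Omega$.
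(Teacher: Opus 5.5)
Your proposal is correct and follows essentially the same route as the paper's proof. The paper also picks $\delta$ by uniform continuity of $f$ and $g$, and derives $f\bigl(B^w_n(x,\pi_1,\delta)\bigr)\subset B^w_n(f(x),\pi_2,\varepsilon)$. It then pushes covers forward using surjectivity to get $\Lambda^{w,s}_{N,\varepsilon}(\Omega)\leq\Lambda^{w,s}_{N,\delta}(f^{-1}(\Omega))$, and passes to the limits in $N$, $\delta$ and $\varepsilon$.
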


\begin{proof}
Let $\mathbf{d}_{X_i}$ and $\mathbf{d}_{Y_i}$ be metrics on $X_i$ and $Y_i$ respectively.
For any $\varepsilon>0$ there exists $\delta>0$ such that 
  \begin{itemize}
    \item $\mathbf{d}_{X_1}(x, x^\prime) < \delta$ $\Longrightarrow$ $\mathbf{d}_{X_2}(f(x), f(x^\prime))< \varepsilon$,
    \item  $\mathbf{d}_{Y_1}(y, y^\prime) < \delta$ $\Longrightarrow$ $\mathbf{d}_{Y_2}(g(y), g(y^\prime))< \varepsilon$.
  \end{itemize}
Then we have $f\left(B^w_n(x,\pi_1, \delta)\right) \subset B^w_n(f(x),\pi_2, \varepsilon)$ for any $n\geq 1$ and $x\in X_1$.

Let $s\geq 0$ and $N\geq 1$. If we have an (at most) countable covering 
$f^{-1}(\Omega) \subset \bigcup_j B^w_{n_j}(x_j, \pi_1, \delta)$ with $n_j\geq N$ then 
$\Omega \subset \bigcup_j B_{n_j}^w(f(x_j),\pi_2, \varepsilon)$ since $f$ is surjective. 
Hence $\Lambda^{w,s}_{N,\varepsilon}(\Omega) \leq \sum_j e^{-n_j s}$.
Therefore 
\[ \Lambda^{w,s}_{N,\varepsilon}(\Omega) \leq \Lambda^{w,s}_{N,\delta}(f^{-1}(\Omega)). \] 
Letting $N\to \infty$, we obtain $\Lambda^{w,s}_{\varepsilon}(\Omega) \leq \Lambda^{w,s}_{\delta}(f^{-1}(\Omega))$
and $\FHh^w(\Omega,T_2,  \varepsilon) \leq \FHh^w(f^{-1}(\Omega),T_1, \delta)$.
Letting $\delta\to 0$ and $\varepsilon \to 0$, we conclude 
$\FHh^w(\Omega,\pi_2, T_2) \leq \FHh^w\left(f^{-1}(\Omega), \pi_1, T_1\right)$.
\end{proof}

Let $(Z, R)$ be a dynamical system.
Let $\rho\colon X_2\to Z$ and $\theta\colon Y_2\to Z$ be equivariant continuous maps
such that $\rho = \theta\circ \pi_2$.
\[
\begin{tikzcd}
(X_1,T_1) \arrow[r,"\pi_1"] \arrow[d, two heads, "f"']
  & (Y_1,S_1) \arrow[d,"g"] \\
(X_2,T_2) \arrow[r,"\pi_2"'] \arrow[dr, "\rho"']
  & (Y_2,S_2) \arrow[d,"\theta"] \\
& (Z,R)
\end{tikzcd}
\]

\begin{corollary} \label{corollary: factor map and weighted topological entropy}
In the above setting, $\FHh^w\left(\rho^{-1}(z), \pi_2, T_2\right) \leq \FHh^w\left((\rho\circ f)^{-1}(z), \pi_1, T_1\right)$
for all $z\in Z$.
\end{corollary}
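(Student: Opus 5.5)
This follows directly from Lemma \ref{lemma: factor map and weighted topological entropy} applied to the fiber. Fix $z\in Z$ and set $\Omega := \rho^{-1}(z)\subset X_2$. Since $\rho = \theta\circ\pi_2$ and $f$ is equivariant, the preimage $f^{-1}(\Omega)$ equals $\{x\in X_1 \mid \rho(f(x)) = z\} = (\rho\circ f)^{-1}(z)$.

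By Lemma \ref{lemma: factor map and weighted topological entropy}, which holds for an arbitrary subset $\Omega\subset X_2$, we obtain
\[ \FHh^w\left(\rho^{-1}(z),\pi_2,T_2\right) \leq \FHh^w\left(f^{-1}(\rho^{-1}(z)),\pi_1,T_1\right) = \FHh^w\left((\rho\circ f)^{-1}(z),\pi_1,T_1\right). \]

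The only point needing care is the empty-set convention. If $\rho^{-1}(z)=\emptyset$, the left side is $-\infty$ and there is nothing to prove. If $\rho^{-1}(z)\neq\emptyset$, then surjectivity of $f$ gives $(\rho\circ f)^{-1}(z)\neq\emptyset$, so both sides are genuine critical values and the lemma applies as stated. Its proof uses only that coverings of $f^{-1}(\Omega)$ push forward to coverings of $\Omega$, together with surjectivity of $f$, and this is valid for nonempty $\Omega$.

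There is no real obstacle here. The statement is a pointwise specialization of the preceding lemma.
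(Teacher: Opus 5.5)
Your proposal is correct and matches the paper's argument: the identity $(\rho\circ f)^{-1}(z)=f^{-1}(\rho^{-1}(z))$ reduces the claim to Lemma~\ref{lemma: factor map and weighted topological entropy} with $\Omega=\rho^{-1}(z)$. That identity is purely set-theoretic, so you do not need $\rho=\theta\circ\pi_2$ or equivariance for it, and your empty-fiber check is a harmless extra.
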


\begin{proof}
Since $(\rho\circ f)^{-1}(z) = f^{-1}\left(\rho^{-1}(z)\right)$, this is an immediate corollary of 
Lemma \ref{lemma: factor map and weighted topological entropy}.
\end{proof}

\section{Proof of one direction of the relativised variational principle} \label{section: proof of one direction of the relativised variational principle}

Throughout this section we assume the following conditions.
\begin{enumerate}
  \item $0\leq w\leq 1$.
  \item $(X, T)$, $(Y, S)$ and $(Z, R)$ are dynamical systems.
  \item $\pi\colon X\to Y$, $\rho\colon X\to Z$ and $\theta\colon Y\to Z$ are equivariant continuous maps with
$\rho = \theta \circ \pi$. 
  \item $\rho$ is surjective.
\end{enumerate}
\[
\begin{tikzcd}
(X,T) \arrow[r,"\pi"] \arrow[dr, two heads, "\rho"'] & (Y,S) \arrow[d,"\theta"] \\
& (Z,R)
\end{tikzcd}
\]
Fix metrics $\mathbf{d}$ and $\mathbf{d}^\prime$ on $X$ and $Y$ respectively.

In this section, we prove the following inequality from Theorem \ref{theorem: relativised variational principle}:
\begin{equation}  \label{eq: half of the variational principle}
  w h_\mu(T|R) + (1-w) h_{\pi_*\mu}(S|R) \leq \int_Z \FHh^w\left(\rho^{-1}(z),\pi, T\right) \, d(\rho_*\mu)(z)  
\end{equation}  
for any measure $\mu \in \mathscr{M}^T(X)$.

\begin{lemma} \label{lemma: measurability of fiber entropy}
 Set $\varphi(z) = \FHh^w\left(\rho^{-1}(z),\pi, T\right)$ for $z\in Z$.
 \begin{enumerate}
    \item $\varphi(z)$ is a measurable function.
    \item For any invariant probability measure $\nu\in \mathscr{M}^R(Z)$, 
we have $\varphi(Rz) = \varphi(z)$ for $\nu$-almost every $z\in Z$.
In particular, if $\nu$ is ergodic, then $\varphi$ is constant $\nu$-almost everywhere.
 \end{enumerate}
\end{lemma}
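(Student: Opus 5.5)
Part (1), measurability: the plan is to show that for fixed $\varepsilon>0$, $s\geq0$ and $N$, the map $z\mapsto \Lambda^{w,s}_{N,\varepsilon}(\rho^{-1}(z))$ is upper semicontinuous (hence Borel).

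To see this, fix $z_0$ and a cover $\rho^{-1}(z_0)\subset\bigcup_k B^w_{n_k}(x_k,\varepsilon)$ with $\sum_k e^{-sn_k}<\Lambda+\eta$. Each Bowen ball is open, since it is a finite intersection of preimages of open sets under continuous maps. The fiber $\rho^{-1}(z_0)$ is compact, so finitely many balls already cover it, and this finite subcover has weight at most that of the original cover. Let $U$ be the union of these finitely many balls. By compactness of $X$ and continuity of $\rho$, the set $\{z: \rho^{-1}(z)\subset U\}=Z\setminus\rho(X\setminus U)$ is open. So the same finite cover works for all $z$ near $z_0$, which gives upper semicontinuity.

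From here the measurability follows by the standard reduction. Restricting to rational $s$, $\varepsilon=1/m$ and using monotone limits in $N$, we get
\[
\FHh^w(\rho^{-1}(z),T,\varepsilon)=\sup\{s\in\mathbb{Q}_{\geq0}: \Lambda^{w,s}_\varepsilon(\rho^{-1}(z))>0\}.
\]
This is measurable because each set $\{z:\Lambda^{w,s}_\varepsilon(\rho^{-1}(z))>0\}$ is Borel. The case of an empty fiber cannot occur, since $\rho$ is surjective. Finally $\varphi=\lim_{m}$ of these functions, so $\varphi$ is measurable.

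Part (2), invariance: the plan is to show $\varphi(z)\le\varphi(Rz)$ for every $z$, and then close with a standard measure-theoretic argument.

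The inequality comes from $T(\rho^{-1}(z))\subset\rho^{-1}(Rz)$, combined with the monotonicity $\FHh^w(\Omega)\le \FHh^w(\Omega')$ for $\Omega\subset\Omega'$ and the estimate
\[
\FHh^w(\Omega,T)\le \FHh^w(T\Omega,T).
\]
To prove this estimate, take a cover of $T\Omega$ by balls $B^w_{n_k}(x_k,\delta)$ with centers in $T\Omega$ (allowed by Remark \ref{remark: variation of the definition of FH entropy}), and write $x_k=Ty_k$ with $y_k\in\Omega$. By uniform continuity of $T$ and of $S$, for $\delta$ small the preimage of each ball under $T$, intersected with $\Omega$, lies in $B^w_{n_k+1}(y_k,\varepsilon')$ after an adjustment at time $0$. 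The point is that closeness of $T^jx$ and $T^jy$ for $j\ge1$, together with $x,y$ both in $\Omega$, does not by itself control time $0$. So we instead use a uniform continuity modulus to control time $0$ from time $1$ only up to a scale.

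This time-$0$ step is the main obstacle. Pulling back one step loses control of $\mathbf d(x,y)$, because $T$ need not be injective. I would handle it by covering $X$ once and for all by finitely many sets of diameter $<\varepsilon$. Each pulled-back ball is then split into at most $M$ pieces, each of which is contained in a Bowen ball of length $n_k+1$ (with $\lceil w(n_k+1)\rceil\le\lceil wn_k\rceil+1$). This gives
\[
\Lambda^{w,s}_{N+1,2\varepsilon}(\Omega)\le M e^{-s}\Lambda^{w,s}_{N,\delta}(T\Omega),
\]
and the constant $M$ does not affect the critical exponent.

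It remains to pass from $\varphi\le\varphi\circ R$ to equality almost everywhere. Since $\varphi$ is measurable, $\nu$ is $R$-invariant and $\varphi\le\varphi\circ R$, we have for each rational $c$ that $\{\varphi>c\}\subset R^{-1}\{\varphi>c\}$. These two sets have equal $\nu$-measure, so they coincide mod $\nu$. This yields $\varphi\circ R=\varphi$ a.e., and constancy a.e. for ergodic $\nu$ follows.
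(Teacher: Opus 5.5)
Your proposal is correct and follows essentially the paper's proof. For (1) it uses upper semicontinuity of $z\mapsto\Lambda^{w,s}_{N,\varepsilon}(\rho^{-1}(z))$, which comes from compactness of the fibres. For (2) it obtains $\varphi(z)\le\varphi(Rz)$ by intersecting pulled-back weighted Bowen balls with a fixed finite cover of $X$. The paper uses the length-one balls $B^w_1(p_a,\pi,\varepsilon)$ for this cover, and these also control $\mathbf{d}^\prime(\pi y,\pi y^\prime)$ at time $0$. Your pieces need that control too; small $\mathbf{d}$-diameter alone does not give it, but uniform continuity of $\pi$ does. The only real difference is cosmetic: you close (2) with the level sets $\{\varphi>c\}\subset R^{-1}\{\varphi>c\}$, whereas the paper compares integrals of the truncations $\min(\varphi,n)$.
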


\begin{proof}
(1) It is enough to prove that, for any $s, N, \varepsilon$, 
the quantity $\Lambda^{w, s}_{N,\varepsilon}\left(\rho^{-1}(z)\right)$ is measurable in $z\in Z$.
Indeed, once this is proved, 
then $\Lambda^{w, s}_{\varepsilon}\left(\rho^{-1}(z)\right) = \lim_{N\to \infty} \Lambda^{w, s}_{N,\varepsilon}\left(\rho^{-1}(z)\right)$ 
is also measurable in $z$, and hence for any $s\geq 0$ 
the set 
\[ \{z\mid \FHh^w\left(\rho^{-1}(z), T, \varepsilon\right) \leq  s\} 
  = \bigcap_{n=1}^\infty \{z\mid \Lambda^{w, s+\frac{1}{n}}_{\varepsilon}\left(\rho^{-1}(z)\right) = 0\} \]
is a measurable subset of $Z$.
Namely $\FHh^w\left(\rho^{-1}(z), T, \varepsilon\right)$ is a measurable function in $z$.
Then $\varphi(z) = \lim_{\varepsilon \to 0}\FHh^w\left(\rho^{-1}(z), T, \varepsilon\right)$ is also measurable.

Take any $C>0$. We will prove that $\{z\mid \Lambda^{w, s}_{N,\varepsilon}\left(\rho^{-1}(z)\right) < C\}$ is an open subset of $Z$.
Suppose $\Lambda^{w, s}_{N,\varepsilon}\left(\rho^{-1}(z)\right) < C$ for some $z\in Z$.
Then there exists an at most countable cover $\{B^w_{n_k}(x_k, \pi, \varepsilon)\}$ of $\rho^{-1}(z)$ that satisfies $n_k\geq N$ and 
\[ \sum_k e^{-sn_k} < C. \] 
Since $\rho^{-1}(z)$ is compact and 
$\bigcup_k B^w_{n_k}(x_k,\pi,\varepsilon)$ is an open neighbourhood of $\rho^{-1}(z)$,
if $z^\prime\in Z$ is sufficiently close to $z\in Z$ then $\rho^{-1}(z^\prime)$ is also covered by $\{B^w_{n_k}(x_k, \pi, \varepsilon)\}$.
Then we obtain $\Lambda^{w,s}_{N, \varepsilon}\left(\rho^{-1}(z^\prime)\right) < C$. 

(2) First we prove that $\varphi(Rz) \geq \varphi(z)$ for all $z\in Z$.
Let $\varepsilon>0$.
Take a finite cover $X = B^w_1(p_1, \pi, \varepsilon)\cup \dots \cup B^w_1(p_A, \pi, \varepsilon)$.
For any natural number $n$, $1\leq a \leq A$ and $x\in X$, there is a point $x^\prime\in X$ that satisfies
\[ 
B_1^w(p_a, \pi, \varepsilon) \cap T^{-1}\left(B^w_n(x, \pi, \varepsilon)\right) 
\subset B_n^w(x^\prime, \pi, 2\varepsilon).
\]
(When the left-hand side is empty, this is trivial. 
Otherwise, any point $x^\prime \in B_1^w(p_a, \pi, \varepsilon) \cap T^{-1}\left(B^w_n(x, \pi, \varepsilon)\right)$
satisfies the claim.)
Since $T\rho^{-1}(z) \subset \rho^{-1}(Rz)$,
it follows that 
\[ 
  \Lambda^{w, s}_{N, 2\varepsilon}\left(\rho^{-1}(z)\right) \leq A \Lambda^{w, s}_{N,\varepsilon}\left(\rho^{-1}(Rz)\right).
 \] 
Noting that $A$ is independent of $N$, we can let $N\to \infty$ and obtain 
$\Lambda^{w, s}_{2\varepsilon}\left(\rho^{-1}(z)\right) \leq A \Lambda^{w, s}_{\varepsilon}\left(\rho^{-1}(Rz)\right)$.
Then $\FHh^w\left(\rho^{-1}(z), T, 2\varepsilon\right) \leq \FHh^w\left(\rho^{-1}(Rz), T, \varepsilon\right)$.
Letting $\varepsilon \to 0$, we conclude $\varphi(z) \leq \varphi(Rz)$.

Let $\varphi_n(z) := \min\left(\varphi(z), n\right)$ for natural numbers $n$.
This is a bounded measurable function.
We have $\varphi_n(Rz) \geq \varphi_n(z)$. 
Since $\nu$ is $R$-invariant,
\[ \int_Z \varphi_n(Rz) d\nu(z) = \int_Z \varphi_n(z) d\nu(z). \]
Hence $\varphi_n(Rz) = \varphi_n(z)$ for $\nu$-almost every $z\in Z$.
Letting $n\to \infty$, we conclude that $\varphi(Rz) = \varphi(z)$ $\nu$-almost everywhere.
\end{proof}

\begin{remark} \label{remark: R-invariance of combinatorial weighted entropy}
The quantity $\psi(z) := \h^w\left(\rho^{-1}(z), \pi, T\right)$ (the combinatorial version of weighted topological entropy)
introduced in (\ref{subsection: relativised variational principle for weighted topological entropy})
has the same properties. 
Namely it is measurable and $R$-invariant $\nu$-almost everywhere.
The measurability was proved in \cite[Proposition 2.2]{Yin}.
Here we provide a proof of the $R$-invariance for completeness.
Let $\varepsilon>0$. We fix open coverings $X = P_1\cup\dots \cup P_A$ and $Y=Q_1\cup \dots \cup Q_B$
that satisfy $\diam(P_a, \mathbf{d}) < \varepsilon$ and $\diam(Q_b, \mathbf{d}^\prime) < \varepsilon$ for all 
$1\leq a \leq A$ and $1\leq b \leq B$. Let $N$ be a natural number.
For any $U\subset X$ with $\diam(U, \mathbf{d}_N) < \varepsilon$, 
we have $\diam (T^{-1}U \cap P_a, \mathbf{d}_N) < \varepsilon$.
Hence for any $\Omega\subset X$ we have $\#\left(T^{-1}\Omega, \mathbf{d}_N, \varepsilon\right) 
\leq A \#\left(\Omega, \mathbf{d}_N, \varepsilon\right)$.
Let $\pi\left(\rho^{-1}(Rz)\right) \subset V_1\cup \dots \cup V_m$ be an open covering with 
$\diam(V_i, \mathbf{d}^\prime_N) < \varepsilon$ for any $1\leq i \leq m$.
Then 
$\pi\left(\rho^{-1}(z)\right) \subset S^{-1}\left(\pi\left(\rho^{-1}(Rz)\right)\right) \subset \bigcup_{b, i} Q_b \cap S^{-1}V_i$.
Each $Q_b\cap S^{-1}V_i$ has diameter smaller than $\varepsilon$ with respect to $\mathbf{d}^\prime_N$.  
We have $\pi^{-1}(Q_b\cap S^{-1}V_i)\cap \rho^{-1}(z) \subset T^{-1}(\pi^{-1}(V_i) \cap \rho^{-1}(Rz))$
and hence
\[ \#\left(\pi^{-1}(Q_b\cap S^{-1}V_i)\cap \rho^{-1}(z), \mathbf{d}_N, \varepsilon\right) 
   \leq A \#\left(\pi^{-1}(V_i) \cap \rho^{-1}(Rz), \mathbf{d}_N, \varepsilon\right). \]
It follows that 
\[ \#^w\left(\rho^{-1}(z), N, \varepsilon\right) \leq (A^w B) \#^w\left(\rho^{-1}(Rz), N, \varepsilon\right). \]
Taking the logarithm, dividing by $N$ and letting $N\to \infty$,
\[ \limsup_{N\to \infty} \frac{\log \#^w\left(\rho^{-1}(z), N, \varepsilon\right)}{N} 
     \leq \limsup_{N\to \infty}  \frac{\log \#^w\left(\rho^{-1}(Rz), N, \varepsilon\right)}{N}. \]
Letting $\varepsilon \to 0$, we obtain $\psi(z) \leq \psi(Rz)$.
Then it follows from the same argument as in Lemma \ref{lemma: measurability of fiber entropy} (2)
that $\psi$ is $R$-invariant $\nu$-almost everywhere for any $\nu\in \mathscr{M}^R(Z)$.
\end{remark}

The proof of \eqref{eq: half of the variational principle} is essentially an application of the following theorem of 
Wang and Huang \cite[Theorem 5.5]{Wang--Huang}.
It may be viewed as a weighted relative version of the Brin--Katok formula.
We state only the special case needed here.

\begin{theorem}[Wang--Huang 2019] \label{theorem: relativised and weighted Brin--Katok formula}
Assume that $R\colon Z\to Z$ is a homeomorphism.
Let $\mu\in \mathscr{M}^T(X)$, and set $\nu := \rho_*\mu \in \mathscr{M}^R(Z)$.
Let
\[ \mu = \int_Z \mu_z \, d\nu(z) \]
be the disintegration of $\mu$ by the map $\rho\colon X\to Z$, where 
$\mu_z$ is a probability measure supported on $\rho^{-1}(z)$ for $\nu$-almost every $z\in Z$.
Then we have
\begin{equation*}
  \begin{split}
  & \int_X \lim_{\varepsilon\to 0} \left(\limsup_{n\to \infty} \frac{-\log \mu_{\rho(x)}(B^w_n(x,\pi, \varepsilon))}{n}\right) d\mu(x) \\
   & = \int_X \lim_{\varepsilon\to 0} \left(\liminf_{n\to \infty} \frac{-\log \mu_{\rho(x)}(B^w_n(x,\pi, \varepsilon))}{n}\right) d\mu(x) \\
    & =  w h_\mu(T|R) +(1-w) h_{\pi_*\mu}(S|R).
  \end{split}
\end{equation*}   
\end{theorem}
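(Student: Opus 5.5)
The statement is the relativised, weighted Brin--Katok formula. I would prove it the way the classical Brin--Katok and relative Shannon--McMillan--Breiman theorems are proved: first compute the growth rate of fibre measures of \emph{partition atoms}, then compare atoms with weighted Bowen balls.

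\textbf{Reduction to the ergodic case.} By the ergodic decomposition of $\mu$ and Lemma \ref{lemma: basic properties of relative measure theoretic entropy}(2), both sides decompose over the ergodic components, so it suffices to treat ergodic $\mu$. Since $R$ is invertible, the $\sigma$-algebra $\mathcal{C} := \rho^{-1}\mathcal{B}_Z$ satisfies $T^{-1}\mathcal{C} = \mathcal{C}$ modulo $\mu$. Conditional expectations given $\mathcal{C}$ therefore commute with the dynamics, which is what makes a relative SMB theorem available. The fibre measure $\mu_{\rho(x)}$ is exactly the conditional measure $\mu^{\mathcal{C}}_x$.

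\textbf{Step 1 (atoms).} Fix a finite partition $\alpha$ of $X$ and a finite partition $\beta$ of $Y$, with small diameters and $\mu$-null, respectively $\pi_*\mu$-null, boundaries. Write $m = \lceil wn \rceil$ and
\[ P_n(x) = \alpha_0^{m-1}(x) \cap \pi^{-1}\beta_0^{n-1}(\pi x). \]
The claim to prove is that, $\mu$-a.e.,
\[ -\tfrac1n \log \mu_{\rho(x)}(P_n(x)) \to w\, h_\mu(T|R,\alpha\vee\pi^{-1}\beta) + (1-w)\, h_{\pi_*\mu}(S|R,\beta). \]
To get this, split the conditional information function by the chain rule:
\[ I\bigl(\alpha_0^{m-1}\vee\pi^{-1}\beta_0^{n-1} \,\big|\, \mathcal{C}\bigr) = I\bigl((\alpha\vee\pi^{-1}\beta)_0^{m-1} \,\big|\, \mathcal{C}\bigr) + I\bigl(\pi^{-1}\beta_m^{n-1} \,\big|\, \mathcal{C}\vee(\alpha\vee\pi^{-1}\beta)_0^{m-1}\bigr). \]
The first term is handled by the relative SMB theorem applied to $T$ with the invariant $\sigma$-algebra $\mathcal{C}$, and contributes $m\, h_\mu(T|R,\alpha\vee\pi^{-1}\beta)$.

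For the second term, I would first show that conditioning on $\alpha_0^{m-1}$ is asymptotically the same as conditioning on $\pi^{-1}\beta_0^{m-1}$ alone, up to $o(n)$. The reason is that the future $\beta$-names after time $m$ are asymptotically independent of the extra $X$-information, given the $Y$-past and $\mathcal{C}$. Once this is in place, the second term becomes $(n-m)\, h_{\pi_*\mu}(S|R,\beta) + o(n)$, by the relative SMB theorem on $Y$ relative to $\theta^{-1}\mathcal{B}_Z$ together with stationarity.

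\textbf{Main obstacle.} I expect this mixed-time term to be the hardest step. The information mixes two time horizons, and the conditioning $\sigma$-algebras are not nested in the standard way. My first attempt would be an Ornstein--Weiss / Kingman subadditivity argument for the sequence of conditional informations, combined with the Abramov--Rokhlin identity $h_\mu(T|R) = h_{\pi_*\mu}(S|R) + h_\mu(T|S)$. Failing that, I would replace $\mu$ by its ergodic decomposition relative to $\pi$ and prove a separate SMB statement for each of the two pieces.

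\textbf{Step 2 (atoms versus balls).}
\begin{itemize}
\item[(a)] If $\alpha$ and $\beta$ have diameter $<\varepsilon$, then $P_n(x) \subset B^w_n(x,\pi,\varepsilon)$. This bounds the $\limsup$ of $-\frac1n\log\mu_{\rho(x)}(B^w_n)$ from above by the atom rate.
\item[(b)] For the reverse inequality on the $\liminf$, use the classical Brin--Katok device. Choose $\varepsilon$ so small that the proportion of times $k$ at which $T^kx$ lies within $\varepsilon$ of $\partial\alpha$, or $S^k\pi x$ lies within $\varepsilon$ of $\partial\beta$, is at most $\delta$; the ergodic theorem and the null boundaries make this possible. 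Then $B^w_n(x,\pi,\varepsilon)$ meets at most $e^{c(\delta)n}$ atoms of $P_n$, with $c(\delta)\to0$. A Borel--Cantelli argument applied to the fibre measures then shows that $\mu_{\rho(x)}(B^w_n(x))$ cannot be much larger than $e^{c(\delta)n}\mu_{\rho(x)}(P_n(x))$ for $\mu$-a.e.\ $x$.
\end{itemize}

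\textbf{Conclusion.} Letting $\varepsilon\to0$ and refining $\alpha$ and $\beta$ until the partition entropies approach the conditional entropies gives the common value
\[ w\, h_\mu(T|R) + (1-w)\, h_{\pi_*\mu}(S|R) \]
for both the $\limsup$ and the $\liminf$ integrals. The ergodic decomposition then yields the general case.
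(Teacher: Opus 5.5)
The paper does not prove this statement. It imports it from Wang--Huang (their Theorem~5.5) and uses it as a black box, so your outline has to stand on its own. Its architecture is the natural one: a relative weighted SMB theorem for the atoms $P_n(x)$, then a Brin--Katok comparison of atoms with weighted Bowen balls. Step~2 is essentially sound; in 2(b) the counting must be done fibrewise, using that for each $z$ at most $e^{nL}$ atoms of $P_n$ have $\mu_z$-mass $\geq e^{-nL}$.

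The genuine gap is Step~1, which is the real content of the theorem and which you leave open. Your reason for the mixed-time term is that the $\beta$-future after time $m$ is asymptotically independent of the extra $X$-past. That is only an averaged statement. With $\gamma=\alpha\vee\pi^{-1}\beta$ and $T^{-1}\mathcal C=\mathcal C$, the averaged conditional mutual information satisfies
\[ I_\mu\bigl(\gamma_0^{m-1};\pi^{-1}\beta_m^{n-1}\,\big|\,\mathcal C\bigr)\le H_\mu(\gamma_0^{m-1}|\mathcal C)+H_\mu(\gamma_0^{n-m-1}|\mathcal C)-H_\mu(\gamma_0^{n-1}|\mathcal C)=o(n). \]
The theorem needs the almost-everywhere version. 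For the lower bound on the $\liminf$, which is exactly the direction used in Proposition~\ref{proposition: half of the variational principle}, you need the \emph{pointwise} mutual information to be $o(n)$ from above, and no $L^1$ bound gives that.

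Your suggested remedies do not supply it either. The map $n\mapsto I(P_n|\mathcal C)$ is not a subadditive cocycle: the $\alpha$-windows of $P_{n_1+n_2}$ and of $P_{n_1}\vee T^{-n_1}P_{n_2}$ differ, so neither partition refines the other. In any case, information functions are not pointwise subadditive. The "relative ergodic decomposition" route is not yet an argument.

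The missing tool is the elementary inequality $\int e^{I(\xi|\mathcal A')-I(\xi|\mathcal A)}\,d\mu\le 1$, valid for a finite partition $\xi$ and $\sigma$-algebras $\mathcal A\subset\mathcal A'$. By Borel--Cantelli it gives $I(\xi_n|\mathcal A'_n)\le I(\xi_n|\mathcal A_n)+2\log n$ eventually a.e., for arbitrary sequences $\xi_n,\mathcal A_n,\mathcal A'_n$. Write
\[ i_n:=I(\pi^{-1}\beta_m^{n-1}|\mathcal C)-I(\pi^{-1}\beta_m^{n-1}|\mathcal C\vee\gamma_0^{m-1}) \]
for the pointwise mutual information. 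The inequality immediately gives $i_n\ge-2\log n$ eventually a.e. Enlarging $\pi^{-1}\beta_m^{n-1}$ to $\gamma_m^{n-1}$ by the pointwise chain rule and applying the same inequality gives, eventually a.e.,
\[ i_n\le I(\gamma_0^{m-1}|\mathcal C)+I(\gamma_m^{n-1}|\mathcal C)-I(\gamma_0^{n-1}|\mathcal C)+2\log n . \]
The right-hand side is $o(n)$ a.e. by the relative SMB theorem, applied also on the window $[m,n)$. Since $T^{-1}\mathcal C=\mathcal C$, one has $I(\gamma_m^{n-1}|\mathcal C)=\sum_{k=m}^{n-1}I(\gamma|\gamma_1^{n-1-k}\vee\mathcal C)\circ T^k$, and Breiman's argument (Chung's maximal lemma plus the ergodic theorem) handles this sum. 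The same window SMB for $\pi^{-1}\beta$ gives $\frac1n I(\pi^{-1}\beta_m^{n-1}|\mathcal C)\to(1-w)h_{\pi_*\mu}(S|R,\beta)$, and your Step~1 claim follows.

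Separately, your reduction to ergodic $\mu$ is not justified as stated. The measure $\mu_{\rho(x)}$ is not the fibre measure of the ergodic component through $x$, because ergodic decomposition and disintegration over $\rho$ do not commute. So the left-hand side does not obviously split over components. It is simpler to run the whole argument for general $\mu$, using the non-ergodic forms of the SMB and ergodic theorems and letting $\varepsilon$ depend on $x$.
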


Here $B^w_n(x, \pi, \varepsilon)$ denotes the $w$-weighted Bowen ball with respect to the map 
$\pi\colon (X, T)\to (Y,S)$.
When $w=1$ and both $Y$ and $Z$ are trivial (i.e. one-point space), Theorem \ref{theorem: relativised and weighted Brin--Katok formula}
reduces to the classical Brin--Katok formula \cite{Brin--Katok}.

The assumption that \(R\colon Z\to Z\) is a homeomorphism is imposed only in order to apply
\cite[Theorem~5.5]{Wang--Huang} literally.  Their theorem is formulated for
disintegrations with respect to \textit{invariant} sub-\(\sigma\)-algebras.  
When \(R\) is a homeomorphism, 
the Borel $\sigma$-algebra $\mathcal{B}_Z$ is $R$-invariant ($R^{-1}\mathcal{B}_Z = \mathcal{B}_Z$), 
and then the conditioning \(\sigma\)-algebra required in their theorem is invariant.

It might be possible to remove the invertibility assumption on $R$ from the 
statement of Theorem \ref{theorem: relativised and weighted Brin--Katok formula}.
However this is not trivial and we do not discuss it here\footnote{Notice that, if $R$ is not a homeomorphism,
the $\sigma$-algebra $\mathcal{B}_Z$ is not invariant in general (it is only \textbf{subinvariant}; 
$R^{-1} \mathcal{B}_Z\subset \mathcal{B}_Z$), 
and hence 
the situation is more complicated.
Nevertheless, it is known that the relative Shannon--McMillan--Breiman theorem holds for 
subinvariant conditioning $\sigma$-algebras as well \cite[Theorem B.0.1 in Appendix B]{Downarowicz}.
Therefore one possible strategy to remove the invertibility assumption from Theorem \ref{theorem: relativised and weighted Brin--Katok formula}
is to establish the weighted version of this relative Shannon--McMillan-Breiman theorem 
and to apply it to the current setting.}.

We can now prove the desired inequality.

\begin{proposition} \label{proposition: half of the variational principle}
For any $\mu \in \mathscr{M}^T(X)$ we have 
\begin{equation} \label{eq: one direction of variational principle in the proposition}
   w h_\mu(T|R) + (1-w) h_{\pi_*\mu}(S|R) \leq \int_Z \FHh^w\left(\rho^{-1}(z),\pi, T\right) \, d(\rho_*\mu)(z). 
\end{equation}   
\end{proposition}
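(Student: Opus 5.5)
The plan is to combine the weighted relative Brin--Katok formula (Theorem \ref{theorem: relativised and weighted Brin--Katok formula}) with a mass distribution (Frostman-type) argument on each fibre. The invertibility hypothesis on $R$ in that theorem has to be handled first, so we begin with a reduction to the invertible case via natural extensions.

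\textbf{Step 1: reduction to homeomorphic $R$.} Pass to the natural extensions
\[ \widetilde X=\{(x_n)_{n\in\mathbb Z}\subset X^{\mathbb Z}: Tx_n=x_{n+1}\} \]
and similarly $\widetilde Y$, $\widetilde Z$, with the induced maps $\tilde\pi,\tilde\rho,\tilde\theta$.
\begin{itemize}
\item[(a)] Measures lift: $\mu$ lifts uniquely to $\tilde\mu$ with the same entropies. Relative entropies are also preserved, since $h_{\tilde\mu}(\tilde T)=h_\mu(T)$ and $h_{\tilde\nu}(\tilde R)=h_\nu(R)$ (when these are infinite, argue with the conditional formulas instead).
\item[(b)] Fibre entropies can only drop: we want $\FHh^w(\tilde\rho^{-1}(\tilde z),\tilde\pi,\tilde T)\le \FHh^w(\rho^{-1}(z_0),\pi,T)$, where $z_0$ is the zeroth coordinate of $\tilde z$. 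Use a metric on $\widetilde X$ that weights coordinate $-k$ by $2^{-k}$. A weighted Bowen ball of length $n+K$ around $T^{-K}$-shifted points then controls a $\widetilde X$-Bowen ball of length $n$. Concretely, $\tilde\rho^{-1}(\tilde z)$ is contained in the set of sequences whose $(-K)$-th coordinate lies in $\rho^{-1}(z_{-K})$, and we compare with $\rho^{-1}(z_{-K})$.
\item[(c)] Conclusion of the reduction: the resulting bound involves $\varphi(z_{-K})$. By Lemma \ref{lemma: measurability of fiber entropy}(2), $\varphi(z_{-K})=\varphi(z_0)$ almost surely, so the integral inequality for the extension implies it for the original system.
\end{itemize}
I expect Step 1 to be the main technical nuisance. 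If it is too messy, an alternative is to note that for the application $R$ is a torus rotation-like homeomorphism anyway; for the general statement, however, the extension route is what I would try first.

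\textbf{Step 2: local dimension lower bound on a fibre.} Assume $R$ is a homeomorphism and set
\[ \underline d(x)=\lim_{\varepsilon\to0}\liminf_n -\tfrac1n\log\mu_{\rho(x)}(B^w_n(x,\varepsilon)). \]
By Theorem \ref{theorem: relativised and weighted Brin--Katok formula}, $\int\underline d\,d\mu$ equals the left-hand side of \eqref{eq: one direction of variational principle in the proposition}. It therefore suffices to show, for $\nu$-a.e. $z$,
\[ \varphi(z)\ge \operatorname{ess\,sup}_{\mu_z}\underline d\ \ge \int \underline d\,d\mu_z, \]
and then integrate over $z$. Fix $z$ with $\mu_z(\rho^{-1}(z))=1$, a number $s<\int\underline d\,d\mu_z$, and the corresponding set $E$ of points with $\underline d(x)>s$, which has $\mu_z(E)>0$. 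Shrinking $E$ and using monotonicity in $\varepsilon$, choose $\varepsilon$ and $N_0$ so that
\[ \mu_z(B^w_n(x,2\varepsilon))\le e^{-ns}\quad\text{for all } x\in E,\ n\ge N_0, \]
with $\mu_z(E)>0$ still holding. Measurability here is routine.

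\textbf{Step 3: mass distribution.} Take any cover of $\rho^{-1}(z)$ by balls $B^w_{n_k}(x_k,\varepsilon)$ with $n_k\ge N\ge N_0$. For each ball meeting $E$, pick $y_k$ in the intersection; then $B^w_{n_k}(x_k,\varepsilon)\subset B^w_{n_k}(y_k,2\varepsilon)$, as in Remark \ref{remark: variation of the definition of FH entropy}. Hence
\[ 0<\mu_z(E)\le\sum_k \mu_z\bigl(B^w_{n_k}(y_k,2\varepsilon)\bigr)\le \sum_k e^{-sn_k}, \]
so $\Lambda^{w,s}_{\varepsilon}(\rho^{-1}(z))\ge\mu_z(E)>0$. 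This gives $\FHh^w(\rho^{-1}(z),T,\varepsilon)\ge s$, and hence $\varphi(z)\ge s$. Letting $s\uparrow\int\underline d\,d\mu_z$ and integrating against $\nu$ yields the proposition.
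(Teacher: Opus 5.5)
Your argument is correct in outline. It differs from the paper's proof in both the reduction and the main step.

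\textbf{Reduction to invertible $R$.} The paper extends only $Z$. After restricting to $\bigcap_n R^nZ$, it takes the fibre products $\tilde X=X\times_Z\tilde Z$ and $\tilde Y=Y\times_Z\tilde Z$. Then $\tilde\rho^{-1}(\tilde z)=\rho^{-1}(z_0)\times\{\tilde z\}$, so the fibre entropy is literally unchanged. The entropy side can only grow, because $h(\tilde R|R)=0$. Your full natural extension instead preserves the entropies but enlarges the fibres, which now carry backward orbits. This forces the Bowen-ball comparison in (b). That comparison works, with two corrections:
\begin{itemize}
\item Controlling $\lceil wn\rceil$ steps of $\tilde T$ needs $\lceil wn\rceil+K$ steps of $T$ at coordinate $-K$. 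A weighted ball of length $n+K$ gives only $\lceil w(n+K)\rceil\le\lceil wn\rceil+K$ steps, with strict inequality in general when $w<1$. For $w>0$, use length $n+\lceil K/w\rceil+1$ instead; this costs only a constant factor in $\Lambda$.
\item Since $K=K(\varepsilon)$ varies, you need $\varphi(z_{-K})\le\varphi(z_0)$ for every $K$. This follows directly from the everywhere inequality $\varphi(z)\le\varphi(Rz)$ proved in Lemma~\ref{lemma: measurability of fiber entropy}(2).
\end{itemize}
Your fallback is not available. In the application, $R(x,y)=(ax,by)$ is a non-invertible expanding endomorphism of $\mathbb{T}^2$, not a rotation-like homeomorphism, so the reduction cannot be skipped.

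\textbf{Main step.} The paper first reduces to ergodic $\mu$. It then uniformizes the Wang--Huang bound on a single set $E\subset X$ and finds $Z'$ with $\nu(Z')>0$ on which $\mu_z(E)>c$. It proves $\varphi\ge A$ on $Z'$ by mass distribution, and spreads this to $\nu$-a.e.\ $z$ using the a.e.\ constancy of $\varphi$ for ergodic $\nu$. Your fibrewise mass distribution instead proves $\varphi(z)\ge\operatorname{ess\,sup}_{\mu_z}\underline d\ge\int\underline d\,d\mu_z$ directly for $\nu$-a.e.\ $z$, and then integrates. This needs neither the ergodic decomposition nor the constancy of $\varphi$ in this step, and it gives a pointwise fibre bound. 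The paper's fibre-product reduction, in turn, avoids any Bowen-ball comparison between the extension and the original system.
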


\begin{proof}
We first reduce the proof to the case where $R\colon Z\to Z$ is a homeomorphism.
(This reduction is a standard application of the natural extension.  
It is somewhat lengthy and may be skipped on a first reading.)
As a preliminary step, we observe that we may assume, without loss of generality, that \(R\) is surjective.
Indeed, every $\nu\in \mathscr{M}^R(Z)$ has full measure on $\bigcap_{n=1}^\infty R^n Z$.
Hence we can replace $Z$ by $\bigcap_{n=1}^\infty R^n Z$, and $X$ and $Y$ by $\rho^{-1}\left(\bigcap_{n=1}^\infty R^n Z\right)$
and $\theta^{-1}\left(\bigcap_{n=1}^\infty R^n Z\right)$ respectively without affecting the validity of the proposition.
The restriction of $R$ to $\bigcap_{n=1}^\infty R^n Z$ is surjective.
Therefore we may assume that $R\colon Z\to Z$ is surjective from the beginning.

Next, we consider the natural extension $(\tilde{Z}, \tilde{R})$ of $(Z, R)$:
\[ \tilde{Z} = \{(z_n)_{n\in \mathbb{Z}} \in Z^{\mathbb{Z}}\mid R z_{n} = z_{n+1} \text{ for all } n\in \mathbb{Z} \}, \quad 
     \tilde{R}\left((z_n)_{n\in \mathbb{Z}}\right) = (z_{n+1})_{n\in \mathbb{Z}}. \]
Let $p^Z\colon \tilde{Z}\to Z$ be the projection to the zeroth coordinate.
Let $\tilde{X} := X\times_Z \tilde{Z}$ be the fiber product of $X$ and $\tilde{Z}$ over $Z$:
\[ \tilde{X} = \{(x, \tilde{z})\in X\times \tilde{Z} \mid \rho(x) = p^Z\left(\tilde{z}\right)\}. \]
Set $\tilde{T} = T\times \tilde{R}\colon \tilde{X}\to \tilde{X}$.
We also define $(\tilde{Y}, \tilde{S})$ similarly as the fiber product of $(Y,S)$ and $(\tilde{Z}, \tilde{R})$ over $(Z,R)$:
\[ \tilde{Y} = \{(y, \tilde{z})\in Y\times \tilde{Z}\mid \theta(y) = p^Z\left(\tilde{z}\right)\}, \quad 
    \tilde{S} = S\times \tilde{R}. \]
Set $\tilde{\pi} = \pi \times \mathrm{Id}_{\tilde{Z}} \colon \tilde{X}\to \tilde{Y}$, and let 
$\tilde{\rho}\colon \tilde{X}\to \tilde{Z}$ and $\tilde{\theta}\colon \tilde{Y}\to \tilde{Z}$ be the natural projections:
\begin{equation*} 
\begin{tikzcd}
(\tilde{X},\tilde{T}) \arrow[r,"\tilde{\pi}"] \arrow[dr, two heads, "\tilde{\rho}"'] & (\tilde{Y},\tilde{S}) \arrow[d,"\tilde{\theta}"] \\
& (\tilde{Z},\tilde{R})
\end{tikzcd}
\end{equation*}

Let $p^X\colon \tilde{X}\to X$ and $p^Y\colon \tilde{Y}\to Y$ be the natural projections.
It is known that, for any $\nu \in \mathscr{M}^{\tilde{R}}(\tilde{Z})$, we have 
$h_{\nu}(\tilde{R}|R) = 0$ \cite[p.190]{Downarowicz}. (This follows from the fact that $p^Z\colon \tilde{Z}\to Z$ is a principal factor map; 
see \S \ref{subsection: zero dimensional principal extension}.)
Therefore, for any $\mu \in \mathscr{M}^{\tilde{T}}(\tilde{X})$, we have 
$h_{\mu}(\tilde{T}|\tilde{R}) = h_{\mu}(\tilde{T}|R) \geq h_{p^X_*\mu}(T|R)$, where 
$h_{\mu}(\tilde{T}|R)$ denotes the conditional Kolmogorov--Sinai entropy of the map 
$p^Z\circ \tilde{\rho} = \rho\circ p^X\colon \tilde{X}\to Z$.
Similarly, $h_{\tilde{\pi}_*\mu}(\tilde{S}|\tilde{R}) \geq h_{\pi_*(p^X_*\mu)}(S|R)$.
Hence\footnote{Indeed, the maps $p^X$ and $p^Y$ are also principal factor maps; see Lemma \ref{lemma: fiber product}.
Therefore we have the equalities $h_{\mu}(\tilde{T}|\tilde{R}) = h_{p^X_*\mu}(T|R)$ and 
$h_{\tilde{\pi}_*\mu}(\tilde{S}|\tilde{R}) = h_{\pi_*(p^X_*\mu)}(S|R)$. But we do not need this here.}
\[ w h_{p^X_*\mu}(T|R) + (1-w) h_{\pi_*(p^X_*\mu)}(S|R) \leq w h_{\mu}(\tilde{T}|\tilde{R}) + (1-w) h_{\tilde{\pi}_*\mu}(\tilde{S}|\tilde{R}). \]

On the other hand, for any $\tilde{z}\in \tilde{Z}$, we have $\tilde{\rho}^{-1}(\tilde{z}) = \rho^{-1}\left(p^Z(\tilde{z})\right) \times \{\tilde{z}\}$ and 
hence (by the natural comparison of weighted Bowen balls of $\pi$ and $\tilde{\pi}$)
\[ \FHh^w\left(\tilde{\rho}^{-1}(\tilde{z}), \tilde{\pi}, \tilde{T}\right) = 
    \FHh^w\left(\rho^{-1}\left(p^Z(\tilde{z})\right), \pi, T\right). \]
Notice that $p^X_*\colon \mathscr{M}^{\tilde{T}}(\tilde{X}) \to \mathscr{M}^T(X)$ is surjective because 
$p^X\colon \tilde{X}\to X$ is surjective.    
Therefore it is enough to prove that 
\[  w h_{\mu}(\tilde{T}|\tilde{R}) + (1-w) h_{\tilde{\pi}_*\mu}(\tilde{S}|\tilde{R}) \leq 
    \int_{\tilde{Z}} \FHh^w\left(\tilde{\rho}^{-1}(\tilde{z}), \tilde{\pi}, \tilde{T}\right) d\left(\tilde{\rho}_*\mu\right)(\tilde{z}) \]
for $\mu \in \mathscr{M}^{\tilde{T}}(\tilde{X})$.
Since $(\tilde{Z}, \tilde{R})$ is an invertible dynamical system, 
we may assume that $R\colon Z\to Z$ is a homeomorphism 
by replacing $X, Y, Z$ with $\tilde{X}, \tilde{Y}, \tilde{Z}$.

We have done the reduction.
From now on, we assume that $R$ is a homeomorphism from the beginning of the argument, and we are going to prove 
\eqref{eq: one direction of variational principle in the proposition}.

By the ergodic decomposition theorem, it suffices to prove the proposition for ergodic $\mu$.
Set $\nu := \rho_*\mu$. Then $\nu$ is also ergodic with respect to $R$, and hence $\FHh^w\left(\rho^{-1}(z),\pi, T\right)$
is constant $\nu$-almost everywhere (Lemma \ref{lemma: measurability of fiber entropy}).
Let 
\[ \mu = \int_Z \mu_z \, d\nu(z) \]
be the disintegration of $\mu$ by the map $\rho\colon X\to Z$, where $\mu_z$ is a probability measure 
supported on $\rho^{-1}(z)$ for $\nu$-almost every $z\in Z$.

If $w h_\mu(T|R) + (1-w) h_{\pi_*\mu}(S|R)$ is zero, then the statement is trivial. So we assume that it is positive.
Take any $0<A< w h_\mu(T|R) + (1-w) h_{\pi_*\mu}(S|R)$.
We will prove that $\FHh^w\left(\rho^{-1}(z), \pi, T\right) \geq A$ for $\nu$-almost every $z\in Z$.

By the theorem of Wang and Huang (Theorem \ref{theorem: relativised and weighted Brin--Katok formula}),
\[ \int_X \lim_{\varepsilon \to 0} \left(\liminf_{n\to \infty} \frac{-\log \mu_{\rho(x)}(B^w_n(x,\pi, \varepsilon))}{n}\right) d\mu(x) >A. \]
Hence 
\[ \mu\left\{x\in X\middle|\, 
  \lim_{\varepsilon \to 0} \left(\liminf_{n\to \infty} \frac{-\log \mu_{\rho(x)}(B^w_n(x,\pi, \varepsilon))}{n}\right) >A\right\}
  >0. \]
Then we can find $\varepsilon>0$, $N>0$ and a Borel subset $E\subset X$ with $\mu(E)>0$ such that 
for every $x\in E$ and $n\geq N$
\[ - \frac{\log \mu_{\rho(x)}(B^w_n(x,\pi, \varepsilon))}{n} >A, \]
that is, $\mu_{\rho(x)}(B^w_n(x,\pi, \varepsilon)) < e^{-nA}$.
This implies that if $x\in X$ satisfies 
$B^w_n(x,\pi, \varepsilon/2) \cap E\cap \rho^{-1}(\rho(x)) \neq \emptyset$ for some $n\geq N$ then we have 
\[ \mu_{\rho(x)}\left(B^w_n(x,\pi, \varepsilon/2)\right) < e^{-nA}. \]
Indeed, if \(y\in B^w_n(x,\pi, \varepsilon/2) \cap E\cap \rho^{-1}(\rho(x))\),
then
$B^w_n(x,\pi,\varepsilon/2)\subset B^w_n(y,\pi,\varepsilon)$ and $\rho(y) = \rho(x)$, and hence
$\mu_{\rho(x)}\left(B^w_n(x,\pi, \varepsilon/2)\right) \leq \mu_{\rho(y)}\left(B^w_n(y,\pi,\varepsilon)\right) < e^{-nA}$.

Set $c := \mu(E)/2 >0$.
Since 
\[ \int_Z \mu_z(E) \, d\nu(z) = \mu(E) =2c, \]
there exists a Borel subset $Z^\prime\subset Z$ such that $\nu(Z^\prime) >0$ and $\mu_z$ is a probability measure 
supported on $\rho^{-1}(z)$ satisfying $\mu_z(E) > c$ for every $z\in Z^\prime$.

Let $z\in Z^\prime$.
We would like to prove $\Lambda^{w, A}_{N, \varepsilon/4}\left(\rho^{-1}(z)\right) \geq c$.
Let 
\[ \rho^{-1}(z) \subset \bigcup_k B^w_{n_k}(x_k,\pi, \varepsilon/4) \]
be an at most countable covering with $n_k\geq N$.
We can assume $\rho^{-1}(z) \cap B^w_{n_k}(x_k, \pi, \varepsilon/4)\neq \emptyset$ for all $k$.
Take a point $x^\prime_k$ in this intersection for each $k$.
Then $\rho^{-1}(z) \subset \bigcup_k B^w_{n_k}(x^\prime_k, \pi, \varepsilon/2)$.
We have 
\begin{align*}
 c & < \mu_z(E) \\
  & = \mu_z\left(E\cap \rho^{-1}(z)\right) \\
 & \leq \sum_{k: E\cap \rho^{-1}(z)\cap B^w_{n_k}(x^\prime_k,\pi, \varepsilon/2)\neq \emptyset} 
        \mu_z\left(B^w_{n_k}(x^\prime_k, \pi,  \varepsilon/2)\right)  \\
      &  < \sum_k e^{-n_k A}. 
\end{align*}        
Therefore 
\[ \Lambda^{w, A}_{N, \varepsilon/4}\left(\rho^{-1}(z)\right) \geq  c. \]
It follows that $\Lambda^{w, A}_{\varepsilon/4}\left(\rho^{-1}(z)\right) \geq  c$ and hence 
$\FHh^w(\rho^{-1}(z),\pi, T) \geq A$ for all $z\in Z^\prime$.
Since $\nu(Z^\prime)>0$ and $\FHh^w(\rho^{-1}(z),\pi, T)$ is constant $\nu$-almost everywhere, 
we conclude that 
$\FHh^w(\rho^{-1}(z),\pi, T) \geq A$ for $\nu$-almost every $z\in Z$.
\end{proof}

\section{Proof of relativised variational principle under additional assumptions} 
\label{section: proof of relativised variational principle under additional assumptions}

The purpose of this section is to prove that, in the setting of Theorem \ref{theorem: relativised variational principle}, 
\begin{equation} \label{eq: difficult part of variational principle} 
  \begin{split}
   & \int_Z \FHh^w\left(\rho^{-1}(z),\pi, T\right) \, d\nu(z) \\
   & \leq \sup\left\{w h_\mu(T|R) + (1-w) h_{\pi_*\mu}(S|R) \middle|\, 
    \mu \in \mathscr{M}^T(X)  \text{ with }\rho_*\mu = \nu\right\}
   \end{split}
\end{equation}
under the following two additional assumptions:
\begin{itemize}
  \item $X$ and $Y$ have zero topological dimension.
  \item $\nu\in \mathscr{M}^R(Z)$ is ergodic.
\end{itemize}
We will remove these assumptions in \S \ref{section: removing the ergodicity assumptions} and
\S \ref{section: completion of the proof of relativised variational principle}.

Let us briefly explain the role of the first assumption.
A compact metrizable space is said to have \textbf{zero topological dimension} if 
its clopen sets (closed and open sets) form a basis of the topology.
For example, the Cantor set has zero topological dimension.
This condition allows us to use clopen partitions in the study of 
measure-theoretic entropy.
Such partitions can be used simultaneously as measurable partitions and as open coverings.
This makes it easier to transfer information between measure-theoretic and topological dynamics.
This idea of \lq\lq{}zero dimensional trick\rq\rq{} was first systematically developed by 
Downarowicz \cite{Downarowicz}.

\subsection{Dynamical Frostman lemma} \label{subsection: dynamical Frostman lemma}

The proof of (\ref{eq: difficult part of variational principle}) is based on \textit{the dynamical Frostman lemma}
introduced by Feng and Huang \cite[Lemma 3.3]{Feng--Huang}.
The original Frostman lemma is a fundamental result in geometric measure theory.
It states that, given a compact metric space, there exists a probability measure on it which 
obeys the \lq\lq{}power law\rq\rq{} corresponding to the Hausdorff dimension. 
The dynamical Frostman lemma (Lemma \ref{lemma: dynamical Frostman lemma} below) 
is an analogous statement for the theory of weighted topological entropy.

Throughout this subsection we assume that $0\leq w \leq 1$, $(X, T)$ and $(Y, S)$ are dynamical systems
(with metrics $\mathbf{d}$ and $\mathbf{d}^\prime$ on $X$ and $Y$ respectively) and 
$\pi\colon X\to Y$ is an equivariant continuous map.
The following is the main result of this subsection.

\begin{lemma} \label{lemma: dynamical Frostman lemma}
Let $K$ be a (not necessarily invariant) closed subset of $X$.
For any $0<s<\FHh^w(K, T)$ there exist $\varepsilon >0$, $N>0$ and 
a (not necessarily invariant) Borel probability measure $\lambda$ supported on $K$
such that for every $x\in X$ and $n\geq N$ we have 
\[ \lambda\left(B^w_n(x, \varepsilon)\right) \leq e^{-sn}. \]
\end{lemma}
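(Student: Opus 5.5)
I will follow the strategy of Feng and Huang \cite[Lemma 3.3]{Feng--Huang}, which adapts Howroyd's proof of the classical Frostman lemma. The idea is to replace Bowen balls by a combinatorial, ``tree-like'' family of cylinder-type sets, build the measure greedily at finite levels, and then pass to a weak$^*$ limit.

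\emph{Setup.} First fix $\varepsilon_0>0$ with $s<\FHh^w(K,T,\varepsilon_0)$. Choose $s<s'<\FHh^w(K,T,\varepsilon_0)$, so that $\Lambda^{w,s'}_{\varepsilon_0}(K)=\infty$, and in particular $\Lambda^{w,s}_{N,\varepsilon_0}(K)$ is large for large $N$. Next fix finite Borel partitions $\alpha$ of $X$ and $\beta$ of $Y$ with mesh smaller than $\varepsilon_0$. For $n\ge1$, consider the partition
\[
\xi_n := \bigvee_{k=0}^{\lceil wn\rceil-1}T^{-k}\alpha \vee \pi^{-1}\Bigl(\bigvee_{k=0}^{n-1}S^{-k}\beta\Bigr).
\]
Each atom of $\xi_n$ has $w$-weighted Bowen diameter less than $\varepsilon_0$, so it lies inside $B^w_n(x,\varepsilon_0)$ for any point $x$ of the atom. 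Moreover, $\xi_{n+1}$ refines $\xi_n$, because $\lceil w(n+1)\rceil\ge\lceil wn\rceil$. Hence the atoms form a tree. Conversely, a ball $B^w_n(x,\varepsilon)$ with $\varepsilon$ smaller than a Lebesgue-number-type constant meets at most a bounded number of atoms of $\xi_n$. This bound is not uniform in $n$ in general. To avoid that issue, I will instead take $\varepsilon$ to be a fraction of $\varepsilon_0$ and compare each Bowen ball with a union of atoms of $\xi_n$ whose total count grows at most like $C^{\,\cdot}$ with a controlled exponent. Alternatively, I will work directly with the ``weighted Bowen-ball Hausdorff content'' as in Feng--Huang.

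\emph{Main construction.} I will work with the weighted Hausdorff content $\mathcal{W}^{s}_{N}(\cdot)$, defined using covers by $w$-weighted Bowen balls with $n_k\ge N$. For large $N$ this content is positive on $K$. Following Feng--Huang, define for each $M\ge N$ a finite measure $\lambda_M$ on $K$ as follows. Start from $e^{-sM}$ mass on each atom of $\xi_M$ meeting $K$. Then go up the tree from level $M$ to level $N$: whenever the mass in an atom of $\xi_n$ exceeds $e^{-sn}$, scale it down to $e^{-sn}$. The resulting $\lambda_M$ satisfies $\lambda_M(P)\le e^{-sn}$ for every atom $P\in\xi_n$ with $N\le n\le M$. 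Every point of $K$ lies in a maximal saturated atom, and these atoms give a cover of $K$. Hence $\lambda_M(K)\ge$ the cover-based content, and this is bounded below by a constant $c>0$ times $\Lambda^{w,s}_{N,\varepsilon_0}(K)\wedge 1$. Normalize, take a weak$^*$ limit $\lambda$ supported on $K$ (using compactness of $K$), and use the upper bound on atoms to deduce $\lambda(B^w_n(x,\varepsilon))\le C e^{-sn}$. This needs care because atoms are not open; I plan to use upper semicontinuity on closed neighbourhoods, enlarging the balls slightly and using the $2\varepsilon$ trick from Remark \ref{remark: variation of the definition of FH entropy}.

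\emph{Main obstacle.} The step I expect to be hardest is controlling a ball $B^w_n(x,\varepsilon)$ by atoms at the same level with a constant independent of $n$. For a general (non-zero-dimensional) $X$, a Bowen ball can meet exponentially many atoms. To absorb this, I would first build the measure with exponent $s'$ and then take $\varepsilon$ small. A ball of radius $\varepsilon$ meets at most $L^{n}$ atoms, where $L$ depends on $\varepsilon$ and the partitions. If $L$ can be made close to $1$ by choosing the partition boundaries to have small neighbourhoods, this gives $\lambda(B^w_n)\le L^n e^{-s'n}\le e^{-sn}$. If this fails, I will fall back to Feng--Huang's original argument, which works directly with Bowen-ball covers and a Vitali-type $5r$-covering lemma for weighted Bowen balls, replacing $\varepsilon_0$ by $\varepsilon=\varepsilon_0/5$. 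Finally, I will absorb the constant $C$ by passing from $s'$ to $s$ and enlarging $N$.
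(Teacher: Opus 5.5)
Your argument has a genuine gap at exactly the step you call the main obstacle. For the lemma as stated, with arbitrary $X$ and $Y$, the gap is not a technicality.

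The tree construction controls $\lambda$ only on atoms of $\xi_n$. Transferring this to an arbitrary ball $B^w_n(x,\varepsilon)$ costs a factor equal to the number of atoms of $\xi_n$ that meet the ball, and you need this number to be at most $e^{(s'-s)n}$ uniformly in $x\in X$. All you have is the bound $|\alpha|^{a_n(x)}|\beta|^{b_n(x)}$. Here $a_n(x)$ (resp.\ $b_n(x)$) counts the times $k<\lceil wn\rceil$ (resp.\ $k<n$) at which the $\varepsilon$-ball around $T^kx$ (resp.\ $S^k\pi(x)$) meets more than one atom of $\alpha$ (resp.\ $\beta$). An orbit can do this for a positive proportion of times, and then the rate does not go to $0$ with $\varepsilon$.

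This is not hypothetical. Take $Y$ a point and $T$ the shift on $[0,1]^{\mathbb{Z}}$, and let $\alpha$ be a partition according to finitely many coordinates cut into intervals. The Bowen ball around a constant sequence sitting at a cut point meets at least $2^{\lceil wn\rceil}$ atoms of $\xi_n$, whatever $\varepsilon$ is. The natural way to make ``$L$ close to $1$'' uniformly in $x$ is to use partitions whose boundaries are null for every invariant measure, i.e.\ a small boundary property. That property fails for many systems, for instance this shift, which has positive mean dimension. You give no argument here, and the fallback to ``Feng--Huang's original argument'' is only a pointer.

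Your construction does work when $X$ and $Y$ are zero-dimensional. Take $\alpha,\beta$ clopen and $\varepsilon$ below the distance between distinct atoms. Then each $B^w_n(x,\varepsilon)$ lies in the atom of $\xi_n$ containing $x$, and your weak$^*$-limit argument over the open Bowen balls goes through. That is in fact the only setting in which the paper uses the lemma (\S\ref{subsection: construction of invariant probability measures}), but it does not prove the statement as given.

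The missing idea is to build the measure directly against Bowen balls, with no partition at all. This is what Howroyd's method, used in the paper after Feng and Huang, actually is; it is not a greedy mass distribution on a tree.

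First, work with the weighted content $\mathcal{W}^{w,s}_{N,\varepsilon}(f)=\inf\sum_k c_k e^{-sn_k}$, taken over families with $\sum_k c_k 1_{B^w_{n_k}(x_k,\varepsilon)}\geq f$. This functional is sublinear in $f$.

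Second, Lemma \ref{lemma: average weighted topological entropy} (whose proof is where a Vitali-type covering argument enters) gives $\Lambda^{w,s'+\delta}_{M,6\varepsilon}(K)\leq\mathcal{W}^{w,s'}_{M,\varepsilon}(K)$. Hence $c:=\mathcal{W}^{w,s'}_{M,\varepsilon}(K)>0$ for suitable $\varepsilon$ and $M$.

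Third, apply Hahn--Banach to the sublinear functional $p(f)=c^{-1}\mathcal{W}^{w,s'}_{M,\varepsilon}(f1_K)$ on $C(X)$, and then the Riesz representation theorem. This yields a probability measure $\lambda$ supported on $K$ with $\int f\,d\lambda\leq p(f)\leq c^{-1}e^{-s'n}$ for every continuous $0\leq f\leq 1_{B^w_n(x,\varepsilon)}$ and every $n\geq M$.

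Fourth, regularity gives $\lambda(B^w_n(x,\varepsilon))\leq c^{-1}e^{-s'n}$ (Lemma \ref{lemma: Howroyd method}). Enlarging $N$ absorbs $c^{-1}$ when passing from $s'$ to $s$.

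Because the estimate is obtained for the balls themselves, the boundary-counting problem never arises.
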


The case of $K=X$ is proved in \cite[Lemma 3.3]{Feng--Huang}.
The proof of this lemma is almost identical to that of \cite[Lemma 3.3]{Feng--Huang}.
However, we give the proof for completeness.

For $s\geq 0$, $\varepsilon>0$, a natural number $N$ and a function $f\colon X\to \mathbb{R}$, 
we define
\[ \mathcal{W}^{w, s}_{N, \varepsilon}(f) = \inf \sum_{k} c_k e^{-s n_k} \]
where the infimum is taken over all at most countable collections
$\{(n_k, x_k, c_k)\}$ that satisfy $n_k\geq N$, $x_k\in X$, $0<c_k<\infty$ and 
\[ \sum_k c_k 1_{B^w_{n_k}(x_k, \varepsilon)} \geq f. \]
Here $1_{B^w_{n_k}(x_k, \varepsilon)}$ denotes the characteristic function of the $w$-weighted Bowen ball 
$B^w_{n_k}(x_k, \varepsilon)$.

For a subset $\Omega \subset X$, we set $\mathcal{W}^{w, s}_{N,\varepsilon}(\Omega) = \mathcal{W}^{w, s}_{N,\varepsilon}(1_{\Omega})$.
The next lemma is a key result. This was proved in \cite[Proposition 3.5]{Feng--Huang}.

\begin{lemma} \label{lemma: average weighted topological entropy}
Let $s\geq 0$, $\varepsilon>0$, $\delta>0$ and $\Omega \subset X$.
For all sufficiently large $N$, we have
\[ \Lambda^{w, s+\delta}_{N,6\varepsilon}(\Omega) \leq 
    \mathcal{W}^{w, s}_{N,\varepsilon}(\Omega) \leq \Lambda^{w, s}_{N,\varepsilon}(\Omega). \]
\end{lemma}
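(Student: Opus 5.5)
The right-hand inequality $\mathcal{W}^{w,s}_{N,\varepsilon}(\Omega)\leq \Lambda^{w,s}_{N,\varepsilon}(\Omega)$ is immediate. Any countable cover $\Omega\subset\bigcup_k B^w_{n_k}(x_k,\varepsilon)$ with $n_k\geq N$ gives an admissible collection $\{(n_k,x_k,1)\}$, since $\sum_k 1_{B^w_{n_k}(x_k,\varepsilon)}\geq 1_\Omega$. Taking the infimum over covers gives the bound.

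For the left-hand inequality I will follow the classical argument comparing weighted Hausdorff measure with Hausdorff measure (Federer 2.10.24, Mattila Lemma 8.16), as adapted by Feng--Huang. Fix an admissible collection $\{(n_k,x_k,c_k)\}$ with $\sum_k c_k 1_{B_k}\geq 1_\Omega$, where $B_k=B^w_{n_k}(x_k,\varepsilon)$. I first reduce to finitely many balls, rational weights, and to $\Omega$ replaced by the set where the sum exceeds $1-\eta$; this costs only a factor $(1-\eta)^{-1}$ in the final estimate. I then split the collection according to the value of $n_k$. For each $n\geq N$ let $\Omega_n=\{x\in\Omega : \sum_{k:n_k=n}c_k1_{B_k}(x)\geq 1/(n^2\zeta)\}$, with $\zeta=\sum_{n\geq 1} n^{-2}$. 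Since $\sum_n 1/(n^2\zeta)=1$, we get $\Omega=\bigcup_n\Omega_n$. It therefore suffices to cover each $\Omega_n$ by balls $B^w_n(y,6\varepsilon)$ whose number is at most $n^2\zeta\sum_{k:n_k=n}c_k$, up to a factor that is subexponential in $n$. Summing over $n$ then gives
\[
\Lambda^{w,s+\delta}_{N,6\varepsilon}(\Omega)\leq \sum_n n^2\zeta\, C_n e^{-\delta n}\sum_{k:n_k=n}c_k e^{-sn}.
\]
Since $n^2\zeta\, C_n e^{-\delta n}\leq 1$ for $n\geq N$ large, this is at most $\sum_k c_k e^{-sn_k}$.

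The core step is a fixed-$n$ covering lemma. Suppose finitely many balls $B^w_n(x_i,\varepsilon)$ of the same length $n$ carry integer weights $m_i$ (after scaling the rational weights), and every point of a set $E$ is covered with total multiplicity at least $t$. I want to show that $E$ is covered by $\lesssim \sum_i m_i/t$ balls $B^w_n(\cdot,6\varepsilon)$.

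Because all balls have the same $n$, the weighted Bowen balls behave like balls of a single pseudo-metric $\mathbf{d}^w_n(x,y)=\max\bigl(\mathbf{d}_{\lceil wn\rceil}(x,y),\ \mathbf{d}'_n(\pi x,\pi y)\bigr)$. Hence the Vitali $5r$-covering lemma applies (the $6\varepsilon$ absorbs the factor $5$ and the open/closed issues). I will run the Federer argument: repeatedly extract a maximal disjoint subfamily, and at each stage lower the multiplicity by one, choosing the family so that every still-uncovered point loses at least one unit. After $t$ rounds one obtains $t$ disjoint subfamilies, each of which covers $E$ with the $5\varepsilon$-enlargements, and whose total weight is at most $\sum m_i$. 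One of these subfamilies therefore has at most $\sum m_i/t$ members.

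I expect this combinatorial extraction to be the main obstacle. The reduction to finitely many balls with rational weights needs care: $\Omega$ is arbitrary, so I first pass to the compact-free setting by working directly with finite subfamilies and the $(1-\eta)$ level sets, and then take a limit over an increasing exhaustion. The rest is bookkeeping.
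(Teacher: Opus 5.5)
The paper does not reprove this lemma; it cites Feng--Huang (Proposition 3.5), whose argument, adapted from Federer 2.10.24, has exactly your architecture. It splits by the level $n$ with thresholds proportional to $n^{-2}$, and treats the level-$n$ balls as balls of the single metric $\max\bigl(\mathbf{d}_{\lceil wn\rceil}(x,y),\mathbf{d}^\prime_n(\pi x,\pi y)\bigr)$. It then runs the integer-multiplicity Vitali extraction with a pigeonhole, and finishes with countable subadditivity and $n^2e^{-n\delta}\leq 1$ for $n\geq N$. Your right-hand inequality is fine.

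One correction in the extraction: what drives it is that each selected subfamily is disjoint. So every point of $E$ loses \emph{at most} one unit of multiplicity per round. Its residual multiplicity after $j$ rounds is therefore still at least $t-j$, and it is still covered by balls of positive residual weight in the next round.

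The genuine gap is the passage from finite subfamilies to the given countable family. You reduce globally to finitely many balls and then take a limit along $\Omega^{(K)}=\{x\in\Omega:\sum_{k\leq K}c_k1_{B_k}(x)>1-\eta\}\uparrow\Omega$. That step needs $\Lambda^{w,s+\delta}_{N,6\varepsilon}(\Omega^{(K)})\to\Lambda^{w,s+\delta}_{N,6\varepsilon}(\Omega)$. Nothing guarantees this continuity from below for a Hausdorff-content-type set function. Nearly optimal covers of $\Omega^{(K)}$ may cover a fixed point only by balls whose level $n$ tends to infinity with $K$, so no limiting cover need exist.

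The exhaustion has to be done \emph{after} splitting by $n$, where the level is frozen, and it uses compactness of $X$. This is what the constant $6=5+1$ pays for. For equal-radius balls a maximal disjoint subfamily already gives $3\varepsilon$-enlargements, so there are no open/closed issues to absorb.

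Concretely, fix $n$ and $\theta>0$, and set $\Omega_{n,K}=\{x\in\Omega:\sum_{k\leq K,\,n_k=n}c_k1_{B_k}(x)>\theta\}$.
\begin{enumerate}
\item Your finite argument covers $\Omega_{n,K}$ by at most $L:=\theta^{-1}\sum_{k:n_k=n}c_k$ balls $B^w_n(x^{(K)}_i,5\varepsilon)$. This bound is independent of $K$, up to arbitrarily small slack from the rational approximation.
\item Pad the list of centres by repetition and use compactness of $X^{\lfloor L\rfloor}$. Along a subsequence, $x^{(K_j)}_i\to x_i$ for every $i$.
\item The level-$n$ metric is continuous, so $B^w_n(x^{(K_j)}_i,5\varepsilon)\subset B^w_n(x_i,6\varepsilon)$ for large $j$.
\item Since $\Omega_{n,K}$ increases in $K$, these at most $L$ balls $B^w_n(x_i,6\varepsilon)$ cover $\bigcup_K\Omega_{n,K}=\{x\in\Omega:\sum_{k:n_k=n}c_k1_{B_k}(x)>\theta\}$.
\end{enumerate}
Finally take $\theta=t\,n^{-2}/\zeta$ with $t<1$. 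The thresholds are strict, but these sets still cover $\Omega$ because $\sum_n t\,n^{-2}/\zeta=t<1$. Sum over $n\geq N$ and let $t\to 1$. This replaces your global $(1-\eta)$ device and closes the argument.
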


The case of $K=X$ of the following lemma was given in \cite[Lemma 3.10]{Feng--Huang}.

\begin{lemma} \label{lemma: Howroyd method}
Let $s\geq 0$, $\varepsilon>0$, $N\in \mathbb{N}$ and $K\subset X$ be a closed subset.
Suppose that $c:= \mathcal{W}^{w, s}_{N,\varepsilon}(K) >0$.
Then there is a Borel probability measure $\lambda$ supported on $K$ such that for any $n\geq N$ and $x\in X$
\[  \lambda\left(B^w_n(x,\varepsilon)\right)  \leq  \frac{e^{-sn}}{c}\]
\end{lemma}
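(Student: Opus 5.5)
We follow the Howroyd-type duality argument of \cite[Lemma 3.10]{Feng--Huang}: construct a positive functional on $C(X)$ via the Hahn--Banach theorem and represent it by a measure. Define, for $f\in C(X)$,
\[ p(f) := \mathcal{W}^{w,s}_{N,\varepsilon}\bigl(1_K\cdot f\bigr), \]
where we extend the definition of $\mathcal{W}^{w,s}_{N,\varepsilon}$ to functions taking negative values in the obvious way (the covering condition $\sum_k c_k 1_{B^w_{n_k}(x_k,\varepsilon)}\geq g$ is vacuous where $g\leq 0$). We first check that $p$ is sublinear on $C(X)$, i.e. $p(f+g)\leq p(f)+p(g)$ and $p(tf)=tp(f)$ for $t\geq 0$. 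Both follow directly from the definition by concatenating admissible collections and rescaling the $c_k$. We also note $0\leq p(f)\leq \|f\|_\infty\, \mathcal{W}^{w,s}_{N,\varepsilon}(K)<\infty$; the finiteness holds because $K$ is compact and can be covered by finitely many Bowen balls of length $N$. Finally, $p(1)=c$.

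By Hahn--Banach, there is a linear functional $L$ on $C(X)$ with $L\leq p$ and $L(1)=p(1)=c$. If $f\leq 0$ then $p(f)=0$, so $L(f)\leq 0$; hence $L$ is positive. If $f\in C(X)$ vanishes on $K$, then $p(\pm f)=0$ and so $L(f)=0$. By the Riesz representation theorem, $L$ is given by a finite Borel measure $\lambda'$ with $\lambda'(X)=c$. This measure is supported on $K$: for any open $U$ disjoint from $K$, every continuous $0\leq f\leq 1_U$ satisfies $L(f)=0$. We then set $\lambda=\lambda'/c$.

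For the ball estimate, fix $n\geq N$ and $x\in X$. The ball $B^w_n(x,\varepsilon)$ is open, being a finite intersection of open conditions. Take any compact $F\subset B^w_n(x,\varepsilon)$ and, by Urysohn, a continuous $f$ with $1_F\leq f\leq 1_{B^w_n(x,\varepsilon)}$. The single triple $(n,x,1)$ is admissible for $1_K f$, so
\[ \lambda'(F)\leq L(f)\leq p(f)\leq e^{-sn}. \]
Inner regularity then gives $\lambda'(B^w_n(x,\varepsilon))\leq e^{-sn}$, and therefore $\lambda(B^w_n(x,\varepsilon))\leq e^{-sn}/c$.

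The only delicate points are that the sublinear functional must incorporate the cutoff $1_K$ to force the support into $K$, and that the Bowen balls must be open so that the upper bound passes from continuous test functions to the ball itself. Both are handled above, and the remainder is routine.
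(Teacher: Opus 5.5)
Your proposal is correct and follows essentially the same Howroyd/Hahn--Banach argument as the paper: the sublinear functional $f\mapsto \mathcal{W}^{w,s}_{N,\varepsilon}(1_K f)$, a positive extension, Riesz representation, support in $K$ via functions vanishing on $K$, and the single-ball bound on test functions. The differences are cosmetic: you normalize by $c$ at the end rather than at the start, you pass from test functions to the open ball via Urysohn and inner regularity, and you leave implicit the domination of $t\mapsto tc$ by $p$ on constants, which holds because $p(t)=0$ for $t<0$.
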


\begin{proof}
The proof borrows the method of Howroyd \cite{Howroyd}.
We have $0<c<\infty$.
Let $C(X)$ be the space of real-valued continuous functions on $X$.
We define a semi-norm $p$ on $C(X)$ by 
\[ p(f) = \frac{1}{c} \mathcal{W}^{w, s}_{N,\varepsilon}(f\cdot 1_K). \]
Here $1_K$ is the characteristic function of $K$.
This is a semi-norm (that is, it satisfies the triangle inequality and $p(tf) = tp(f)$ for any $t\geq 0$), 
$p(1_X) = 1$, $0\leq p(f) \leq \sup_{x\in X}|f(x)|$ and $p(g) = 0$ if $g\leq 0$.

We consider a linear functional 
\[ \mathbb{R}1_X\to \mathbb{R}, \quad t1_{X}\mapsto t. \]
Applying the Hahn--Banach theorem, we can extend it to a linear functional 
$L\colon C(X)\to \mathbb{R}$ that satisfies 
\[ L(1_X) = 1, \quad -p(-f) \leq L(f) \leq p(f) \text{ for $\forall f\in C(X)$}. \]
If $f\in C(X)$ is a nonnegative function, then $0=-p(-f) \leq L(f)$.
Therefore by the Riesz representation theorem, there is a Borel probability measure $\lambda$ on $X$
for which we have $L(f) = \int_X fd\lambda$.
If $f\in C(X)$ is zero on $K$, then $p(f) = p(-f) = 0$ and hence $L(f)= 0$.
Thus $\lambda$ is supported on $K$.

Let $x\in X$ be an arbitrary point and let $f\in C(X)$ be any continuous function that satisfies 
$0\leq f \leq 1$ and $\supp (f) \subset B^w_n(x, \varepsilon)$ for some $n \geq N$.
Since $f\leq 1_{B^w_n(x, \varepsilon)}$, we have 
$0\leq L(f) \leq p(f) \leq \frac{1}{c}e^{-sn}$.
This holds for any $f\in C(X)$ satisfying the above conditions. Therefore 
$\lambda\left(B^w_n(x, \varepsilon)\right) \leq \frac{1}{c} e^{-sn}$.
\end{proof}

Now the dynamical Frostman lemma (Lemma \ref{lemma: dynamical Frostman lemma}) follows from 
Lemmas \ref{lemma: average weighted topological entropy} and \ref{lemma: Howroyd method}.

\begin{proof}[Proof of Lemma \ref{lemma: dynamical Frostman lemma}]
We take $0<s < s^\prime < \FHh^w(K, T)$.
It follows from the definition of the weighted topological entropy and Lemma \ref{lemma: average weighted topological entropy}
that we can find $\varepsilon >0$ and $M\in \mathbb{N}$ for which we have $c:= \mathcal{W}^{w, s^\prime}_{M,\varepsilon}(K)>0$.
Applying Lemma \ref{lemma: Howroyd method}, there is a Borel probability measure $\lambda$ supported on $K$ that satisfies 
$\lambda\left(B^w_n(x, \varepsilon)\right) \leq e^{-s^\prime n}/c$ for any $x\in X$ and $n\geq M$. 
We choose $N\geq M$ so that $e^{-(s^\prime-s)N}/c \leq 1$.
Then we obtain $\lambda\left(B^w_n(x, \varepsilon)\right) \leq e^{-sn}$ for any $x\in X$ and $n\geq N$.
\end{proof}

\subsection{Construction of invariant probability measures} \label{subsection: construction of invariant probability measures}

Here we prove (\ref{eq: difficult part of variational principle}) under additional assumptions.
We need the following technical result on calculus.
This was proved in \cite[Lemma 5.4]{Feng--Huang}.

\begin{lemma} \label{lemma: calculus}
Let $u\colon \mathbb{N}\to \mathbb{R}$ be a bounded function defined on the set of natural numbers.
Suppose $\lim_{n\to \infty} |u(n+1)-u(n)| = 0$.
Then for any positive numbers $c$ and $r$
\[ \limsup_{n\to \infty} \left(u(\lceil cn\rceil)- u(\lceil rn \rceil)\right) \geq 0. \]
Here $\lceil x \rceil$ denotes the smallest integer greater than or equal to $x$.
\end{lemma}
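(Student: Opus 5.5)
The plan is to argue by contradiction, assuming
\[ \limsup_{n\to\infty}\bigl(u(\lceil cn\rceil)-u(\lceil rn\rceil)\bigr) = -2\eta<0 \]
for some $\eta>0$. Then there is $n_0$ with $u(\lceil cn\rceil)\le u(\lceil rn\rceil)-\eta$ for all $n\ge n_0$. The case $c=r$ is trivial, so assume $c\neq r$. By symmetry of the roles it suffices to treat both orderings; we handle $c<r$ and $c>r$ in the same way by iterating the inequality along a geometric sequence.

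The key step is iteration. Set $\gamma=r/c$. For any $m$ large, put $n=\lceil m/c\rceil$ roughly, so that $\lceil cn\rceil\approx m$ and $\lceil rn\rceil\approx \gamma m$. The slowly varying hypothesis $|u(n+1)-u(n)|\to0$ is exactly what makes these approximations harmless. Indeed, $\lceil cn\rceil$ differs from $m$ by at most $\lceil c\rceil+1$, a bounded number of steps, so $|u(\lceil cn\rceil)-u(m)|\to0$ as $m\to\infty$. Likewise $\lceil r\lceil m/c\rceil\rceil$ differs from $\lceil \gamma m\rceil$ by a bounded amount (at most about $r+2$), so $|u(\lceil rn\rceil)-u(\lceil\gamma m\rceil)|\to0$. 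Hence for all $m\ge m_1$ we get
\[ u(m)\le u(\lceil \gamma m\rceil)-\eta/2. \]

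Now define $m_{k+1}=\lceil\gamma m_k\rceil$ starting from a large $m_0$.
\begin{itemize}
\item If $\gamma>1$, the sequence $m_k\to\infty$ stays above $m_1$, and iterating gives $u(m_k)\ge u(m_0)+k\eta/2\to\infty$, contradicting boundedness.
\item If $\gamma<1$, rewrite the inequality as $u(\lceil\gamma m\rceil)\ge u(m)+\eta/2$, and iterate upward instead. Choose $m'_{k+1}$ with $\lceil \gamma m'_{k+1}\rceil = m'_k$, or approximately so. Concretely, take $m'_{k+1}=\lfloor m'_k/\gamma\rfloor$, for which $\lceil\gamma m'_{k+1}\rceil$ lies within a bounded distance of $m'_k$. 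Absorbing that error again via slow variation, with $\eta/4$ in place of $\eta/2$, gives $u(m'_{k+1})\le u(m'_k)-\eta/4$. Hence $u(m'_k)\to-\infty$, again a contradiction.
\end{itemize}

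The main obstacle is the bookkeeping of the ceiling errors. One must ensure these discrepancies are uniformly bounded, so that slow variation applies uniformly along the iteration. This is what allows the gap $\eta$, minus a small error, to accumulate linearly in $k$ while $u$ stays bounded. Everything else is elementary.
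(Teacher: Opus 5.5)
Your argument is correct. You take the contradiction hypothesis and use slow variation (finitely many bounded ceiling shifts) to turn it into $u(m)\le u(\lceil\gamma m\rceil)-\eta/2$ for all large $m$, with $\gamma=r/c$. Iterating this along a geometric sequence then makes $u$ unbounded: upward via $m_{k+1}=\lceil\gamma m_k\rceil$ when $\gamma>1$, and via $m'_{k+1}=\lfloor m'_k/\gamma\rfloor$ when $\gamma<1$, where in fact $\lceil\gamma m'_{k+1}\rceil=m'_k$ exactly, so the extra $\eta/4$ margin is not needed.

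The paper gives no proof of its own for this lemma; it simply cites \cite[Lemma 5.4]{Feng--Huang}. Your argument is a complete, self-contained proof of the statement.
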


Throughout the subsection we assume that 
\begin{enumerate}
  \item $0\leq w \leq 1$,
  \item $(X, T)$, $(Y, S)$ and $(Z, R)$ are dynamical systems,
  \item $\pi\colon X\to Y$, $\theta\colon Y\to Z$ and $\rho\colon X\to Z$ are equivariant continuous maps 
with $\rho=  \theta \circ \pi$,
  \item $\rho$ is surjective,
  \item $X$ and $Y$ have zero topological dimension.
\end{enumerate}
\[
\begin{tikzcd}
(X,T) \arrow[r,"\pi"] \arrow[dr, two heads, "\rho"'] & (Y,S) \arrow[d,"\theta"] \\
& (Z,R)
\end{tikzcd}
\]
The condition (5) means that clopen sets (closed and open sets) form a basis for the topology of both
$X$ and $Y$.

We take metrics $\mathbf{d}$ and $\mathbf{d}^\prime$ on $X$ and $Y$ respectively.
By replacing the metric $\mathbf{d}(x, y)$ with 
$\mathbf{d}(x, y)+ \mathbf{d}^\prime(\pi(x), \pi(y))$, if necessary, we can assume that the map 
$\pi\colon (X, \mathbf{d})\to (Y, \mathbf{d}^\prime)$ is one-Lipschitz (that is, 
$\mathbf{d}^\prime(\pi(x), \pi(y))\leq \mathbf{d}(x, y)$ for all $x, y\in X$).

\begin{proposition} \label{proposition: construction of invariant measure}
Let $\nu\in \mathscr{M}^R(Z)$ be an ergodic measure.
Then for $\nu$-almost every $z\in Z$ 
\[ \FHh^w\left(\rho^{-1}(z), \pi, T\right) \leq 
    \sup\{w h_\mu(T|R) + (1-w)h_{\pi_*\mu}(S|R) \mid \mu \in \mathscr{M}^T(X) \text{ with } \rho_*\mu = \nu\}. \]
\end{proposition}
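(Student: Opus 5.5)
By Lemma \ref{lemma: measurability of fiber entropy}, the function $\varphi(z)=\FHh^w(\rho^{-1}(z),\pi,T)$ is constant $\nu$-a.e.; call this value $s_0$. If $s_0\le 0$ there is nothing to prove. Otherwise fix $0<s<s_0$. The plan is to construct $\mu\in\mathscr{M}^T(X)$ with $\rho_*\mu=\nu$ and
\[
w h_\mu(T|R)+(1-w)h_{\pi_*\mu}(S|R)\ge s.
\]
I follow the Feng--Huang proof of Theorem \ref{theorem: Feng--Huang}, relativised in the style of Ledrappier--Walters (and, in spirit, the Kenyon--Peres argument). The steps are a fiberwise Frostman measure, a measurable choice in $z$, averaging into a measure on $X$ that pushes to $\nu$, and a Misiurewicz-type entropy estimate along the orbit. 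Clopen partitions, available by the zero-dimensionality of $X$ and $Y$, let the Bowen-ball bounds translate directly into bounds on partition elements.

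\textbf{Step 1 (Frostman on fibers).} For $\nu$-a.e. $z$, Lemma \ref{lemma: dynamical Frostman lemma} with $K=\rho^{-1}(z)$ gives $\varepsilon_z$, $N_z$ and $\lambda_z$ supported on $\rho^{-1}(z)$ with $\lambda_z(B^w_n(x,\varepsilon_z))\le e^{-sn}$ for $n\ge N_z$. By shrinking $\varepsilon$ and enlarging $N$ on a set of positive measure, we may take $\varepsilon,N$ uniform on a set $Z_0$ with $\nu(Z_0)>0$. Next I need $z\mapsto\lambda_z$ measurable. I would obtain this by noting that the set of admissible $(z,\lambda)$ (with $\lambda(\rho^{-1}(z))=1$ and the closed-ball version of the Frostman bound for all $n\ge N$) is Borel, even closed after a small adjustment of $\varepsilon$, and then applying a measurable selection theorem (Kuratowski--Ryll-Nardzewski). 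By ergodicity and the invariance of the set where $\varphi>s$, we can work with $Z_0$ of positive measure and set $\lambda_z$ arbitrary elsewhere.

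\textbf{Step 2 (building $\mu$).} Choose a clopen partition $\alpha$ of $X$ with $\mathrm{mesh}<\varepsilon$ that refines $\pi^{-1}\beta$, where $\beta$ is a clopen partition of $Y$ with $\mathrm{mesh}<\varepsilon$. Following the Feng--Huang lemma, the Frostman bound yields the following: for each $n\ge N$ and each atom $P$ of $\alpha_0^{\lceil wn\rceil-1}\vee \pi^{-1}\beta_0^{n-1}$, we have $\lambda_z(P)\le e^{-sn}$. Define
\[
\sigma_n=\int_Z\lambda_z\,d\nu(z)\quad(\text{normalised over } Z_0,\ \text{or extended}),\qquad
\mu_n=\frac1n\sum_{k<n}T^k_*\sigma_n .
\]
Since $\rho_*T^k_*\sigma_n=R^k_*\nu'$ and the relevant piece of $\nu$ is handled by ergodicity, averaging gives $\rho_*\mu_n\to\nu$. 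To keep $\rho_*\sigma_n=\nu$ exactly, I would put $\lambda_z$ on all of $Z$ using the a.e. constancy of $\varphi$. Then $\rho_*\mu_n=\frac1n\sum R^k_*\nu=\nu$, so any weak$^*$ limit $\mu$ satisfies $\rho_*\mu=\nu$ and is $T$-invariant.

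\textbf{Step 3 (entropy estimate).} The fiberwise bound gives
\[
H_{\sigma_n}\bigl(\alpha_0^{\lceil wn\rceil-1}\vee\pi^{-1}\beta_0^{n-1}\,\big|\,\rho^{-1}\mathcal{B}_Z\bigr)\ge sn,
\]
since the conditional measures of $\sigma_n$ given $\rho$ are the $\lambda_z$. Feng--Huang's trick is to split this into $w\cdot H(\alpha\text{-part}\mid\pi)$ plus $(1-w)\cdot H(\beta\text{-part})$. Then, via the Misiurewicz blocking argument with the calculus Lemma \ref{lemma: calculus} choosing good scales $n_j$ for the two different lengths $\lceil wn\rceil$ and $n$, one obtains
\[
w\,\frac1m H_{\mu}(\alpha_0^{m-1}\mid\rho^{-1}\mathcal{B}_Z)+(1-w)\,\frac1m H_{\pi_*\mu}(\beta_0^{m-1}\mid\theta^{-1}\mathcal{B}_Z)\ge s-o(1).
\]
Upper semicontinuity of conditional entropy (Lemma \ref{lemma: basic properties of relative measure theoretic entropy}(1), valid since clopen sets have null boundary) and the concavity and almost-convexity of Lemma \ref{lemma: concavity of entropy and reverse} pass this to the limit.

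The main obstacle is Step 3's two-scale decomposition together with conditioning on $\rho^{-1}\mathcal{B}_Z$. Conditional entropy with respect to a non-invariant or only subinvariant $\sigma$-algebra does not straightforwardly satisfy the subadditivity needed for blocking. I would first try to reduce to $R$ invertible via the natural extension, as in Proposition \ref{proposition: half of the variational principle}, and use that $\rho_*T^k_*\sigma_n=\nu$ is invariant, so conditional entropies under shifts behave as in Ledrappier--Walters.
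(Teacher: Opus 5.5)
Your Step~3 has a genuine gap. The invariant measure you extract depends on the block length $m$, and nothing in the proposal produces one measure that works for every $m$.

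The subsequence from Lemma~\ref{lemma: calculus} is chosen for $u(n)=\frac1m H_{\mu_n}(\gamma_0^{m-1}\mid\rho^{-1}\mathcal{B}_Z)$, with $\gamma=\pi^{-1}\beta$; its role is to kill the two-scale error term $u(n)-u(\lceil wn\rceil)$. So this subsequence, and with it the weak$^*$ limit $\mu$, changes with $m$. The lemma concerns a single function and gives no common subsequence for the whole family.

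For one fixed $m$, the inequality
\[
\tfrac{w}{m}H_\mu(\alpha_0^{m-1}\mid\rho^{-1}\mathcal{B}_Z)+\tfrac{1-w}{m}H_{\pi_*\mu}(\beta_0^{m-1}\mid\theta^{-1}\mathcal{B}_Z)\ge s
\]
says nothing about entropy. For invariant $\mu$ these block entropies are subadditive in $m$, so $\frac1m H_\mu(\alpha_0^{m-1}\mid\rho^{-1}\mathcal{B}_Z)\ge h_\mu(T|R,\alpha)$, which is the wrong direction. Upper semicontinuity and (almost) concavity do not repair this. The classical Misiurewicz argument avoids the problem only because its subsequence is fixed before the block length is chosen.

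The missing idea is the one the paper uses. Let $\mathscr{M}_\ell$ be the set of $\mu\in\mathscr{M}^T(X)$ with $\rho_*\mu=\nu$ satisfying the block-$\ell$ inequality.
\begin{itemize}
\item $\mathscr{M}_\ell$ is closed by Lemma~\ref{lemma: basic properties of relative measure theoretic entropy}(1), and nonempty by the argument you sketch.
\item $\mathscr{M}_{\ell_1\ell_2}\subset\mathscr{M}_{\ell_1}\cap\mathscr{M}_{\ell_2}$, because $H_\mu(\alpha_0^{\ell_1\ell_2-1}\mid\rho^{-1}\mathcal{B}_Z)\le\ell_2H_\mu(\alpha_0^{\ell_1-1}\mid\rho^{-1}\mathcal{B}_Z)$ for invariant $\mu$, and similarly for $\beta$.
\item By the finite intersection property and compactness there is $\mu\in\bigcap_\ell\mathscr{M}_\ell$, and this $\mu$ satisfies $wh_\mu(T|R)+(1-w)h_{\pi_*\mu}(S|R)\ge s$.
\end{itemize}

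This is also exactly where the subinvariance you flag as the main obstacle enters, and it is harmless. Since $T^{-r}\rho^{-1}\mathcal{B}_Z=\rho^{-1}R^{-r}\mathcal{B}_Z\subset\rho^{-1}\mathcal{B}_Z$, one has $H_\sigma(T^{-r}\xi\mid\rho^{-1}\mathcal{B}_Z)\le H_{T^r_*\sigma}(\xi\mid\rho^{-1}\mathcal{B}_Z)$ for any $\sigma$, invariant or not, and that is all the blocking uses. No natural extension is needed. Passing to $X\times_Z\tilde Z$ would in general not preserve the zero-dimensionality you need for clopen partitions.

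Steps~1--2 take a heavier route than the paper and are inconsistent as written.
\begin{itemize}
\item Uniform $\varepsilon,N$ exist only on sets $Z_0$ of measure less than (though arbitrarily close to) $1$. So you cannot place Frostman measures with uniform constants on all of $Z$.
\item With $\lambda_z$ arbitrary off $Z_0$, you get the bound $\nu(Z_0)\,sn$, not $sn$. This still suffices if $\nu(Z_0)$ is taken close to $1$.
\item Alternatively, normalise $\sigma$ over $Z_0$ and use the ergodic theorem to get $\rho_*\mu_n\to\nu$. The measurable selection is legitimate, since with open Bowen balls the admissible set of pairs $(z,\lambda)$ is closed.
\end{itemize}
The paper avoids selection entirely. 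It takes one fiber $K=\rho^{-1}(z_0)$ over a $\nu$-generic point $z_0$ at which $\varphi$ takes its a.e.\ value, and applies Lemma~\ref{lemma: dynamical Frostman lemma} once. Then $\rho_*\mu_m=\frac1m\sum_{k<m}\delta_{R^kz_0}\to\nu$ comes for free.

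Finally, the split is not ``$w\,H(\alpha\text{-part}\mid\pi)+(1-w)H(\beta\text{-part})$''. It is $H_\lambda(\alpha_0^{\lceil wn\rceil-1}\mid\rho^{-1}\mathcal{B}_Z)+H_{T^{\lceil wn\rceil}_*\lambda}(\gamma_0^{n-\lceil wn\rceil-1}\mid\rho^{-1}\mathcal{B}_Z)$. The weights $w$ and $1-w$ only appear after blocking and concavity, and that step is what produces the error term handled by Lemma~\ref{lemma: calculus}.
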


This proposition provides (\ref{eq: difficult part of variational principle}) under the additional assumptions that 
$X$ and $Y$ have zero topological dimension and $\nu$ is ergodic.
In view of the removal of the ergodicity assumption in
\S\ref{section: removing the ergodicity assumptions}, we prove a slightly stronger statement than Proposition 
\ref{proposition: construction of invariant measure}.

We denote by $\mathcal{B}_Z$ the Borel $\sigma$-algebra of $Z$.
Let $s\geq 0$ and let $\alpha$ and $\beta$ be  finite clopen partitions of $X$ and $Y$ respectively.
We define $E^s_{\alpha, \beta}\subset \mathscr{M}^T(X)$ by
\begin{equation} \label{eq: definition of E^s_{alpha, beta}}
 E^s_{\alpha, \beta} = \left\{\mu \in \mathscr{M}^T(X)\middle|   \frac{w}{n} H_\mu\left(\alpha^{n-1}_0|\rho^{-1}\mathcal{B}_Z\right) 
+ \frac{1-w}{n} H_{\pi_*\mu}\left(\beta^{n-1}_0|\theta^{-1}\mathcal{B}_Z\right) \geq s 
\text{ for all $n\geq 1$}\right\}. 
\end{equation}
Since $\alpha$ and $\beta$ are clopen partitions, $E^s_{\alpha, \beta}$ is a closed subset of $\mathscr{M}^T(X)$
with respect to the weak$^*$ topology
by Lemma \ref{lemma: basic properties of relative measure theoretic entropy} (1).
Notice that, for $\mu\in E^s_{\alpha, \beta}$, we have  
\[ w h_\mu(T|R) + (1-w) h_{\pi_*\mu}(S|R) \geq s. \]
We define $\mesh(\alpha)$ as the maximum of $\diam(A, \mathbf{d})$ over $A\in \alpha$.
We define $\mesh(\beta)$ similarly.

Proposition \ref{proposition: construction of invariant measure} follows from the next result.
This is the main result of this section.

\begin{proposition} \label{proposition: construction of invariant measures more detailed}
For any ergodic measure $\nu\in \mathscr{M}^R(Z)$ and a real number $s$ with  
$0\leq s < \int_Z \FHh^w(\rho^{-1}(z),\pi, T) d\nu(z)$,
there exists a positive number $\varepsilon$ such that if finite clopen partitions $\alpha$ and $\beta$ of 
$X$ and $Y$ (respectively) satisfy $\mesh(\alpha) < \varepsilon$ and $\mesh(\beta)  < \varepsilon$ then 
$\nu \in \rho_*(E^s_{\alpha, \beta})$.
\end{proposition}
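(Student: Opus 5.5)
Since $\nu$ is ergodic, Lemma \ref{lemma: measurability of fiber entropy} shows that $\FHh^w(\rho^{-1}(z),\pi,T)$ equals a constant $h$ for $\nu$-a.e.\ $z$, so $s<h$. I would fix $s<s'<s''<h$ and follow the Feng--Huang construction, but now run it fiberwise and average over $\nu$.

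The first step is to apply the dynamical Frostman lemma uniformly on fibers. For each $z$ with $\FHh^w(\rho^{-1}(z))>s''$, Lemma \ref{lemma: dynamical Frostman lemma} (applied with $K=\rho^{-1}(z)$) provides $\varepsilon_z$, $N_z$ and a probability measure $\lambda_z$ on $\rho^{-1}(z)$ with $\lambda_z(B^w_n(x,\varepsilon_z))\le e^{-s''n}$ for $n\ge N_z$. Choose $\varepsilon>0$ and $N$ such that the set $Z_0$ of $z$ admitting such a measure with $\varepsilon_z\ge 4\varepsilon$ and $N_z\le N$ has $\nu(Z_0)>1-\eta$. To make $z\mapsto\lambda_z$ measurable, I would go through Lemma \ref{lemma: Howroyd method}: the condition $\mathcal{W}^{w,s'''}_{N,\varepsilon}(\rho^{-1}(z))\ge c$ defines a closed-type condition, as in the proof of Lemma \ref{lemma: measurability of fiber entropy}(1), and the set of admissible measures is a closed, nonempty subset of $\mathscr{M}(X)$ depending measurably on $z$. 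A measurable selection theorem (Kuratowski--Ryll-Nardzewski) then yields a measurable family. For $z\notin Z_0$, I would take $\lambda_z$ to be any measurable choice supported on $\rho^{-1}(z)$.

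Next, fix clopen partitions $\alpha,\beta$ with mesh $<\varepsilon$, and use the one-Lipschitz normalization of $\pi$. Set $\sigma=\int\lambda_z\,d\nu$, so that $\rho_*\sigma=\nu$. Then each atom $A$ of $\alpha_0^{\lceil wn\rceil-1}\vee\pi^{-1}\beta_0^{n-1}$ lies in some weighted Bowen ball $B^w_n(x,\varepsilon)$. The Frostman bound therefore gives, for $z\in Z_0$, the following. Write $\lambda_z$ restricted to an atom $B$ of $\pi^{-1}\beta_0^{n-1}$, and estimate the $\alpha$-atoms inside $B$ by the weighted Feng--Huang inequality: the mass of each atom is at most $e^{-s''n}$, and a concavity/Hölder argument in the spirit of Feng--Huang Lemma 3.3 yields
\[ w\,H_{\lambda_z}(\alpha_0^{\lceil wn\rceil-1}\mid\pi^{-1}\beta_0^{n-1})+H_{\pi_*\lambda_z}(\beta_0^{n-1})\ \ge\ s''n \]
(more precisely, $w\,H(\alpha\text{-part})+(1-w)H(\beta\text{-part})$ combined with $H(\beta\text{-part})$). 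Integrating over $z$ gives this for the conditional entropies $H_\sigma(\cdot\mid\rho^{-1}\mathcal{B}_Z)$ and $H_{\pi_*\sigma}(\cdot\mid\theta^{-1}\mathcal{B}_Z)$, up to an error $\eta\, n\log(|\alpha||\beta|)$.

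The final step is to form the Cesàro averages $\mu_n=\frac1n\sum_{k<n}T^k_*\sigma$. Since $\nu$ is $R$-invariant, $\rho_*\mu_n=\nu$. Take a weak$^*$ limit $\mu$; it satisfies $\rho_*\mu=\nu$. By the Misiurewicz splitting argument, using subadditivity of conditional entropy, invariance of $\rho^{-1}\mathcal{B}_Z$ under conditioning (only subinvariance $T^{-1}\rho^{-1}\mathcal B_Z\subset\rho^{-1}\mathcal B_Z$ is needed in the correct direction), and Lemma \ref{lemma: concavity of entropy and reverse} for the concavity, one obtains lower bounds on $\frac1m H_{\mu_n}(\alpha_0^{m-1}\mid\cdot)$ and $\frac1m H_{\pi_*\mu_n}(\beta_0^{m-1}\mid\cdot)$. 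The mismatch between the time scales $\lceil wn\rceil$ and $n$ is handled exactly as Feng--Huang do, via Lemma \ref{lemma: calculus} applied to $u(n)=\frac1n H(\cdot)$ along a subsequence. Passing to the limit with Lemma \ref{lemma: basic properties of relative measure theoretic entropy}(1) (clopen atoms have null boundary) gives $\mu\in E^{s}_{\alpha,\beta}$ for every $m\ge1$. For this last point I would use that for an invariant measure the quantity in $E^s$ is, in the limit, bounded below by its infimum over $m$. If needed, I would replace the condition "for all $n$" by first getting it at multiples and then using subadditivity, which makes $\inf_m=\lim_m$.

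I expect the main obstacle to be this last step: controlling the two different time scales in the Misiurewicz argument while conditioning on the non-invariant $\sigma$-algebra $\rho^{-1}\mathcal{B}_Z$, together with getting the bound for every $n\ge1$ rather than asymptotically. The measurable selection of $\lambda_z$ is the secondary technical hurdle.
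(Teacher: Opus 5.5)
The overall architecture is right: Frostman measure, Ces\`aro averages, Misiurewicz splitting with entropies conditioned on $\rho^{-1}\mathcal{B}_Z$, Lemma~\ref{lemma: calculus}, and a weak$^*$ limit. However, the step that actually produces the weighted combination is wrong as written, and otherwise you defer it to ``exactly as Feng--Huang do''.

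\textbf{The central inequality is false.} Your display $w\,H_{\lambda_z}(\alpha_0^{\lceil wn\rceil-1}\mid\pi^{-1}\beta_0^{n-1})+H_{\pi_*\lambda_z}(\beta_0^{n-1})\ge s''n$ does not follow from the Frostman bound and fails in general. For $0<w<1$, let $Y$ and $Z$ be points, $X$ the full shift on $k$ symbols, and $\alpha$ the partition into cylinders of a fixed length. Then $\FHh^w(X,T)=w\log k$, so $s''$ may be any number below $w\log k$. But $w\,H(\alpha_0^{\lceil wn\rceil-1})\le w^2n\log k+O(1)$ for every measure, so the display fails for large $n$ once $s''>w^2\log k$. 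No H\"older-type step is involved; Lemma~3.3 of Feng--Huang is the dynamical Frostman lemma itself.

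\textbf{Where the weights really come from.} The Frostman bound gives an \emph{unweighted} inequality $s''n\le H_\lambda(\xi_n)$ for a partition $\xi_n$ whose atoms lie in weighted Bowen balls. The weights $w$ and $1-w$ arise only from the lengths of the two time blocks. The right choice is $\xi_n=\alpha_0^{\lceil wn\rceil-1}\vee\gamma_{\lceil wn\rceil}^{n-1}$ with $\gamma=\pi^{-1}\beta$; its atoms lie in $B^w_n(x,\pi,\varepsilon)$ because $\pi$ is one-Lipschitz. If you instead start from $\alpha_0^{\lceil wn\rceil-1}\vee\gamma_0^{n-1}$ and split along $\gamma_0^{n-1}$, as you do, $\alpha\vee\gamma$ enters the first block and you land in $E^s_{\alpha\vee\gamma,\beta}$ rather than $E^s_{\alpha,\beta}$.

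With all entropies conditioned on $\rho^{-1}\mathcal{B}_Z$, the paper proceeds as follows.
\begin{enumerate}
\item Bound $H_\lambda(\xi_n)$ by $H_\lambda(\alpha_0^{\lceil wn\rceil-1})+H_{T^{\lceil wn\rceil}_*\lambda}(\gamma_0^{n-\lceil wn\rceil-1})$ and split into $\ell$-blocks.
\item By concavity, bound the $\alpha$-part by $\lceil wn\rceil H_{\mu_{\lceil wn\rceil}}(\alpha_0^{\ell-1})$.
\item Writing $\mu_n$ as a convex combination of $\mu_{\lceil wn\rceil}$ and the tail average, bound the $\gamma$-part by $nH_{\mu_n}(\gamma_0^{\ell-1})-\lceil wn\rceil H_{\mu_{\lceil wn\rceil}}(\gamma_0^{\ell-1})$.
\item What remains is the error $u(n)-u(\lceil wn\rceil)$ with $u(n)=\frac1\ell H_{\mu_n}(\gamma_0^{\ell-1}\mid\rho^{-1}\mathcal{B}_Z)$, not $\frac1n H$. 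Both inequalities of Lemma~\ref{lemma: concavity of entropy and reverse} give $|u(n+1)-u(n)|\to0$.
\item Lemma~\ref{lemma: calculus} then gives a subsequence along which the error is asymptotically nonpositive, and the limit is taken along $\mu_{\lceil wn_k\rceil}$.
\end{enumerate}

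\textbf{One $\mu$ for all $m$ needs compactness.} The subsequence above depends on $\ell$. So the construction only shows that each $\mathscr{M}_\ell$ is nonempty, where $\mathscr{M}_\ell$ is the set of $\mu\in\mathscr{M}^T(X)$ with $\rho_*\mu=\nu$ satisfying the defining inequality of $E^s_{\alpha,\beta}$ at $n=\ell$. It does not give one $\mu$ for all $m$, so the compactness argument must be the main step, not a fallback. Subadditivity together with $T^{-r}\rho^{-1}\mathcal{B}_Z\subset\rho^{-1}\mathcal{B}_Z$ gives $\mathscr{M}_{\ell_1\ell_2}\subset\mathscr{M}_{\ell_1}\cap\mathscr{M}_{\ell_2}$. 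Each $\mathscr{M}_\ell$ is closed by Lemma~\ref{lemma: basic properties of relative measure theoretic entropy}(1), hence $\bigcap_\ell\mathscr{M}_\ell\neq\emptyset$.

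\textbf{Your fiber-averaged route.} Using $\sigma=\int\lambda_z\,d\nu$ is a workable alternative. The Misiurewicz step only needs subinvariance, $\rho_*\mu_n=\nu$ holds exactly, and the admissible pairs $(z,\lambda)$ form a closed subset of $Z\times\mathscr{M}(X)$, so measurable selection is available. But the loss from $Z\setminus Z_0$ is at most $\eta s''n$, since entropies are nonnegative. An error of size $\eta n\log(|\alpha||\beta|)$ would force $\eta$, and hence $\varepsilon$, to depend on the partitions, which is circular.

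Under ergodicity the paper avoids this overhead altogether. It fixes one $\nu$-generic point $z_0$ with typical fiber entropy and applies Lemma~\ref{lemma: dynamical Frostman lemma} to the single fiber $K=\rho^{-1}(z_0)$, so conditional entropy is plain entropy. It then obtains $\rho_*\mu=\nu$ from the genericity of $z_0$.
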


\begin{proof}
This proposition is the most important part of the proof of Theorem \ref{theorem: relativised variational principle}.
The basic structure of the argument comes from the ideas of Feng and Huang \cite[\S 5]{Feng--Huang}.
Here is the outline of the proof.
First, we use a dynamical Frostman lemma to construct a probability measure
on a typical fiber with good weighted Bowen-ball estimates.
Second, we average its iterates and extract an invariant measure.
The main difficulty is to obtain the correct weighted combination of
conditional entropies from these averaged measures.
Finally, a small technical argument, based on Lemma \ref{lemma: calculus},
removes an error term which appears because the two time scales
$\lceil wn\rceil$ and $n$ are different.

The endpoint cases $w=0, 1$ are simpler and can be treated similarly. 
Thus we assume $0<w<1$.
Since $\nu$ is ergodic, $\FHh^w(\rho^{-1}(z), \pi, T)$ is constant $\nu$-almost everywhere (Lemma \ref{lemma: measurability of fiber entropy}).
Take a point $z_0\in Z$ that satisfies the following two conditions.
 \begin{itemize}
   \item $z_0$ is a generic point of $\nu$, namely $\frac{1}{n}\sum_{k=0}^{n-1}\delta_{R^k z_0} \to \nu$ in the weak$^*$ topology. Here 
   $\delta_{R^k z_0}$ is the delta measure at $R^k z_0$.
   \item  $\FHh^w(\rho^{-1}(z_0), \pi, T) = \FHh^w(\rho^{-1}(z), \pi, T)$ for $\nu$-almost every $z\in Z$.
 \end{itemize}
Set $K=\rho^{-1}(z_0)$. This is a closed subset of $X$.
Applying Lemma \ref{lemma: dynamical Frostman lemma} (dynamical Frostman lemma) to $K$, we can find 
a positive number $\varepsilon$, a natural number $N$ and a Borel probability measure $\lambda$ supported on 
$K$ such that for any $x\in X$ and $n\geq N$ we have 
\[ \lambda\left(B^w_n(x, \pi, \varepsilon)\right) \leq e^{-sn}. \]
We will prove that the statement of the proposition holds for $\varepsilon$ chosen here.

Let $\alpha$ and $\beta$ be clopen partitions of $X$ and $Y$ respectively with $\mesh(\alpha) < \varepsilon$ and 
$\mesh(\beta)< \varepsilon$.
For a natural number $\ell$ we define $\mathscr{M}_\ell$ as the set of $\mu\in \mathscr{M}^T(X)$ that satisfies 
$\rho_*\mu = \nu$ and
\[ \frac{w}{\ell} H_\mu\left(\alpha_0^{\ell-1}| \rho^{-1}\mathcal{B}_Z\right) + \frac{1-w}{\ell} H_{\pi_*\mu}\left(\beta_0^{\ell-1}|\theta^{-1}\mathcal{B}_Z\right)
    \geq s. \]
$\mathscr{M}_\ell$ is a closed (and hence compact) subset of $\mathscr{M}^T(X)$ with respect to the weak$^*$ topology
by Lemma \ref{lemma: basic properties of relative measure theoretic entropy} (1).
We have $E^s_{\alpha, \beta}\cap (\rho_*)^{-1}(\nu) = \bigcap_{\ell=1}^\infty \mathscr{M}_\ell$.
The main task of the proof is to show that $\mathscr{M}_\ell$ is non-empty for all $\ell\geq 1$.
Suppose, for the moment, that this is the case.
Then the proof goes as follows.

Let $\ell_1$ and $\ell_2$ be natural numbers.
We claim that $\mathscr{M}_{\ell_1 \ell_2} \subset \mathscr{M}_{\ell_1}\cap \mathscr{M}_{\ell_2}$.
Indeed, let $\mu\in \mathscr{M}_{\ell_1 \ell_2}$.
Since $\alpha_0^{\ell_1 \ell_2 -1} = \alpha_0^{\ell_1-1} \vee \alpha_{\ell_1}^{2\ell_1-1}\vee \dots \vee \alpha_{(\ell_2-1)\ell_1}^{\ell_1 \ell_2-1}$,
we have 
\[ H_\mu\left(\alpha_0^{\ell_1\ell_2-1}|\rho^{-1}\mathcal{B}_Z\right) \leq 
   \sum_{k=0}^{\ell_2-1}H_\mu\left(\alpha_{k \ell_1}^{(k+1)\ell_1-1}\middle| \rho^{-1}\mathcal{B}_Z\right) \leq
   \ell_2 H_\mu\left(\alpha_0^{\ell_1-1}|\rho^{-1}\mathcal{B}_Z\right). \]
Here we have used $H_\mu\left(\alpha_r^{r+\ell_1-1}\middle| \rho^{-1}\mathcal{B}_Z\right) \leq 
H_\mu\left(\alpha_0^{\ell_1-1}\mid \rho^{-1}\mathcal{B}_Z\right)$ for any $r\geq 0$\footnote{
Since $T^{-r}\rho^{-1}\mathcal B_Z = \rho^{-1}R^{-r}\mathcal B_Z \subset \rho^{-1}\mathcal B_Z$, 
conditioning on \(\rho^{-1}\mathcal B_Z\) is finer than conditioning on
\(T^{-r}\rho^{-1}\mathcal B_Z\). Hence
\[
H_\mu\left(\alpha_r^{r+\ell_1-1}\mid \rho^{-1}\mathcal B_Z\right) =
H_\mu\left(T^{-r}\alpha_0^{\ell_1-1}\mid \rho^{-1}\mathcal B_Z\right) \leq
H_\mu\left(T^{-r}\alpha_0^{\ell_1-1}\mid T^{-r}\rho^{-1}\mathcal B_Z\right)
= H_\mu\left(\alpha_0^{\ell_1-1}\mid \rho^{-1}\mathcal B_Z\right).
\]}.
Hence 
\[ \frac{1}{\ell_1 \ell_2} H_\mu\left(\alpha_0^{\ell_1\ell_2-1}|\rho^{-1}\mathcal{B}_Z\right) 
\leq \frac{1}{\ell_1}H_\mu\left(\alpha_0^{\ell_1-1}|\rho^{-1}\mathcal{B}_Z\right). \]
Similarly we have 
\[ \frac{1}{\ell_1 \ell_2} H_{\pi_*\mu}\left(\beta_0^{\ell_1\ell_2-1}|\theta^{-1}\mathcal{B}_Z\right) 
\leq \frac{1}{\ell_1}H_{\pi_*\mu}\left(\beta_0^{\ell_1-1}|\theta^{-1}\mathcal{B}_Z\right). \]
Therefore 
\begin{align*}
  s & \leq \frac{w}{\ell_1 \ell_2} H_\mu\left(\alpha_0^{\ell_1\ell_2-1}|\rho^{-1}\mathcal{B}_Z\right) +
          \frac{1-w}{\ell_1 \ell_2} H_{\pi_*\mu}\left(\beta_0^{\ell_1\ell_2-1}|\theta^{-1}\mathcal{B}_Z\right) \\
    & \leq  \frac{w}{\ell_1}H_\mu\left(\alpha_0^{\ell_1-1}|\rho^{-1}\mathcal{B}_Z\right) +
        \frac{1-w}{\ell_1}H_{\pi_*\mu}\left(\beta_0^{\ell_1-1}|\theta^{-1}\mathcal{B}_Z\right). 
\end{align*}
This shows $\mu \in \mathscr{M}_{\ell_1}$. Similarly $\mu \in \mathscr{M}_{\ell_2}$.
Thus $\mathscr{M}_{\ell_1 \ell_2} \subset \mathscr{M}_{\ell_1}\cap \mathscr{M}_{\ell_2}$.

Since each $\mathscr{M}_\ell$ is a nonempty compact subset, we can conclude that
\[ E^s_{\alpha, \beta} \cap (\rho_*)^{-1}(\nu) = \bigcap_{\ell=1}^\infty \mathscr{M}_\ell \neq \emptyset.  \]
Namely we have $\nu \in \rho_*(E^s_{\alpha, \beta})$.

Now we move to the proof of $\mathscr{M}_\ell \neq \emptyset$.
Fix a positive number $M$ greater than both the cardinalities of $\alpha$ and $\beta$.
We set $\gamma = \pi^{-1}(\beta)$. This is a finite clopen partition of $X$.
Let $n\geq N$ be a natural number (we will eventually let $n\to \infty$), and consider the partition
\[ 
\alpha_0^{\lceil wn\rceil -1}\vee \gamma_{\lceil wn \rceil}^{n-1}. 
\]
Since $\mesh(\alpha)$ and $\mesh(\beta)$ are smaller than $\varepsilon$ and the map 
$\pi\colon (X, \mathbf{d})\to (Y, \mathbf{d}^\prime)$ is one-Lipschitz, every element $P$
of the partition $\alpha_0^{\lceil wn\rceil -1}\vee \gamma_{\lceil wn \rceil}^{n-1}$ satisfies 
$P\subset B^w_n(x, \pi, \varepsilon)$ for any $x\in P$.
Hence $\lambda(P) \leq \lambda\left(B^w_n(x, \pi, \varepsilon)\right) \leq e^{-sn}$.
Therefore 
\begin{equation} \label{eq: entropy bound of lambda}
  \begin{split}
  sn & \leq    -\sum_{P\in \alpha_0^{\lceil wn\rceil -1}\vee \gamma_{\lceil wn \rceil}^{n-1}} \lambda(P) \log \lambda(P) \\
  & =  H_\lambda\left(\alpha_0^{\lceil wn\rceil -1}\vee \gamma_{\lceil wn \rceil}^{n-1} \middle|\rho^{-1}\mathcal{B}_Z\right),
  \quad 
  \text{since $\lambda$ is supported on $K = \rho^{-1}(z_0)$} \\
  & \leq  H_\lambda\left(\alpha_0^{\lceil wn \rceil -1}\middle| \rho^{-1}\mathcal{B}_Z\right) + 
            H_\lambda\left(\gamma_{\lceil wn \rceil}^{n-1}\middle| \rho^{-1}\mathcal{B}_Z\right) \\
  & \leq       H_\lambda\left(\alpha_0^{\lceil wn \rceil -1}\middle| \rho^{-1}\mathcal{B}_Z\right) + 
            H_{T^{\lceil wn\rceil}_*\lambda}\left(\gamma^{n-\lceil wn \rceil-1}_{0}\middle| \rho^{-1}\mathcal{B}_Z\right).
  \end{split}
\end{equation}
Let $\ell$ be any natural number. (We assume that $n$ is much larger than $\ell$.)
For each $0\leq i < \ell$, we set 
\[ m_i := \left\lfloor \frac{\lceil wn\rceil-i-1}{\ell} \right\rfloor, \quad 
   m^\prime_i :=  \left\lfloor \frac{n-\lceil wn \rceil -i -1}{\ell}\right\rfloor   \]
and consider 
\begin{align*}
  \alpha_0^{\lceil wn \rceil-1} 
  &= \alpha_0^{i-1} \vee \alpha_i^{i+\ell-1} \vee \alpha_{i+\ell}^{i+2\ell-1} \vee \cdots 
       \vee \alpha_{i+(m_i-1)\ell}^{i+m_i\ell-1} \vee \alpha_{i+m_i\ell}^{\lceil wn\rceil-1} \\
   \gamma_0^{n-\lceil wn \rceil -1} & = \gamma_0^{i-1}\vee \gamma_i^{i+\ell-1}\vee \gamma_{i+\ell}^{i+2\ell-1} \vee \cdots
       \vee \gamma_{i+(m^\prime_i-1)\ell}^{i+m^\prime_i \ell-1} \vee \gamma_{i+m^\prime_i\ell}^{n-\lceil wn \rceil -1},
\end{align*}
where the terms $\alpha_0^{i-1}$ and $\gamma_0^{i-1}$ should be omitted when $i=0$.
Then 
\begin{align*}
 H_\lambda\left(\alpha_0^{\lceil wn\rceil-1}\middle| \rho^{-1}\mathcal{B}_Z\right)  \leq & \,
 H_\lambda\left(\alpha_0^{i-1}\middle| \rho^{-1}\mathcal{B}_Z\right) 
 + \sum_{j=0}^{m_i-1} H_\lambda\left(\alpha_{i+j\ell}^{i+(j+1)\ell-1}\middle|\rho^{-1}\mathcal{B}_Z\right) \\
 &+ H_\lambda\left(\alpha_{i+m_i\ell}^{\lceil wn\rceil-1}\middle| \rho^{-1}\mathcal{B}_Z\right).
\end{align*} 
We have $H_\lambda\left(\alpha_{i+j\ell}^{i+(j+1)\ell-1}\middle|\rho^{-1}\mathcal{B}_Z\right) 
\leq  H_{T^{i+j\ell}_*\lambda}\left(\alpha_0^{\ell-1}\middle| \rho^{-1}\mathcal{B}_Z\right)$.
The terms $H_\lambda\left(\alpha_0^{i-1}\middle| \rho^{-1}\mathcal{B}_Z\right)$ and 
$H_\lambda\left(\alpha_{i+m_i\ell}^{\lceil wn\rceil-1}\middle| \rho^{-1}\mathcal{B}_Z\right)$ are bounded from above by 
$\ell\log M$ because $|\alpha| < M$.
Hence 
\[ H_\lambda\left(\alpha_0^{\lceil wn\rceil-1}\middle| \rho^{-1}\mathcal{B}_Z\right) \leq 
   2\ell \log M + \sum_{j=0}^{m_i-1}H_{T^{i+j\ell}_*\lambda}\left(\alpha_0^{\ell-1}\middle| \rho^{-1}\mathcal{B}_Z\right). \]
Similarly we have 
\[ H_{T^{\lceil wn \rceil}_*\lambda}\left(\gamma_0^{n-\lceil wn \rceil -1}\middle| \rho^{-1}\mathcal{B}_Z\right)
   \leq 2\ell \log M + \sum_{j=0}^{m^\prime_i-1}H_{T^{\lceil wn \rceil+i+j\ell}_*\lambda}\left(\gamma_0^{\ell-1}\middle| \rho^{-1}\mathcal{B}_Z\right). \]
Adding these inequalities over $i=0, 1, \dots, \ell-1$, we obtain 
\begin{equation}  \label{eq: entropy estimate by partitioning}
  \begin{split}
  & \ell H_\lambda\left(\alpha_0^{\lceil wn\rceil-1}\middle| \rho^{-1}\mathcal{B}_Z\right) 
  + \ell H_{T^{\lceil wn \rceil}_*\lambda}\left(\gamma_0^{n-\lceil wn \rceil -1}\middle| \rho^{-1}\mathcal{B}_Z\right) \\
  &\leq   4\ell^2 \log M + \sum_{k=0}^{\lceil wn \rceil-1} H_{T^k_*\lambda}\left(\alpha_0^{\ell-1}\middle| \rho^{-1}\mathcal{B}_Z\right)
             + \sum_{k=\lceil wn \rceil}^{n-1} H_{T^k_*\lambda} \left(\gamma_0^{\ell-1}\middle| \rho^{-1}\mathcal{B}_Z\right).
  \end{split}
\end{equation}
Since the entropy is concave in the underlying measures 
(see the inequality \eqref{eq: concavity of entropy} in Lemma \ref{lemma: concavity of entropy and reverse}),
\[ \sum_{k=0}^{\lceil wn \rceil-1} H_{T^k_*\lambda}\left(\alpha_0^{\ell-1}\middle| \rho^{-1}\mathcal{B}_Z\right)
   \leq \lceil wn \rceil H_{\frac{1}{\lceil wn\rceil}\sum_{k=0}^{\lceil wn\rceil-1} T^k_*\lambda}\left(\alpha_0^{\ell-1}\middle| \rho^{-1}\mathcal{B}_Z\right), \]
\[  \sum_{k=\lceil wn \rceil}^{n-1} H_{T^k_*\lambda} \left(\gamma_0^{\ell-1}\middle| \rho^{-1}\mathcal{B}_Z\right) \leq 
    (n-\lceil wn \rceil) H_{\frac{1}{n-\lceil wn \rceil}\sum_{k=\lceil wn \rceil}^{n-1} T^k_*\lambda}\left(\gamma_0^{\ell-1}\middle| \rho^{-1}\mathcal{B}_Z\right). \]
Set 
\[ \mu_n = \frac{1}{n}\sum_{k=0}^{n-1} T^k_*\lambda. \]
Then the first inequality provides 
\[ \sum_{k=0}^{\lceil wn \rceil-1} H_{T^k_*\lambda}\left(\alpha_0^{\ell-1}\middle| \rho^{-1}\mathcal{B}_Z\right) \leq 
    \lceil wn \rceil H_{\mu_{\lceil wn \rceil}}\left(\alpha_0^{\ell-1}\middle| \rho^{-1}\mathcal{B}_Z\right). \]
Note that 
\[ \mu_n = \frac{\lceil wn \rceil}{n} \mu_{\lceil wn \rceil} 
  + \frac{n-\lceil wn \rceil}{n}\left(\frac{1}{n-\lceil wn \rceil}\sum_{k=\lceil wn \rceil}^{n-1} T^k_*\lambda\right). \]
Using the concavity (the inequality \eqref{eq: concavity of entropy} in Lemma \ref{lemma: concavity of entropy and reverse}) again,
\begin{align*}
 & \frac{\lceil wn \rceil}{n}H_{\mu_{\lceil wn \rceil}}\left(\gamma_0^{\ell-1}\middle|\rho^{-1}\mathcal{B}_Z\right) +
  \frac{n-\lceil wn \rceil}{n} 
  H_{\frac{1}{n-\lceil wn \rceil}\sum_{k=\lceil wn \rceil}^{n-1} T^k_*\lambda}\left(\gamma_0^{\ell-1}\middle|\rho^{-1}\mathcal{B}_Z\right)
    \\
 & \leq H_{\mu_n}\left(\gamma_0^{\ell-1}\middle|\rho^{-1}\mathcal{B}_Z\right). 
\end{align*}  
Therefore 
\[ \sum_{k=\lceil wn \rceil}^{n-1} H_{T^k_*\lambda} \left(\gamma_0^{\ell-1}\middle| \rho^{-1}\mathcal{B}_Z\right)
\leq n H_{\mu_n}\left(\gamma_0^{\ell-1}\middle| \rho^{-1}\mathcal{B}_Z\right) 
- \lceil wn \rceil H_{\mu_{\lceil wn \rceil}}\left(\gamma_0^{\ell-1}\middle| \rho^{-1}\mathcal{B}_Z\right). \]
Combining these estimates with \eqref{eq: entropy estimate by partitioning}, we obtain 
\begin{align*}
  &\ell H_\lambda\left(\alpha_0^{\lceil wn\rceil-1}\middle| \rho^{-1}\mathcal{B}_Z\right) 
  + \ell H_{T^{\lceil wn \rceil}_*\lambda}\left(\gamma_0^{n-\lceil wn \rceil -1}\middle| \rho^{-1}\mathcal{B}_Z\right) \\
   & \leq  4 \ell^2 \log M + \lceil wn \rceil H_{\mu_{\lceil wn \rceil}}\left(\alpha_0^{\ell-1}\middle| \rho^{-1}\mathcal{B}_Z\right) \\
    & +n H_{\mu_n}\left(\gamma_0^{\ell-1}\middle| \rho^{-1}\mathcal{B}_Z\right) 
- \lceil wn \rceil H_{\mu_{\lceil wn \rceil}}\left(\gamma_0^{\ell-1}\middle| \rho^{-1}\mathcal{B}_Z\right). 
\end{align*}
From \eqref{eq: entropy bound of lambda}
\begin{equation} \label{eq: lower bound on the combination of entropy}
   \begin{split}
    s \leq & \frac{4\ell \log M}{n} + \frac{\lceil wn \rceil}{n\ell}H_{\mu_{\lceil wn \rceil}}\left(\alpha_0^{\ell-1}\middle|\rho^{-1}\mathcal{B}_Z\right) \\
     &+ \frac{1}{\ell} H_{\mu_n}\left(\gamma_0^{\ell-1}\middle| \rho^{-1}\mathcal{B}_Z\right)
     - \frac{\lceil wn \rceil}{n \ell}  H_{\mu_{\lceil wn \rceil}}\left(\gamma_0^{\ell-1}\middle| \rho^{-1}\mathcal{B}_Z\right) \\
      = & \frac{4\ell \log M}{n} + \frac{\lceil wn \rceil}{n\ell}H_{\mu_{\lceil wn \rceil}}\left(\alpha_0^{\ell-1}\middle|\rho^{-1}\mathcal{B}_Z\right) + 
       \frac{n-\lceil wn \rceil}{n\ell} H_{\mu_{\lceil wn \rceil}}\left(\gamma_0^{\ell-1}\middle| \rho^{-1}\mathcal{B}_Z\right) \\
       &+ \frac{1}{\ell}H_{\mu_n}\left(\gamma_0^{\ell-1}\middle| \rho^{-1}\mathcal{B}_Z\right) 
       - \frac{1}{\ell} H_{\mu_{\lceil wn \rceil}}\left(\gamma_0^{\ell-1}\middle| \rho^{-1}\mathcal{B}_Z\right).
   \end{split}
\end{equation}
This estimate is close to the desired one. 
If the last terms 
\[ \frac{1}{\ell}H_{\mu_n}\left(\gamma_0^{\ell-1}\middle| \rho^{-1}\mathcal{B}_Z\right) 
       - \frac{1}{\ell} H_{\mu_{\lceil wn \rceil}}\left(\gamma_0^{\ell-1}\middle| \rho^{-1}\mathcal{B}_Z\right)\]
were not present, we could immediately obtain the conclusion by letting $n\to \infty$.
So the remaining task is to show that this error term does not affect the limiting lower bound.

Set $u(n) = \frac{1}{\ell}H_{\mu_n}\left(\gamma_0^{\ell-1}\middle|\rho^{-1}\mathcal{B}_Z\right)$.
We have $0\leq u(n) \leq \log M$, namely it is bounded.

\begin{claim} \label{claim: removing an error term by concavity and reverse}
$|u(n+1)-u(n)|\to 0$ as $n\to \infty$.
\end{claim}

\begin{proof}
We have $\mu_{n+1} = \frac{n}{n+1}\mu_n + \frac{1}{n+1}T^n_*\lambda$.
We apply Lemma \ref{lemma: concavity of entropy and reverse} with $t=\frac{1}{n+1}$ and obtain 
  \begin{align*}
  & \frac{n}{n+1}H_{\mu_n}\left(\gamma_0^{\ell-1}\middle|\, \rho^{-1}\mathcal{B}_Z\right) 
  + \frac{1}{n+1} H_{T^n_*\lambda}\left(\gamma_0^{\ell-1}\middle|\, \rho^{-1}\mathcal{B}_Z\right) \\
  & \leq H_{\mu_{n+1}}\left(\gamma_0^{\ell-1}\middle|\, \rho^{-1}\mathcal{B}_Z\right) \\
  & \leq   \frac{n}{n+1}H_{\mu_n}\left(\gamma_0^{\ell-1}\middle|\, \rho^{-1}\mathcal{B}_Z\right) 
  + \frac{1}{n+1} H_{T^n_*\lambda}\left(\gamma_0^{\ell-1}\middle|\, \rho^{-1}\mathcal{B}_Z\right)
  + h\left(\frac{1}{n+1}\right),
  \end{align*}
where $h(\frac{1}{n+1}) = \frac{1}{n+1}\log (n+1) + \frac{n}{n+1}\log \frac{n+1}{n}$.
Therefore 
\[ |u(n+1)-u(n)| \leq \frac{u(n)}{n+1} + \frac{1}{\ell(n+1)}H_{T^n_*\lambda}\left(\gamma_0^{\ell-1}\middle|\, \rho^{-1}\mathcal{B}_Z\right)
    + \frac{1}{\ell} h\left(\frac{1}{n+1}\right). \]
The right-hand side goes to zero as $n\to \infty$ because 
$u(n)$ and 
$\frac{1}{\ell}H_{T^n_*\lambda}\left(\gamma_0^{\ell-1}\middle|\, \rho^{-1}\mathcal{B}_Z\right)$ are both bounded by 
$\log M$ and $h(\frac{1}{n+1}) \to 0$.
\end{proof}

Now we can apply Lemma \ref{lemma: calculus} to $u(n)$ and obtain 
\[ \limsup_{n\to \infty} \left(u(\lceil wn\rceil) - u(n)\right) \geq 0. \]
So there exists a subsequence $n_1<n_2<n_3<\dots$ for which we have 
\[  \lim_{k\to \infty} \left(u(\lceil wn_k\rceil) - u(n_k)\right) \geq 0. \]
Namely
\[ 
 \lim_{k\to \infty} \left\{\frac{1}{\ell}H_{\mu_{\lceil w n_k\rceil}}\left(\gamma_0^{\ell-1}\middle|\, \rho^{-1}\mathcal{B}_Z\right)
  - \frac{1}{\ell} H_{\mu_{n_k}}\left(\gamma_0^{\ell-1}\middle|\, \rho^{-1}\mathcal{B}_Z\right)\right\} \geq 0.
  \]

Passing to a further subsequence (which we also denote by $\{n_k\}$), we can assume that 
$\mu_{\lceil w n_k\rceil}$ converges to $\mu\in \mathscr{M}^T(X)$ in the weak$^*$ topology.
We have \(\rho_*\mu=\nu\), because
\[
\rho_*\mu_{\lceil wn_k\rceil}
=
\frac{1}{\lceil wn_k\rceil}
\sum_{j=0}^{\lceil wn_k\rceil-1}\delta_{R^jz_0}
\to \nu
\]
as \(k\to\infty\), by the genericity of \(z_0\).

Since $\alpha$ and $\gamma$ are clopen, 
by Lemma \ref{lemma: basic properties of relative measure theoretic entropy} (1)
\begin{align*}
 \limsup_{k\to \infty} H_{\mu_{\lceil w n_k\rceil}} \left(\alpha_0^{\ell-1}\middle| \rho^{-1}\mathcal{B}_Z\right) 
 & \leq H_\mu\left(\alpha_0^{\ell-1}\middle| \rho^{-1}\mathcal{B}_Z\right), \\
  \limsup_{k\to \infty} H_{\mu_{\lceil w n_k\rceil}} \left(\gamma_0^{\ell-1}\middle| \rho^{-1}\mathcal{B}_Z\right) 
 & \leq H_\mu\left(\gamma_0^{\ell-1}\middle| \rho^{-1}\mathcal{B}_Z\right).
\end{align*}
Now we combine these estimates with \eqref{eq: lower bound on the combination of entropy} and conclude 
\[ s \leq \frac{w}{\ell} H_\mu \left(\alpha_0^{\ell-1}\middle| \rho^{-1}\mathcal{B}_Z\right) 
  + \frac{1-w}{\ell} H_\mu \left(\gamma_0^{\ell-1}\middle| \rho^{-1}\mathcal{B}_Z\right). \]
Since $\gamma = \pi^{-1}\beta$, we have $H_\mu \left(\gamma_0^{\ell-1}\middle| \rho^{-1}\mathcal{B}_Z\right)
= H_{\pi_*\mu} \left(\beta_0^{\ell-1}\middle| \theta^{-1}\mathcal{B}_Z\right)$.
Therefore we obtain $\mu \in \mathscr{M}_\ell$.
This shows that $\mathscr{M}_\ell$ is non-empty and, as we have already observed, this finishes the proof.
\end{proof}

\section{Removing the ergodicity assumption} \label{section: removing the ergodicity assumptions}

In this section, we remove the ergodicity assumption on $\nu$.
(For the application to Bedford--McMullen carpets, it is enough to consider only the case that 
$\nu$ is ergodic. Therefore readers may skip this section if their primary interest is in Theorem 
\ref{theorem: intersection of Bedford--McMullen carpets}.)
At first sight, this may seem to follow immediately from
Proposition \ref{proposition: construction of invariant measure}
and the ergodic decomposition theorem.
There is, however, a minor measurability issue: one has to choose,
for almost every ergodic component, a nearly optimal lift in a measurable way.
We handle this using the measurable selection theorem of Kuratowski and Ryll--Nardzewski.
We first explain where the difficulty lies.

Throughout this section,
as in \S \ref{subsection: construction of invariant probability measures}, we assume the following.
\begin{enumerate}
  \item $0\leq w \leq 1$,
  \item $(X, T)$, $(Y, S)$ and $(Z, R)$ are dynamical systems,
  \item $\pi\colon X\to Y$, $\theta\colon Y\to Z$ and $\rho\colon X\to Z$ are equivariant continuous maps 
with $\rho= \theta \circ \pi$,
  \item $\rho$ is surjective,
  \item $X$ and $Y$ have zero topological dimension.
\end{enumerate}
\[
\begin{tikzcd}
(X,T) \arrow[r,"\pi"] \arrow[dr, two heads, "\rho"'] & (Y,S) \arrow[d,"\theta"] \\
& (Z,R)
\end{tikzcd}
\]

We assume that $\mathscr{M}^T(X), \mathscr{M}^S(Y)$ and $\mathscr{M}^R(Z)$ are endowed with the 
weak$^*$ topology and the Borel $\sigma$-algebra.
We would like to prove 

\begin{proposition} \label{proposition: construction of invariant measures without ergodicity}
For any (not necessarily ergodic) $\nu\in \mathscr{M}^R(Z)$
\begin{equation*} 
  \begin{split}
   & \int_Z \FHh^w\left(\rho^{-1}(z),\pi, T\right)  d\nu(z) \\
   & \leq \sup\left\{w h_\mu(T|R) + (1-w) h_{\pi_*\mu}(S|R) \middle|\, 
    \mu \in \mathscr{M}^T(X)  \text{ with }\rho_*\mu = \nu\right\}.
   \end{split}
\end{equation*}
\end{proposition}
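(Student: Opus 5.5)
We will deduce the statement from Proposition \ref{proposition: construction of invariant measures more detailed} through the ergodic decomposition of $\nu$, using measurable selection to glue fiberwise lifts. Write $\nu = \int_{\mathscr{M}^R_{\mathrm{erg}}(Z)} \xi \, dP(\xi)$ for the ergodic decomposition. Set $\varphi(z) = \FHh^w(\rho^{-1}(z),\pi,T)$ and, for ergodic $\xi$, let $c(\xi)$ be the $\xi$-a.e.\ constant value of $\varphi$ (Lemma \ref{lemma: measurability of fiber entropy}). Then $\int_Z \varphi\, d\nu = \int c(\xi)\, dP(\xi)$, and $\xi\mapsto c(\xi)=\int\varphi\,d\xi$ is measurable. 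We may assume the left-hand side is positive, and we will take any $s$ below it. (If $\varphi$ is unbounded, truncate: replace $c$ by $\min(c,L)$ and let $L\to\infty$.)

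First we fix scales. Choose a sequence of finite clopen partitions $\alpha_m,\beta_m$ of $X,Y$ with mesh tending to $0$; zero-dimensionality makes this possible. For ergodic $\xi$ and rational $q<c(\xi)$, Proposition \ref{proposition: construction of invariant measures more detailed} yields an $m$ with $\xi\in\rho_*(E^q_{\alpha_{m'},\beta_{m'}})$ for all $m'\ge m$. Fix $\delta>0$ and define $q(\xi)$ to be a measurable step function of $c(\xi)$ with $c(\xi)-\delta\le q(\xi)< c(\xi)$ (or $q(\xi)=0$ when $c(\xi)\le\delta$). Let $m(\xi)$ be the least admissible index. This index is measurable, because $\{\xi : \xi\in\rho_*(E^q_{\alpha_m,\beta_m})\}$ is the image of a compact set under the continuous map $\rho_*$, hence closed.

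Next we select lifts. Consider the set-valued map $\xi\mapsto F(\xi):=E^{q(\xi)}_{\alpha_{m(\xi)},\beta_{m(\xi)}}\cap\rho_*^{-1}(\xi)$, which has nonempty compact values. On each of the countably many measurable pieces where $(q,m)$ is constant, this map is the fiber of a closed subset of $\mathscr{M}^R_{\mathrm{erg}}(Z)\times\mathscr{M}^T(X)$. It is therefore weakly measurable, since $\{\xi: F(\xi)\cap U\ne\emptyset\}$ is $F_\sigma$ for open $U$. The Kuratowski--Ryll--Nardzewski theorem then gives a measurable selection $\xi\mapsto\mu_\xi\in F(\xi)$.

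Finally we glue. Put $\mu=\int\mu_\xi\,dP(\xi)\in\mathscr{M}^T(X)$, so that $\rho_*\mu=\nu$. For each $\xi$ we have $wh_{\mu_\xi}(T|R)+(1-w)h_{\pi_*\mu_\xi}(S|R)\ge q(\xi)\ge c(\xi)-\delta$. The remaining task is to show that the functional $\mu\mapsto wh_\mu(T|R)+(1-w)h_{\pi_*\mu}(S|R)$ is affine under this integral; we expect this to be the main obstacle. Here $\mu_\xi$ need not be ergodic, but the relative entropy over $R$ should be affine over decompositions that respect $\rho$. We would prove this by applying Lemma \ref{lemma: basic properties of relative measure theoretic entropy}(2) to the ergodic decomposition of $\mu$ and of each $\mu_\xi$: the ergodic components of $\mu$ are obtained by integrating those of the $\mu_\xi$, and $h(T|R)$ is the integral over ergodic components. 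An alternative route avoids exact affinity. The definition of $E^q$ gives the inequality at each finite $n$ with conditional Shannon entropies, and the concavity inequality \eqref{eq: concavity of entropy}, extended to integrals, passes it to $\mu$ directly, so that $\mu\in$ the corresponding $E$-type set with bound $\int q\,dP$. This yields $wh_\mu(T|R)+(1-w)h_{\pi_*\mu}(S|R)\ge\int c\,dP-\delta$. Letting $\delta\to0$ gives the proposition.
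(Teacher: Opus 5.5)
Your proposal is correct and follows essentially the paper's route. You truncate the fiber entropy, apply Proposition~\ref{proposition: construction of invariant measures more detailed} on each ergodic component, and take the first admissible index among countably many clopen partitions at a discretized level. You then get a measurable lift from the Kuratowski--Ryll-Nardzewski theorem, with the same $\sigma$-compactness argument for weak measurability, and integrate. The only cosmetic difference is that you apply the selection theorem once to the glued correspondence, while the paper selects separately on each $\rho_*(E^s_{\alpha_n,\beta_n})$ and glues on the disjoint pieces $G^s_n$. Your ergodic-decomposition justification that $\mu\mapsto w h_\mu(T|R)+(1-w)h_{\pi_*\mu}(S|R)$ is affine under $\int\cdot\,dP$ fills in a step the paper simply writes as an equality. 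Your alternative finite-$n$ concavity route would need extra care, for example nested partitions, because $\alpha_{m(\xi)},\beta_{m(\xi)}$ vary with $\xi$.
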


Let $\mathscr{M}^R_{\mathrm{erg}}(Z)$ be the set of ergodic measures on $(Z, R)$.
This is a Borel subset of $\mathscr{M}^R(Z)$.
For $N\geq 1$, we set 
\[ f_N(z) = \min\left(\FHh^w\left(\rho^{-1}(z), \pi, T\right),N\right), \quad  (z\in Z). \]
Proposition \ref{proposition: construction of invariant measure} implies that for any $\delta>0$ and 
$\lambda \in \mathscr{M}^R_{\mathrm{erg}}(Z)$
there exists $\mu\in \mathscr{M}^T(X)$ such that $\rho_*\mu = \lambda$ and
\[ w h_\mu(T|R) + (1-w) h_{\pi_*\mu}(S|R) \geq \int_Z f_N(z) d\lambda(z) -\delta. \]
Then it is natural to expect that the following lemma holds.

\begin{lemma} \label{lemma: existence of measurable section}
For any $\delta>0$ there exists a measurable map $\phi\colon \mathscr{M}^R_{\mathrm{erg}}(Z) \to \mathscr{M}^T(X)$ 
that satisfies $\rho_*\left(\phi(\lambda)\right) = \lambda$ and
\[ w h_{\phi(\lambda)}(T|R) + (1-w) h_{\pi_*\phi(\lambda)}(S|R) \geq \int_Z f_N(z) d\lambda(z) -\delta \]
for every $\lambda \in \mathscr{M}^R_{\mathrm{erg}}(Z)$.
\end{lemma}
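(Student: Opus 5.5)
We will obtain $\phi$ from the Kuratowski--Ryll-Nardzewski measurable selection theorem applied to a closed-valued multifunction built from the sets $E^s_{\alpha,\beta}$. The function $f_N$ is bounded and measurable (Lemma \ref{lemma: measurability of fiber entropy}), so $F(\lambda):=\int_Z f_N\,d\lambda$ is a Borel function on $\mathscr{M}^R(Z)$. Since $X$ and $Y$ are zero-dimensional, we fix sequences of finite clopen partitions $\alpha_k$ of $X$ and $\beta_k$ of $Y$ with $\mesh(\alpha_k),\mesh(\beta_k)\to 0$. We then discretise the target level: fix $\delta>0$ and the grid $s_j=j\delta/2$, $0\le j\le 2N/\delta$.

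Next we define, for each $j,k$, the Borel set
\[ G_{j,k}=\{\lambda\in \mathscr{M}^R_{\mathrm{erg}}(Z)\mid s_j< F(\lambda)\}\cap \rho_*(E^{s_j}_{\alpha_k,\beta_k}). \]
Here $\rho_*(E^{s_j}_{\alpha_k,\beta_k})$ is compact, because $E^{s}_{\alpha,\beta}$ is closed in the compact space $\mathscr{M}^T(X)$ and $\rho_*$ is continuous, so this piece is Borel. For ergodic $\lambda$ with $F(\lambda)>s_j$, the constant value of $f_N$ satisfies $f_N\le \FHh^w$, so $s_j<\int_Z \FHh^w(\rho^{-1}(z),\pi,T)\,d\lambda$. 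Proposition \ref{proposition: construction of invariant measures more detailed} then gives $\lambda\in G_{j,k}$ for all sufficiently large $k$. Choosing $j(\lambda)$ as the largest $j$ with $s_j<F(\lambda)$ (or $j=0$ when $F(\lambda)=0$), and $k(\lambda)$ as the least admissible $k$, we get Borel functions. This yields a Borel partition of $\mathscr{M}^R_{\mathrm{erg}}(Z)$ into countably many pieces $P_{j,k}$. On each piece we look at the multifunction
\[ \Phi(\lambda)=E^{s_j}_{\alpha_k,\beta_k}\cap \rho_*^{-1}(\lambda). \]
Its values are nonempty and compact. The graph of $\Phi$ is closed in $\mathscr{M}^R(Z)\times\mathscr{M}^T(X)$ restricted to $P_{j,k}$, so $\Phi$ is weakly measurable: for a closed set $C$, the set $\{\lambda:\Phi(\lambda)\cap C\ne\emptyset\}=\rho_*(E\cap C)$ is compact, which handles upper measurability, and weak measurability follows for compact-valued maps into a Polish space. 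Kuratowski--Ryll-Nardzewski then gives a Borel selector on each $P_{j,k}$, and gluing these gives $\phi$.

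Finally we check the inequality. Every $\mu\in E^{s}_{\alpha,\beta}$ satisfies $wh_\mu(T|R)+(1-w)h_{\pi_*\mu}(S|R)\ge s$, as noted after \eqref{eq: definition of E^s_{alpha, beta}}, and $s_j\ge F(\lambda)-\delta/2$ by the choice of $j$. When $F(\lambda)=0$ the bound is trivial, with $s_0=0$ and any lift allowed. The condition $\rho_*\phi(\lambda)=\lambda$ holds by construction. The main obstacle is the measurability bookkeeping: we must confirm that $F$ is Borel on $\mathscr{M}^R(Z)$, which follows from the monotone class theorem since $f_N$ is bounded Borel, and that the closed-graph, compact-valued map admits a Borel selector. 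If Kuratowski--Ryll-Nardzewski is awkward to apply directly, the fallback is the standard fact that a compact-valued map with closed graph into a compact metric space is upper semicontinuous, hence measurable.
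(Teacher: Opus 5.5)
Your proposal is correct and follows essentially the same route as the paper. You discretise the level $\int_Z f_N\,d\lambda$ on a grid, use Proposition~\ref{proposition: construction of invariant measures more detailed} to cover the ergodic measures by countably many disjoint Borel pieces on which $\lambda\in\rho_*(E^{s}_{\alpha_n,\beta_n})$, get weak measurability of $\lambda\mapsto E^{s}_{\alpha_n,\beta_n}\cap\rho_*^{-1}(\lambda)$ from compactness of $\rho_*$-images, apply Kuratowski--Ryll-Nardzewski on each piece, and glue. The only slip is bookkeeping: because $G_{0,k}$ uses a strict inequality, it excludes $\lambda$ with $\int_Z f_N\,d\lambda=0$, so these need their own piece with $E^0_{\alpha,\beta}=\mathscr{M}^T(X)$ (the paper's $F^0_n$ uses a non-strict inequality for this reason), which your final paragraph essentially supplies.
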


The point of the lemma is the measurability of $\phi$. 
If we do not assume it, 
then the statement is an immediate consequence of Proposition \ref{proposition: construction of invariant measure}
and the axiom of choice.

Assume this lemma for the moment. Then the proof of Proposition \ref{proposition: construction of invariant measures without ergodicity}
goes as follows.
Let $\nu\in \mathscr{M}^R(Z)$ be a (not necessarily ergodic) invariant probability measure.
We consider its ergodic decomposition:
\begin{equation*}
   \nu = \int_{\mathscr{M}^R_{\mathrm{erg}}(Z)} \lambda \, dp(\lambda).
\end{equation*} 
Here $p$ is a Borel probability measure on $\mathscr{M}^R_{\mathrm{erg}}(Z)$.
We define $\mu \in \mathscr{M}^T(X)$ by 
\[ \mu = \int_{\mathscr{M}^R_{\mathrm{erg}}(Z)} \phi(\lambda) \, dp(\lambda). \]
Then 
\begin{align*}
  w h_\mu(T|R) + (1-w) h_{\pi_*\mu}(S|R) & = \int_{\mathscr{M}^R_{\mathrm{erg}}(Z)}
  \left(w h_{\phi(\lambda)}(T|R) + (1-w) h_{\pi_*\phi(\lambda)}(S|R)\right) dp(\lambda) \\
 & \geq \int_{\mathscr{M}^R_{\mathrm{erg}}(Z)}\left(\int_Z f_N(z)  d\lambda(z) -\delta\right) dp(\lambda) \\
 &= \int_Z f_N(z)  d\nu(z) -\delta.
\end{align*}
Letting $N\to \infty$ and $\delta\to 0$, 
we obtain Proposition \ref{proposition: construction of invariant measures without ergodicity}.

Now the remaining problem is to prove Lemma \ref{lemma: existence of measurable section}.
A main ingredient of our proof is the following \textit{measurable selection theorem} due to Kuratowski and 
Ryll--Nardzewski; see the book of Srivastava \cite[p.189, Theorem 5.2.1]{Srivastava}
for the proof. (The proof is short and interesting.)

\begin{theorem}[Measurable selection theorem] \label{theorem: measurable selection theorem}
Let $\mathcal{X}$ be a complete separable metric space.
Let $\mathcal{B}$ be its Borel $\sigma$-algebra, and $\mathrm{Cl}(\mathcal{X})$ be the set of 
nonempty closed subsets of $\mathcal{X}$.
Let $(\Omega, \mathcal{F})$ be a measurable space, and 
$F\colon \Omega \to \mathrm{Cl}(\mathcal{X})$ a map.
Suppose $F$ is weakly measurable, that is, for any open set $U\subset \mathcal{X}$, the set 
$\{\omega\in \Omega\mid F(\omega) \cap U \neq \emptyset\}$ belongs to the $\sigma$-algebra $\mathcal{F}$.
Then there exists a measurable map $\phi\colon \Omega \to \mathcal{X}$, with respect to $\mathcal{B}$ and $\mathcal{F}$, 
that satisfies 
$\phi(\omega) \in F(\omega)$ for all $\omega\in \Omega$.
\end{theorem}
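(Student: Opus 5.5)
The plan is the classical successive-approximation argument. Fix a complete metric $\mathbf{d}$ on $\mathcal{X}$ and a countable dense sequence $x_1, x_2, x_3, \dots$ in $\mathcal{X}$. Write $B(x,r)$ for the open ball. I will construct measurable maps $\phi_k\colon \Omega\to \mathcal{X}$ $(k\geq 0)$, each taking values in $\{x_1, x_2, \dots\}$, such that
\[ F(\omega)\cap B\left(\phi_k(\omega), 2^{-k}\right) \neq \emptyset, \qquad \mathbf{d}\left(\phi_k(\omega), \phi_{k-1}(\omega)\right) < 2^{-k}+2^{-k+1} \]
for all $\omega$ and $k$. Then $\phi := \lim_k \phi_k$ will be the desired selection.

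\textbf{Base step.} Let $\phi_0(\omega) := x_n$, where $n$ is the least index with $F(\omega)\cap B(x_n,1)\neq\emptyset$. Such an $n$ exists because $F(\omega)$ is nonempty and the $x_n$ are dense. The set $\{\phi_0 = x_n\}$ equals
\[ \{F\cap B(x_n,1)\neq\emptyset\} \setminus \bigcup_{m<n}\{F\cap B(x_m,1)\neq\emptyset\}. \]
Each of these sets lies in $\mathcal{F}$ by weak measurability, so $\phi_0$ is measurable.

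\textbf{Inductive step.} Given $\phi_{k-1}$, define $\phi_k(\omega) := x_n$ with $n$ least such that
\[ F(\omega)\cap B\left(x_n,2^{-k}\right)\cap B\left(\phi_{k-1}(\omega),2^{-k+1}\right)\neq\emptyset. \]
Such an $n$ exists for the following reason. By the inductive hypothesis there is a point $y\in F(\omega)$ with $\mathbf{d}\left(y,\phi_{k-1}(\omega)\right)<2^{-k+1}$. Choosing $x_n$ with $\mathbf{d}(x_n,y)<2^{-k}$ puts $y$ in the triple intersection. The two required inequalities for $\phi_k$ then follow from the triangle inequality.

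For measurability, I would decompose according to the countably many values of $\phi_{k-1}$:
\begin{align*}
\{\phi_k=x_n\} = \bigcup_{m}\Bigl(&\{\phi_{k-1}=x_m\}\cap\{F\cap U_{n,m}\neq\emptyset\} \\
&\setminus\bigcup_{n'<n}\{F\cap U_{n',m}\neq\emptyset\}\Bigr),
\end{align*}
where $U_{n,m} = B(x_n,2^{-k})\cap B(x_m,2^{-k+1})$ is open. This is a countable Boolean combination of sets in $\mathcal{F}$, so $\phi_k$ is measurable. I expect this measurability bookkeeping to be the only delicate point. It is exactly where weak measurability is used, and it is the reason for choosing countably-valued approximants $\phi_k$.

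\textbf{Passage to the limit.} Since $\sum_k 3\cdot 2^{-k}<\infty$, the sequence $\phi_k(\omega)$ is Cauchy. Completeness of $\mathcal{X}$ gives a limit $\phi(\omega)$. Because $\mathbf{d}\left(\phi_k(\omega),F(\omega)\right)<2^{-k}$ for every $k$, we get $\mathbf{d}\left(\phi(\omega),F(\omega)\right)=0$, hence $\phi(\omega)\in F(\omega)$ since $F(\omega)$ is closed.

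Finally, $\phi$ is measurable as a pointwise limit of measurable maps into a metric space. To check this, take a closed set $C$ and put $C_j=\{x\mid \mathbf{d}(x,C)<1/j\}$. Then
\[ \phi^{-1}(C)=\bigcap_j\bigcup_N\bigcap_{k\geq N}\phi_k^{-1}(C_j), \]
which lies in $\mathcal{F}$. This completes the argument.
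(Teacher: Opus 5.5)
Your argument is correct. It is the classical Kuratowski--Ryll-Nardzewski successive-approximation proof: countably-valued approximants $\phi_k$ are chosen by a least-index rule, made measurable through weak measurability, and converge uniformly by completeness. The paper does not prove the theorem itself but cites Srivastava's book for it, and this is essentially the argument given there. One cosmetic point: if the dense sequence has repeated terms, read your displayed formulas for $\{\phi_k=x_n\}$ at the level of indices, or take $n$ to be the first index attaining that value; measurability is unaffected.
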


As in \S \ref{subsection: construction of invariant probability measures}, we take metrics $\mathbf{d}$ and $\mathbf{d}^\prime$
on $X$ and $Y$ respectively so that the map $\pi\colon (X, \mathbf{d})\to (Y, \mathbf{d}^\prime)$ is one-Lipschitz.

Let $s\geq 0$, and let $\alpha$ and $\beta$ be finite clopen partitions of $X$ and $Y$ respectively.
We define a closed subset $E^s_{\alpha, \beta}\subset \mathscr{M}^T(X)$ by (\ref{eq: definition of E^s_{alpha, beta}}). 
Let $\rho_*\left(E^s_{\alpha, \beta}\right)$ be the image of $E^s_{\alpha, \beta}$ by the map 
$\rho_*\colon \mathscr{M}^T(X)\to \mathscr{M}^R(Z)$.
For each $\nu\in \rho_*\left(E^s_{\alpha, \beta}\right)$ we define 
\[ E^s_{\alpha, \beta, \nu} = \{\mu\in E^s_{\alpha, \beta}\mid \rho_*\mu = \nu\}. \]
This is a nonempty closed subset of $\mathscr{M}^T(X)$.

\begin{claim} \label{claim: weak measurability}
The map 
\[ \rho_*\left(E^s_{\alpha, \beta}\right) \to \mathrm{Cl}\left(\mathscr{M}^T(X)\right),
 \quad \nu \mapsto E^s_{\alpha, \beta, \nu} \]
 is weakly measurable.
\end{claim}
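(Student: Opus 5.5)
We need to show that for every open $U\subset \mathscr{M}^T(X)$ the set $\{\nu\in\rho_*(E^s_{\alpha,\beta}) \mid E^s_{\alpha,\beta,\nu}\cap U\neq\emptyset\}$ is Borel. By definition this set equals $\rho_*\left(E^s_{\alpha,\beta}\cap U\right)$, so the task is to show that the image under the continuous map $\rho_*$ of $E^s_{\alpha,\beta}\cap U$ is Borel in $\rho_*(E^s_{\alpha,\beta})$. The space $\mathscr{M}^T(X)$ is a compact metrizable space, and $E^s_{\alpha,\beta}$ is closed in it (by Lemma \ref{lemma: basic properties of relative measure theoretic entropy} (1), as noted after \eqref{eq: definition of E^s_{alpha, beta}}).

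The key step is to write the open set $U$ as a countable union of closed sets. Since $\mathscr{M}^T(X)$ is metrizable, $U=\bigcup_{k=1}^\infty F_k$ with each $F_k$ closed; for instance, take $F_k=\{\mu\mid \mathrm{dist}(\mu,U^c)\geq 1/k\}$ for a compatible metric. Then
\[ \rho_*\left(E^s_{\alpha,\beta}\cap U\right)=\bigcup_{k=1}^\infty \rho_*\left(E^s_{\alpha,\beta}\cap F_k\right). \]
Each $E^s_{\alpha,\beta}\cap F_k$ is a closed subset of the compact space $\mathscr{M}^T(X)$, hence compact. The map $\rho_*\colon \mathscr{M}^T(X)\to\mathscr{M}^R(Z)$ is continuous in the weak$^*$ topology, since $\int g\,d\rho_*\mu=\int g\circ\rho\,d\mu$ for continuous $g$. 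So each image $\rho_*(E^s_{\alpha,\beta}\cap F_k)$ is compact and therefore closed in $\mathscr{M}^R(Z)$. The countable union is then an $F_\sigma$ set, hence Borel in $\mathscr{M}^R(Z)$, and so also Borel relative to the subspace $\rho_*(E^s_{\alpha,\beta})$. (That subspace is itself compact, being the image of the compact set $E^s_{\alpha,\beta}$, so its Borel structure is the trace structure.)

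I expect no serious obstacle. The only point needing care is that images of Borel sets under continuous maps need not be Borel in general. Here this is avoided by the $\sigma$-compactness of $E^s_{\alpha,\beta}\cap U$, which rests on the closedness of $E^s_{\alpha,\beta}$ and so on the use of clopen partitions. Because of that closedness, the values $E^s_{\alpha,\beta,\nu}$ are also nonempty closed sets, so the map in the claim does take values in $\mathrm{Cl}(\mathscr{M}^T(X))$. Finally, $\mathscr{M}^T(X)$ is a compact metric space and hence complete separable, so Theorem \ref{theorem: measurable selection theorem} will apply afterwards.
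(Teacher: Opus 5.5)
Your proposal is correct and follows essentially the same route as the paper: you identify the relevant set with $\rho_*\left(E^s_{\alpha,\beta}\cap U\right)$, write $U$ as a countable union of closed (hence compact) sets, and use continuity of $\rho_*$ to express the image as a countable union of compact sets. The paper's proof is exactly this argument.
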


\begin{proof}
Let $U\subset \mathscr{M}^T(X)$ be an open set.
We need to show that the set 
\[ \{\nu\in \rho_*\left(E^s_{\alpha, \beta}\right)\mid E^s_{\alpha, \beta, \nu}\cap U \neq \emptyset\}
     = \rho_*\left(E^s_{\alpha, \beta}\cap U\right) \]
is a measurable subset of $\rho_*\left(E^s_{\alpha, \beta}\right)$.

Since $\mathscr{M}^T(X)$ is a compact metrizable space, there are compact subsets $F_n\subset \mathscr{M}^T(X)$ satisfying 
$U = \bigcup_{n=1}^\infty F_n$.
Then 
\[ \rho_*\left(E^s_{\alpha, \beta}\cap U\right) = \rho_*\left(\bigcup_{n=1}^\infty (F_n\cap E^s_{\alpha, \beta})\right)
   = \bigcup_{n=1}^\infty \rho_*(F_n\cap E^s_{\alpha, \beta}). \]
Since $\rho_*\colon \mathscr{M}^T(X)\to \mathscr{M}^R(Z)$ is continuous and 
$F_n\cap E^s_{\alpha, \beta}$ is a compact subset of $\mathscr{M}^T(X)$, the set 
$\rho_*\left(F_n\cap E^s_{\alpha, \beta}\right)$ is a compact subset of $\mathscr{M}^R(Z)$.
In particular, it is measurable.
Thus $\rho_*\left(E^s_{\alpha, \beta}\cap U\right)$ is also measurable.
\end{proof}

Now we can use Theorem \ref{theorem: measurable selection theorem} and obtain a measurable map 
$\phi^s_{\alpha, \beta}\colon \rho_*\left(E^s_{\alpha, \beta}\right) \to \mathscr{M}^T(X)$ such that 
$\phi^s_{\alpha, \beta}(\nu) \in E^s_{\alpha, \beta, \nu}$ for every $\nu\in \rho_*\left(E^s_{\alpha, \beta}\right)$.

Fix a natural number $N$ and a positive number $\delta$.
Set $J = \{0, \delta, 2 \delta, 3 \delta, 4 \delta, \dots\}$.
For $n\geq 1$, we take finite clopen partitions $\alpha_n$ and $\beta_n$ of $X$ and $Y$ respectively that satisfy 
$\mesh(\alpha_n) \to 0$ and $\mesh(\beta_n)\to 0$ as $n\to \infty$.

For $s\in J$ with $s>0$ and $n\geq 1$ we define 
\[ F_n^s = \left\{\nu\in \rho_*\left(E^s_{\alpha_n, \beta_n}\right) \middle| s < \int_Z f_N(z) d\nu(z) \leq s+ \delta\right\}. \] 
For $s=0$ we set 
\[ F_n^0 = \left\{\nu\in \mathscr{M}^R(Z) \middle| 0 \leq \int_Z f_N(z) d\nu(z) \leq \delta\right\}. \] 
(Notice that $\rho_*\left(E^0_{\alpha_n, \beta_n}\right) = \rho_*(\mathscr{M}^T(X)) = \mathscr{M}^R(Z)$ and that 
$F_n^0$ is independent of the choice of $n$.)
The sets $F_n^s$ are measurable subsets of $\mathscr{M}^R(Z)$.
If $s\neq s^\prime$ then $F_n^s\cap F_{n^\prime}^{s^\prime} = \emptyset$.

It follows from Proposition \ref{proposition: construction of invariant measures more detailed} that 
every $\nu\in \mathscr{M}_{\mathrm{erg}}^R(Z)$ belongs to $F^s_n$ for some $s\in J$ and $n\geq 1$.
Set $G_1^s = F_1^s$ and
\[ G_{n+1}^s = F_{n+1}^s\setminus \left(F_1^s\cup F_2^s\cup \dots \cup F_n^s\right) \quad (n\geq 1). \]
Then $G_n^s$ $(s\in J, n\geq 1)$ are pairwise disjoint and 
$\bigcup_{s\in J}\bigcup_{n=1}^\infty F_n^s = \bigcup_{s\in J}\bigcup_{n=1}^\infty G_n^s$.
We have 
\[ \mathscr{M}_{\mathrm{erg}}^R(Z) \subset \bigcup_{s\in J}\bigcup_{n=1}^\infty G_n^s. \]
Since $G_n^s \subset \rho_*\left(E^s_{\alpha_n, \beta_n}\right)$, we can define a measurable map 
$\phi\colon \bigcup_{s\in J}\bigcup_{n=1}^\infty G_n^s \to \mathscr{M}^T(X)$ by 
\[ \phi(\nu) = \phi^s_{\alpha_n,\beta_n}(\nu), \quad (\nu\in G_n^s). \]
This satisfies
\[  \rho_*\left(\phi(\nu)\right) = \nu, \quad 
    w h_{\phi(\nu)}(T|R) + (1-w) h_{\pi_*\phi(\nu)}(S|R) \geq \int_Z f_N(z) d\nu(z) -\delta.\]
Restricting $\phi$ to $\mathscr{M}_{\mathrm{erg}}^R(Z)$, we have proved Lemma \ref{lemma: existence of measurable section}.

\section{Completion of the proof of Theorem \ref{theorem: relativised variational principle}}
\label{section: completion of the proof of relativised variational principle}

In this section we establish Theorem \ref{theorem: relativised variational principle}.

\subsection{Zero dimensional principal extension} \label{subsection: zero dimensional principal extension}

We have so far proved Theorem \ref{theorem: relativised variational principle} under the additional assumption that 
the spaces $X$ and $Y$ have zero topological dimension.
The remaining problem is how to remove this artificial assumption.
In this subsection we prepare a technical device for that purpose.

Let $(X, T)$ and $(Y, S)$ be dynamical systems.
An equivariant continuous map $\pi\colon X\to Y$ is called a \textbf{factor map} if $\pi$ is surjective.
Let $\pi\colon X\to Y$ be a factor map and let $\mathbf{d}$ be a metric on $X$.
We define the \textbf{topological conditional entropy} of $\pi$ by 
\[ \h(X, T|Y,S) = \lim_{\varepsilon\to 0}
\left(\lim_{N\to \infty} \frac{\sup_{y \in Y} \log \#(\pi^{-1}(y), \mathbf{d}_N, \varepsilon)}{N}\right). \]
Here $\#(\pi^{-1}(y), \mathbf{d}_N, \varepsilon)$ is the $\varepsilon$-covering number of $\pi^{-1}(y)$ with respect to $\mathbf{d}_N$.
By the relativised variational principle of Ledrappier and Walters (and some additional variational principle) \cite[Theorem 6.8.8]{Downarowicz}
\begin{equation} \label{eq: conditional variational principle}
 \h(X, T|Y,S) =\sup_{\mu\in \mathscr{M}^T(X)} h_\mu(T|S). 
\end{equation} 
The factor map $\pi$ is said to be \textbf{principal} if $\h(X, T|Y, S) = 0$.
In this case, $(X, T)$ is called a \textbf{principal extension} of $(Y, S)$.
If $\pi\colon (X, T)\to (Y, S)$ and $\theta\colon (Y, S)\to (Z, R)$ are both principal factor maps, then 
the composition $\theta\circ \pi\colon (X, T)\to (Z, R)$ is also principal.
(This follows from the variational principle \eqref{eq: conditional variational principle} and 
$h_\mu(T|R) = h_\mu(T|S) + h_{\pi_*\mu}(S|R)$.)

\begin{lemma} \label{lemma: principal extension and weighted entropy sum}
Let $(X_i, T_i)$, $(Y_i, S_i)$ $(i=1,2)$ and $(Z, R)$ be dynamical systems.
Suppose we are given the following commutative diagram.
\[
\begin{tikzcd}
(X_1,T_1) \arrow[r,"\pi_1"] \arrow[d, two heads, "f"']
  & (Y_1,S_1) \arrow[d, two heads, "g"] \\
(X_2,T_2) \arrow[r,"\pi_2"'] \arrow[dr, "\rho"']
  & (Y_2,S_2) \arrow[d,"\theta"] \\
& (Z,R)
\end{tikzcd}
\]
We assume that $f$ and $g$ are principal factor maps.
Then for any $0\leq w\leq 1$ and $\nu\in \mathscr{M}^R(Z)$
\begin{align*}
  & \sup\{w h_\mu(T_1|R) + (1-w)h_{(\pi_1)_*\mu}(S_1|R)\mid \mu\in \mathscr{M}^{T_1}(X_1) \text{ with } (\rho\circ f)_*\mu = \nu\} \\
  & =  \sup\{w h_\lambda(T_2|R) + (1-w)h_{(\pi_2)_*\lambda}(S_2|R)\mid \lambda \in \mathscr{M}^{T_2}(X_2) \text{ with }
         \rho_*\lambda = \nu\}.
\end{align*}
\end{lemma}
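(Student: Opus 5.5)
We prove the two inequalities separately, comparing the two suprema through the push-forward $f_*\colon \mathscr{M}^{T_1}(X_1)\to \mathscr{M}^{T_2}(X_2)$. Note first that $\pi_2\circ f = g\circ \pi_1$, so $(\pi_2)_*f_*\mu = g_*(\pi_1)_*\mu$ for every $\mu\in\mathscr{M}^{T_1}(X_1)$. Also $(\rho\circ f)_*\mu = \rho_*(f_*\mu)$, so the constraint $(\rho\circ f)_*\mu=\nu$ is equivalent to $\rho_*(f_*\mu)=\nu$.

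The key identity is the following. For $\mu\in \mathscr{M}^{T_1}(X_1)$, the relative Abramov--Rokhlin formula, applied to the chain $X_1\to X_2\to Z$ (through $f$ and $\rho$), gives
\[ h_\mu(T_1|R) = h_\mu(T_1|T_2) + h_{f_*\mu}(T_2|R). \]
Here $h_\mu(T_1|T_2)\le \h(X_1,T_1|X_2,T_2) = 0$ by \eqref{eq: conditional variational principle}, because $f$ is principal. Hence $h_\mu(T_1|R)=h_{f_*\mu}(T_2|R)$. In the same way, for the chain $Y_1\xrightarrow{g} Y_2\xrightarrow{\theta} Z$ applied to the measure $(\pi_1)_*\mu$, we get
\[ h_{(\pi_1)_*\mu}(S_1|R) = h_{g_*(\pi_1)_*\mu}(S_2|R) = h_{(\pi_2)_*f_*\mu}(S_2|R), \]
using that $g$ is principal. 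Therefore the weighted quantity for $\mu$ equals the weighted quantity for $\lambda=f_*\mu$. This gives ``$\le$'' at once, since $f_*\mu$ is admissible for the right-hand supremum.

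For ``$\ge$'', we need every admissible $\lambda\in\mathscr{M}^{T_2}(X_2)$ to be of the form $f_*\mu$ for some $\mu\in\mathscr{M}^{T_1}(X_1)$. This follows because $f$ is a surjective continuous equivariant map: lift $\lambda$ to some probability measure $\tilde\lambda$ on $X_1$ with $f_*\tilde\lambda=\lambda$ (Hahn--Banach or a measurable section). Then take a weak$^*$ limit point $\mu$ of the Cesàro averages $\frac1n\sum_{k<n}(T_1^k)_*\tilde\lambda$. Since $f_*$ is continuous and $\lambda$ is $T_2$-invariant, $f_*\mu=\lambda$ and $\mu$ is $T_1$-invariant. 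By the identity above, the two weighted quantities agree, and ``$\ge$'' follows.

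The main obstacle is the justification of the chain rule $h_\mu(T_1|R)=h_\mu(T_1|T_2)+h_{f_*\mu}(T_2|R)$ when entropies may be infinite, and the fact that the conditioning is along $X_1\to X_2\to Z$ rather than through $Y$. The paper already quotes this formula, in the form $h_\mu(T|R) = h_{\pi_*\mu}(S|R) + h_\mu(T|S)$ from \cite{Ledrappier--Walters}, for arbitrary compositions of equivariant maps. With the principal term equal to zero, this additive form needs no subtraction, so infinite values cause no trouble.
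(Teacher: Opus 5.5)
Your proposal is correct and follows essentially the same route as the paper: the chain rule $h_\mu(T_1|R)=h_\mu(T_1|T_2)+h_{f_*\mu}(T_2|R)$ and its analogue for $g$, the vanishing of the principal terms via the conditional variational principle, and surjectivity of $f_*\colon \mathscr{M}^{T_1}(X_1)\to\mathscr{M}^{T_2}(X_2)$. The only difference is that you spell out that surjectivity (lift by Hahn--Banach, then take a Ces\`aro limit), whereas the paper simply attributes it to the Hahn--Banach theorem.
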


\begin{proof}
The map $f_*\colon \mathscr{M}^{T_1}(X_1) \to \mathscr{M}^{T_2}(X_2)$ is surjective by the Hahn--Banach theorem.
By the definition of principal factor maps and the variational principle \eqref{eq: conditional variational principle},
for $\mu \in \mathscr{M}^{T_1}(X_1)$
\begin{align*}
  h_\mu(T_1|R) &= h_\mu(T_1|T_2) + h_{f_*\mu}(T_2|R) = h_{f_*\mu}(T_2|R), \\
  h_{(\pi_1)_*\mu}(S_1|R) &= h_{(\pi_1)_*\mu}(S_1|S_2) + h_{(g\circ \pi_1)_*\mu}(S_2|R) = h_{(\pi_2\circ f)_*\mu}(S_2|R).
\end{align*}
Since \(f_*\colon \mathscr M^{T_1}(X_1)\to \mathscr M^{T_2}(X_2)\) is surjective,
taking the supremum over all admissible \(\mu\) gives the desired equality.
\end{proof}

\begin{lemma} \label{lemma: fiber product}
Let $(X,T)$, $(Y, S)$ and $(Y^\prime, S^\prime)$ be dynamical systems.
Let $\pi\colon (X,T)\to (Y, S)$ be an equivariant continuous map and $\phi\colon (Y^\prime, S^\prime)\to (Y, S)$
a principal factor map.
We define the fiber product 
\[ X\times_{Y} Y^\prime := \{(x, y)\in X\times Y^\prime\mid \pi(x) = \phi(y)\}. \]
The pair $(X\times_Y Y^\prime, T\times S^\prime)$ becomes a dynamical system.
Then the map 
\[ \varphi\colon X\times_Y Y^\prime \to X, \quad (x, y)\mapsto x \]
is a principal factor map.
\[
\begin{tikzcd}
(X\times_Y Y',\,T\times S') \arrow[r] \arrow[d,"\varphi"']
  & (Y',S') \arrow[d,"\phi: \text{ principal}"] \\
(X,T) \arrow[r,"\pi"']
  & (Y,S)
\end{tikzcd}
\]
\end{lemma}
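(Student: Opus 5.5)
Surjectivity and continuity of $\varphi$ are the easy points. Continuity and equivariance are clear, since $\varphi$ is the restriction of the coordinate projection and $\varphi\circ(T\times S')=T\circ\varphi$. For surjectivity, take $x\in X$; because $\phi$ is a factor map, hence surjective, there is $y\in Y'$ with $\phi(y)=\pi(x)$, and then $(x,y)\in X\times_Y Y'$. It is also routine that $X\times_Y Y'$ is a closed invariant subset of $X\times Y'$, so the pair really is a dynamical system. The substantive claim is that $\h(X\times_Y Y',T\times S'\mid X,T)=0$.

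I would prove this directly from the definition of topological conditional entropy, which avoids any measure theory. Fix metrics $\mathbf{d}_X$ on $X$ and $\mathbf{d}_{Y'}$ on $Y'$, and equip the fiber product with the max metric $\mathbf{d}((x,y),(x',y'))=\max(\mathbf{d}_X(x,x'),\mathbf{d}_{Y'}(y,y'))$. For $x\in X$ the fiber is
\[
\varphi^{-1}(x)=\{x\}\times\phi^{-1}(\pi(x)).
\]
Along this fiber the first coordinate is constant, so for any $N$ the Bowen metric $\mathbf{d}_N$ restricted to $\varphi^{-1}(x)$ coincides with the Bowen metric $(\mathbf{d}_{Y'})_N$ on $\phi^{-1}(\pi(x))$. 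A cover of $\phi^{-1}(\pi(x))$ by open sets $U_i$ of $(\mathbf{d}_{Y'})_N$-diameter less than $\varepsilon$ therefore gives a cover of the fiber by the sets $(X\times U_i)\cap \varphi^{-1}(x)$, which are open in $X\times_Y Y'$ after intersecting with a small open neighbourhood of $x$. The neighbourhood needs diameter less than $\varepsilon$ in $\mathbf{d}_N$; its first coordinate is constant only on the fiber, not on nearby points. The resulting sets have $\mathbf{d}_N$-diameter less than $\varepsilon$, which yields
\[
\#(\varphi^{-1}(x),\mathbf{d}_N,\varepsilon)\le \#(\phi^{-1}(\pi(x)),(\mathbf{d}_{Y'})_N,\varepsilon).
\]
Taking the supremum over $x$ bounds this by $\sup_{y\in Y}\#(\phi^{-1}(y),(\mathbf{d}_{Y'})_N,\varepsilon)$. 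Taking $\log$, dividing by $N$, letting $N\to\infty$ and then $\varepsilon\to0$ gives $\h(X\times_Y Y'\mid X)\le \h(Y'\mid Y)=0$.

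The main obstacle, a mild one, is the openness technicality in the covering-number definition, since covering numbers are defined with open sets of $X\times_Y Y'$. I would sidestep it with the remark in \S\ref{section: preliminaries} that closed covers may be used instead of open ones. The sets $(\{x\}\times\overline{U_i})\cap(X\times_Y Y')$ are then closed, and it suffices that they have diameter less than $\varepsilon$. This holds after shrinking, because one may take the closures $\overline{U_i}$ themselves of diameter less than $\varepsilon$ by using closed covers on the $Y'$ side as well. A further subtlety is whether the limit in $N$ in the definition exists; the inequality holds for each $N$, so it passes equally well to $\limsup$, and no other issue arises.
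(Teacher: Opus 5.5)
Your proposal is correct and follows the paper's proof. The paper also equips $X\times_Y Y^\prime$ with the max metric, observes that $(\varphi^{-1}(x),D_N)$ is isometric to $(\phi^{-1}(\pi(x)),\mathbf{d}^\prime_N)$, compares covering numbers, and concludes $\h(X\times_Y Y^\prime, T\times S^\prime\mid X,T)\le \h(Y^\prime,S^\prime\mid Y,S)=0$. Your extra steps — checking surjectivity of $\varphi$, and handling open versus closed covers either by thickening with a small $(\mathbf{d}_X)_N$-ball around $x$ or by using closed covers on the $Y^\prime$ side — only make explicit details the paper leaves implicit.
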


\begin{proof}
This was proved in \cite[Lemma 5.3]{Tsukamoto}. We repeat the proof here for completeness.
Take metrics $\mathbf{d}$ and $\mathbf{d}^\prime$ on $X$ and $Y^\prime$ respectively.
We define a metric $D$ on $X\times_Y Y^\prime$ by 
\[ D\left((x_1, y_1), (x_2, y_2)\right) = \max\left(\mathbf{d}(x_1, x_2), \mathbf{d}^\prime(y_1, y_2)\right). \]
For any $N\geq 1$ and $x\in X$, the metric space $(\varphi^{-1}(x), D_N)$ is 
isometric to $(\phi^{-1}(\pi(x)), \mathbf{d}^\prime_N)$.
Hence for any $\varepsilon>0$ 
\[ \#\left(\varphi^{-1}(x), D_N, \varepsilon\right) = \#\left(\phi^{-1}(\pi(x)), \mathbf{d}^\prime_N,\varepsilon\right). \]
Then $\h\left(X\times_Y Y^\prime, T\times S^\prime\middle| X, T\right) \leq \h(Y^\prime, S^\prime|Y, S) = 0$.
\end{proof}

The next theorem is due to Downarowicz and Huczek (\cite[Theorem 7.6.1]{Downarowicz}, 
\cite{Downarowicz--Huczek}).
Here recall again that a compact metrizable space is said to be zero (topological) dimensional if 
clopen sets form a basis of the topology.

\begin{theorem}[Existence of zero dimensional principal extension] \label{theorem: zero dimensional principal extension}
For any dynamical system $(X, T)$, there exists a principal extension 
$\phi \colon (X^\prime, T^\prime) \to (X, T)$ such that $X^\prime$ is zero dimensional.
\end{theorem}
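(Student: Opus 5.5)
The plan is to build $X^\prime$ as a closed $T\times\sigma$-invariant subset of $X\times\prod_{k\geq1}\Lambda_k^{\mathbb{Z}_{\geq0}}$, where the $\Lambda_k$ are finite alphabets and $\sigma$ is the shift. A point $x\in X$ is paired with symbolic "names" recording which cell of a finer and finer partition its orbit visits. Then $\phi$ is the projection to $X$. This $X^\prime$ is automatically zero dimensional, and it is compact and invariant by construction. The whole difficulty is principality. The fibre $\phi^{-1}(x)$ is not a singleton exactly because of the points whose orbit hits partition boundaries, where the name is ambiguous. So we must arrange that the number of admissible names over a given $x$ along $N$ steps grows subexponentially in $N$, uniformly in $x$. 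By \eqref{eq: conditional variational principle} it then suffices to show $h_\mu(T^\prime|T)=0$ for every invariant $\mu$ on $X^\prime$.

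\emph{Step 1 (time-dependent partitions with rarely visited boundaries).} Fix a finite set of centres $p_1,\dots,p_m$ and a scale $\varepsilon_k$. Instead of a single partition by balls around the $p_i$, use radii $r_i(n)\in[\varepsilon_k,2\varepsilon_k]$ that vary with the time $n$. They cycle through $L$ well-separated values. A fixed point $y$ lies within $\delta$ of the sphere $\{\mathbf{d}(\cdot,p_i)=r\}$ for at most one radius $r$ of the list, provided $\delta$ is smaller than the spacing $\varepsilon_k/L$. Hence for every $x$ and every $N$, the times $0\leq n<N$ at which $T^nx$ is $\delta$-close to the boundary of the time-$n$ partition form a set of density at most about $m/L$. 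Define the admissible names at level $k$ as the limits of the unique cell labels of nearby points, that is, the closure. At times when $T^nx$ is not $\delta$-close to a boundary, the label is forced.

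\emph{Step 2 (counting).} Over a fixed $x$, the level-$k$ names on $[0,N)$ are free only on a set of density at most $m/L$, with at most $m$ choices at each such time. So the count is at most $\binom{N}{\leq mN/L}m^{mN/L}=e^{N\eta_k}$ with $\eta_k\to0$ as $L=L_k\to\infty$. Summing over levels with weights, using the product metric in which level $k$ contributes at scale $2^{-k}$, gives conditional topological entropy at most $\sup_k\eta_k$ at every scale. We choose $L_k$ growing fast enough that this is zero. The time dependence of the partitions is handled by adding a cyclic odometer coordinate $\mathbb{Z}/L_k\mathbb{Z}$ at each level. The odometer has zero entropy, so it does not spoil principality, but it makes the coding genuinely shift-equivariant.

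\emph{Step 3 (surjectivity and equivariance).} We must check that $X^\prime$ projects onto $X$ and is invariant. This follows from compactness: every $x$ admits at least one admissible name, obtained as a limit of names of nearby points. We also need the names at level $k$ to be consistent with those at level $k+1$, which we arrange by taking $X^\prime$ to be the set of all jointly admissible sequences; since $X^\prime$ is defined by closed conditions, it is closed. Finally, $X^\prime$ separates points within a fibre only through the symbols, so it is zero dimensional.

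The main obstacle I expect is Step 1 in full generality. Without a small boundary property we cannot make boundaries measure zero for all invariant measures, and the varying-radius trick has to be made compatible across levels. It also has to interact correctly with the odometer so that the counting bound remains uniform in $x$. I would first try the simple cyclic choice of radii with $L_k$ super-exponentially increasing and check the entropy estimate directly on $(\phi^{-1}(x),D_N)$, as in the proof of Lemma \ref{lemma: fiber product}.
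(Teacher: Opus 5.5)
The paper does not prove this theorem. It quotes it from Downarowicz and Huczek (Theorem 7.6.1 in Downarowicz's book), so there is no in-paper argument to compare with. Your outline, however, has a genuine gap at Step 1. From ``a fixed $y$ is $\delta$-close to $\{\mathbf{d}(\cdot,p_i)=r\}$ for at most one $r$ in the list'' nothing follows about the orbit. The point $T^nx$ moves with $n$, and the orbit can be synchronised with the schedule, lying near the sphere of radius $r_i(n)$ at every time $n$. The pigeonhole principle gives only an averaged statement: for fixed $x$ and $N$, at most $mN$ of the $LN$ pairs (time, cyclic shift of the schedule) are bad. So most shifts are good for that particular orbit segment, but which shifts are good depends on $x$ and $N$. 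The odometer coordinate does not rescue this, because the fibre over $x$ contains every phase, including the bad ones.

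In fact the uniform bound you need is false in general. Suppose a time-dependent partition into $m$ sets of diameter $\le 4\varepsilon$ had its $\delta$-thickened boundaries met with density at most $c$ along every orbit. Use partitions of unity that are $\{0,1\}$-valued away from the boundaries and map $x\mapsto(\psi^{(n)}(T^nx))_{n<N}\in(\Delta^{m-1})^N$. This is the Lindenstrauss--Weiss argument, and it shows that $(X,\mathbf{d}_N)$ admits a $5\varepsilon$-embedding into a polyhedron of dimension $\le c(m-1)N$. Since $c\approx m/L$, letting $L\to\infty$ with $m$ fixed would force $\mathrm{mdim}(X,T)=0$. That fails, for instance, for the shift on $[0,1]^{\mathbb{Z}}$.

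Even if Step 1 held, Step 2 would not give principality. At any scale $\epsilon$ fine enough to separate level-$1$ symbols, $\#(\phi^{-1}(x),D_N,\epsilon)$ is at least the number of level-$1$ names over $x$. These covering numbers only grow as $\epsilon\to0$, so $\h(X',T'\,|\,X,T)$ is at least the exponential growth rate of level-$1$ names. At scale $2^{-K}$ the correct bound is $\sum_{k\le K}\eta_k$, not $\sup_k\eta_k$. Moreover your $\eta_k$ is strictly positive for every finite $L_k$, so no choice of $(L_k)$ produces $0$.

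Principality is an exact condition: at every fixed scale, the admissible names over each $x$ must grow subexponentially. In systems of positive mean dimension, the argument above shows this cannot come from a fixed, even time-periodic, partition whose boundaries are visited rarely. The names over such $x$ must instead be strongly dependent at the ambiguous times rather than free. Providing a construction that achieves this is the nontrivial content of the Downarowicz--Huczek theorem, and it is what your proposal is missing.
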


\begin{corollary} \label{cor: zero dimensional trick}
Let $\pi\colon (X, T)\to (Y, S)$ be an equivariant continuous map between dynamical systems.
Then we can construct the following commutative diagram so that 
$f$ and $g$ are principal factor maps and that $X^\prime$ and $Y^\prime$ are zero dimensional.
\[
\begin{tikzcd}
(X^\prime,T^\prime) \arrow[r,"\pi^\prime"] \arrow[d, two heads,"f"'] &  (Y^\prime,S^\prime)    \arrow[d,two heads, "g"] \\
 (X,T) \arrow[r,"\pi"'] & (Y,S)
\end{tikzcd}
\]
\end{corollary}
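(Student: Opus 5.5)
First take a zero dimensional principal extension $g\colon (Y^\prime,S^\prime)\to (Y,S)$ of $Y$ by Theorem \ref{theorem: zero dimensional principal extension}. Then form the fiber product $X_1 := X\times_Y Y^\prime$ with the map $T\times S^\prime$. By Lemma \ref{lemma: fiber product}, the projection $f_1\colon X_1\to X$ is a principal factor map. It is surjective because $g$ is surjective: for every $x$ we can pick $y^\prime\in g^{-1}(\pi(x))$. The second projection $\pi_1\colon X_1\to Y^\prime$ is equivariant, and by construction $g\circ\pi_1=\pi\circ f_1$.

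$X_1$ need not be zero dimensional, so we apply Theorem \ref{theorem: zero dimensional principal extension} a second time. This gives a zero dimensional principal extension $f_2\colon (X^\prime,T^\prime)\to (X_1,T_1)$. Set $f=f_1\circ f_2$ and $\pi^\prime=\pi_1\circ f_2$. Then $f$ is a composition of principal factor maps and hence principal, as remarked after \eqref{eq: conditional variational principle}. Both $X^\prime$ and $Y^\prime$ are zero dimensional, and commutativity follows from
\[ g\circ\pi^\prime=g\circ\pi_1\circ f_2=\pi\circ f_1\circ f_2=\pi\circ f. \]

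The only point needing care is that the fiber product of zero dimensional-ish data is not automatically zero dimensional. For this reason we do not try to make $X_1$ itself zero dimensional. Instead we take the second principal extension, which handles this, and rely on the fact that principality is preserved under composition. Nothing else is an obstacle: all the maps involved are continuous, equivariant and surjective.
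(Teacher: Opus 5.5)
Your proposal is correct and follows the paper's proof essentially step for step. Both arguments take a zero dimensional principal extension $g$ of $Y$, form the fiber product $X\times_Y Y^\prime$ (whose projection to $X$ is principal by Lemma~\ref{lemma: fiber product}), take a second zero dimensional principal extension of that fiber product, and conclude by composing, using that principality is preserved under composition.
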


\begin{proof}
By Theorem \ref{theorem: zero dimensional principal extension}, there exists a zero dimensional principal 
extension $g\colon (Y^\prime, S^\prime)\to (Y, S)$.
Consider the fiber product $X\times_Y Y^\prime$ and the following commutative diagram.
\[
\begin{tikzcd}
(X\times_Y Y^\prime,T\times S^\prime) \arrow[r,"\Pi_2"] \arrow[d, two heads,"\Pi_1"'] &  (Y^\prime,S^\prime)    \arrow[d,two heads, "g"] \\
 (X,T) \arrow[r,"\pi"'] & (Y,S)
\end{tikzcd}
\]
Here $\Pi_1$ and $\Pi_2$ are the natural projections.
By Lemma \ref{lemma: fiber product}, the map $\Pi_1$ is principal.
Applying Theorem \ref{theorem: zero dimensional principal extension} to $X\times_Y Y^\prime$, we obtain 
a principal extension $\phi\colon (X^\prime, T^\prime)\to (X\times_Y Y^\prime, T\times S^\prime)$.
We define $\pi^\prime = \Pi_2\circ \phi$ and $f=\Pi_1\circ \phi$.
\[
\begin{tikzcd}[column sep=large, row sep=large]
(X',T') 
  \arrow[rd, two heads, "\phi"] 
  \arrow[rdd, "f"'] 
  \arrow[rrd, "\pi'", bend left=8]
& & \\
& (X\times_Y Y',\, T\times S') 
  \arrow[d, two heads, "\Pi_1"] 
  \arrow[r, "\Pi_2"]
& (Y',S') 
  \arrow[d, two heads, "g"] \\
& (X,T) 
  \arrow[r, "\pi"']
& (Y,S)
\end{tikzcd}
\]
Since the composition of principal factor maps is also principal, the map $f$ is principal.
\end{proof}

\subsection{Proof of Theorem \ref{theorem: relativised variational principle}}
\label{subsection: proof of relativised variational principle}

Here we prove Theorem \ref{theorem: relativised variational principle}.
We repeat the statement for the convenience of readers.

\begin{theorem}[$=$ Theorem \ref{theorem: relativised variational principle}]
Let $0\leq w\leq 1$.
Let $(X, T)$, $(Y, S)$ and $(Z, R)$ be dynamical systems.
Let $\pi\colon X\to Y$, $\rho\colon X\to Z$ and $\theta\colon Y\to Z$ be equivariant continuous maps such that 
$\rho = \theta \circ \pi$ and that $\rho$ is surjective.
\[
\begin{tikzcd}
(X,T) \arrow[r,"\pi"] \arrow[dr, two heads, "\rho"'] & (Y,S) \arrow[d,"\theta"] \\
& (Z,R)
\end{tikzcd}
\]
For any $\nu\in \mathscr{M}^R(Z)$, we have
\begin{equation*} 
  \begin{split}
   & \int_Z \FHh^w\left(\rho^{-1}(z),\pi, T\right) \, d\nu(z) \\
   & = \sup\left\{w h_\mu(T|R) + (1-w) h_{\pi_*\mu}(S|R) \middle|\, 
    \mu \in \mathscr{M}^T(X)  \text{ with }\rho_*\mu = \nu\right\}.
   \end{split}
\end{equation*}    
\end{theorem}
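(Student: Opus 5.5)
The inequality ``$\geq$'' is exactly Proposition \ref{proposition: half of the variational principle}, which holds in full generality. What remains is the reverse inequality \eqref{eq: difficult part of variational principle} for arbitrary $X$ and $Y$. Proposition \ref{proposition: construction of invariant measures without ergodicity} already gives it when $X$ and $Y$ are zero dimensional. The plan is to reduce the general case to that one through principal extensions.

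First, apply Corollary \ref{cor: zero dimensional trick} to $\pi\colon (X,T)\to(Y,S)$. This produces zero dimensional systems $(X',T')$ and $(Y',S')$, an equivariant map $\pi'\colon X'\to Y'$, and principal factor maps $f\colon X'\to X$ and $g\colon Y'\to Y$ with $g\circ\pi'=\pi\circ f$. Set
\[ \rho'=\rho\circ f=\theta\circ g\circ \pi', \qquad \theta'=\theta\circ g. \]
Then $\rho'$ is surjective, since both $f$ and $\rho$ are, and $\rho'=\theta'\circ\pi'$. So the triple $(X',Y',Z)$ satisfies all the hypotheses of \S\ref{section: removing the ergodicity assumptions}.

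Second, compare the fiber entropies. By Corollary \ref{corollary: factor map and weighted topological entropy}, applied to the diagram with $f$ and $g$,
\[ \FHh^w\left(\rho^{-1}(z),\pi,T\right)\leq \FHh^w\left((\rho')^{-1}(z),\pi',T'\right)\quad \text{for every } z\in Z. \]
Integrating against $\nu$ and then applying Proposition \ref{proposition: construction of invariant measures without ergodicity} to the zero dimensional triple gives
\[ \int_Z \FHh^w\left(\rho^{-1}(z),\pi,T\right)d\nu \leq \sup\left\{w h_\mu(T'|R)+(1-w)h_{\pi'_*\mu}(S'|R)\,\middle|\, \mu\in\mathscr{M}^{T'}(X'),\ \rho'_*\mu=\nu\right\}. \]
Both integrands are measurable by Lemma \ref{lemma: measurability of fiber entropy}.

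Third, identify the right-hand side with the corresponding supremum for $(X,Y,Z)$. This is precisely Lemma \ref{lemma: principal extension and weighted entropy sum}, which uses that $f$ and $g$ are principal. The conditional entropies over $R$ are unchanged under principal extensions, and $f_*$ maps $\{\mu:\rho'_*\mu=\nu\}$ onto $\{\lambda:\rho_*\lambda=\nu\}$. Combining the three steps yields \eqref{eq: difficult part of variational principle}, and together with Proposition \ref{proposition: half of the variational principle} this gives equality.

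The main difficulty has already been handled in the preceding sections, namely the Frostman-type construction and the measurable selection. The only remaining points to check are that the diagram produced by Corollary \ref{cor: zero dimensional trick} really commutes with $\theta'=\theta\circ g$, so that the hypotheses of Lemma \ref{lemma: principal extension and weighted entropy sum} hold with $\theta$ replaced by $\theta'$. One must also confirm that the fiber comparison runs in the correct direction, namely that the upper bound holds for the lifted fibers $f^{-1}(\rho^{-1}(z))$. Both follow directly from the constructions above.
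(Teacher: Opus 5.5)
Your proposal is correct and is essentially the paper's own proof. You take the inequality ``$\geq$'' from Proposition \ref{proposition: half of the variational principle}. For the reverse, you lift to the zero dimensional principal extension of Corollary \ref{cor: zero dimensional trick}, compare fibers via Corollary \ref{corollary: factor map and weighted topological entropy}, apply Proposition \ref{proposition: construction of invariant measures without ergodicity} to the triple $(X',Y',Z)$ with $\rho\circ f$ and $\theta\circ g$, and transfer the supremum back with Lemma \ref{lemma: principal extension and weighted entropy sum}. The points you flag (commutativity with $\theta'=\theta\circ g$, surjectivity of $\rho\circ f$, and the direction of the fiber comparison) are exactly the ones the paper relies on, and they hold as you say.
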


\begin{proof}
By Proposition \ref{proposition: half of the variational principle}
\begin{align*}
  & \sup\left\{w h_\mu(T|R) + (1-w) h_{\pi_*\mu}(S|R) \middle|\, 
      \mu \in \mathscr{M}^T(X)  \text{ with }\rho_*\mu = \nu\right\} \\
     &\leq \int_Z \FHh^w\left(\rho^{-1}(z),\pi, T\right) \, d\nu(z).
\end{align*}     
We would like to prove the reverse inequality.
By Corollary \ref{cor: zero dimensional trick}, we can construct the following commutative diagram 
so that 
$f$ and $g$ are principal factor maps and that $X^\prime$ and $Y^\prime$ are zero dimensional.
\[
\begin{tikzcd}
(X^\prime,T^\prime) \arrow[r,"\pi^\prime"] \arrow[d, two heads,"f"'] &  (Y^\prime,S^\prime)    \arrow[d,two heads, "g"] \\
 (X,T) \arrow[r,"\pi"'] & (Y,S)
\end{tikzcd}
\]
We apply Lemma \ref{lemma: principal extension and weighted entropy sum} to the commutative diagram 
\[
\begin{tikzcd}
(X^\prime,T^\prime) \arrow[r,"\pi^\prime"] \arrow[d, two heads, "f"']
  & (Y^\prime,S^\prime) \arrow[d, two heads, "g"] \\
(X,T) \arrow[r,"\pi"'] \arrow[dr, "\rho"']
  & (Y,S) \arrow[d,"\theta"] \\
& (Z,R)
\end{tikzcd}
\]
and we obtain
\begin{align*}
    & \sup\left\{w h_\mu(T|R) + (1-w) h_{\pi_*\mu}(S|R) \middle|\, 
    \mu \in \mathscr{M}^T(X)  \text{ with }\rho_*\mu = \nu\right\} \\
    & = \sup\left\{w h_\mu(T^\prime|R) + (1-w) h_{(\pi^\prime)_*\mu}(S^\prime|R) \middle|\, 
    \mu \in \mathscr{M}^{T^\prime}(X^\prime)  \text{ with }(\rho\circ f)_*\mu = \nu\right\}.
\end{align*}
By Corollary \ref{corollary: factor map and weighted topological entropy}
\[ \int_Z \FHh^w\left(\rho^{-1}(z),\pi, T\right) \, d\nu(z) 
\leq \int_Z \FHh^w\left((\rho \circ f)^{-1}(z),\pi^\prime, T^\prime\right) \, d\nu(z).\]
Since $X^\prime$ and $Y^\prime$ are zero dimensional, 
we can apply Proposition \ref{proposition: construction of invariant measures without ergodicity}
to the diagram 
\[
\begin{tikzcd}
(X^\prime,T^\prime) \arrow[r,"\pi^\prime"] \arrow[dr, two heads, "\rho\circ f"'] & (Y^\prime,S^\prime) \arrow[d,"\theta\circ g"] \\
& (Z,R)
\end{tikzcd}
\]
and we obtain
\begin{align*}
  & \int_Z \FHh^w\left((\rho \circ f)^{-1}(z),\pi^\prime, T^\prime \right) \, d\nu(z) \\
  &\leq \sup\left\{w h_\mu(T^\prime|R) + (1-w) h_{(\pi^\prime)_*\mu}(S^\prime|R) \middle|\, 
    \mu \in \mathscr{M}^{T^\prime}(X^\prime)  \text{ with }(\rho\circ f)_*\mu = \nu\right\}.
\end{align*}
Combining these estimates, we conclude
\begin{align*}
 & \int_Z \FHh^w\left(\rho^{-1}(z),\pi, T\right) \, d\nu(z) \\
 &\leq \sup\left\{w h_\mu(T|R) + (1-w) h_{\pi_*\mu}(S|R) \middle|\, 
    \mu \in \mathscr{M}^T(X)  \text{ with }\rho_*\mu = \nu\right\}.
\end{align*}
\end{proof}

\subsection{Useful corollary} \label{subsection: useful corollary}

In this subsection we study a modification of Corollary \ref{corollary: equivalence of two approaches}.
The result here will be applied to the dimension theory of intersections of randomly translated Bedford--McMullen carpets.
Let $(X, T)$, $(Y, S)$ and $(Z, R)$ be dynamical systems, and let 
$\pi\colon X\to Y$ and $\rho\colon X\to Z$ be equivariant continuous maps.
We assume that $\rho$ is surjective.
\begin{equation*} 
\begin{tikzcd}
(X,T) \arrow[r,"\pi"] \arrow[dr, two heads, "\rho"']
  & (Y,S) \\
  & (Z,R)
\end{tikzcd}
\end{equation*}
Notice that here we do not consider a map from $Y$ to $Z$.
Define $\Pi\colon X\to Y\times Z$ and $\theta\colon Y\times Z\to Z$ by 
$\Pi(x) = (\pi(x), \rho(x))$ and $\theta(y, z) = z$.
Then we have the following commutative diagram.
\begin{equation}  \label{eq: commutative diagram}
\begin{tikzcd}
(X,T) \arrow[r,"\Pi"] \arrow[dr, two heads, "\rho"'] & (Y\times Z, S\times R) \arrow[d,"\theta"] \\
& (Z,R)
\end{tikzcd}
\end{equation}

We compare the weighted topological entropy of $\pi$ and $\Pi$:

\begin{lemma} \label{lemma: reduction to previous Corollary}
Let $0\leq w \leq 1$ and $z\in Z$.
\begin{enumerate}
 \item We have 
\[ \FHh^w\left(\rho^{-1}(z), \pi, T\right)= \FHh^w\left(\rho^{-1}(z), \Pi, T\right). \]
Here the left-hand side is the $w$-weighted topological entropy of the set $\rho^{-1}(z)$ with respect to the 
map $\pi\colon X\to Y$, whereas the right-hand side is the $w$-weighted topological entropy of $\rho^{-1}(z)$ 
with respect to the map $\Pi \colon X\to Y\times Z$.
 \item We have 
\[ \h^w\left(\rho^{-1}(z), \pi, T\right)= \h^w\left(\rho^{-1}(z), \Pi, T\right). \]
Here $\h^w\left(\rho^{-1}(z), \pi, T\right)$ and $\h^w\left(\rho^{-1}(z), \Pi, T\right)$ are the combinatorial version of 
the $w$-weighted topological entropy of $\rho^{-1}(z)$
(defined in \S \ref{subsection: relativised variational principle for weighted topological entropy}) 
with respect to $\pi$ and $\Pi$ respectively.
\end{enumerate}
\end{lemma}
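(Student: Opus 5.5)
The plan is to compare the weighted Bowen balls (and the combinatorial covers) for $\pi$ and for $\Pi$ directly. The point is that on the fiber $\rho^{-1}(z)$ the extra $Z$-coordinate of $\Pi$ is automatically controlled. Fix metrics $\mathbf{d}$, $\mathbf{d}'$, $\mathbf{d}''$ on $X$, $Y$, $Z$, and use the max metric $D = \max(\mathbf{d}', \mathbf{d}'')$ on $Y\times Z$. Since $B^w_n(x,\Pi,\varepsilon)$ imposes the extra condition $\mathbf{d}''(R^k\rho(x), R^k\rho(y)) < \varepsilon$ for $0\le k<n$, we have
\[ B^w_n(x,\Pi,\varepsilon) \subset B^w_n(x,\pi,\varepsilon). \]
Hence $\Lambda^{w,s}_{N,\varepsilon}$ for $\pi$ is at most that for $\Pi$, and likewise $\#^w$ for $\pi$ is at most that for $\Pi$: a $\Pi$-cover $\{V_j\}$ of $\Pi(\Omega)$ with small $D_N$-diameter projects to sets $\mathrm{pr}_Y(V_j)$ covering $\pi(\Omega)$ with small $\mathbf{d}'_N$-diameter, and $\pi^{-1}(\mathrm{pr}_Y V_j)\cap\Omega \supset \Pi^{-1}(V_j)\cap\Omega$. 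The wrong direction here causes no trouble, since we use the images only to bound the $\pi$-quantity from above; openness of the projections is fine because projections are open maps. This gives "$\le$" in both (1) and (2).

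For the reverse inequalities we restrict to $\Omega = \rho^{-1}(z)$.

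For (2), given an open cover $V_1,\dots,V_m$ of $\pi(\rho^{-1}(z))$ with $\diam(V_j,\mathbf{d}'_N)<\varepsilon$, take $V_j\times Z$. These cover $\Pi(\rho^{-1}(z))\subset \pi(\rho^{-1}(z))\times\{z\}$. Their $D_N$-diameter is not small, but only the part meeting $\Pi(\Omega)$ matters, and that part is contained in $V_j\times\{z\}$. So I would instead use the open sets $V_j\times U$, with $U$ the open $\mathbf{d}''_N$-ball of radius $\varepsilon/3$ about $z$. This set has $D_N$-diameter $<\varepsilon$, and $\Pi^{-1}(V_j\times U)\cap\Omega = \pi^{-1}(V_j)\cap\Omega$. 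Therefore the $\Pi$-quantity at scale $\varepsilon$ is at most the $\pi$-quantity at scale $\varepsilon/3$-ish; adjusting constants, it is bounded by the $\pi$-quantity at the same $\varepsilon$ when we take the strict-inequality definitions carefully, and in any case the limits as $\varepsilon\to0$ agree. The $\#(\cdot,\mathbf{d}_N,\varepsilon)$ factors are identical because they only involve $\mathbf{d}_N$ on $X$.

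For (1), I invoke Remark \ref{remark: variation of the definition of FH entropy}: we may assume the centers $x_k$ lie in $\Omega=\rho^{-1}(z)$, at the cost of replacing $\varepsilon$ by $2\varepsilon$. For $x\in\rho^{-1}(z)$ we have
\[ B^w_n(x,\pi,\varepsilon)\cap\rho^{-1}(z) = B^w_n(x,\Pi,\varepsilon)\cap\rho^{-1}(z), \]
since every point of the fiber has $Z$-orbit equal to that of $x$, so the extra condition holds trivially. Thus any cover of $\Omega$ by $\pi$-balls centered in $\Omega$ gives a cover by $\Pi$-balls with the same radii and times. This yields
\[ \Lambda^{w,s}_{N,2\varepsilon}(\Omega,\Pi)\le\widetilde\Lambda^{w,s}_{N,2\varepsilon}(\Omega,\Pi)\le \widetilde\Lambda^{w,s}_{N,2\varepsilon}(\Omega,\pi)\le\Lambda^{w,s}_{N,\varepsilon}(\Omega,\pi). \]
Passing to the critical exponents and letting $\varepsilon\to0$ gives "$\ge$". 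Independence of the choice of metrics justifies using the max metric.

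The only mildly delicate step is the radius bookkeeping in (2), namely the strict inequality $\diam<\varepsilon$ with the product metric. Shrinking the $Z$-factor to a small neighborhood of $z$ handles it, since $\Pi(\Omega)$ lies in the slice over $z$.
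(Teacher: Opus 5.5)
There is a gap in the first inequality of (2), $\#^w(\Omega,\pi,N,\varepsilon)\le\#^w(\Omega,\Pi,N,\varepsilon)$.

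To bound the $\pi$-side sum by the $\Pi$-side sum term by term, you need
\[
\#(\Omega\cap\pi^{-1}(W_j),\mathbf{d}_N,\varepsilon)\le\#(\Omega\cap\Pi^{-1}(V_j),\mathbf{d}_N,\varepsilon),
\]
and hence the inclusion $\Omega\cap\pi^{-1}(W_j)\subset\Omega\cap\Pi^{-1}(V_j)$. With $W_j=\mathrm{pr}_Y(V_j)$ you only have the reverse inclusion. That is exactly the harmful direction, not a harmless one. Suppose $V_j$ contains $(y,z')$ with $z'\neq z$ but not $(y,z)$. Then every $x\in\Omega$ with $\pi(x)=y$ lies in $\pi^{-1}(\mathrm{pr}_Y V_j)$, because $\Pi(x)=(y,z)$ need not be in $V_j$ while $y$ is in its projection. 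Such points are counted on the $\pi$-side but not on the $\Pi$-side. So the projected cover can have a strictly larger weighted sum, and your argument gives no inequality. In particular, your argument does not give this half of (2) for arbitrary $\Omega$, as you suggest. Unlike the ball inclusion in (1), it must use that $\Omega=\rho^{-1}(z)$.

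The fix is the observation the paper's proof rests on: $\Pi(\rho^{-1}(z))=\pi(\rho^{-1}(z))\times\{z\}$. Replace projections by slices, $W_j=\{y\in Y\mid (y,z)\in V_j\}$. These sets have the properties you need:
\begin{itemize}
\item They are open, being preimages of $V_j$ under $y\mapsto(y,z)$.
\item They cover $\pi(\rho^{-1}(z))$.
\item They satisfy $\diam(W_j,\mathbf{d}'_N)\le\diam(V_j,D_N)<\varepsilon$, since $D_N((y_1,z),(y_2,z))=\mathbf{d}'_N(y_1,y_2)$.
\item They give $\rho^{-1}(z)\cap\pi^{-1}(W_j)=\rho^{-1}(z)\cap\Pi^{-1}(V_j)$ exactly.
\end{itemize}

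The rest of your proposal is sound. Part (1) is essentially the paper's argument: the identity of the two Bowen balls intersected with the fiber, combined with Remark~\ref{remark: variation of the definition of FH entropy}. Your sets $V_j\times U$ give the other inequality of (2) at the same $\varepsilon$, since $\diam(V_j\times U,D_N)=\max(\diam(V_j,\mathbf{d}'_N),\diam(U,\mathbf{d}''_N))<\varepsilon$. The hedge about an ``$\varepsilon/3$-ish'' loss is unnecessary. Together with the slice construction, you obtain the exact equality $\#^w(\rho^{-1}(z),\pi,N,\varepsilon)=\#^w(\rho^{-1}(z),\Pi,N,\varepsilon)$ that the paper asserts.
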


\begin{proof}
(1) Take metrics $\mathbf{d}$, $\mathbf{d}^\prime$ and $\mathbf{d}^{\prime\prime}$ on $X$, $Y$ and $Z$ respectively.
We define a metric $D$ on $Y\times Z$ by 
$D\left((y_1, z_1), (y_2, z_2)\right)  = \max\left(\mathbf{d}^\prime(y_1, y_2), \mathbf{d}^{\prime\prime}(z_1, z_2)\right)$.
Now it is easy to check that for any $x\in \rho^{-1}(z)$, $n\geq 1$ and $\varepsilon >0$
\begin{equation} \label{eq: coincidence of weighted Bowen balls of pi and Pi}
   B^w_n(x, \pi, \varepsilon) \cap \rho^{-1}(z) = B^w_n(x,\Pi, \varepsilon) \cap \rho^{-1}(z). 
\end{equation}   
Here $B^w_n(x, \pi, \varepsilon)$ and $B^w_n(x,\Pi, \varepsilon)$ are the $w$-weighted Bowen balls with respect to $\pi$ and $\Pi$ 
respectively.
Notice that we have $B^w_n(x,\Pi, \varepsilon)\subset B^w_n(x, \pi, \varepsilon)$ for all $x\in X$ 
and they do not coincide in general.
However, their intersections with $\rho^{-1}(z)$ do coincide if $x\in \rho^{-1}(z)$.

Now we recall the observation in Remark \ref{remark: variation of the definition of FH entropy} that one 
obtains the same values of \(\FHh^w(\rho^{-1}(z),\pi, T)\) and $\FHh^w\left(\rho^{-1}(z), \Pi, T\right)$
even if the definition of the Feng--Huang entropy is modified by requiring all centers of weighted Bowen balls to belong to \(\rho^{-1}(z)\).
Then we conclude from \eqref{eq: coincidence of weighted Bowen balls of pi and Pi}
that $\FHh^w\left(\rho^{-1}(z), \pi, T\right)= \FHh^w\left(\rho^{-1}(z), \Pi, T\right)$.

(2) We have $\Pi\left(\rho^{-1}(z)\right) = \pi(\rho^{-1}(z)) \times \{z\}$.
Then, following the definitions given in \S \ref{subsection: relativised variational principle for weighted topological entropy}, it is 
straightforward to check that 
$\#^w\left(\rho^{-1}(z), \pi, N, \varepsilon\right) = \#^w\left(\rho^{-1}(z), \Pi, N, \varepsilon\right)$ and hence 
$\h^w\left(\rho^{-1}(z), \pi, T\right)= \h^w\left(\rho^{-1}(z), \Pi, T\right)$.
\end{proof}

The next corollary is the main result of this subsection.
This will be used in \S \ref{section: application to Bedford--McMullen carpets}.

\begin{corollary} \label{corollary: geometric and combinatorial entropies}
Let $(X, T)$, $(Y, S)$ and $(Z, R)$ be dynamical systems, and let 
$\pi\colon X\to Y$ and $\rho\colon X\to Z$ be equivariant continuous maps.
We assume that $\rho$ is surjective.
\begin{equation*} 
\begin{tikzcd}
(X,T) \arrow[r,"\pi"] \arrow[dr, two heads, "\rho"']
  & (Y,S) \\
  & (Z,R)
\end{tikzcd}
\end{equation*}
Let $0\leq w\leq 1$ and $\nu\in \mathscr{M}^R(Z)$. Then 
\[ \FHh^w\left(\rho^{-1}(z), \pi, T\right)  = \h^w\left(\rho^{-1}(z), \pi, T\right) \]
for $\nu$-almost every $z\in Z$.
\end{corollary}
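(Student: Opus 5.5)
The plan is to reduce the statement to Corollary \ref{corollary: equivalence of two approaches} by way of the auxiliary diagram \eqref{eq: commutative diagram}. We replace $\pi\colon X\to Y$ by $\Pi=(\pi,\rho)\colon X\to Y\times Z$ and put $\theta\colon Y\times Z\to Z$, $\theta(y,z)=z$. Then $\theta$ is an equivariant continuous map from $(Y\times Z,S\times R)$ to $(Z,R)$, and $\theta\circ\Pi=\rho$ holds by construction. Since $\rho$ is surjective, the triple $(\Pi,\rho,\theta)$ satisfies every hypothesis of Corollary \ref{corollary: equivalence of two approaches}, with $(Y,S)$ there replaced by $(Y\times Z,S\times R)$.

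First step: apply Corollary \ref{corollary: equivalence of two approaches} to this diagram. For the given $\nu\in\mathscr{M}^R(Z)$ it yields
\[ \FHh^w\left(\rho^{-1}(z),\Pi,T\right)=\h^w\left(\rho^{-1}(z),\Pi,T\right) \]
for $\nu$-almost every $z\in Z$.

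Second step: transport this identity back to $\pi$ using Lemma \ref{lemma: reduction to previous Corollary}. Part (1) of that lemma identifies the Feng--Huang entropies of $\rho^{-1}(z)$ with respect to $\pi$ and $\Pi$, and part (2) identifies the combinatorial entropies. Both identities hold for every $z\in Z$, so no further exceptional null set appears. Chaining the three equalities gives $\FHh^w(\rho^{-1}(z),\pi,T)=\h^w(\rho^{-1}(z),\pi,T)$ for $\nu$-a.e. $z$.

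I expect no real obstacle, since all the substantive work is already contained in Theorem \ref{theorem: relativised variational principle}, Theorem \ref{theorem: Yin} and Lemma \ref{lemma: reduction to previous Corollary}. The only point to check is that nothing in Corollary \ref{corollary: equivalence of two approaches} needs more than continuity and equivariance of $\theta$; in particular, it does not require $\theta$ to be surjective. The measurability and $R$-invariance inputs (Lemma \ref{lemma: measurability of fiber entropy} and Remark \ref{remark: R-invariance of combinatorial weighted entropy}) are stated for general diagrams of this shape, so they apply verbatim to $\Pi$.
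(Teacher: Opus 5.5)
Your proposal is correct and matches the paper's proof: it applies Corollary~\ref{corollary: equivalence of two approaches} to the auxiliary diagram with $\Pi=(\pi,\rho)$ and $\theta(y,z)=z$, then transfers the identity back to $\pi$ using both parts of Lemma~\ref{lemma: reduction to previous Corollary}. On your side remark, the worry is moot: surjectivity of $\theta$ is not a hypothesis of that corollary, and in any case $\theta$ is surjective because $\theta\circ\Pi=\rho$ is.
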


\begin{proof}
Applying Corollary \ref{corollary: equivalence of two approaches} to the 
commutative diagram \eqref{eq: commutative diagram}, we obtain 
\[ \FHh^w\left(\rho^{-1}(z), \Pi, T\right)  = \h^w\left(\rho^{-1}(z), \Pi, T\right) \]
for $\nu$-almost every $z\in Z$.
Now the conclusion follows from Lemma \ref{lemma: reduction to previous Corollary}.
\end{proof}

\begin{remark} \label{remark: another relativised variational principle}
In the setting of Corollary \ref{corollary: geometric and combinatorial entropies}, 
applying the variational principle (Theorem \ref{theorem: relativised variational principle})
to the diagram \eqref{eq: commutative diagram}, we obtain
\begin{align*}
 & \int_Z \FHh^w\left(\rho^{-1}(z), \pi, T\right) d\nu(z) 
  = \int_Z \FHh^w\left(\rho^{-1}(z), \Pi, T\right) d\nu(z) \\
 & = \sup\left\{w h_\mu(T|R) + (1-w) h_{\Pi_*\mu}(S\times R|R) \middle| \mu\in \mathscr{M}^T(X) \text{ with } \rho_*\mu = \nu\right\},
\end{align*}
where $h_{\Pi_*\mu}(S\times R|R)$ denotes 
the conditional entropy of $\Pi_*\mu \in \mathscr{M}^{S\times R}(Y\times Z)$ with respect to 
the natural projection $\theta\colon Y\times Z\to Z$.
\end{remark}

\section{Application to Bedford--McMullen carpets} \label{section: application to Bedford--McMullen carpets}

In this section, we prove Theorem \ref{theorem: intersection of Bedford--McMullen carpets}.
The argument has three steps.
First, Corollary \ref{corollary: geometric and combinatorial entropies}
identifies the Feng--Huang weighted entropy and the combinatorial weighted entropy
on typical fibres.
Second, we compute the Feng--Huang entropy geometrically and relate it to
Hausdorff dimension.
Finally, we compute the combinatorial entropy by a digit-by-digit analysis,
which leads to products of the matrices $Q_{\tau,v}$.

Recall that $\mathbb{T} = \mathbb{R}/\mathbb{Z}$ and $\mathbb{T}^2 = \mathbb{R}^2/\mathbb{Z}^2$.
Let $a\geq b> 1$ be integers and set $A= \{0,1,2,\dots,a-1\}$ and 
$B=\{0,1,2,\dots,b-1\}$.
We also set $w = \log_a b$.
Let $D_1$ and $D_2$ be nonempty subsets of $A\times B$.
For $r=1,2$,
let $X_r$ be the Bedford--McMullen carpets defined by the digit set $D_r$:
\[ X_r = \left\{\left(\sum_{n=1}^\infty \frac{x_n}{a^n}, \sum_{n=1}^\infty\frac{y_n}{b^n}\right) \in \mathbb{T}^2\middle|\, 
    (x_n, y_n)\in D_r \text{ for all $n\geq1$}\right\}.  \]
Define $T_r\colon X_r\to X_r$ by $T_r(x, y) = (ax, by)$.
Let $\pi_r\colon X_r\to \mathbb{T}$ be the projection to the second coordinate ($\pi_r(x, y) = y$), and set 
$Y_r = \pi_r(X_r)$. We define $S_r\colon Y_r\to Y_r$ by $S_r y = by$.
We would like to calculate the Hausdorff dimension of the intersection $(X_1+\mathbf{t})\cap X_2$ for Lebesgue almost every
$\mathbf{t} \in \mathbb{T}^2$.

We define $X := X_1\times X_2$, $Y := Y_1\times Y_2$ and $Z := \mathbb{T}^2$
with maps $T\colon X\to X$, $S\colon Y\to Y$ and $R\colon Z\to Z$ defined by 
\[ T := T_1\times T_2, \quad S := S_1\times S_2, \quad R(x,y) = (ax, by). \] 
We also define $\pi\colon X\to Y$ and $\rho\colon X\to Z$ by 
\[ \pi = \pi_1\times \pi_2, \quad \rho\left(\mathbf{x}, \mathbf{x}^\prime\right) = \mathbf{x}^\prime-\mathbf{x} \quad 
    (\mathbf{x}\in X_1, \mathbf{x}^\prime\in X_2). \]
Now we have    
\begin{equation*} 
\begin{tikzcd}
(X,T) \arrow[r,"\pi"] \arrow[dr, "\rho"']
  & (Y,S) \\
  & (Z,R)
\end{tikzcd}.
\end{equation*}
It is sometimes convenient to extend the maps $S$ and $T$ to $\mathbb{T}^2$ and $\mathbb{T}^4$.
Namely we will occasionally consider the maps 
\[ \mathbb{T}^2\ni (y_1, y_2) \mapsto (b y_1, b y_2) \in \mathbb{T}^2, \quad 
    \mathbb{T}^4 \ni (x_1, y_1, x_2, y_2) \mapsto (ax_1, b y_1, a x_2, b y_2) \in \mathbb{T}^4 \]
and denote them also by $S$ and $T$ respectively.     
    
Let $\nu\in \mathscr{M}^R(Z)$ be the Lebesgue measure.
We apply Corollary \ref{corollary: geometric and combinatorial entropies} to this setting and obtain 
\begin{equation} \label{eq: geometric and combinatorial entropies of fibers}
 \FHh^w\left(\rho^{-1}(\mathbf{t}), \pi, T\right) = \h^w\left(\rho^{-1}(\mathbf{t}), \pi, T\right)  \quad 
    \text{for $\nu$-almost every $\mathbf{t}\in Z$}. 
\end{equation}    
Strictly speaking, Corollary \ref{corollary: geometric and combinatorial entropies}
assumes that $\rho$ is surjective.
If $\rho(X)\neq Z$, then $\rho(X)$ is a proper closed $R$-invariant subset of $Z$.
Since Lebesgue measure on $Z$ is ergodic and has full support, this implies
$\nu(\rho(X))=0$.
Thus $\rho^{-1}(\mathbf t)=\emptyset$ for $\nu$-a.e. $\mathbf t$, and the desired
identity \eqref{eq: geometric and combinatorial entropies of fibers} is trivial. 
(We use the convention\footnote{Recall our convention that the Hausdorff dimension of the empty set is $-\infty$.}
that the entropy of the empty set is $-\infty$.)
Hence we may assume, without loss of generality, that
$\rho$ is surjective.

The task of this section is to calculate both sides of \eqref{eq: geometric and combinatorial entropies of fibers}
in terms of geometric and combinatorial quantities
and deduce Theorem \ref{theorem: intersection of Bedford--McMullen carpets}.

\subsection{Calculating the Feng--Huang entropy} \label{subsection: calculating the Feng--Huang entropy}

First we calculate the Feng--Huang entropy $\FHh^w\left(\rho^{-1}(\mathbf{t}), \pi, T\right)$ in terms of Hausdorff dimension.
The proof is based on a direct comparison between standard balls and weighted Bowen balls.

We need to introduce metrics.
We define a metric $\mathbf{d}$ on the circle $\mathbb{T} = \mathbb{R}/\mathbb{Z}$ by 
$\mathbf{d}(x, y) = \min_{n\in \mathbb{Z}}|x-y-n|$.
We also define metrics $\mathbf{d}^\prime$ and $\mathbf{d}^{\prime\prime}$ 
on $\mathbb{T}^2$ and $\mathbb{T}^2\times \mathbb{T}^2$ respectively by 
\begin{align*}
   \mathbf{d}^\prime\left((x, y), (x^\prime, y^\prime)\right) & = \max\left(\mathbf{d}(x,x^\prime), \mathbf{d}(y, y^\prime)\right), \\
    \mathbf{d}^{\prime\prime}\left((\mathbf{x}, \mathbf{y}), (\mathbf{x}^\prime, \mathbf{y}^\prime)\right) & =
    \max\left(\mathbf{d}^\prime(\mathbf{x}, \mathbf{x}^\prime), \mathbf{d}^\prime(\mathbf{y}, \mathbf{y}^\prime)\right)
\end{align*}    
where $x, y, x^\prime, y^\prime\in \mathbb{T}$ and $\mathbf{x}, \mathbf{y}, \mathbf{x}^\prime, \mathbf{y}^\prime\in \mathbb{T}^2$.
We consider Hausdorff dimension of various subsets of $\mathbb{T}^2$ and $\mathbb{T}^2\times \mathbb{T}^2$ with respect to these metrics.

\begin{lemma} \label{lemma: Hausdorff dimension of intersection}
For every $\mathbf{t} \in \mathbb{T}^2$
\[ \dim_{\mathrm{H}} \left((X_1+\mathbf{t})\cap X_2\right) = \dim_{\mathrm{H}} \rho^{-1}(\mathbf{t}). \]
\end{lemma}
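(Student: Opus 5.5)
The plan is to show that the natural parametrisation of the fiber is bi-Lipschitz. Concretely, $\rho^{-1}(\mathbf{t})$ and $(X_1+\mathbf{t})\cap X_2$ will be identified by a map that changes distances by at most a bounded factor. Hausdorff dimension is invariant under bi-Lipschitz bijections, so this suffices.

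We consider the projection to the second factor,
\[ p\colon \rho^{-1}(\mathbf{t})\to \mathbb{T}^2, \quad (\mathbf{x},\mathbf{x}^\prime)\mapsto \mathbf{x}^\prime. \]
The first step is to check that $p$ maps onto $(X_1+\mathbf{t})\cap X_2$. Indeed, $(\mathbf{x},\mathbf{x}^\prime)\in\rho^{-1}(\mathbf{t})$ means $\mathbf{x}\in X_1$, $\mathbf{x}^\prime\in X_2$ and $\mathbf{x}^\prime=\mathbf{x}+\mathbf{t}$. Hence $\mathbf{x}^\prime\in (X_1+\mathbf{t})\cap X_2$. Conversely, any $\mathbf{x}^\prime$ in the intersection lifts to $(\mathbf{x}^\prime-\mathbf{t},\mathbf{x}^\prime)$. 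The map $p$ is injective, because on the fiber the first coordinate is determined by $\mathbf{x}=\mathbf{x}^\prime-\mathbf{t}$.

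The second step is the metric comparison. Take two points $(\mathbf{x}_1,\mathbf{x}_1^\prime)$ and $(\mathbf{x}_2,\mathbf{x}_2^\prime)$ in the fiber. Then $\mathbf{x}_1-\mathbf{x}_2=\mathbf{x}_1^\prime-\mathbf{x}_2^\prime$ in $\mathbb{T}^2$. Since $\mathbf{d}^\prime$ is translation invariant, this gives $\mathbf{d}^\prime(\mathbf{x}_1,\mathbf{x}_2)=\mathbf{d}^\prime(\mathbf{x}_1^\prime,\mathbf{x}_2^\prime)$. By the definition of $\mathbf{d}^{\prime\prime}$ as a maximum, we conclude
\[ \mathbf{d}^{\prime\prime}\bigl((\mathbf{x}_1,\mathbf{x}_1^\prime),(\mathbf{x}_2,\mathbf{x}_2^\prime)\bigr)=\mathbf{d}^\prime(\mathbf{x}_1^\prime,\mathbf{x}_2^\prime). \]
So $p$ is in fact an isometry onto its image. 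This yields the equality of Hausdorff dimensions for every $\mathbf{t}$, with no exceptional set.

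The empty case also has to be covered. If the intersection is empty, then so is the fiber, and both sides equal $-\infty$ by convention. There is no real obstacle in this argument. The only point needing care is translation invariance of the torus metric $\mathbf{d}$, which holds because $\mathbf{d}(x,y)=\min_n|x-y-n|$ depends only on $x-y$.
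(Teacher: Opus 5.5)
Your proof is correct and is essentially the paper's argument: the paper uses the inverse map $\mathbf{x}\mapsto(\mathbf{x}-\mathbf{t},\mathbf{x})$ and notes that it is an isometry whose inverse is your projection $(\mathbf{x},\mathbf{x}^\prime)\mapsto\mathbf{x}^\prime$. Your explicit check of the translation invariance of $\mathbf{d}^\prime$ and of the empty case spells out details the paper leaves implicit.
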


\begin{proof}
The map
\[
   (X_1+\mathbf{t})\cap X_2 \to \rho^{-1}(\mathbf{t}), \quad \mathbf{x} \mapsto (\mathbf{x}-\mathbf{t},\mathbf{x})
\]
is isometric. (The inverse is the projection $ \rho^{-1}(\mathbf{t}) \to  (X_1+\mathbf{t})\cap X_2$, 
$(\mathbf{x}, \mathbf{x}^\prime) \mapsto  \mathbf{x}^\prime$.)
Therefore $(X_1+\mathbf{t})\cap X_2$ and $\rho^{-1}(\mathbf{t})$ have the same Hausdorff dimension.
\end{proof}

Recall $w= \log_a b$.

\begin{proposition} \label{proposition: Feng--Huang entropy and dimension}
For any subset $\Omega\subset X_1\times X_2$
\[ \FHh^w\left(\Omega, \pi, T\right) = (\log b) \dim_{\mathrm{H}}\Omega. \]
\end{proposition}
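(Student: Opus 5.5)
The plan is to compare weighted Bowen balls in $X_1\times X_2$ directly with metric balls (in $\mathbf{d}''$) of radius comparable to $b^{-n}$, up to bounded multiplicative constants in the radius and a bounded additive shift in $n$. After that, Hausdorff measures at exponent $\sigma$ and the quantities $\Lambda^{w,\sigma\log b}$ control each other, and the critical exponents agree up to the factor $\log b$.

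\textbf{Step 1: balls versus squares.} Fix a small $\varepsilon>0$ (say $\varepsilon<1/(4a)$, so that the multiplications by $a$ and $b$ on $\mathbb{T}$ are locally injective expanding maps at scale $\varepsilon$). Take a point $(\mathbf{x},\mathbf{x}')=((x_1,y_1),(x_2,y_2))$. Membership of $(\mathbf{u},\mathbf{u}')$ in $B^w_n((\mathbf{x},\mathbf{x}'),\pi,\varepsilon)$ amounts to two conditions:
\begin{itemize}
\item $\mathbf{d}(a^kx_i,a^ku_i)<\varepsilon$ and $\mathbf{d}(b^ky_i,b^kv_i)<\varepsilon$ for $k<\lceil wn\rceil$, coming from $T$;
\item $\mathbf{d}(b^ky_i,b^kv_i)<\varepsilon$ for $k<n$, coming from $S$ on $Y$.
\end{itemize}
A standard induction on $k$ for an expanding circle map gives the following: $\mathbf{d}(a^kx,a^ku)<\varepsilon$ for all $k<m$ is equivalent to $\mathbf{d}(x,u)<\varepsilon a^{-(m-1)}$. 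Apply this with $m=\lceil wn\rceil$, and note $a^{-\lceil wn\rceil}\asymp a^{-wn}=b^{-n}$. Then the weighted Bowen ball is contained in the $\mathbf{d}''$-ball of radius $C\varepsilon b^{-n}$. Conversely, it contains the $\mathbf{d}''$-ball of radius $c\varepsilon b^{-n}$. Here $0<c<C$ depend only on $a,b$.

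\textbf{Step 2: upper bound $\FHh^w\le(\log b)\dim_{\mathrm H}$.} Let $\sigma>\dim_{\mathrm H}\Omega$. Cover $\Omega$ by sets $U_k$ with $\sum\diam(U_k)^\sigma$ small and all diameters tiny. Each $U_k$ that meets $\Omega$ lies in a $\mathbf{d}''$-ball of radius $\diam U_k$ centred in $\Omega$. Choose $n_k$ with $c\varepsilon b^{-n_k-1}\le\diam U_k< c\varepsilon b^{-n_k}$; by Step 1 that ball lies in a weighted Bowen ball of level $n_k$. Then
\[
\sum_k e^{-\sigma\log b\, n_k}\le (c\varepsilon/b)^{-\sigma}\sum_k\diam(U_k)^\sigma .
\]
This makes $\Lambda^{w,\sigma\log b}_{N,\varepsilon}(\Omega)$ arbitrarily small with $N$ arbitrarily large, so $\FHh^w(\Omega,T,\varepsilon)\le\sigma\log b$ for every small $\varepsilon$.

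\textbf{Step 3: lower bound.} Conversely, take any cover of $\Omega$ by Bowen balls $B^w_{n_k}(\cdot,\varepsilon)$ with $n_k\ge N$. By Step 1 each has diameter at most $2C\varepsilon b^{-n_k}$, so
\[
\mathcal{H}^\sigma_{\delta}(\Omega)\le (2C\varepsilon)^\sigma\sum_k e^{-\sigma n_k\log b},
\]
with $\delta=2C\varepsilon b^{-N}$. If $\sigma\log b>\FHh^w(\Omega,T)$, then for each $\varepsilon$ the right-hand side can be made to vanish as $N\to\infty$. Hence $\mathcal{H}^\sigma(\Omega)=0$ and $\dim_{\mathrm H}\Omega\le\sigma$.

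Taking $\varepsilon\to0$ and combining the two bounds proves the equality for all $\Omega$, including the empty-set convention.

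The only real obstacle is the comparison in Step 1. One must make sure the two-scale structure matches: the $x$-coordinates are controlled only up to time $\lceil wn\rceil$, at scale $a^{-\lceil wn\rceil}$, while the $y$-coordinates are controlled up to time $n$ through $S$, at scale $b^{-n}$. One also has to handle wrap-around on the torus. Both issues are handled by the small-$\varepsilon$ local injectivity argument.
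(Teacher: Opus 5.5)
Your proof is correct and follows essentially the same route as the paper. The paper makes your Step 1 explicit as $\mathrm{B}(p,rb^{-n})\subset B^w_n(p,\pi,r)\subset \mathrm{B}(p,rab^{-n})$ for $0<r<1/(2a)$ (so $c=1$, $C=a$), and then compares $\mathcal{H}^s$ with $\Lambda^{w,s\log b}$ in both directions exactly as in your Steps 2 and 3. The only cosmetic point is in Step 2: sets $U_k$ of zero diameter must be enlarged or discarded before $n_k$ is chosen. The paper sidesteps this by defining $\mathcal{H}^s$ through covers by balls of positive radius.
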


\begin{proof}
For $r>0$ and $p\in X_1\times X_2$ we denote $\mathrm{B}(p, r) = \{q\in X_1\times X_2\mid \mathbf{d}^{\prime\prime}(p, q) < r\}$.
For $s\geq 0$ and $\varepsilon>0$ we define $\mathcal{H}^s_\varepsilon(\Omega)$ as the infimum of 
$\sum_j r_j^s$ over all at most countable covering $\Omega \subset \bigcup_j \mathrm{B}(p_j, r_j)$ with $0<r_j < \varepsilon$.
We set $\mathcal{H}^s(\Omega) = \lim_{\varepsilon\to 0} \mathcal{H}^s_\varepsilon(\Omega)$.
The Hausdorff dimension $\dim_{\mathrm H}\Omega$ is the critical value of $s$ at which
$\mathcal H^s(\Omega)$ changes from $\infty$ to $0$.

The point of the proof is to compare $\mathrm{B}(p, r)$ with the weighted Bowen ball $B^w_n(p,\pi, r)$.
For $0<r<\frac{1}{2a}$ and $p=\left((x_1, y_1), (x_2, y_2)\right) \in X_1\times X_2$, the $w$-weighted Bowen ball 
$B^w_n(p,\pi, r)$ is the set of $q = \left((x^\prime_1, y^\prime_1), (x^\prime_2, y^\prime_2)\right) \in X_1\times X_2$
satisfying $\mathbf{d}(x_i, x^\prime_i)<r a^{-\lceil wn \rceil +1}$ and $\mathbf{d}(y_i, y^\prime_i) < r b^{-n+1}$ for $i=1,2$.
Since $w=\log_a b$ we have 
\[ b^{-n} < a^{-\lceil wn\rceil +1} \leq a b^{-n}. \]
Hence 
\[  \mathrm{B}(p, rb^{-n}) \subset B^w_n(p,\pi, r) \subset \mathrm{B}(p, rab^{-n}). \]

Let $0<\varepsilon <\frac{1}{2a}$ and 
let $N$ be a natural number.
The quantity $\Lambda^{w, s\log b}_{N,\varepsilon}(\Omega)$ is defined as the infimum of
$\sum_{j} b^{-sn_j}$ over all at most countable covering 
$\Omega \subset \bigcup_j  B^w_{n_j}(p_j,\pi, \varepsilon)$ with $n_j\geq N$.
We have $B^w_{n_j}(p_j,\pi, \varepsilon)\subset \mathrm{B}(p_j, \varepsilon ab^{-n_j})$ and hence 
\[  \mathcal{H}^s_{\varepsilon a b^{-N+1}}(\Omega) \leq (\varepsilon a)^s \Lambda^{w, s\log b}_{N,\varepsilon}(\Omega). \]
Letting $N\to \infty$, we obtain 
\begin{equation} \label{eq: Hausdorff measure and Feng--Huang entropy}
   \mathcal{H}^s(\Omega) \leq (\varepsilon a)^s \Lambda^{w, s\log b}_{\varepsilon}(\Omega). 
\end{equation}   

On the other hand, suppose we are given at most countable covering 
$\Omega \subset \bigcup_j \mathrm{B}(p_j, r_j)$ with $r_j < \varepsilon b^{-N}$.
Let $n_j$ be the largest integer satisfying $\varepsilon b^{-n_j} > r_j$.
Then $n_j\geq N$ and 
\[ \mathrm{B}(p_j, r_j) \subset \mathrm{B}(p_j, \varepsilon b^{-n_j}) \subset B^w_{n_j}(p_j,\pi, \varepsilon). \]
It follows that 
\[ \Lambda^{w, s\log b}_{N,\varepsilon}(\Omega) \leq \sum_j b^{-sn_j}  \leq \left(\frac{b}{\varepsilon}\right)^s \sum_j r_j^s,  \quad 
     (\text{since $\varepsilon b^{-n_j-1} \leq r_j$}). \]
Therefore $\Lambda^{w, s\log b}_{N,\varepsilon}(\Omega) \leq (b/\varepsilon)^s \mathcal{H}^s_{\varepsilon b^{-N}}(\Omega)$.
Letting $N\to \infty$
\[ \Lambda^{w, s\log b}_{\varepsilon}(\Omega) \leq  \left(\frac{b}{\varepsilon}\right)^s \mathcal{H}^s(\Omega). \]
This inequality together with \eqref{eq: Hausdorff measure and Feng--Huang entropy} implies that for any $0<\varepsilon <\frac{1}{2a}$
\[  \FHh^w(\Omega, \pi, T, \varepsilon) = (\log b) \dim_{\mathrm{H}} \Omega. \]
Letting $\varepsilon \to 0$, we obtain the conclusion 
$\FHh^w\left(\Omega, \pi, T\right) = (\log b) \dim_{\mathrm{H}}\Omega$.
\end{proof}

Combining Lemma \ref{lemma: Hausdorff dimension of intersection} and 
Proposition \ref{proposition: Feng--Huang entropy and dimension}, we can calculate 
$\FHh^w\left(\rho^{-1}(\mathbf{t}), \pi, T\right)$ in terms of Hausdorff dimension.

\begin{corollary} \label{corollary: calculating the Feng--Huang entropy}
For all $\mathbf{t}\in \mathbb{T}^2$
\[ \FHh^w\left(\rho^{-1}(\mathbf{t}), \pi, T\right) = (\log b) \dim_{\mathrm{H}}\left((X_1+\mathbf{t})\cap X_2\right). \]
\end{corollary}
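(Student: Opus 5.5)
This is an immediate combination of two results already established. First, by Lemma \ref{lemma: Hausdorff dimension of intersection}, for every $\mathbf{t}\in\mathbb{T}^2$ the map $\mathbf{x}\mapsto(\mathbf{x}-\mathbf{t},\mathbf{x})$ is an isometry from $(X_1+\mathbf{t})\cap X_2$ onto the fibre $\rho^{-1}(\mathbf{t})$, with respect to the metrics $\mathbf{d}^\prime$ and $\mathbf{d}^{\prime\prime}$. Hence
\[ \dim_{\mathrm H}\left((X_1+\mathbf{t})\cap X_2\right)=\dim_{\mathrm H}\rho^{-1}(\mathbf{t}). \]
Second, Proposition \ref{proposition: Feng--Huang entropy and dimension} holds for an \emph{arbitrary} subset $\Omega\subset X_1\times X_2$. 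We therefore apply it with $\Omega=\rho^{-1}(\mathbf{t})$ and obtain
\[ \FHh^w\left(\rho^{-1}(\mathbf{t}),\pi,T\right)=(\log b)\dim_{\mathrm H}\rho^{-1}(\mathbf{t}). \]
Chaining the two identities gives the claim for every $\mathbf{t}$, with no almost-everywhere restriction.

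The only point needing care is the degenerate case $\rho^{-1}(\mathbf{t})=\emptyset$, i.e. $(X_1+\mathbf{t})\cap X_2=\emptyset$. Here both sides equal $-\infty$ by the paper's conventions: the Feng--Huang entropy of the empty set is $-\infty$, and the Hausdorff dimension of the empty set is $-\infty$. The identity $-\infty=(\log b)\cdot(-\infty)$ is then consistent, since $\log b>0$.

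No other obstacle arises. The comparison of Euclidean balls with weighted Bowen balls was already carried out, uniformly in $\Omega$, inside the proof of Proposition \ref{proposition: Feng--Huang entropy and dimension}.
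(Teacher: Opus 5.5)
Your proposal is correct and matches the paper's argument: the corollary is obtained by combining the isometry of Lemma \ref{lemma: Hausdorff dimension of intersection} with Proposition \ref{proposition: Feng--Huang entropy and dimension} applied to $\Omega=\rho^{-1}(\mathbf{t})$. Your treatment of the empty-fibre case, where both sides equal $-\infty$ by the paper's conventions, is a small addition that the paper leaves implicit.
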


\subsection{Combinatorial preparations}  \label{subsection: combinatorial preparations}

Our final task is to compute $\h^w(\rho^{-1}(\mathbf{t}), \pi, T)$ in terms of matrix products.
This will be carried out in the next subsection.
The necessary combinatorial preparation is given in this subsection.

First we recall the definitions of the matrices $Q_{\tau, v}$ introduced in 
\S \ref{subsection: intersecting random translates of Bedford--McMullen carpets}.
We consider $4\times 4$-matrices indexed by $I = \{(0,0), (0,1), (1,0), (1,1)\}$.
Let $q\colon \mathbb{R}^2\to \mathbb{R}$ be the projection to the second coordinate.
For $\tau\in A\times B$ and $v\in B$, 
we define a $4\times 4$-matrix $Q_{\tau, v}$ by 
\[ Q_{\tau, v}\bigl((i,j), (k,\ell)\bigr) 
   = \left|\left(D_1+\tau+(i,j)\right)\cap \left(D_2+(ka,\ell b)\right)\cap q^{-1}(\ell b+ v)\right|, \]
where $(i,j), (k,\ell)\in I$.   
We also set
\[ Q_\tau  = \sum_{v\in B} Q_{\tau, v}. \]

For $r=1,2$ and $n\geq 1$, we define 
\[ X_r^{(n)} 
  = \left\{\left(\sum_{m=1}^n \frac{x_m}{a^m}, \sum_{m=1}^n \frac{y_m}{b^m}\right) \in \mathbb{R}^2
  \middle|\, (x_m, y_m) \in D_r \> (1\leq m \leq n)\right\}. \]
Notice that this is a subset of $\mathbb{R}^2$ (not the torus $\mathbb{T}^2$).

The following lemma is a key result that connects the intersection $(X_1+\mathbf{t})\cap X_2$
to the products of $Q_{\tau,v}$.
This may be viewed as a self-affine analogue of \cite[Lemma~4.2]{Kenyon--Peres_intersection}.

\begin{lemma} \label{lemma: discrete carpets}
For $\tau_m = (\alpha_m, \beta_m)\in A\times B$, $v_m\in B$ $(1\leq m \leq n)$ and $(i,j), (k,\ell)\in I$,
the cardinality of the set
\begin{equation} \label{eq: discrete intersection}
   \left(X_1^{(n)}+\left(\sum_{m=1}^n \frac{\alpha_m}{a^m}, \sum_{m=1}^n \frac{\beta_m}{b^m}\right) 
  + \left(\frac{i}{a^n}, \frac{j}{b^n}\right)\right) \cap \left(X_2^{(n)}+ (k,\ell)\right) 
  \cap q^{-1}\left(\ell+ \sum_{m=1}^n \frac{v_m}{b^m}\right)
\end{equation}
is equal to the $\left((i,j), (k,\ell)\right)$-entry of the matrix 
\[ Q_{\tau_n, v_n} Q_{\tau_{n-1}, v_{n-1}} \cdots Q_{\tau_1, v_1}. \] 
\end{lemma}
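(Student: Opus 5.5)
We argue by induction on $n$. The plan is to rescale by $(a^n,b^n)$ so that everything becomes an integer lattice problem, and then peel off the last digit. It helps to package the statement as follows. For $\tau_1,\dots,\tau_n$, $v_1,\dots,v_n$ and $(i,j),(k,\ell)\in I$, let $N_n\bigl((i,j),(k,\ell)\bigr)$ denote the cardinality of the set \eqref{eq: discrete intersection}. Multiplying by the diagonal map $\mathrm{diag}(a^n,b^n)$, this set corresponds bijectively to the set of pairs of digit strings $(x_m,y_m)_{m}\in D_1^n$ and $(x'_m,y'_m)_m\in D_2^n$ satisfying three conditions:
\[
\sum_{m=1}^n (x_m+\alpha_m)a^{n-m} + i = \sum_{m=1}^n x'_m a^{n-m} + k a^n,
\]
\[
\sum_{m=1}^n (y_m+\beta_m)b^{n-m} + j = \sum_{m=1}^n y'_m b^{n-m} + \ell b^n,
\]
and $y'_m=v_m$ for all $m$.

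For the third condition, note that $q^{-1}\bigl(\ell+\sum v_m b^{-m}\bigr)$ intersected with $X_2^{(n)}+(k,\ell)$ forces $\sum_m y'_m b^{-m}=\sum_m v_m b^{-m}$, and uniqueness of finite base-$b$ expansions with digits in $B$ gives $y'_m=v_m$. I also need the map from digit pairs to points to be injective. On the $X_2$ side this holds by uniqueness of expansions. On the $X_1$ side, once the $X_2$ point and the carry are fixed, the $X_1$ point is determined, again by uniqueness of expansions. So counting points is the same as counting pairs of digit strings.

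For the base case $n=1$, the conditions read $(x_1,y_1)+\tau_1+(i,j)=(x'_1,y'_1)+(ka,\ell b)$ with $y'_1=v_1$. This is exactly membership in $(D_1+\tau_1+(i,j))\cap(D_2+(ka,\ell b))\cap q^{-1}(\ell b+v_1)$, so the count is the $((i,j),(k,\ell))$-entry of $Q_{\tau_1,v_1}$.

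For the inductive step, fix the last digits $(x_n,y_n)$ and $(x'_n,y'_n)$. Reducing the two integer equations modulo $a$ and $b$, the last-digit equation forces
\[
(x_n,y_n)+\tau_n+(i,j)=(x'_n,y'_n)+(k'a,\ell'b)
\]
for some intermediate carry $(k',\ell')\in\mathbb{Z}^2$. Subtracting this and dividing by $(a,b)$, the remaining $n-1$ digits satisfy the level-$(n-1)$ system with input carry $(k',\ell')$ and output carry $(k,\ell)$. Hence
\[
N_n\bigl((i,j),(k,\ell)\bigr)=\sum_{(k',\ell')} Q_{\tau_n,v_n}\bigl((i,j),(k',\ell')\bigr)\, N_{n-1}\bigl((k',\ell'),(k,\ell)\bigr),
\]
where the $N_{n-1}$ count uses $\tau_1,\dots,\tau_{n-1}$ and $v_1,\dots,v_{n-1}$. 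By the induction hypothesis this sum is the $((i,j),(k,\ell))$-entry of $Q_{\tau_n,v_n}(Q_{\tau_{n-1},v_{n-1}}\cdots Q_{\tau_1,v_1})$, as required.

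The order of the product needs care. The equation $\sum_m x_m a^{n-m}$ written with most significant digit first suggests the opposite order, so I will set up the recursion by stripping the least significant digit, which is the $m=n$ term. Equivalently I can reindex so that the digit sequence read from the coarse scale matches the claimed order $Q_{\tau_n,v_n}\cdots Q_{\tau_1,v_1}$. If the indexing turns out the other way, I will instead strip $m=1$ and carry from below, and check the claim against the $n=1,2$ cases.

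The main obstacle is showing that the intermediate carry $(k',\ell')$ always lies in $\{0,1\}^2$, so that the sum runs over $I$ and the $4\times4$ matrices suffice. I would prove this by bounds. We have $0\le x_n+\alpha_n+i\le 2a-1$ and $0\le x'_n\le a-1$, so $k'a\in[-(a-1),\,2a-1]$, which gives $k'\in\{0,1\}$; the case $k'=-1$ is impossible because $k'a$ must be an integer multiple of $a$ in that range. For the level-$(n-1)$ problem the same bound applies: the carry-in at the top, namely the final $(k,\ell)$, is already in $\{0,1\}^2$, and the intermediate carry equals the integer $\bigl(\sum_{m<n}(\cdots)-\sum_{m<n}(\cdots)\bigr)/a^{n-1}$ adjusted by the top carry. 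Its range is controlled by the same estimate, since $\sum_{m}(x_m+\alpha_m)a^{-m}<2$ and $\sum_m x'_m a^{-m}\ge 0$. The vertical carry $\ell'$ is handled identically with $b$ in place of $a$.
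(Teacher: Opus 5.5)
Your proposal is correct and is essentially the paper's proof: induction on $n$, clearing denominators by $(a^n,b^n)$, reducing modulo $(a,b)$ to split off the least significant digit $m=n$ with an intermediate carry $(k',\ell')$, and recognizing the resulting sum as $Q_{\tau_n,v_n}$ times the level-$(n-1)$ product. You should also state the easy converse, as the paper does: a last-digit solution with carry $(k',\ell')$ and a level-$(n-1)$ solution with input carry $(k',\ell')$ recombine into a level-$n$ solution, which is what justifies the equality. Your bound $k'a\in[-(a-1),2a-1]$ supplies the detail the paper leaves implicit for $(k',\ell')\in I$. The closing remarks about carries inside the level-$(n-1)$ problem and about possibly reversing the product order are unnecessary. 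The induction hypothesis already handles the deeper carries, and your recursion already yields the order $Q_{\tau_n,v_n}\cdots Q_{\tau_1,v_1}$.
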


\begin{proof}
We proceed by induction on $n$.
The case $n=1$ follows from the definition of $Q_{\tau, v}$.
Indeed, the cardinality 
\begin{equation} \label{eq: discrete intersection case n=1}
 \left|\left(X_1^{(1)}+\left(\frac{\alpha_1}{a}, \frac{\beta_1}{b}\right) + \left(\frac{i}{a}, \frac{j}{b}\right)\right) 
\cap \left(X_2^{(1)}+ (k,\ell)\right) \cap q^{-1}\left(\ell+\frac{v_1}{b}\right)\right| 
\end{equation}
is equal to the number of $(x_1, y_1)\in D_1$ for which there exists $u\in A$ with $(u, v_1)\in D_2$ and  
\[ \left(\frac{x_1+\alpha_1+i}{a}, \frac{y_1+\beta_1+j}{b}\right) = \left(\frac{u}{a}+k, \frac{v_1}{b}+ \ell\right). \]
This equation is equivalent to 
$(x_1+\alpha_1+i, y_1+\beta_1+j) = (u+ak, v_1+\ell b)$.
Therefore \eqref{eq: discrete intersection case n=1} is equal to 
\[ \left|\left(D_1+(\alpha_1, \beta_1)+(i,j)\right)\cap (D_2+(ka, \ell b)) \cap q^{-1}(\ell b+v_1)\right|. \]
This coincides with the definition of $Q_{\tau_1, v_1}\left((i,j), (k,\ell)\right)$.

Now we suppose that $n>1$ and the statement holds for $n-1$.
The cardinality of \eqref{eq: discrete intersection} is equal to the number of $(x_m, y_m)\in D_1$ $(1\leq m \leq n)$
for which there exist $u_m\in A$ $(1\leq m \leq n)$ with 
$(u_m, v_m)\in D_2$ and 
\[ \left(\sum_{m=1}^n\frac{x_m}{a^m}+ \sum_{m=1}^n\frac{\alpha_m}{a^m} + \frac{i}{a^n},
    \sum_{m=1}^n \frac{y_m}{b^m} + \sum_{m=1}^n \frac{\beta_m}{b^m} + \frac{j}{b^n}\right) 
    = \left(\sum_{m=1}^n \frac{u_m}{a^m} + k, \sum_{m=1}^n \frac{v_m}{b^m} + \ell\right). \] 
Multiplying the first and second coordinates by $a^n$ and $b^n$ respectively, this equation is equivalent to 
\begin{equation} \label{eq: discrete intersection another form}
 \begin{split}
 & \left(\sum_{m=1}^n (x_m+\alpha_m) a^{n-m} + i, \sum_{m=1}^n (y_m + \beta_m)b^{n-m} + j\right) \\
  & = \left(\sum_{m=1}^n u_m a^{n-m} + k a^n, \sum_{m=1}^n v_m b^{n-m} + \ell b^n\right).
 \end{split} 
\end{equation}
Taking the two coordinates of \eqref{eq: discrete intersection another form} modulo $a$ and $b$, respectively,
we obtain that 
\begin{equation} \label{eq: discrete intersection top component}
 (x_n +\alpha_n + i, y_n+\beta_n+j) = (u_n + a k^\prime, v_n + b \ell^\prime) 
\end{equation}
for some $(k^\prime, \ell^\prime) \in I$.
The two equations \eqref{eq: discrete intersection another form} and \eqref{eq: discrete intersection top component}
imply 
\begin{equation} \label{eq: discrete intersection case n-1}
  \begin{split}
   & \left(\sum_{m=1}^{n-1}(x_m+\alpha_m)a^{n-1-m} + k^\prime, \sum_{m=1}^{n-1}(y_m+\beta_m)b^{n-1-m} + \ell^\prime\right) \\
   & = \left(\sum_{m=1}^{n-1}u_m a^{n-1-m} + k a^{n-1}, \sum_{m=1}^{n-1}v_m b^{n-1-m} + \ell b^{n-1}\right). 
  \end{split}
\end{equation}
On the other hand, the equation \eqref{eq: discrete intersection case n-1} together with 
\eqref{eq: discrete intersection top component} implies \eqref{eq: discrete intersection another form}.
Then it follows from the induction hypothesis that the cardinality of \eqref{eq: discrete intersection} is equal to 
\begin{align*}
  \sum_{(k^\prime, \ell^\prime) \in I} & \left|\left(D_1+\tau_n+(i,j)\right)\cap (D_2+(a k^\prime, b \ell^\prime))\cap q^{-1}(v_n+b\ell^\prime)\right| \\
   &\times \left(Q_{\tau_{n-1}, v_{n-1}} Q_{\tau_{n-2}, v_{n-2}} \cdots Q_{\tau_1, v_1}\right)\left((k^\prime, \ell^\prime), (k, \ell)\right),
\end{align*}
which is equal to $\left(Q_{\tau_n, v_n} Q_{\tau_{n-1}, v_{n-1}} \cdots Q_{\tau_1, v_1}\right)\left((i, j), (k, \ell)\right)$.
\end{proof}

Next we formulate how to pass from the \lq\lq{}approximate overlap\rq\rq{} \eqref{eq: discrete intersection}
to real intersection points of two carpets.
For $v_1, \dots, v_n\in B$ and $\mathbf{t} \in \mathbb{T}^2$, we define $\mathcal{N}(\mathbf{t}; v_1, \dots, v_n)$
as the number of $(u_1, \dots, u_n)\in A^n$ that satisfies the following two conditions.
\begin{itemize}
  \item $(u_m, v_m)\in D_2$ for all $1\leq m \leq n$.
  \item There exist $(u_m, v_m)\in D_2$ for $m\geq n+1$ satisfying
\begin{equation} \label{eq: defining equations of N(t,v_1,dots,v_n)}
 \left(\sum_{m=1}^\infty \frac{u_m}{a^m}, \sum_{m=1}^\infty \frac{v_m}{b^m}\right) \in X_1 + \mathbf{t}.
\end{equation} 
\end{itemize}
Notice that the condition \eqref{eq: defining equations of N(t,v_1,dots,v_n)} is equivalent to 
\[ \left(\sum_{m=1}^\infty \frac{u_m}{a^m}, \sum_{m=1}^\infty \frac{v_m}{b^m}\right) \in (X_1 + \mathbf{t})\cap X_2. \]

For $\mathbf{i}, \mathbf{k}\in I$, we define the $4\times 4$-matrix $E_{\mathbf{i} \mathbf{k}}$ by 
\[ E_{\mathbf{i} \mathbf{k}}(\mathbf{j}, \mathbf{l}) = \begin{cases} 1 & \left((\mathbf{j}, \mathbf{l}) = (\mathbf{i}, \mathbf{k})\right) \\
                                                                                                0 & \left((\mathbf{j}, \mathbf{l}) \neq (\mathbf{i}, \mathbf{k})\right)
                                                                                                \end{cases}. \] 

\begin{lemma} \label{lemma: approximate overlap and real intersection point}
Let $\tau_m = (\alpha_m, \beta_m) \in A\times B$ ($m\geq 1$) be given, and set 
\[ \mathbf{t} = \left(\sum_{m=1}^\infty \frac{\alpha_m}{a^m}, \sum_{m=1}^\infty \frac{\beta_m}{b^m}\right). \]
Let $n\geq 1$.
Let $\mathbf{i} = (i, j)$ and $\mathbf{k}= (k, \ell)$ be two elements of $I$ such that 
\begin{equation} \label{eq: extendability}
 Q_{\tau_{n+h}} Q_{\tau_{n+h-1}}\cdots Q_{\tau_{n+1}} E_{\mathbf{i} \mathbf{k}} \neq 0 \quad 
    \text{for any $h>0$}. 
\end{equation}   
Suppose that $(u_1, v_1), \dots, (u_n, v_n) \in D_2$ are given so that 
$\left(\sum_{m=1}^n \frac{u_m}{a^m}, \sum_{m=1}^n \frac{v_m}{b^m}\right) + (k, \ell)$ belongs to the set 
\begin{equation} \label{eq: discrete intersection revisited} 
  \left(X_1^{(n)}+\left(\sum_{m=1}^n \frac{\alpha_m}{a^m}, \sum_{m=1}^n \frac{\beta_m}{b^m}\right) 
  + \left(\frac{i}{a^n}, \frac{j}{b^n}\right)\right) \cap \left(X_2^{(n)}+ (k,\ell)\right) 
  \cap q^{-1}\left(\ell+ \sum_{m=1}^n \frac{v_m}{b^m}\right). 
\end{equation}  
Then there exist $(u_m, v_m) \in D_2$ $(m\geq n+1)$ for which we have
\[  \left(\sum_{m=1}^\infty \frac{u_m}{a^m}, \sum_{m=1}^\infty \frac{v_m}{b^m}\right) \in (X_1+\mathbf{t})\cap X_2. \]
In particular, under the condition \eqref{eq: extendability}, we have 
\[ \left(Q_{\tau_n, v_n} Q_{\tau_{n-1}, v_{n-1}} \cdots Q_{\tau_1, v_1}\right)(\mathbf{i}, \mathbf{k}) \leq \mathcal{N}(\mathbf{t}; v_1, \dots, v_n) \]
for any $v_1, \dots, v_n\in B$.
\end{lemma}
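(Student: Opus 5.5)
The plan is to read the matrices $Q_{\tau,v}$ as transition counts of a carry automaton, and then to extend the given finite configuration to an infinite one by a compactness (König's lemma) argument.

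\emph{Carry chains.} From the proof of Lemma~\ref{lemma: discrete carpets}, a point of \eqref{eq: discrete intersection} is the same as digits $(x_m,y_m)\in D_1$ and $(u_m,v_m)\in D_2$ for $1\le m\le n$, together with carries $c_0=(i,j)$, $c_1,\dots,c_n=(k,\ell)$ in $I$. At each level $m$ the local equation \eqref{eq: discrete intersection top component} must hold, with incoming carry $c_{m-1}$ and outgoing carry $c_m$ (levels counted from the bottom $m=n$ upward). The row index of $Q_{\tau_n}\cdots Q_{\tau_1}$ is the carry entering at the deepest level, and the column index is the carry leaving level $1$.

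Now unpack \eqref{eq: extendability}. The $(\mathbf{i}',\mathbf{k})$-entry of $Q_{\tau_{n+h}}\cdots Q_{\tau_{n+1}}E_{\mathbf{i}\mathbf{k}}$ equals $(Q_{\tau_{n+h}}\cdots Q_{\tau_{n+1}})(\mathbf{i}',\mathbf{i})$. So the hypothesis says: for each $h$ there is some $\mathbf{i}'\in I$ with this entry positive. Apply Lemma~\ref{lemma: discrete carpets} to the shifted digit sequence $\tau_{n+1},\dots,\tau_{n+h}$, summing over $v$'s. This gives digits at levels $n+1,\dots,n+h$ and a carry chain that starts at $\mathbf{i}'$ and ends with outgoing carry $\mathbf{i}=(i,j)$, which is exactly the bottom carry of the given length-$n$ configuration.

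Concatenating the two pieces gives a valid length-$(n+h)$ configuration whose first $n$ levels are the given digits $(x_m,y_m)$, $(u_m,v_m)$. This is the same gluing of \eqref{eq: discrete intersection top component} with \eqref{eq: discrete intersection case n-1} used in the induction step of Lemma~\ref{lemma: discrete carpets}.

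\emph{König's lemma.} Consider the tree whose nodes at depth $h$ are the valid extensions of length $h$, i.e.\ digits and carries at levels $n+1,\dots,n+h$ compatible with the fixed top part. Truncating a valid extension to depth $h'<h$ is again valid: the carry $c_{n+h'}$ simply becomes the incoming carry. So this is a tree, it is finitely branching since $D_1$, $D_2$ and $I$ are finite, and by the previous step it is infinite. König's lemma therefore gives an infinite branch, i.e.\ infinite sequences $(x_m,y_m)\in D_1$ and $(u_m,v_m)\in D_2$ for $m\ge1$.

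For each $N$, the relation \eqref{eq: discrete intersection another form} at depth $N$ holds with a bottom carry in $I$. After dividing by $(a^N,b^N)$ it reads
\[
\sum_{m\le N}\frac{x_m+\alpha_m}{a^m}+O(a^{-N})=\sum_{m\le N}\frac{u_m}{a^m}+k,
\]
and similarly in the second coordinate with $b$. Letting $N\to\infty$ gives $\mathbf{x}+\mathbf{t}=\mathbf{u}$ in $\mathbb{T}^2$, where $\mathbf{x}\in X_1$ and $\mathbf{u}=\bigl(\sum_m u_m a^{-m},\sum_m v_m b^{-m}\bigr)\in X_2$. This proves the first assertion.

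\emph{The inequality.} By Lemma~\ref{lemma: discrete carpets}, the entry $(Q_{\tau_n,v_n}\cdots Q_{\tau_1,v_1})(\mathbf{i},\mathbf{k})$ counts the points of \eqref{eq: discrete intersection revisited}. Each such point has the form $\bigl(\sum_{m\le n}u_m a^{-m},\sum_{m\le n}v_m b^{-m}\bigr)+(k,\ell)$ with $u_m\in A$. Since $0\le\sum_{m\le n}u_m a^{-m}<1$ and the $u_m$ are base-$a$ digits, the point determines $(u_1,\dots,u_n)$ uniquely. By the first part, every such tuple extends to a point of $(X_1+\mathbf{t})\cap X_2$, so it is counted by $\mathcal{N}(\mathbf{t};v_1,\dots,v_n)$. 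Hence the inequality.

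The main obstacle I expect is the index bookkeeping: checking that the column index $\mathbf{i}$ of the shifted product really corresponds to the bottom carry $(i,j)$ of the given configuration, so that the gluing is legitimate. This should follow directly from the orientation established in the induction step of Lemma~\ref{lemma: discrete carpets}.
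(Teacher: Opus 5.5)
Your proposal is correct and follows essentially the paper's route. The paper uses a diagonal argument to extract an infinite carry chain $\mathbf{i}_0=(i,j),\mathbf{i}_1,\mathbf{i}_2,\dots$ and digits $v_{n+h}$ with $Q_{\tau_{n+h},v_{n+h}}(\mathbf{i}_h,\mathbf{i}_{h-1})>0$, then fills in the remaining digits level by level, lets $h\to\infty$, and deduces the inequality from Lemma~\ref{lemma: discrete carpets}; this is the same compactness step as your K\"onig's lemma argument, run on carry states rather than full digit configurations, and your explicit checks of the gluing and of the injectivity from points to $u$-tuples supply details the paper leaves implicit.
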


Roughly speaking, the lemma claims that the condition \eqref{eq: extendability} guarantees that any point of 
\eqref{eq: discrete intersection revisited} can be \lq\lq{}extended\rq\rq{} to a point of $(X_1+\mathbf{t})\cap X_2$.

\begin{proof}
By the condition \eqref{eq: extendability} and the diagonal argument, we can find sequences
$\mathbf{i}_1, \mathbf{i}_2, \mathbf{i}_3, \dots$ in $I$ and $v_{n+1}, v_{n+2}, v_{n+3}, \dots$ in $B$ such that 
$Q_{\tau_{n+h}, v_{n+h}}(\mathbf{i}_h, \mathbf{i}_{h-1}) >0$ for any $h>0$, where
we have set $\mathbf{i}_0 = (i, j)$.

Let $\mathbf{i}_h = (i_h, j_h)$.
By the definition of the matrix $Q_{\tau_{n+h}, v_{n+h}}$, we can find $(x_{n+h}, y_{n+h})\in D_1$ and 
$u_{n+h}\in A$ such that $(u_{n+h}, v_{n+h}) \in D_2$ and 
\[ (x_{n+h}+\alpha_{n+h}+i_h, y_{n+h}+\beta_{n+h}+j_h) = 
    (u_{n+h}+ a i_{h-1}, v_{n+h}+ b j_{h-1}). \]
Then, as in the proof of Lemma \ref{lemma: discrete carpets}
(namely, by the argument that the equations \eqref{eq: discrete intersection top component} and 
\eqref{eq: discrete intersection case n-1} imply \eqref{eq: discrete intersection another form}), we can show that 
the point $\left(\sum_{m=1}^{n+h} \frac{u_m}{a^m}, \sum_{m=1}^{n+h} \frac{v_m}{b^m}\right)$ belongs to the set
\[  \left(X_1^{(n+h)}+\left(\sum_{m=1}^{n+h} \frac{\alpha_m}{a^m}, \sum_{m=1}^{n+h} \frac{\beta_m}{b^m}\right) 
  + \left(\frac{i_h}{a^{n+h}}, \frac{j_h}{b^{n+h}}\right)\right) \cap \left(X_2^{(n+h)}+ (k,\ell)\right) 
  \cap q^{-1}\left(\ell+ \sum_{m=1}^{n+h} \frac{v_m}{b^m}\right). \]
This holds for any $h>0$.
Letting $h\to \infty$, we obtain
\[  \left(\sum_{m=1}^\infty \frac{u_m}{a^m}, \sum_{m=1}^\infty \frac{v_m}{b^m}\right) \in (X_1+\mathbf{t})\cap X_2. \]

Now we have seen that any point of \eqref{eq: discrete intersection revisited} can be extended to a point of 
$(X_1+\mathbf{t})\cap X_2$.
Therefore the cardinality of the set \eqref{eq: discrete intersection revisited} (which is equal to 
$\left(Q_{\tau_n, v_n} \cdots Q_{\tau_1, v_1}\right)(\mathbf{i}, \mathbf{k})$ by Lemma \ref{lemma: discrete carpets})
is bounded from above by $\mathcal{N}(\mathbf{t}; v_1, \dots, v_n)$.
\end{proof}

We need a convenient condition that guarantees \eqref{eq: extendability}.
The next definition provides one.
Let $M$ be any nonnegative $4\times 4$-matrix.
(Here \lq\lq{}nonnegative\rq\rq{} means that every entry of $M$ is nonnegative.)
We say that $M$ is \textbf{persistent} if $Q_{\tau_n} Q_{\tau_{n-1}}\cdots Q_{\tau_1} M \neq 0$
for any choices of $n\geq 1$ and $\tau_1, \dots, \tau_n\in A\times B$.
$M$ is said to be non-persistent if $Q_{\tau_n} Q_{\tau_{n-1}}\cdots Q_{\tau_1} M =0$
for some $n\geq 1$ and $\tau_1, \dots, \tau_n\in A\times B$.
If $M$ is persistent then $Q_\tau M$ is also persistent for any $\tau\in A\times B$.
The role of persistence is to record which carry states can be extended
indefinitely, independently of the future translation digits.
It enables us to use Lemma \ref{lemma: approximate overlap and real intersection point} and compare 
the finite matrix product with the actual combinatorial covering number of the fiber.

For a nonnegative $4\times 4$-matrix $M$, we define a $4\times 4$-matrix $\iota(M)$
by 
\[ \iota(M)(\mathbf{i}, \mathbf{k}) = \begin{cases}  1 & (M(\mathbf{i}, \mathbf{k})  >0) \\
                                                                      0 & (M(\mathbf{i}, \mathbf{k}) = 0) 
                                                                      \end{cases}, \]
for $\mathbf{i}, \mathbf{k}\in I$.
$\iota(M)$ is persistent if and only if $M$ is persistent.
For two nonnegative matrices $M_1$ and $M_2$, we have $\iota(M_1 M_2) = \iota\left(\iota(M_1) \iota(M_2)\right)$.
Notice that there exist only $2^{16}$ possibilities for the value of $\iota(M)$.

\begin{lemma} \label{lemma: extinction in definite time}
A nonnegative $4\times 4$-matrix $M$ is non-persistent if and only if 
there exist $n < 2^{16}$ and $\tau_1, \dots, \tau_n \in A\times B$ that satisfy 
$Q_{\tau_n} Q_{\tau_{n-1}}\cdots Q_{\tau_1} M =0$.
\end{lemma}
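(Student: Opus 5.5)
The plan is a pigeonhole argument on the finite set of zero--one patterns $\{\iota(N)\}$. The ``if'' direction is immediate from the definition of non-persistence, so the work is in the converse. Suppose $M$ is non-persistent. We choose $n\geq 1$ \emph{minimal} such that there exist $\tau_1,\dots,\tau_n\in A\times B$ with $Q_{\tau_n}\cdots Q_{\tau_1}M=0$. We then show $n<2^{16}$.

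Set $M_0=M$ and $M_m=Q_{\tau_m}\cdots Q_{\tau_1}M$ for $1\leq m\leq n$. By minimality of $n$, each $M_m$ with $0\leq m\leq n-1$ is nonzero. (For $m=0$, if $M=0$ then $Q_{\tau}M=0$ for any $\tau$, so $n=1<2^{16}$.) Now consider the patterns $\iota(M_0),\dots,\iota(M_{n-1})$. These are $n$ nonzero $4\times4$ zero--one matrices, so there are at most $2^{16}-1$ possible values. If $n\geq 2^{16}$, pigeonhole gives indices $0\leq p<q\leq n-1$ with $\iota(M_p)=\iota(M_q)$.

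The key fact is that vanishing of a product of nonnegative matrices depends only on the $\iota$-patterns of its factors. This follows from $\iota(M_1M_2)=\iota(\iota(M_1)\iota(M_2))$, because for nonnegative matrices $N\neq 0$ iff $\iota(N)\neq 0$. Consequently,
\[
\iota\bigl(Q_{\tau_n}\cdots Q_{\tau_{q+1}}M_q\bigr)=\iota\bigl(\iota(Q_{\tau_n}\cdots Q_{\tau_{q+1}})\,\iota(M_q)\bigr)=\iota\bigl(\iota(Q_{\tau_n}\cdots Q_{\tau_{q+1}})\,\iota(M_p)\bigr)=\iota\bigl(Q_{\tau_n}\cdots Q_{\tau_{q+1}}M_p\bigr).
\]
The left side is $\iota(M_n)=0$. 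Hence the shortened product $Q_{\tau_n}\cdots Q_{\tau_{q+1}}Q_{\tau_p}\cdots Q_{\tau_1}M$ vanishes. It uses $n-(q-p)<n$ factors, and at least one factor remains since $n-q\geq 1$. This contradicts the minimality of $n$, so $n<2^{16}$.

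The only delicate point is handling the count carefully: the pigeonhole must be run over nonzero patterns only, and the $M=0$ edge case must be treated separately. I do not expect a genuine obstacle here. The bound could even be sharpened to $2^{16}-1$, but the statement only needs $n<2^{16}$.
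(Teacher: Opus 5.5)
Your proof is correct and is essentially the paper's argument: take a shortest annihilating word, apply pigeonhole to the $\iota$-patterns of the partial products, and excise the repeated block using $\iota(M_1M_2)=\iota(\iota(M_1)\iota(M_2))$. The only difference is that you apply pigeonhole to the $n$ nonzero patterns $\iota(M_0),\dots,\iota(M_{n-1})$ rather than all $n+1$. This cleanly disposes of the degenerate pair $(\ell,m)=(0,n)$, which would leave an empty word; the paper's count over all $n+1$ patterns handles that case only implicitly.
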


\begin{proof}
The \lq\lq{}if\rq\rq{} part is trivial.
We prove the \lq\lq{}only if\rq\rq{} part.
Suppose $M$ is non-persistent.
We choose the minimum $n\geq 1$ for which there exist $\tau_1, \dots, \tau_n\in A\times B$ that satisfy 
$Q_{\tau_n} \cdots Q_{\tau_1} M =0$.
We would like to show $n<2^{16}$.
Suppose the contrary (that is, $n\geq 2^{16}$).
Consider the following $n+1$ matrices 
\[ \iota(M), \> \iota(Q_{\tau_1}M), \> \iota(Q_{\tau_2}Q_{\tau_1}M), \dots, \iota(Q_{\tau_n}\cdots Q_{\tau_1}M). \]
By the pigeon-hole principle, there exist $0\leq \ell < m \leq n$ for which we have
$\iota\left(Q_{\tau_{\ell}}\cdots Q_{\tau_1}M\right) = \iota\left(Q_{\tau_m}\cdots Q_{\tau_1}M\right)$.
Then 
\begin{align*}
  \iota\left(Q_{\tau_n}\cdots Q_{\tau_{m+1}}\cdot Q_{\tau_{\ell}}\cdots Q_{\tau_1}M\right) & =
  \iota\left(\iota\left(Q_{\tau_n}\cdots Q_{\tau_{m+1}}\right) \cdot \iota\left(Q_{\tau_{\ell}}\cdots Q_{\tau_1}M\right)\right) \\
   & = \iota\left(\iota\left(Q_{\tau_n}\cdots Q_{\tau_{m+1}}\right) \cdot \iota\left(Q_{\tau_m}\cdots Q_{\tau_1}M\right)\right) \\
    & = \iota\left(Q_{\tau_n}\cdots Q_{\tau_1}M\right) = 0, 
\end{align*}    
which implies $Q_{\tau_n}\cdots Q_{\tau_{m+1}}\cdot Q_{\tau_{\ell}}\cdots Q_{\tau_1}M=0$.
However this contradicts the minimality of $n$.
\end{proof}

Now we assume that $\tau_1, \tau_2, \tau_3, \dots$ are independent random 
variables such that each $\tau_n$ is uniformly distributed over $A\times B$.
For notational convenience, we set $c=2^{16}-1$.
From Lemma \ref{lemma: extinction in definite time} we can find $p>0$ such that, if $M$ is a non-persistent nonnegative 
$4\times 4$-matrix, then 
\[ \mathbb{P}\left(Q_{\tau_{c}}Q_{\tau_{c-1}}\cdots Q_{\tau_1}M = 0\right) \geq p. \]

\begin{lemma} \label{lemma: extinction or not}
The following statement holds true almost surely.
For any $\varepsilon>0$, there exists a natural number $n_0$ such that, for any $n\geq n_0$ and any nonnegative $4\times 4$-matrix $M$,
the product $Q_{\tau_{n+\lfloor n\varepsilon\rfloor}} Q_{\tau_{n+\lfloor n\varepsilon\rfloor -1}}\cdots Q_{\tau_{n+2}} Q_{\tau_{n+1}}M$ 
is either zero or persistent.
\end{lemma}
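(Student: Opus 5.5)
The plan is to reduce the statement to finitely many "patterns" and then apply a Borel--Cantelli argument over blocks of length $c=2^{16}-1$. Since $\iota(Q_{\tau_m}\cdots Q_{\tau_{n+1}}M)=\iota\bigl(\iota(Q_{\tau_m}\cdots Q_{\tau_{n+1}})\,\iota(M)\bigr)$, both zeroness and persistence of the product depend only on $\iota(M)$. Hence we may restrict $M$ to the finite set $\mathcal{S}$ of $0$--$1$ matrices, with $|\mathcal{S}|\leq 2^{16}$.

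\textbf{Key observation.} Fix $M$ and a starting time $n$, and set $P_m = Q_{\tau_m}\cdots Q_{\tau_{n+1}}M$ for $m\geq n$.
\begin{itemize}
\item If some $P_m$ is persistent, then all later $P_{m'}$ ($m'\geq m$) are persistent, since $Q_\tau$ times a persistent matrix is persistent.
\item Zero matrices are non-persistent and stay zero.
\end{itemize}
So the only way the conclusion can fail is that $P_m$ is nonzero and non-persistent for every $m$ in $[n, n+\lfloor n\varepsilon\rfloor]$. Split this window into consecutive blocks of length $c$. At the start of each block the current product $P$ is non-persistent, so by the choice of $p$ (which comes from Lemma \ref{lemma: extinction in definite time}) the next $c$ independent digits kill it with probability at least $p$.

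\textbf{Conditional independence.} The product at the start of a block is measurable with respect to the earlier digits, while the next block uses fresh digits. Conditioning on the past, the failure events for successive blocks therefore multiply. This gives
\[
\mathbb{P}\bigl(P_m\neq 0 \text{ non-persistent for all } n\le m\le n+\lfloor n\varepsilon\rfloor\bigr)\le (1-p)^{\lfloor \lfloor n\varepsilon\rfloor/c\rfloor}.
\]
A union bound over $M\in\mathcal{S}$ multiplies this by at most $2^{16}$.

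\textbf{Borel--Cantelli.} For fixed $\varepsilon$, the resulting bound is summable in $n$ because it decays geometrically in $n$. By Borel--Cantelli, almost surely the bad event happens for only finitely many $n$, which produces $n_0(\varepsilon)$. To get "for any $\varepsilon>0$" almost surely, run this for $\varepsilon=1/k$, $k\in\mathbb{N}$, and intersect the countably many full-measure events. For general $\varepsilon$, choose $1/k<\varepsilon$ and use monotonicity: once the window of length $\lfloor n/k\rfloor$ already makes the product zero or persistent, the longer window of length $\lfloor n\varepsilon\rfloor$ keeps it so, by the stability noted in the key observation.

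\textbf{Main obstacle.} The step needing care is the conditional independence argument. One must check that the event "non-persistent and nonzero at the start of block $j$" depends only on $\tau_{n+1},\dots,\tau_{n+jc}$. The bound $\ge p$ must also hold uniformly over whichever non-persistent matrix occurs; it does, because $p$ was chosen uniformly over all non-persistent $M$.
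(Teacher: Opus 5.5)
Your proposal is correct and matches the paper's proof essentially step by step. Both proofs reduce to the finitely many patterns $\iota(M)$, bound the bad event by $(1-p)^{\lfloor\lfloor n\varepsilon\rfloor/c\rfloor}$ using blocks of length $c$, apply Borel--Cantelli, and use monotonicity in $\varepsilon$ to pass to $\varepsilon=1/k$. Your conditioning on the past at the start of each block simply spells out why the block bound holds, which the paper states without further justification.
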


\begin{proof}
First we notice that if the statement holds for some $\varepsilon'<\varepsilon$,
then it also holds for $\varepsilon$, because multiplying further on the left
keeps a zero matrix zero and sends a persistent matrix to a persistent matrix.
So it is enough to consider only countably many $\varepsilon = 1, 1/2, 1/3, 1/4,\dots$.
Hence we fix one such $\varepsilon$.
We can replace $M$ with $\iota(M)$, which has only $2^{16}$ possibilities.
So we also fix one such $M$, and it is enough to prove that the event
\begin{equation} \label{eq: probability of zero or persistent}
  \{\text{$Q_{\tau_{n+\lfloor n\varepsilon\rfloor}} Q_{\tau_{n+\lfloor n\varepsilon\rfloor -1}}\cdots Q_{\tau_{n+1}}M$
is either zero or persistent for sufficiently large $n$}\} 
\end{equation}
has probability one.
We define an event $\Omega_n$ by 
\[ \Omega_n = \{\text{$Q_{\tau_{n+\lfloor n\varepsilon\rfloor}} Q_{\tau_{n+\lfloor n\varepsilon\rfloor -1}}\cdots Q_{\tau_{n+1}}M$
  is nonzero and non-persistent}\}. \]
We have 
\[ \mathbb{P}(\Omega_n) \leq (1-p)^{\lfloor \lfloor n\varepsilon\rfloor/c \rfloor}. \]
Hence $\sum_{n=1}^\infty \mathbb{P}(\Omega_n) <\infty$.
By the Borel--Cantelli lemma,
\[ \mathbb{P}\left(\bigcap_{k=1}^\infty \bigcup_{n\geq k} \Omega_n\right)  = 0. \]
Namely 
\[ \mathbb{P}\left(\bigcup_{k=1}^\infty \bigcap_{n\geq k} \Omega_n^c\right)  = 1. \]
This shows that the event \eqref{eq: probability of zero or persistent} has probability one.
\end{proof}

Let $\tau_n = (\alpha_n, \beta_n)$.
We set 
\[ \mathbf{t} = \left(\sum_{n=1}^\infty \frac{\alpha_n}{a^n}, \sum_{n=1}^\infty \frac{\beta_n}{b^n}\right) \in \mathbb{T}^2. \]
Since $\tau_n$ $(n\geq 1)$ are assumed to be independent and uniformly distributed over $A\times B$, the distribution of 
$\mathbf{t}$ is given by the Lebesgue measure.

For a $4\times 4$-matrix $M = \left(M(\mathbf{i}, \mathbf{k})\right)_{\mathbf{i}, \mathbf{k} \in I}$ we denote 
\[ \norm{M} = \sum_{\mathbf{i}, \mathbf{k} \in I} |M(\mathbf{i}, \mathbf{k})|. \]
(Any norm will essentially do the same work, but this one is the most convenient.)

\begin{lemma} \label{lemma: counting intersection and matrix product}
Almost surely, we have
\[ \mathcal{N}(\mathbf{t}; v_1, \dots, v_n) \leq \norm{Q_{\tau_n, v_n} Q_{\tau_{n-1}, v_{n-1}}\cdots Q_{\tau_1, v_1}} \]
for any $n\geq 1$ and any $v_1, \dots, v_n\in B$.
\end{lemma}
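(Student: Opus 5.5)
The plan is to inject the set of tuples $(u_1,\dots,u_n)$ counted by $\mathcal{N}(\mathbf{t};v_1,\dots,v_n)$ into the disjoint union, over $\mathbf{i}=(i,j)$ and $\mathbf{k}=(k,\ell)$ in $I$, of the finite sets \eqref{eq: discrete intersection}. Lemma \ref{lemma: discrete carpets} says the cardinality of the $(\mathbf{i},\mathbf{k})$-th set equals $\left(Q_{\tau_n,v_n}\cdots Q_{\tau_1,v_1}\right)(\mathbf{i},\mathbf{k})$. Summing over the $16$ pairs then gives exactly $\norm{Q_{\tau_n,v_n}\cdots Q_{\tau_1,v_1}}$ for our choice of norm.

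First I fix the almost sure event. With probability one the digit sequence $(\alpha_m)$ is neither eventually $0$ nor eventually $a-1$, and the same holds for $(\beta_m)$ with $b-1$ in place of $a-1$. This holds because each such event is a countable union of events of probability zero. I also use that $\mathbf{t}$ is the torus point whose digits are $\tau_m$.

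Now take a tuple $(u_1,\dots,u_n)$ counted by $\mathcal{N}$. By definition there are an extension $(u_m,v_m)\in D_2$ for $m>n$ and digits $(x_m,y_m)\in D_1$ for all $m$ such that
\[ \sum_m \frac{x_m+\alpha_m}{a^m}=\sum_m\frac{u_m}{a^m}+k,\qquad \sum_m \frac{y_m+\beta_m}{b^m}=\sum_m\frac{v_m}{b^m}+\ell \]
in $\mathbb{R}$, for some integers $k,\ell$.

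Next I show $(k,\ell)\in I$. Each of the sums $\sum_m x_m/a^m$, $\sum_m\alpha_m/a^m$ and $\sum_m u_m/a^m$ lies in $[0,1]$, so $k\in\{-1,0,1,2\}$. The value $k=-1$ forces $\sum\alpha_m/a^m=0$, i.e.\ all $\alpha_m=0$. The value $k=2$ forces $\sum\alpha_m/a^m=1$, i.e.\ $\alpha_m=a-1$ for all $m$. Both are excluded on our event, so $k\in\{0,1\}$; the same argument gives $\ell\in\{0,1\}$.

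I then multiply the first equation by $a^n$ and the second by $b^n$, and split each sum into the head $m\le n$ and the tail $m>n$. This yields integers
\[ i=\sum_{m>n}\frac{x_m+\alpha_m-u_m}{a^{m-n}},\qquad j=\sum_{m>n}\frac{y_m+\beta_m-v_m}{b^{m-n}} . \]
The same tail argument applies to these. Here it only needs $(\alpha_m)_{m>n}$ to be neither identically $0$ nor identically $a-1$, which our event guarantees for every $n$. It shows $i\in\{0,1\}$ and likewise $j\in\{0,1\}$.

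With these carries, the head equations are exactly \eqref{eq: discrete intersection another form}. Hence the point $\left(\sum_{m\le n}u_m/a^m,\sum_{m\le n}v_m/b^m\right)+(k,\ell)$ lies in the set \eqref{eq: discrete intersection} indexed by $((i,j),(k,\ell))$.

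For injectivity, I assign to $(u_1,\dots,u_n)$ the pair of carries $((i,j),(k,\ell))$ together with this point. If two tuples give the same carries, they give the same point; subtracting the common shift $(k,\ell)$ leaves $\sum_{m\le n}u_m/a^m$. Since $u_m\in A$, this finite base-$a$ expansion determines $(u_1,\dots,u_n)$ uniquely, so the assignment is injective. The extension and the carries need not be unique, so I will simply fix one choice per tuple; any choice works.

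Combining the injection with Lemma \ref{lemma: discrete carpets} gives the claimed bound. The step needing the most care is the carry range: showing that the tail values never land on the boundary cases that would give carries outside $I$. That is precisely where the almost sure hypothesis enters, and I would check it carefully, including the uniformity of the event in $n$.
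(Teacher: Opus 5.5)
Your proposal is correct and follows essentially the same route as the paper. The paper discards the null set via \eqref{eq: genericity condition of t}, which is equivalent to your condition that no tail of $(\alpha_m)$ or $(\beta_m)$ is identically $0$ or identically maximal. It then shows that both carry pairs lie in $I$ and places each counted tuple in the corresponding set of Lemma~\ref{lemma: discrete carpets}. Your explicit injection into the disjoint union over the $16$ index pairs just spells out the counting step that the paper leaves implicit.
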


\begin{proof}
This is basically a consequence of Lemma \ref{lemma: discrete carpets}.
Discarding a measure zero event, we can assume that for every $n\geq 1$
\begin{equation} \label{eq: genericity condition of t}
   0< \sum_{m=n}^\infty \frac{\alpha_m}{a^m} < \frac{1}{a^{n-1}}, \quad
    0 < \sum_{m=n}^\infty \frac{\beta_m}{b^m} < \frac{1}{b^{n-1}}. 
\end{equation}   
Here we have considered the sums $\sum_{m=n}^\infty \frac{\alpha_m}{a^m}$, $\sum_{m=n}^\infty \frac{\beta_m}{b^m}$
as points in $\mathbb{R}$ (not $\mathbb{T}$).

Suppose that $(x_m, y_m)\in D_1$ and $(u_m, v_m)\in D_2$ $(m\geq 1)$ are given and satisfy 
\[ \left(\sum_{m=1}^\infty \frac{x_m}{a^m}, \sum_{m=1}^\infty \frac{y_m}{b^m}\right)
           + \left(\sum_{m=1}^\infty \frac{\alpha_m}{a^m}, \sum_{m=1}^\infty \frac{\beta_m}{b^m}\right) = 
           \left(\sum_{m=1}^\infty \frac{u_m}{a^m},
           \sum_{m=1}^\infty \frac{v_m}{b^m}\right) \quad \text{in $\mathbb{T}^2$}. \]
By \eqref{eq: genericity condition of t}
\[ 0< \sum_{m=1}^\infty \frac{x_m}{a^m} + \sum_{m=1}^\infty \frac{\alpha_m}{a^m} < 2, \quad 
    0< \sum_{m=1}^\infty \frac{y_m}{b^m} +  \sum_{m=1}^\infty \frac{\beta_m}{b^m} < 2 \quad \text{in $\mathbb{R}$}. \]
Hence there exists $(k, \ell) \in I = \{(0,0), (0,1), (1,0), (1,1)\}$ satisfying
\[  \left(\sum_{m=1}^\infty \frac{x_m}{a^m} + \sum_{m=1}^\infty \frac{\alpha_m}{a^m},
           \sum_{m=1}^\infty \frac{y_m}{b^m} +  \sum_{m=1}^\infty \frac{\beta_m}{b^m}\right)
     =   \left(\sum_{m=1}^\infty \frac{u_m}{a^m}, \sum_{m=1}^\infty \frac{v_m}{b^m}\right) + (k, \ell) \quad 
     \text{in $\mathbb{R}$}. \]
Let $n \geq 1$. We have
\begin{align*}
     & \left(\sum_{m=1}^n \frac{x_m}{a^m} + \sum_{m=1}^n \frac{\alpha_m}{a^m},
           \sum_{m=1}^n \frac{y_m}{b^m} +  \sum_{m=1}^n \frac{\beta_m}{b^m}\right)  \\ 
       &+ \left(\sum_{m=n+1}^\infty \frac{x_m}{a^m} + \sum_{m=n+1}^\infty \frac{\alpha_m}{a^m},
           \sum_{m=n+1}^\infty \frac{y_m}{b^m} +  \sum_{m=n+1}^\infty \frac{\beta_m}{b^m}\right)    \\
   &  = \left(\sum_{m=1}^n \frac{u_m}{a^m}, \sum_{m=1}^n \frac{v_m}{b^m}\right) + 
        \left(\sum_{m=n+1}^\infty \frac{u_m}{a^m}, \sum_{m=n+1}^\infty \frac{v_m}{b^m}\right) + (k, \ell).
\end{align*}
We have $0\leq \sum_{m=n+1}^\infty \frac{u_m}{a^m} \leq \frac{1}{a^n}$ and 
$0\leq \sum_{m=n+1}^\infty \frac{v_m}{b^m} \leq \frac{1}{b^n}$.
By \eqref{eq: genericity condition of t}
\[ 0< \sum_{m=n+1}^\infty \frac{x_m}{a^m} + \sum_{m=n+1}^\infty \frac{\alpha_m}{a^m} < \frac{2}{a^n}, \quad 
    0<  \sum_{m=n+1}^\infty \frac{y_m}{b^m} +  \sum_{m=n+1}^\infty \frac{\beta_m}{b^m} < \frac{2}{b^n}. \]
Then there exists $(i, j)\in I$ such that 
\[ \left(\sum_{m=1}^n \frac{x_m}{a^m} + \sum_{m=1}^n \frac{\alpha_m}{a^m},
           \sum_{m=1}^n \frac{y_m}{b^m} +  \sum_{m=1}^n \frac{\beta_m}{b^m}\right) 
    + \left(\frac{i}{a^n}, \frac{j}{b^n}\right) 
    = \left(\sum_{m=1}^n \frac{u_m}{a^m}, \sum_{m=1}^n \frac{v_m}{b^m}\right) + (k,\ell). \]
This means that the point 
\[ \left(\sum_{m=1}^n \frac{u_m}{a^m}, \sum_{m=1}^n \frac{v_m}{b^m}\right) + (k,\ell) \]
belongs to the set 
\[ \left(X_1^{(n)}+\left(\sum_{m=1}^n \frac{\alpha_m}{a^m}, \sum_{m=1}^n \frac{\beta_m}{b^m}\right) 
  + \left(\frac{i}{a^n}, \frac{j}{b^n}\right)\right) \cap \left(X_2^{(n)}+ (k,\ell)\right) 
  \cap q^{-1}\left(\ell+ \sum_{m=1}^n \frac{v_m}{b^m}\right) \]
in Lemma \ref{lemma: discrete carpets}.
This implies that 
\[ \mathcal{N}(\mathbf{t}; v_1, \dots, v_n) \leq \norm{Q_{\tau_n, v_n} Q_{\tau_{n-1}, v_{n-1}}\cdots Q_{\tau_1, v_1}}. \]
\end{proof}

\subsection{Proof of Theorem \ref{theorem: intersection of Bedford--McMullen carpets}}
\label{subsection: proof of Theorem_intersection of Bedford--McMullen carpets}

In this subsection, we calculate the entropy $\h^w\left(\rho^{-1}(\mathbf{t}), \pi, T\right)$ and 
complete the proof of Theorem \ref{theorem: intersection of Bedford--McMullen carpets}.

Recall the definition of $\mathcal{N}(\mathbf{t}; v_1, \dots, v_n)$:
For $v_1, \dots, v_n\in B$ and $\mathbf{t} \in \mathbb{T}^2$, we defined $\mathcal{N}(\mathbf{t}; v_1, \dots, v_n)$
as the number of $(u_1, \dots, u_n)\in A^n$ such that $(u_m, v_m)\in D_2$ for all $1\leq m \leq n$ and that
there exist $(u_m, v_m)\in D_2$ for $m\geq n+1$ satisfying
$\left(\sum_{m=1}^\infty \frac{u_m}{a^m}, \sum_{m=1}^\infty \frac{v_m}{b^m}\right) \in X_1 + \mathbf{t}$.

\begin{lemma} \label{lemma: expression of combinatorial entropy}
For every $\mathbf{t}\in \mathbb{T}^2$
\[ \h^w\left(\rho^{-1}(\mathbf{t}), \pi, T\right) 
  = \limsup_{n\to \infty} \frac{1}{n} \log \left(\sum_{(v_1, \dots, v_n)\in B^n} \mathcal{N}(\mathbf{t}; v_1, \dots, v_n)^w\right).
  \]
\end{lemma}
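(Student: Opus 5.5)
We compare both sides through symbolic cylinders. Write $\Omega = \rho^{-1}(\mathbf{t})$, which we identify isometrically with $(X_1+\mathbf{t})\cap X_2$ via the second coordinate $\mathbf{x}^\prime \in X_2$ (as in Lemma \ref{lemma: Hausdorff dimension of intersection}). Then $\pi(\Omega)$ is identified with the set of second coordinates $y$ of such points, together with the corresponding $y$-coordinate of $X_1$, which is determined by $y-t_2$. For $(v_1,\dots,v_n)\in B^n$, let $V(v_1,\dots,v_n)$ be the closed set of points of $Y$ whose $X_2$-component has base-$b$ expansion starting with $v_1,\dots,v_n$. Within $\pi^{-1}(V(v_1,\dots,v_n))\cap \Omega$, let $U(u_1,\dots,u_{\lceil wn\rceil})$ be the closed pieces whose $X_2$-point has horizontal digits $u_1,\dots,u_{\lceil wn\rceil}$ (and vertical digits $v_1,\dots,v_n$).

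The main step is to show that, for fixed small $\varepsilon$ and up to multiplicative constants $C(\varepsilon)^n$ with $C(\varepsilon)$ subexponential in the limit (in fact bounded uniformly in $n$),
\[ \#^w(\Omega,N,\varepsilon)\asymp \sum_{(v_1,\dots,v_N)} \widetilde{\mathcal{N}}_N(v)^w ,\]
where $\widetilde{\mathcal{N}}_N(v)$ counts the realized horizontal prefixes $(u_1,\dots,u_{\lceil wN\rceil})$. This comparison uses the usual Bedford--McMullen observation that the $\mathbf{d}_N$-metric on $X_2$ at scale $\varepsilon$ corresponds to agreement of about the first $\lceil wN\rceil$ horizontal digits and $N$ vertical digits, up to a bounded number of neighbouring cylinders. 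The $X_1$-component is determined by $\mathbf{x}^\prime-\mathbf{t}$, so it contributes nothing extra: since $\mathbf{t}$ is fixed, the $\mathbf{d}_N$-closeness of $X_1$-points follows from that of $X_2$-points, because $T$ commutes with translation by $\mathbf{t}$ up to the isometry involving $R^k\mathbf{t}$.

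For the upper bound, we cover $\pi(\Omega)$ by the sets $V(v)$, using the closed-cover version in Lemma \ref{lemma: weighted covering number}, and cover each fibre by the sets $U(u)$. Neighbouring digit cylinders on $\mathbb{T}$ can be close without sharing digits, so for the lower bound we use that any set of small $\mathbf{d}^\prime_N$-diameter meets at most a bounded number (say $2^2$ per coordinate) of $N$-cylinders $V(v)$, and similarly for the $U$. We then use subadditivity of $x\mapsto x^w$ for the $V$'s, and the trivial inequality $\max\le$ sum for the $U$'s.

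Next we must replace $\widetilde{\mathcal{N}}$, which counts prefixes of length $\lceil wN\rceil$ extended to $N$ vertical digits, by $\mathcal{N}(\mathbf{t};v_1,\dots,v_n)$, which counts full length-$n$ horizontal prefixes. I expect this to be the main obstacle. Note that $\mathcal{N}(\mathbf{t};v_1,\dots,v_N)$ counts $u_1,\dots,u_N$, while the covering counts only $u_1,\dots,u_{\lceil wN\rceil}$ subject to the $v$'s up to $N$. The fix I would try is a reindexing. The quantity $\sum_{v\in B^N}\widetilde{\mathcal{N}}^w$ equals $\sum_{v_1..v_{m}}\sum_{v_{m+1}..v_N}(\cdots)^w$ with $m=\lceil wN\rceil$. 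It is bounded above by $\sum_{v\in B^N}\mathcal{N}(\mathbf t;v_1..v_m)^w$, with the $v_{m+1},\dots$ constrained only vertically. Hence I would instead interpret the claimed identity as the definition of the growth rate and verify it by matching the $\limsup$ through monotonicity, if needed using $\mathcal{N}(\mathbf t; v_1..v_n)$ on prefixes. I would check carefully which of these two indexings actually appears. If the counts genuinely differ, then the stated formula should be read with $\mathcal{N}$ counting exactly the prefix data seen at scale $(a^{-\lceil wn\rceil},b^{-n})$, and the proof reduces to the cylinder comparison above. In either case the final step is to take $\frac1N\log$, then $\limsup$, then $\varepsilon\to0$; the bounded constants disappear.
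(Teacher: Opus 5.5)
\textbf{There is a genuine gap, and it sits in your main step.} In $\#^w(\Omega,N,\varepsilon)$ the fibre count is $\#(\Omega\cap\pi^{-1}(V_j),\mathbf{d}_N,\varepsilon)$, where $\mathbf{d}_N=\mathbf{d}^T_N$ runs over all $N$ iterates of $T(x,y)=(ax,by)$. The truncation to $\lceil wN\rceil$ steps belongs to the Feng--Huang weighted Bowen balls, not to $\h^w$. In $\h^w$ the weight $w$ enters only as the exponent.

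On $\rho^{-1}(\mathbf{t})$ this metric is, as you note, the $N$-step Bowen metric of the $X_2$-component. So an $\varepsilon$-small piece fixes about $N$ horizontal digits, not $\lceil wN\rceil$. The fibre count over a vertical cylinder $v_1,\dots,v_N$ is therefore comparable to $\mathcal{N}(\mathbf{t};v_1,\dots,v_N)$ itself.

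Your claimed comparison $\#^w(\Omega,N,\varepsilon)\asymp\sum_v\widetilde{\mathcal{N}}_N(v)^w$ is false. For $D_1=D_2=A\times B$ the left side grows like $b^N(a^N)^w=b^{2N}$, while the right side grows like $b^N(a^{wN})^w=b^{(1+w)N}$. The ``main obstacle'' of passing from $\widetilde{\mathcal{N}}$ to $\mathcal{N}$ is an artefact of this misreading. Your proposed way out, reading the lemma with $\mathcal{N}$ counting only $\lceil wn\rceil$ horizontal digits, changes the statement rather than proving it. By the same example, the changed statement is false whenever $a>b$.

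\textbf{Once the scale is corrected, your cylinder scheme is essentially the paper's proof.}

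For the upper bound, the paper takes $n>N+\log_b(1/\varepsilon)$ and covers $\pi(\rho^{-1}(\mathbf{t}))$ by the closed sets $\mathcal{B}(\mathbf{v})$, $\mathbf{v}\in B^n$. It then covers $\pi^{-1}(\mathcal{B}(\mathbf{v}))\cap\rho^{-1}(\mathbf{t})$ by the sets $\mathcal{X}(\mathbf{u},\mathbf{v})$, with $\mathbf{u}$ ranging over $\mathcal{A}(\mathbf{v})\cup\mathcal{A}(\mathbf{v}^+)\cup\mathcal{A}(\mathbf{v}^-)$. The neighbours $\mathbf{v}^{\pm}$ are needed because $b$-adic expansions on $\mathbb{T}$ are not unique; you only address this on the lower-bound side. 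They cost nothing: after $(x+y+z)^w\le x^w+y^w+z^w$, one uses that $\mathbf{v}\mapsto\mathbf{v}^{\pm}$ are bijections of $B^n$.

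For the lower bound, the paper makes a greedy choice of pairwise disjoint sets $\mathcal{B}(\mathbf{v})$ that carry a third of $\sum_{\mathbf{v}}\mathcal{N}(\mathbf{t};\mathbf{v})^w$. It pairs this with $(1/a)$-separated sets $\Lambda(\mathbf{v})$ for $\mathbf{d}''_n$, each of size at least $\mathcal{N}(\mathbf{t};\mathbf{v})/3$. Your bounded-overlap-plus-subadditivity argument would work equally well in place of the greedy selection. It does so only if the separated sets are taken at horizontal length $n$, not $\lceil wn\rceil$.
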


\begin{proof}
Recall that $\pi\colon X\to Y$ and $\rho\colon X\to \mathbb{T}^2$ were defined by 
\[ \pi(x_1, y_1, x_2, y_2) = (y_1, y_2), \quad 
    \rho(x_1, y_1, x_2, y_2) = (x_2-x_1, y_2-y_1) \]
for $(x_1, y_1)\in X_1$ and $(x_2, y_2)\in X_2$.
We use metrics $\mathbf{d}$, $\mathbf{d}^\prime$ and $\mathbf{d}^{\prime\prime}$ on $\mathbb{T}$, $\mathbb{T}^2$ and 
$\mathbb{T}^2\times \mathbb{T}^2$ respectively introduced in \S \ref{subsection: calculating the Feng--Huang entropy}.
For $n\geq 1$, we denote $\mathbf{d}^\prime_n = (\mathbf{d}^\prime)^S_n$ and 
$\mathbf{d}^{\prime\prime}_n = (\mathbf{d}^{\prime\prime})^{T}_n$.
We define a subset $B_2 \subset B$ as the set of $v\in B$ for which there exists $u\in A$ with $(u, v)\in D_2$.

We fix $\mathbf{t} = (s, t)\in \mathbb{T}^2$.
First notice that if $\rho^{-1}(\mathbf{t})$ is empty then $\mathcal{N}(\mathbf{t}; v_1, \dots, v_n)$ is zero 
for all $(v_1, \dots, v_n)$ and the 
above identity trivially holds. (Both sides are $-\infty$.) So we assume that $\rho^{-1}(\mathbf{t})$ is not empty.

For $\mathbf{v} = (v_1, \dots, v_n)\in B^n$, we introduce 
$\mathbf{v}^{\pm} = (v^{\pm}_1, \dots, v^{\pm}_n)\in B^n$ by 
\[ \sum_{m=1}^n \frac{v_m}{b^m} + \frac{1}{b^n} = \sum_{m=1}^n \frac{v_m^+}{b^m}, \quad 
    \sum_{m=1}^n \frac{v_m}{b^m} - \frac{1}{b^n} = \sum_{m=1}^n \frac{v_m^-}{b^m}, \quad \text{in $\mathbb{T}$}.\]
We define a closed subset $\mathcal{B}(\mathbf{v}) \subset \mathbb{T}^2$ by
\[ \mathcal{B}(\mathbf{v}) = \left\{\left(-t+\sum_{m=1}^\infty \frac{v_m}{b^m},
     \sum_{m=1}^\infty \frac{v_m}{b^m}  \right)\in \mathbb{T}^2 \middle|\, 
     v_m\in B_2 \text{ for all $m\geq n+1$}\right\}. \]
We have $\pi\left(\rho^{-1}(\mathbf{t})\right)\subset \bigcup_{\mathbf{v}\in B^n} \mathcal{B}(\mathbf{v})$.
We define $\mathcal{A}(\mathbf{v})$ as the set of $\mathbf{u} = (u_1, \dots, u_n)\in A^n$ such that
$(u_m, v_m)\in D_2$ for all $1\leq m \leq n$ and that
there exist $(u_m, v_m)\in D_2$ for $m\geq n+1$ satisfying
$\left(\sum_{m=1}^\infty \frac{u_m}{a^m}, \sum_{m=1}^\infty \frac{v_m}{b^m}\right) \in X_1 + \mathbf{t}$.
We have $\mathcal{N}(\mathbf{t}; \mathbf{v}) = |\mathcal{A}(\mathbf{v})|$.

For $\mathbf{u}\in \mathcal{A}(\mathbf{v})$ we define a closed subset 
$\mathcal{X}(\mathbf{u}, \mathbf{v})$ of $\mathbb{T}^2\times \mathbb{T}^2$ by
\begin{align*}
   & \mathcal{X}(\mathbf{u}, \mathbf{v})= \\
  & \left\{\left(-s+\sum_{m=1}^\infty \frac{u_m}{a^m}, -t+\sum_{m=1}^\infty \frac{v_m}{b^m},
   \sum_{m=1}^\infty \frac{u_m}{a^m}, \sum_{m=1}^\infty \frac{v_m}{b^m}\right) \middle|\, 
  (u_m, v_m)\in D_2 \text{ for all $m\geq n+1$}\right\}.
\end{align*}
Then 
\[ 
   \pi^{-1}\left(\mathcal{B}(\mathbf{v})\right) \cap \rho^{-1}(\mathbf{t}) \subset 
    \bigcup_{\mathbf{u}\in \mathcal{A}(\mathbf{v})} \mathcal{X}(\mathbf{u}, \mathbf{v}) 
    \cup \bigcup_{\mathbf{u}\in \mathcal{A}(\mathbf{v}^-)} \mathcal{X}(\mathbf{u}, \mathbf{v}^-) 
    \cup \bigcup_{\mathbf{u}\in \mathcal{A}(\mathbf{v}^+)} \mathcal{X}(\mathbf{u}, \mathbf{v}^+).
 \]   
 
For every $N\geq 1$ and $\varepsilon>0$, if $n>N+\log_{b}(1/\varepsilon)$, then the diameters of 
$\mathcal{B}(\mathbf{v})$ and $\mathcal{X}(\mathbf{u}, \mathbf{v})$ with respect to $\mathbf{d}^\prime_N$ and 
$\mathbf{d}^{\prime\prime}_N$ are 
smaller than $\varepsilon$.
Then (noting Lemma \ref{lemma: weighted covering number})
\begin{align*}
   \h^w\left(\rho^{-1}(\mathbf{t}), \pi, T\right) & \leq \limsup_{n\to \infty} 
   \frac{1}{n}\log \sum_{\mathbf{v}\in B^n}
   \left(\mathcal{N}(\mathbf{t}; \mathbf{v}) + \mathcal{N}(\mathbf{t}; \mathbf{v}^+) + \mathcal{N}(\mathbf{t}; \mathbf{v}^-)\right)^w \\
  & \leq 
  \limsup_{n\to \infty} \frac{1}{n}\log \sum_{\mathbf{v}\in B^n}
   \left(\mathcal{N}(\mathbf{t}; \mathbf{v})^w 
   + \mathcal{N}(\mathbf{t}; \mathbf{v}^+)^w + \mathcal{N}(\mathbf{t}; \mathbf{v}^-)^w\right) \\
   & = \limsup_{n\to \infty} \frac{1}{n}\log \sum_{\mathbf{v}\in B^n}
   \mathcal{N}(\mathbf{t}; \mathbf{v})^w.
\end{align*}
In the second inequality, we have used $0 \leq w \leq 1$.
Next we consider the reverse inequality.

For each $n\geq 1$, by a simple greedy algorithm, we can choose a subset $\Omega_n\subset B^n$
for which the following conditions hold.
  \begin{itemize}
   \item We have $\mathcal{B}(\mathbf{v})\cap \pi\left(\rho^{-1}(\mathbf{t})\right) \neq \emptyset$ for every $\mathbf{v}\in \Omega_n$.
   \item $\mathcal{B}(\mathbf{v}) \cap \mathcal{B}(\mathbf{v}^\prime) = \emptyset$ for any distinct $\mathbf{v}, \mathbf{v}^\prime\in \Omega_n$.  
            (This implies $\mathbf{d}^\prime(\mathbf{x}, \mathbf{x}^\prime) \geq 1/b^n$ for any $\mathbf{x}\in \mathcal{B}(\mathbf{v})$
            and $\mathbf{x}^\prime\in \mathcal{B}(\mathbf{v}^\prime)$.)
   \item $\sum_{\mathbf{v}\in \Omega_n} \mathcal{N}(\mathbf{t}; \mathbf{v})^w \geq 
             \frac{1}{3}\sum_{\mathbf{v}\in B^n}  \mathcal{N}(\mathbf{t}; \mathbf{v})^w$.
  \end{itemize}
(Here the \lq\lq{}greedy algorithm\rq\rq{} is as follows. We first choose $\mathbf{v}_1\in B^n$ which attains the maximum 
$\mathcal{N}(\mathbf{t}; \mathbf{v})$, and subtract $\{\mathbf{v}_1, \mathbf{v}^-_1, \mathbf{v}^+_1\}$ from $B^n$.
Next we choose $\mathbf{v}_2 \in B^n \setminus \{\mathbf{v}_1, \mathbf{v}^-_1, \mathbf{v}^+_1\}$ which attains the maximum 
$\mathcal{N}(\mathbf{t}; \mathbf{v})$ among the remaining ones, and subtract 
$\{\mathbf{v}_2, \mathbf{v}^-_2, \mathbf{v}^+_2\}$ from $B^n \setminus \{\mathbf{v}_1, \mathbf{v}^-_1, \mathbf{v}^+_1\}$.
We continue this process until it terminates, and we set $\Omega_n := \{\mathbf{v}_1, \mathbf{v}_2, \dots\}$.
The factor \(1/3\) comes from the fact that each \(b\)-adic interval can meet
only its two neighbors.)

For every $\mathbf{v}\in \Omega_n$, we can take a finite subset 
$\Lambda(\mathbf{v})\subset \pi^{-1}(\mathcal{B}(\mathbf{v}))\cap \rho^{-1}(\mathbf{t})$
with $\left|\Lambda(\mathbf{v})\right| \geq (1/3) \mathcal{N}(\mathbf{t}; \mathbf{v})$ 
such that $\mathbf{d}^{\prime\prime}_n(p, p^\prime) \geq 1/a$ for any distinct $p, p^\prime\in \Lambda(\mathbf{v})$
(that is, $\Lambda(\mathbf{v})$ is a $(1/a)$-separated set with respect to $\mathbf{d}^{\prime\prime}_n$).

Now, let $0<\varepsilon<1/a$ and suppose we are given an open cover 
$\pi\left(\rho^{-1}(\mathbf{t})\right) \subset V_1\cup V_2\cup \dots \cup V_K$ with 
$\diam(V_j, \mathbf{d}^\prime_n) < \varepsilon$ for all $1\leq j \leq K$.
For each $\mathbf{v}\in \Omega_n$, let $J_\mathbf{v}$ be the set of $j\in \{1,2,\dots, K\}$ with 
$V_j\cap \mathcal{B}(\mathbf{v}) \neq \emptyset$.
For $\mathbf{v}\neq \mathbf{v}^\prime$, we have $J_\mathbf{v}\cap J_{\mathbf{v}^\prime} = \emptyset$ because
$\diam(V_j, \mathbf{d}^\prime_n) < 1/b$ and 
the distance between $\mathcal{B}(\mathbf{v})$ and $\mathcal{B}(\mathbf{v}^\prime)$ with respect to $\mathbf{d}^\prime_n$
is greater than or equal to $1/b$.

Since $\Lambda(\mathbf{v}) \subset \bigcup_{j\in J_{\mathbf{v}}} \pi^{-1}(V_j) \cap \rho^{-1}(\mathbf{t})$ and 
$\Lambda(\mathbf{v})$ is $\varepsilon$-separated with respect to $\mathbf{d}^{\prime\prime}_n$,
\[ \left|\Lambda(\mathbf{v})\right| \leq \sum_{j\in J_\mathbf{v}}
    \#\left(\pi^{-1}(V_j)\cap \rho^{-1}(\mathbf{t}), \mathbf{d}^{\prime\prime}_n, \varepsilon\right). \]
From $0\leq w \leq 1$ and $\left|\Lambda(\mathbf{v})\right| \geq (1/3) \mathcal{N}(\mathbf{t}; \mathbf{v})$,
\[ 3^{-w} \mathcal{N}(\mathbf{t}; \mathbf{v})^w \leq 
    \sum_{j\in J_{\mathbf{v}}} \#\left(\pi^{-1}(V_j)\cap \rho^{-1}(\mathbf{t}), \mathbf{d}^{\prime\prime}_n, \varepsilon\right)^w. \]
Summing this over $\mathbf{v}\in \Omega_n$,
\[ 3^{-w} \sum_{\mathbf{v}\in \Omega_n} \mathcal{N}(\mathbf{t}; \mathbf{v})^w \leq 
    \sum_{j=1}^K \#\left(\pi^{-1}(V_j)\cap \rho^{-1}(\mathbf{t}), \mathbf{d}^{\prime\prime}_n, \varepsilon\right)^w. \]
Since $\sum_{\mathbf{v}\in \Omega_n} \mathcal{N}(\mathbf{t}; \mathbf{v})^w \geq 
             \frac{1}{3}\sum_{\mathbf{v}\in B^n}  \mathcal{N}(\mathbf{t}; \mathbf{v})^w$, we obtain
\[ 3^{-1-w} \sum_{\mathbf{v}\in B^n} \mathcal{N}(\mathbf{t}; \mathbf{v})^w \leq
    \#^w\left(\rho^{-1}(\mathbf{t}), n, \varepsilon\right), \quad 
    (0< \varepsilon < 1/a). \]
Therefore we conclude 
\[ \limsup_{n\to \infty} \frac{1}{n}\log \sum_{\mathbf{v}\in B^n}
   \mathcal{N}(\mathbf{t}; \mathbf{v})^w \leq \h^w\left(\rho^{-1}(\mathbf{t}), \pi, T\right). \]
\end{proof}

For $\mathbf{t} \in \mathbb{T}^2$, we introduce $\tau_n = (\alpha_n, \beta_n) \in A\times B$ $(n\geq 1)$ by 
\[ \mathbf{t} = \left(\sum_{n=1}^\infty \frac{\alpha_n}{a^n}, \sum_{n=1}^\infty \frac{\beta_n}{b^n}\right). \]
They are uniquely determined for Lebesgue almost every $\mathbf{t} \in \mathbb{T}^2$.
(We ignore points $\mathbf{t}$ for which $\tau_n$ are not uniquely determined.)
When $\mathbf{t}$ is uniformly distributed over $\mathbb{T}^2$, the variables $\tau_n$ become independent and uniformly distributed 
over $A\times B$.

Now we can calculate the combinatorial version of the weighted topological entropy 
in terms of matrix products.

\begin{proposition}  \label{proposition: combinatorial entropy and matrix product}
For Lebesgue almost every $\mathbf{t}\in \mathbb{T}^2$ 
\begin{equation} \label{eq: combinatorial entropy and matrix product}
   \h^w\left(\rho^{-1}(\mathbf{t}), \pi, T\right) = \lim_{n\to \infty} 
    \frac{1}{n}\log \left(\sum_{(v_1, \dots, v_n)\in B^n} \norm{Q_{\tau_n, v_n} Q_{\tau_{n-1}, v_{n-1}}\cdots Q_{\tau_1, v_1}}^w\right). 
\end{equation}    
\end{proposition}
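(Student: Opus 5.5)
By Lemma \ref{lemma: expression of combinatorial entropy}, it is enough to compare $\sum_{\mathbf{v}\in B^n}\mathcal{N}(\mathbf{t};\mathbf{v})^w$ with $P_n:=\sum_{\mathbf{v}\in B^n}\norm{Q_{\tau_n,v_n}\cdots Q_{\tau_1,v_1}}^w$ on a full-measure set of $\mathbf{t}$. We also need the limit on the right to exist. This follows from the subadditive ergodic theorem applied to the shift on $(A\times B)^{\mathbb N}$ with the Bernoulli measure. Indeed, $\log P_n$ is subadditive along the shift: submultiplicativity of $\norm{\cdot}$ and $(xy)^w$ give $P_{n+m}(\tau)\le P_n(\tau)P_m(\sigma^n\tau)$. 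If the products eventually vanish, the limit is $-\infty$ and both sides are $-\infty$; this case can be treated separately.

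The upper bound is immediate. Lemma \ref{lemma: counting intersection and matrix product} gives $\mathcal{N}(\mathbf{t};\mathbf{v})\le\norm{Q_{\tau_n,v_n}\cdots Q_{\tau_1,v_1}}$ almost surely. Summing the $w$-th powers and using Lemma \ref{lemma: expression of combinatorial entropy}, we get $\h^w(\rho^{-1}(\mathbf{t}),\pi,T)\le\lim_n\frac1n\log P_n$.

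For the lower bound, fix $\varepsilon>0$, put $m=n+\lfloor n\varepsilon\rfloor$, and work with the time-$m$ quantities. Write $M_{\mathbf{v}}=Q_{\tau_m,v_m}\cdots Q_{\tau_1,v_1}$ for $\mathbf{v}\in B^m$. Split $M_{\mathbf{v}}=G\cdot H$, where $H=Q_{\tau_n,v_n}\cdots Q_{\tau_1,v_1}$ and $G$ is the product of the last $\lfloor n\varepsilon\rfloor$ factors. For each entry $(\mathbf{i},\mathbf{k})$, we have $Q_{\tau_m}\cdots Q_{\tau_{n+1}}E_{\mathbf{i}\mathbf{k}}$-type control. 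By Lemma \ref{lemma: extinction or not}, almost surely for large $n$ the matrix $Q_{\tau_m}\cdots Q_{\tau_{n+1}}E_{\mathbf{i}\mathbf{k}}$ is either zero or persistent. Since $\iota$ depends only on positivity, the same dichotomy holds for $G E_{\mathbf{i}\mathbf{k}}$ up to the difference between $Q_{\tau,v}$ and $Q_\tau=\sum_v Q_{\tau,v}$. Now let $(\mathbf{i}',\mathbf{k})$ be an entry of $M_{\mathbf{v}}$ with positive value. Then the corresponding column of $Q_{\tau_m}\cdots Q_{\tau_1}$ is nonzero, hence persistent, so \eqref{eq: extendability} holds for $E_{\mathbf{i}'\mathbf{k}}$ at time $m$. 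Lemma \ref{lemma: approximate overlap and real intersection point} then gives $M_{\mathbf{v}}(\mathbf{i}',\mathbf{k})\le\mathcal{N}(\mathbf{t};v_1,\dots,v_m)$. Since there are only 16 entries, $\norm{M_{\mathbf{v}}}\le16\,\mathcal{N}(\mathbf{t};\mathbf{v})$. This is the intended lower estimate, and it would give
\[ \frac1m\log P_m\le \frac1m\log\Bigl(16^w\sum_{\mathbf{v}\in B^m}\mathcal{N}(\mathbf{t};\mathbf{v})^w\Bigr), \]
which suffices since $m\to\infty$.

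The main obstacle is that the persistence argument above really concerns rows $\mathbf{i}'$, i.e. carry states at time $m$, and extendability beyond time $m$. Lemma \ref{lemma: extinction or not} guarantees zero-or-persistent only for the product over the window $(n,m]$ applied to a matrix $M$. So I would apply it with $M:=\iota(Q_{\tau_n}\cdots Q_{\tau_1})$-sized pieces. Concretely, I would consider $\norm{M_{\mathbf{v}}}$ versus the entries of $Q_{\tau_m,v_m}\cdots Q_{\tau_{n+1},v_{n+1}}\cdot H$ restricted to persistent rows. Because $M_{\mathbf{v}}\le (Q_{\tau_m}\cdots Q_{\tau_{n+1}})H$ entrywise, a nonzero entry at row $\mathbf{i}'$ forces $E_{\mathbf{i}'\mathbf{k}}$-extendability via persistence of $Q_{\tau_m}\cdots Q_{\tau_{n+1}}E_{\mathbf{j}\mathbf{k}}$ for some $\mathbf{j}$ with $H(\mathbf{j},\mathbf{k})>0$. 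I would verify carefully that persistence of this column product implies \eqref{eq: extendability} for the row $\mathbf{i}'$ reached, possibly replacing $\mathbf{i}'$ by all rows and accepting a constant factor 16. Alternatively, I would compare $P_m$ to $P_n$, using $P_m\ge c^{\,n\varepsilon}$-type bounds only for the upper direction and the trivial bound $P_m\le (16\max\norm{Q})^{w\,n\varepsilon}P_n$. That would let me pass between times $n$ and $m$ with an error $O(\varepsilon)$, then let $\varepsilon\to0$.
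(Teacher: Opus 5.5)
Your upper bound and the existence of the limit are fine and agree with the paper. The lower bound has a genuine gap, and it is the one you flag yourself. The inequality $\norm{M_{\mathbf{v}}}\le 16\,\mathcal{N}(\mathbf{t};v_1,\dots,v_m)$ needs condition \eqref{eq: extendability} \emph{at time $m$} for each row $\mathbf{i}'$ with $M_{\mathbf{v}}(\mathbf{i}',\mathbf{k})>0$. Nothing available gives this.

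Lemma \ref{lemma: extinction or not} only tells you that a column such as $Q_{\tau_m}\cdots Q_{\tau_{n+1}}E_{\mathbf{j}\mathbf{k}}$ is persistent. For a nonnegative vector $x=\sum_{\mathbf{i}'}x_{\mathbf{i}'}e_{\mathbf{i}'}$, persistence only says that under each future product \emph{some} $e_{\mathbf{i}'}$ in its support survives. A particular row $\mathbf{i}'$ can still die under the realized digits $\tau_{m+1},\tau_{m+2},\dots$. So your claim that persistence of the column implies \eqref{eq: extendability} for the row $\mathbf{i}'$ is false in general. Replacing $\mathbf{i}'$ by all rows does not rescue it, because the entry of the surviving row does not control the other entries.

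Two further points. The zero-or-persistent dichotomy does not transfer to products with fixed $v$'s; you can only use $Q_{\tau,v}\le Q_\tau$ entrywise. Your fallback of comparing $P_m$ with $P_n$ relates two matrix quantities and never connects them to $\mathcal{N}$.

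The missing idea is to use the window $(n,m]$ to certify extendability at time $n$, not at time $m$. Set $W=Q_{\tau_m}\cdots Q_{\tau_{n+1}}$ and $H=Q_{\tau_n,v_n}\cdots Q_{\tau_1,v_1}$. Let $J$ be the set of $(\mathbf{j},\mathbf{k})$ for which $WE_{\mathbf{j}\mathbf{k}}$ is persistent.

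For $(\mathbf{j},\mathbf{k})\in J$, condition \eqref{eq: extendability} holds at time $n$. For $h\le m-n$ this is because $WE_{\mathbf{j}\mathbf{k}}\neq 0$, and for larger $h$ it follows from persistence. Lemma \ref{lemma: approximate overlap and real intersection point} then gives $H(\mathbf{j},\mathbf{k})\le\mathcal{N}(\mathbf{t};v_1,\dots,v_n)$. For $(\mathbf{j},\mathbf{k})\notin J$, the dichotomy forces $WE_{\mathbf{j}\mathbf{k}}=0$.

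Hence, entrywise, $M_{\mathbf{v}}\le WH=WH_J$, where $H_J:=\sum_{(\mathbf{j},\mathbf{k})\in J}H(\mathbf{j},\mathbf{k})E_{\mathbf{j}\mathbf{k}}$. This gives $\norm{M_{\mathbf{v}}}\le 16(4ab)^{n\varepsilon}\mathcal{N}(\mathbf{t};v_1,\dots,v_n)$. Summing over $\mathbf{v}\in B^m$ costs an extra factor $b^{\lfloor n\varepsilon\rfloor}$ for the free digits $v_{n+1},\dots,v_m$. The result is $(1+\varepsilon)\lambda\le\varepsilon\log b+w\varepsilon\log(4ab)+\h^w(\rho^{-1}(\mathbf{t}),\pi,T)$, and letting $\varepsilon\to0$ finishes. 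This is the paper's argument.
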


\begin{proof}
First notice that this statement is claimed only for Lebesgue almost every $\mathbf{t}$ while 
Lemma \ref{lemma: expression of combinatorial entropy} holds true for every $\mathbf{t}$.
We also notice that the limit in the right-hand side of \eqref{eq: combinatorial entropy and matrix product}
exists for almost every $\mathbf{t}$ because of the subadditive ergodic theorem.

Set 
$\mathcal{N}_0(\mathbf{t}; v_1, \dots, v_n) = \norm{Q_{\tau_n, v_n} Q_{\tau_{n-1}, v_{n-1}}\cdots Q_{\tau_1, v_1}}$.
It follows from Lemma \ref{lemma: counting intersection and matrix product} that
$\mathcal{N}(\mathbf{t}; v_1, \dots, v_n) \leq \mathcal{N}_0(\mathbf{t}; v_1, \dots, v_n)$
for all $v_1, \dots, v_n$ and almost every $\mathbf{t}\in \mathbb{T}^2$.
In particular, from Lemma \ref{lemma: expression of combinatorial entropy},
\[ \h^w\left(\rho^{-1}(\mathbf{t}), \pi, T\right) \leq 
    \lim_{n\to \infty} 
    \frac{1}{n}\log \left(\sum_{(v_1, \dots, v_n)\in B^n} \mathcal{N}_0(\mathbf{t}; v_1, \dots, v_n)^w\right) \quad 
    \text{for a.e. $\mathbf{t}\in \mathbb{T}^2$}. \]
We would like to prove the reverse inequality.

By Lemma \ref{lemma: extinction or not}, removing a null set of $\mathbf{t} \in \mathbb{T}^2$, we can assume that 
for any $\varepsilon>0$ there exists a natural number $n_0 = n_0(\mathbf{t}, \varepsilon)$ 
such that, for any nonnegative $4\times 4$-matrix $M$,
the product $Q_{\tau_{n+\lfloor n\varepsilon \rfloor}} Q_{\tau_{n+\lfloor n\varepsilon \rfloor -1}}\cdots Q_{\tau_{n+1}}M$
is either zero or persistent for $n\geq n_0$.

Recall that, for $\mathbf{i}, \mathbf{k}\in I$, the $4\times 4$-matrix $E_{\mathbf{i} \mathbf{k}}$ was given by 
\[ E_{\mathbf{i} \mathbf{k}}(\mathbf{j}, \mathbf{l}) = \begin{cases} 1 & \left((\mathbf{j}, \mathbf{l}) = (\mathbf{i}, \mathbf{k})\right) \\
                                                                                                0 & \left((\mathbf{j}, \mathbf{l}) \neq (\mathbf{i}, \mathbf{k})\right)
                                                                                                \end{cases}. \]
We define a subset $J(\mathbf{t}, n, \varepsilon)\subset I\times I$ as the set of 
$(\mathbf{i}, \mathbf{k})\in I\times I$ such that the product
$Q_{\tau_{n+\lfloor n\varepsilon \rfloor}} Q_{\tau_{n+\lfloor n\varepsilon \rfloor -1}}\cdots Q_{\tau_{n+1}}E_{\mathbf{i} \mathbf{k}}$
is persistent.
For a nonnegative $4\times 4$-matrix $M$ we define 
\[ M^{(\mathbf{t}, n,\varepsilon)} 
   = \sum_{(\mathbf{i}, \mathbf{k})\in J(\mathbf{t}, n, \varepsilon)} M(\mathbf{i}, \mathbf{k}) E_{\mathbf{i} \mathbf{k}}. \]
Then we have 
\[ Q_{\tau_{n+\lfloor n\varepsilon \rfloor}} Q_{\tau_{n+\lfloor n\varepsilon \rfloor -1}}\cdots Q_{\tau_{n+1}} M 
   = Q_{\tau_{n+\lfloor n\varepsilon \rfloor}} Q_{\tau_{n+\lfloor n\varepsilon \rfloor -1}}\cdots 
   Q_{\tau_{n+1}} \cdot M^{(\mathbf{t}, n,\varepsilon)}  \]
for $n\geq n_0(\mathbf{t},\varepsilon)$.   

Let $(\mathbf{i}, \mathbf{k})\in J(\mathbf{t}, n, \varepsilon)$.
By the definition of persistence, we have
\[ Q_{\tau_{n+h}} Q_{\tau_{n+h-1}}\cdots Q_{\tau_{n+1}} E_{\mathbf{i} \mathbf{k}} \neq 0 \quad 
    \text{for any $h>0$}. \]
Now we can apply Lemma \ref{lemma: approximate overlap and real intersection point} and obtain that
\[ \left(Q_{\tau_n, v_n} Q_{\tau_{n-1}, v_{n-1}} \cdots Q_{\tau_1, v_1}\right)(\mathbf{i}, \mathbf{k})
    \leq \mathcal{N}(\mathbf{t}; v_1, \dots, v_n) \]
for all $v_1, \dots, v_n\in B$. 
It follows that
\[ \text{Every entry of } \left(Q_{\tau_n, v_n}Q_{\tau_{n-1},v_{n-1}}\cdots Q_{\tau_1, v_1}\right)^{(\mathbf{t}, n, \varepsilon)}
   \leq \mathcal{N}(\mathbf{t}; v_1, \dots, v_n). \]
For $n\geq n_0(\mathbf{t},\varepsilon)$,
\begin{align*}
  &Q_{\tau_{n+\lfloor n\varepsilon \rfloor}} Q_{\tau_{n+\lfloor n\varepsilon \rfloor -1}}\cdots Q_{\tau_{n+1}} \cdot
   Q_{\tau_n, v_n}\cdots Q_{\tau_1, v_1}  \\
  & = Q_{\tau_{n+\lfloor n\varepsilon \rfloor}} Q_{\tau_{n+\lfloor n\varepsilon \rfloor -1}}\cdots 
   Q_{\tau_{n+1}} \cdot \left(Q_{\tau_n, v_n}\cdots Q_{\tau_1, v_1}\right)^{(\mathbf{t}, n,\varepsilon)}. 
\end{align*}   
Every entry of this matrix is bounded from above by $(4ab)^{n\varepsilon} \mathcal{N}(\mathbf{t}; v_1, \dots, v_n)$.
Hence 
\[ \norm{Q_{\tau_{n+\lfloor n\varepsilon \rfloor}} Q_{\tau_{n+\lfloor n\varepsilon \rfloor -1}}\cdots Q_{\tau_{n+1}} \cdot
   Q_{\tau_n, v_n}\cdots Q_{\tau_1, v_1}} \leq 16(4ab)^{n\varepsilon} \mathcal{N}(\mathbf{t}; v_1, \dots, v_n). \]
In particular, for any $v_1, \dots, v_{n+\lfloor n\varepsilon \rfloor}\in B$ with $n\geq n_0(\mathbf{t}, \varepsilon)$
\[ \mathcal{N}_0(\mathbf{t}; v_1, \dots, v_n, v_{n+1}, \dots, v_{n+\lfloor n\varepsilon \rfloor}) 
    \leq 16(4ab)^{n\varepsilon} \mathcal{N}(\mathbf{t}; v_1, \dots, v_n). \]
Hence 
\[ \sum_{v_1, \dots, v_{n+\lfloor n\varepsilon \rfloor} \in B} \mathcal{N}_0(\mathbf{t}; v_1, \dots, v_{n+\lfloor n\varepsilon  \rfloor})^w
    \leq b^{n\varepsilon}\cdot 16^w \cdot (4ab)^{n\varepsilon w} \sum_{v_1, \dots, v_n\in B} 
    \mathcal{N}(\mathbf{t}; v_1, \dots, v_n)^w. \]
Therefore 
\[ \lim_{n\to \infty} 
    \frac{1+\varepsilon}{n}\log \left(\sum_{(v_1, \dots, v_n)\in B^n} \mathcal{N}_0(\mathbf{t}; v_1, \dots, v_n)^w\right) \leq 
    \varepsilon \log b + w \varepsilon \log (4ab) + \h^w\left(\rho^{-1}(\mathbf{t}), \pi, T\right). \]
We can let $\varepsilon \to 0$ and obtain the conclusion.
\end{proof}

Applying Corollary \ref{corollary: calculating the Feng--Huang entropy} and
Proposition \ref{proposition: combinatorial entropy and matrix product} to our fundamental identity 
\[ \FHh^w\left(\rho^{-1}(\mathbf{t}), \pi, T\right) = \h^w\left(\rho^{-1}(\mathbf{t}), \pi, T\right), \]
we obtain Theorem \ref{theorem: intersection of Bedford--McMullen carpets}:

\begin{corollary}[$=$ Theorem \ref{theorem: intersection of Bedford--McMullen carpets}]
For Lebesgue almost every $\mathbf{t}\in \mathbb{T}^2$
\[ \dim_{\mathrm{H}}\left((X_1+\mathbf{t})\cap X_2\right) 
    = \lim_{n\to \infty} 
    \frac{1}{n}\log_b \left(\sum_{(v_1, \dots, v_n)\in B^n} \norm{Q_{\tau_n, v_n} Q_{\tau_{n-1}, v_{n-1}}\cdots Q_{\tau_1, v_1}}^w\right). \]
\end{corollary}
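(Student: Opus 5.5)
The plan is to assemble three results already established. First, \eqref{eq: geometric and combinatorial entropies of fibers}, obtained from Corollary \ref{corollary: geometric and combinatorial entropies} with $\nu$ the Lebesgue measure on $Z=\mathbb{T}^2$, gives
\[ \FHh^w\left(\rho^{-1}(\mathbf{t}),\pi,T\right)=\h^w\left(\rho^{-1}(\mathbf{t}),\pi,T\right) \]
for Lebesgue-a.e.\ $\mathbf{t}$. The case where $\rho$ is not surjective has already been disposed of: both sides are then $-\infty$ almost everywhere.

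For the left-hand side I will invoke Corollary \ref{corollary: calculating the Feng--Huang entropy}, which holds for every $\mathbf{t}$. It rewrites the Feng--Huang entropy as $(\log b)\dim_{\mathrm H}((X_1+\mathbf{t})\cap X_2)$, with the convention that $\dim_{\mathrm H}\emptyset=-\infty$. For the right-hand side I will use Proposition \ref{proposition: combinatorial entropy and matrix product}, which expresses $\h^w(\rho^{-1}(\mathbf{t}),\pi,T)$ for a.e.\ $\mathbf{t}$ as the limit of $\frac1n\log\sum_{v_1,\dots,v_n}\norm{Q_{\tau_n,v_n}\cdots Q_{\tau_1,v_1}}^w$, where the $\tau_n$ are the base-$(a,b)$ digits of $\mathbf{t}$.

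The union of the three exceptional null sets is still null, so off that set all three identities hold simultaneously. Dividing by $\log b>0$ then turns $\log$ into $\log_b$ and yields the displayed formula. Identifying the limit with the constant $\lambda$ of \eqref{eq: new random matrix product} uses two facts. Under Lebesgue measure the digits $\tau_n$ are i.i.d.\ uniform on $A\times B$, and the limit is almost surely constant by the subadditive ergodic theorem. That constancy comes from Kingman's theorem applied to the subadditive cocycle $\log\sum_{v}\norm{\cdot}^w$, using submultiplicativity of the norm together with $(xy)^w$ splitting. With these, the statement coincides with Theorem \ref{theorem: intersection of Bedford--McMullen carpets}.

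I expect no real obstacle, since all the substantive work is contained in the cited results. The only care needed is bookkeeping: the null sets, the $-\infty$ conventions when the fibre is empty, and the fact that the choice of norm is immaterial.
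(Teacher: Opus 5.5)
Your proposal is correct and is essentially the paper's own proof. You combine the almost-everywhere identity \eqref{eq: geometric and combinatorial entropies of fibers} (including the non-surjective case) with Corollary~\ref{corollary: calculating the Feng--Huang entropy}, which holds for every $\mathbf{t}$, and Proposition~\ref{proposition: combinatorial entropy and matrix product}, then discard the union of null sets and divide by $\log b$.
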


\begin{remark}
We have so far considered only the case that $\mathbf{t}\in \mathbb{T}^2$ is chosen according to the 
Lebesgue measure. However, this is just for simplicity of the exposition.
The same conclusion (and the same proof)
work for other \lq\lq{}Bernoulli measures\rq\rq{} as well:
Let $p$ be a probability measure on $A\times B$ such that
 $p(\mathbf{u}) >0$ for every $\mathbf{u}\in A\times B$.
Let $\tau_1, \tau_2, \tau_3, \dots$ be independent and identically distributed random variables that 
take values in $A\times B$ according to $p$.
Let $\tau_n = (\alpha_n, \beta_n)$ and set
\[ \mathbf{t} := \left(\sum_{n=1}^\infty \frac{\alpha_n}{a^n}, \sum_{n=1}^\infty \frac{\beta_n}{b^n}\right) \in \mathbb{T}^2. \]
Then, almost surely, we have 
\begin{equation} \label{eq: dimension formula for general Bernoulli measure}
 \dim_{\mathrm{H}}\left((X_1+\mathbf{t})\cap X_2\right) 
    = \lim_{n\to \infty} 
    \frac{1}{n}\log_b \left(\sum_{(v_1, \dots, v_n)\in B^n} \norm{Q_{\tau_n, v_n} Q_{\tau_{n-1}, v_{n-1}}\cdots Q_{\tau_1, v_1}}^w\right), 
\end{equation}    
where $w = \log_a b$.
We notice that the full-support assumption of $p$ cannot simply be omitted. 
For example, let $a=b=3$, $D_1 = \{(0,0), (0,1), (0,2)\}$ and $D_2 = \{(2,0), (2,1), (2,2)\}$.
We suppose that $p$ is the Dirac measure at $\{(0,0)\}$. In this case $X_1 = X_2 = \{0\}\times \mathbb{T}$, and
we almost surely have $(X_1+\mathbf{t})\cap X_2 = \{0\}\times \mathbb{T}$, which has dimension one. However 
$Q_{(0,0)} = 0$ and hence the right-hand side of \eqref{eq: dimension formula for general Bernoulli measure} is $-\infty$.
The point is that a statement corresponding to Lemma \ref{lemma: counting intersection and matrix product} does not hold for 
this example. A sufficient condition for the validity of \eqref{eq: dimension formula for general Bernoulli measure} is that, 
for every $n\geq 1$, the distribution of the random variable 
\[  \left(\sum_{m=n}^\infty \frac{\alpha_m}{a^m}, \sum_{m=n}^\infty \frac{\beta_m}{b^m}\right) \]
has measure zero on $\{0\}\times \mathbb{T} \cup \mathbb{T}\times \{0\}$.
\end{remark}

\end{document}